\newcommand{\arxiv}[1]{\href{http://arxiv.org/abs/#1}{\tt arXiv:\nolinkurl{#1}}}
\newcommand{\arXiv}[1]{\href{http://arxiv.org/abs/#1}{\tt arXiv:\nolinkurl{#1}}}
\newcommand{\googlebooks}[1]{(preview at \href{http://books.google.com/books?id=#1}{google books})}
\definecolor{dark-red}{rgb}{0.7,0.25,0.25}
\definecolor{dark-blue}{rgb}{0.15,0.15,0.55}
\definecolor{medium-blue}{rgb}{0,0,.8}
\definecolor{shaded-blue}{RGB}{98,140,255}
\definecolor{DarkGreen}{RGB}{0,150,0}
\definecolor{rho}{named}{red}
\definecolor{Salmon}{RGB}{255, 144, 144}
\theoremstyle{plain}
\newtheorem{thm}{Theorem}[section]
\newtheorem*{thm*}{Theorem}
\newtheorem{thmalpha}{Theorem}
\newtheorem{cor}[thm]{Corollary}
\newtheorem{coralpha}[thmalpha]{Corollary}
\newtheorem*{cor*}{Corollary}
\newtheorem*{conj*}{Conjecture}
\newtheorem{lem}[thm]{Lemma}
\newtheorem{fact}[thm]{Fact}
\newtheorem{prop}[thm]{Proposition}
\newtheorem*{quest*}{Question}
\newtheorem*{claim*}{Claim}
\theoremstyle{definition}
\newtheorem{defn}[thm]{Definition}
\newtheorem{nota}[thm]{Notation}
\newtheorem{ex}[thm]{Example}
\newtheorem{sub-ex}[thm]{Sub-Example}
\newtheorem{rem}[thm]{Remark}
\newtheorem*{rem*}{Remark}
\newtheorem{remark}[thm]{Remark}
\DeclareMathOperator{\coev}{coev}
\DeclareMathOperator{\End}{End}
\DeclareMathOperator{\ev}{ev}
\DeclareMathOperator{\Hom}{Hom}
\DeclareMathOperator{\spann}{span}
\DeclareMathOperator{\id}{id}
\DeclareMathOperator{\im}{im}
\DeclareMathOperator{\Tr}{Tr}
\DeclareMathOperator{\tr}{tr}
\newcommand{\comment}[1]{}
\newcommand{\be}{\begin{enumerate}[label=(\arabic*)]}
\newcommand{\ee}{\end{enumerate}}
\newcommand{\set}[2]{\left\{#1 \middle| #2\right\}}
\newcommand{\alttens}[1][n]{{\text{alt}\otimes #1}}
\newcommand{\Xalt}{X^{\alttens}}
\def\semicolon{;}
\def\applytolist#1{
    \expandafter\def\csname multi#1\endcsname##1{
        \def\multiack{##1}\ifx\multiack\semicolon
            \def\next{\relax}
        \else
            \csname #1\endcsname{##1}
            \def\next{\csname multi#1\endcsname}
        \fi
        \next}
    \csname multi#1\endcsname}
\def\calc#1{\expandafter\def\csname c#1\endcsname{{\mathcal #1}}}
\def\bbc#1{\expandafter\def\csname bb#1\endcsname{{\mathbb #1}}}
\def\bfc#1{\expandafter\def\csname bf#1\endcsname{{\mathbf #1}}}
\def\sfc#1{\expandafter\def\csname s#1\endcsname{{\sf #1}}}
\def\fc#1{\expandafter\def\csname f#1\endcsname{{\mathfrak #1}}}
\newcommand{\Cstar}{\rm{C^*}}
\newcommand{\Wstar}{\rm{W^*}}
\newcommand{\noshow}[1]{}
\newcommand{\MR}[1]{}
\newcommand{\TLJ}{\cT\cL\cJ}
\tikzset{vertex/.style = {shape=circle,draw,fill=black,inner sep=0pt,minimum size=5pt}}
\tikzset{edge/.style = {->,> = latex', bend right}}
\tikzset{
 super thick/.style={line width=3pt}
}
\tikzset{
    quadruple/.style args={[#1] in [#2] in [#3] in [#4]}{
        #1,preaction={preaction={preaction={draw,#4},draw,#3}, draw,#2}
    }
}
\tikzstyle{shaded}=[fill=gray!25!white]
\tikzstyle{shadedpink}=[left color= white, right color = Salmon]
\tikzstyle{unshaded}=[fill=white]
\tikzstyle{empty box}=[circle, draw, thick, fill=white, opaque, inner sep=2mm]
\tikzstyle{annular}=[scale=.7, inner sep=1mm, baseline]
\tikzstyle{rectangular}=[scale=.75, inner sep=1mm, baseline=-.1cm]
\tikzstyle{mid>}=[decoration={markings, mark=at position 0.5 with {\arrow{>}}}, postaction={decorate}]
\tikzstyle{mid<}=[decoration={markings, mark=at position 0.5 with {\arrow{<}}}, postaction={decorate}]
\tikzstyle{over}=[double, draw=white, super thick, double=]
\tikzstyle{relativecommutantshading}=[fill=blue!20!white]
\tikzstyle{ctwoshading}=[fill=red!15!white]
\tikzstyle{Bshading}=[fill=blue!15!white]
\newcommand{\roundNbox}[6]{
 \draw[rounded corners=5pt, very thick, #1] ($#2+(-#3,-#3)+(-#4,0)$) rectangle ($#2+(#3,#3)+(#5,0)$);
 \coordinate (ZZa) at ($#2+(-#4,0)$);
 \coordinate (ZZb) at ($#2+(#5,0)$);
 \node at ($1/2*(ZZa)+1/2*(ZZb)$) {#6};
}
\newcommand{\ncircle}[5]{
 \draw[very thick, #1] #2 circle (#3);
 \node at #2 {#5};
 \node at ($#2+(#4:.15cm)+(#4:#3cm)$) {\scriptsize{$\star$}};
}
\newcommand{\halfcircle}[5]{
 \draw[very thick, #1] ($ #2 + (0,#3) $) -- ($ #2 + (0,#3) + (#4,0) $) arc (90:-90:#3cm) -- ($ #2 - (0,#3) $) -- ($ #2 + (0,#3) + (0,.0211) $); 
 \node at ($ #2 + .5*(#3,0) +.5*(#4,0) $) {#5};
}
\newcommand{\openhalfcircle}[5]{
 \draw[very thick, fill=white, #1] ($ #2 + (0,#3) $) -- ($ #2 + (#4,0) + (#3,#3) $) arc (90:-90:#3cm) -- ($ #2 - (0,#3) $) -- ($ #2 + (0,#3) $); 
 \draw[super thick, white] ($ #2 + (0,#3) - (0,.0211) $) -- ($ #2 - (0,#3) + (0,.0211) $);
 \node at ($ #2 + .5*(#3,0) +.5*(#4,0) $) {#5};
}
\title{The module embedding theorem via towers of algebras}
\author{Desmond Coles, Peter Huston, David Penneys, and Srivatsa Srinivas}
\begin{document}


\maketitle
\begin{abstract}
Jones and Penneys showed that a finite depth subfactor planar algebra embeds in the bipartite graph planar algebra of its principal graph, via a Markov towers of algebras approach.
 We relate several equivalent perspectives on the notion of module over a subfactor planar algebra, and show that a Markov tower is equivalent to a module over the Temperley-Lieb-Jones planar algebra.
 As a corollary, we obtain a classification of semisimple pivotal $\Cstar$ modules over Temperley-Lieb-Jones in terms of pointed graphs with a Frobenius-Perron vertex weighting.
 We then generalize the Markov towers of algebras approach to show that a finite depth subfactor planar algebra embeds in the bipartite graph planar algebra of the fusion graph of any of its cyclic modules.
\end{abstract}

\section{Introduction}

Jones' planar algebras \cite{math.QA/9909027} are a powerful method to construct \cite{MR2979509,1810.06076} and classify \cite{MR3166042,MR3345186,1509.00038} finite index ${\rm II}_1$ subfactors.
Many exotic examples have been constructed via \emph{graph planar algebra embedding}, i.e., by finding \emph{evaluable} planar subalgebras of graph planar algebras.
By \cite{MR2812459}, any finite depth subfactor planar algebra embeds in the graph planar algebra of its principal graph.
This result also extends to infinite depth subfactor planar algebras by \cite{gpa}.

To date, graph planar algebra embedding has been used to construct:
\begin{itemize}
\item
the $E_6$ and $E_8$ subfactor planar algebras \cite{MR1929335},
\item
group planar algebras \cite{MR2511128},
\item
Haagerup-Izumi quadratic subfactors \cite{MR2679382,MR2822034,MR3314808,MR3394622,MR3402358},
\item
quantum group planar algebras \cite{MR3306607}, and
\item
the extended Haagerup fusion categories \cite{MR2979509,1810.06076}.
\end{itemize}
While none of the constructions above rely on the embedding theorem from \cite{MR2812459},
the embedding theorem gives us the motivation to do the hard work of looking for the embedding.
However, the embedding theorem is necessary for Liu's important classification theorem for composites of $A_3$ and $A_4$ subfactor planar algebras \cite{MR3345186}, in which he shows that higher quotients of $A_3*A_4$ do not exist because the possible generator does not embed in the appropriate graph planar algebra.

As noted in \cite{MR2812459}, it was rather surprising that the dual principal graph made no appearance in the embedding theorem.
Adding to this mystery, certain examples above could be constructed by embedding into planar algebras of bipartite graphs which are completely different from the principal and dual principal graphs \cite{MR2679382,MR3402358,1810.06076}.
The answer to why this occurs is the following theorem:

\begin{thm}[\cite{1810.06076}]
Let $\cP_\bullet$ be a finite depth subfactor planar algebra and $\cC$ its unitary $2\times 2$ multifusion category of projections with generator $X \in \cP_{1,+}$, the unshaded-shaded strand.
Endow $\cC$ with the canonical spherical structure inherited from $\cP_\bullet$.
There is a bijective correspondence between:
\begin{itemize}
\item
planar $\dag$-algebra embeddings $\cP_\bullet \to \cG\cP\cA(\Gamma)_\bullet$, where $\Gamma$ is a finite connected bipartite graph, and 
\item
indecomposable finitely semisimple pivotal left $\cC$-module $\Cstar$ categories $\cM$ whose fusion graph with respect to $X$ is $\Gamma$.
\end{itemize}
\end{thm}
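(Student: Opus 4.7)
The strategy is to pass through the Markov tower of algebras perspective developed earlier in the paper, matching both sides of the correspondence to the same underlying combinatorial data indexed by the graph $\Gamma$.

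For the forward direction, given an indecomposable pivotal $\Cstar$ module category $\cM$ with fusion graph $\Gamma$ with respect to $X$, I would first assemble the morphism spaces $\cM(m, X^{\otimes n} \otimes m')$ for simples $m, m' \in \Irr(\cM)$ into a Markov tower of algebras, with Markov trace determined by the pivotal structure on $\cM$ together with the Frobenius-Perron vertex weighting on $\Gamma$. By the equivalence between Markov towers and modules over $\TLJ$ established earlier in the paper, this tower carries a $\TLJ$-module structure, and choosing unitary bases of the edge-indexed morphism spaces identifies it with a planar $\dag$-subalgebra of $\cG\cP\cA(\Gamma)_\bullet$. The planar $\dag$-algebra action of $\cP_\bullet$ on $\cM$ factors through this tower, producing the desired embedding $\cP_\bullet \hookrightarrow \cG\cP\cA(\Gamma)_\bullet$.

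For the backward direction, given an embedding $\iota : \cP_\bullet \hookrightarrow \cG\cP\cA(\Gamma)_\bullet$, I would exploit the fact that $\cG\cP\cA(\Gamma)_\bullet$ itself has a canonical underlying pivotal $\Cstar$ module category: its simple objects are the vertices of $\Gamma$, morphism spaces of the action of the standard strand have distinguished bases indexed by the edges of $\Gamma$, and the pivotal $\Cstar$ structure is determined by the Frobenius-Perron weights. Pulling this structure back along $\iota$ yields an indecomposable semisimple pivotal left $\cC$-module $\Cstar$ category $\cM$, and by construction the fusion graph of $\cM$ with respect to $X$ is exactly $\Gamma$. Indecomposability of $\cM$ will correspond to connectedness of $\Gamma$ via the standard Perron-Frobenius argument.

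The main obstacle will be verifying, in the forward direction, that the Markov tower embeds as a planar $\dag$-subalgebra of $\cG\cP\cA(\Gamma)_\bullet$ and not merely as a tower of algebras with compatible trace. This is exactly what the equivalence with $\TLJ$-modules buys: since $\TLJ$ generates all planar tangles, a $\TLJ$-module structure upgrades tower-theoretic maps to planar maps, and compatibility with the spherical structure follows from the identification of the pivotal trace on $\cM$ with the Markov trace on $\cG\cP\cA(\Gamma)_\bullet$ built from the Frobenius-Perron weights. Bijectivity of the correspondence then reduces to chasing the dictionary $\text{simples} \leftrightarrow \text{vertices}$ and $\text{morphism bases} \leftrightarrow \text{edges}$, with the uniqueness (up to scale) of the Frobenius-Perron eigenvector of $\Gamma$ ensuring that the pivotal $\Cstar$ structure recovered on each side matches.
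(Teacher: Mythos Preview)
Your proposal has the right overall shape but contains a genuine gap in the forward direction, and it diverges from the paper's actual route in a way worth noting.

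\textbf{The gap.} You write that ``since $\TLJ$ generates all planar tangles, a $\TLJ$-module structure upgrades tower-theoretic maps to planar maps.'' This is not correct as stated: $\TLJ$ only gives you the Temperley-Lieb tangles, but a planar $\dag$-algebra embedding $\cQ_\bullet \hookrightarrow \cG\cP\cA(\Gamma)_\bullet$ must intertwine \emph{all} shaded planar tangles, including those with $\cQ$-labeled input disks. Knowing that your tower carries a $\TLJ$-module structure tells you nothing about how elements of $\cQ_{n,\pm}$ that lie outside the Temperley-Lieb subalgebra interact with tangles. You need a separate mechanism to verify planarity of the full map, and your proposal does not supply one.

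\textbf{How the paper handles this.} The paper's forward direction does not attempt to embed directly into $\cG\cP\cA(\Gamma)_\bullet$ by choosing bases. Instead it fixes a single simple basepoint $m$, builds the Markov tower $M_n = \End_\cM(m \vartriangleleft X^{\text{alt}\otimes n})$, and then passes far enough down the tower (to some $M_{2r} \subset M_{2r+1}$) that the inclusion becomes \emph{strongly Markov}. At that point the canonical relative commutant planar $\dag$-algebra $\cP_\bullet$ of \cite{MR2812459} is available, and the map $\Phi:\cQ_\bullet\to\cP_\bullet$ given by adding $2r$ strings on the left is checked to be planar via the finite generating-tangle criterion (Lemma~\ref{lem:SufficientConditionsForPlanarMap}): one verifies compatibility with Jones projections, left/right inclusion, and left/right capping, and these suffice. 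The identification $\cP_\bullet\cong\cG\cP\cA(\Gamma)_\bullet$ is then imported wholesale from \cite[Thm.~3.28]{MR2812459}. Your ``choose unitary bases'' step is effectively trying to reprove that last isomorphism from scratch while simultaneously establishing the embedding, which conflates two separate difficulties.

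\textbf{Backward direction.} Your backward sketch is closer to what the paper does (see the example following Definition~\ref{def:RightPlanarModule}, where cutting $\cG\cP\cA(\Gamma)_\bullet$ by $p_v$ produces a planar module, hence a pivotal module category via Theorem~\ref{thm:ModuleEquivalence}). Note, however, that the paper in fact concentrates on the forward direction and on showing the embedding is independent of auxiliary choices (\S\ref{sec:InvarianceOfEmbedding}); the full bijectivity is what is being cited from \cite{1810.06076}.
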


The proof in \cite{1810.06076} is mostly in the language of tensor and module categories.
In this article, we provide an independent proof in the original towers of algebras approach to subfactor theory \cite{MR0696688,MR936086,MR999799,MR1278111} and the graph planar algebra embedding theorem \cite{MR2812459}.

Our starting point is the well-known correspondence between:
\begin{enumerate}[label={\rm(\arabic*)}]
\item
unitary $2\times 2$ multitensor categories $\cC$ with orthogonal decomposition into simples $1_\cC = 1_0 \oplus 1_1$ and generator $X = 1_0 \otimes X \otimes 1_1$ with its canonical spherical/balanced unitary dual functor
(see \cite{MR2091457,1808.00323}),\footnote{
 That $X$ is a generator means that any proper full subcategory of $\cC$ containing $X$ which is closed under tensor product, direct sum, taking dual, and taking subobjects is equivalent to $\cC$; see Definition \ref{def:multitensor}. 
} 
and
\item
Jones' subfactor planar algebras $\cP_\bullet$ \cite{math.QA/9909027}.
\end{enumerate}
In \S\ref{sec:Modules}, we build on this correspondence by defining analogous notions of \emph{right modules} for these algebraic objects.
We briefly describe these objects here, and we refer the reader to \S\ref{sec:3TypesOfModules} for more details.

A \emph{pivotal} module category for $\cC$ is a finitely semisimple $\Cstar$ category $\cM$ which is an indecomposable right $\cC$-module category equipped with a faithful positive trace $\Tr^\cM_m$ on each endomorphism $\Cstar$ algebra $\cM(m\to m)$ which is compatible with the right $\cC$-action \cite{MR3019263,1810.06076}.
That is, for all $m\in \cM$, $c\in \cC$, and $f\in \End_\cM(m \vartriangleleft c)$, 
$$
\Tr^\cM_{m\vartriangleleft c}(f)
=
\Tr^\cM_{m}(
(\id_m \vartriangleleft\coev_c^\dag) \circ (f\vartriangleleft\id_{\overline{c}}) \circ (\id_m \vartriangleleft \coev_c)
),
$$
where $(\overline{c}, \ev_c, \coev_c)$ is the canonical \emph{balanced} dual of $c\in \cC$ \cite{MR1444286,MR2091457,MR3342166,1808.00323}.

A \emph{(connected) right planar module} $\cM_\bullet$ for a subfactor planar algebra $\cP_\bullet$ is a sequence of finite dimensional von Neumann algebras $(\cM_k)_{k\geq 0}$ with $\dim(\cM_0) = 1$,\footnote{
The adjective \emph{connected} refers to the condition that $\dim(\cM_0) = 1$.
}
together with an action of the 
shaded planar module operad, 
which is a variation of Voronov's Swiss cheese operad \cite{MR1718089}.
We refer the reader to Definition \ref{def:PlanarModule} for the details, but we include a representative tangle below which acts amongst the algebras $\cM_k$ and the box spaces $\cP_{n,\pm}$:\,\footnote{ 
We use the convention that all $\cM_k$ appear before $\cP_{n,\pm}$ in the tensor product; this is not problematic, as the tensor category of finite dimensional complex vector spaces is symmetric.
}
$$
\begin{tikzpicture}[baseline = -.1cm]
 \filldraw[shaded] (.3,1.3) .. controls ++(90:.3cm) and ++(270:.3cm) .. (.8,2) -- 
  (1.8,2) .. controls ++(270:.3cm) and ++(90:.3cm) .. (2.5,1.4) --
  (2.5,1) .. controls ++(270:1cm) and ++(-30:1cm) .. (.85,-1.25) --
  (.85,-.75) .. controls ++(30:.3cm) and ++(270:.3cm) .. (1.5,-.4) -- 
  (1.5,.4) .. controls ++(90:.8cm) and ++(270:.8cm) .. (.3,.7);
 \filldraw[unshaded] (2,1) circle (.3cm);
 \filldraw[shaded] (.2,-.7) -- (.2,-.4) arc (180:0:.15cm) -- (.5,-.7);
 \filldraw[shaded]  (.2,-1.3) .. controls ++(270:.3cm) and ++(90:.3cm) .. (.8,-2) -- (1.8,-2)
  (1.8,-2) .. controls ++(90:.3cm) and ++(-30:1.2cm) .. (2.5,-1)
  .. controls ++(-135:1cm) and ++(-45:1cm) .. (.5,-1.3) -- (.2,-1.3);
 \filldraw[shaded] (2.5,1) .. controls ++(30:.3cm) and ++(-135:.3cm) .. (3.3,1.5)
  arc (49:-49:2cm)  .. controls ++(135:.3cm) and ++(0:.8cm) ..  (2.5,-1)
  .. controls ++(30:1cm) and ++(-30:1cm) .. (2.5,1);
 \filldraw[unshaded] (3.85,.8) .. controls ++(-135:.8cm) and ++(135:.3cm) .. (3.85,-.8) arc (-23:23:2cm);
 \halfcircle{}{(0,0)}{2}{2}{}
 \openhalfcircle{}{(0,1)}{.3}{.4}{\scriptsize{$2$}}
 \openhalfcircle{}{(0,-1)}{.3}{.4}{\scriptsize{$1$}}
 \ncircle{unshaded}{(2.5,1)}{.4}{180}{\scriptsize{$5$}}
 \ncircle{unshaded}{(1.5,0)}{.4}{0}{\scriptsize{$4$}}
 \ncircle{unshaded}{(2.5,-1)}{.4}{90}{\scriptsize{$3$}}
\end{tikzpicture}
\,:
(\cM_3 \otimes \cM_1) \otimes (\cP_{2,+}\otimes \cP_{1,-} \otimes \cP_{3,+}) \to \cM_{4}
$$
Here, one can glue shaded planar module tangles into the module input semidisks, and one can glue shaded planar tangles into the circular input disks.
In addition, the tower of algebras $(\cM_k)_{k \geq 0}$ must satisfy that multiplication in the von Neumann algebra $\cM_k$ is given by the tangle
$$
\begin{tikzpicture}[baseline = -.1cm]
 \draw (.25,-1) -- (.25,1);
 \halfcircle{}{(0,0)}{1}{.3}{}
 \openhalfcircle{}{(0,.4)}{.25}{.25}{\scriptsize{$2$}}
 \openhalfcircle{}{(0,-.4)}{.25}{.25}{\scriptsize{$1$}}
 \node at (.4,0) {\scriptsize{$k$}};
 \node at (.4,.8) {\scriptsize{$k$}};
 \node at (.4,-.8) {\scriptsize{$k$}};
\end{tikzpicture}
:
\cM_k \otimes \cM_k \to \cM_k,
$$
and the $*$-structure on $\cM_k$ is compatible with the reflection of tangles about a horizontal axis.
Notice this canonically identifies $\cM_0 = \bbC$ as a von Neumann algebra.
Under this identification, we require that the linear functionals
$$
\tr_k:=
d^{-k}\cdot\,
\begin{tikzpicture}[baseline = -.1cm]
 \draw (.25,.25) arc (180:0:.2cm) -- (.65,-.25) arc (0:-180:.2cm);
 \halfcircle{}{(0,0)}{.8}{.3}{}
 \openhalfcircle{}{(0,0)}{.25}{0}{}
 \node at (.8,0) {\scriptsize{$k$}};
\end{tikzpicture}
:
\cM_k \to \cM_0 = \bbC
$$
are faithful positive normalized traces, where $d$ is the loop parameter of $\cP_\bullet$.

The following theorem generalizes the correspondence between
unitary $2\times 2$ multitensor categories $\cC$ with $1_\cC =1_0\oplus 1_1$ and generator $X = 1_0\otimes X \otimes 1_1$ 
and
subfactor planar algebras $\cP_\bullet$.

\begin{thmalpha}
\label{thm:ModuleEquivalence}
Let $\cP_\bullet$ be a subfactor planar algebra corresponding to $(\cC, X)$ as above.
There is an equivalence between:
\begin{enumerate}[label={\rm(\arabic*)}]
\item
pivotal right $\cC$-module $\Cstar$ categories $(\cM,\Tr^\cM)$ with choice of simple basepoint $m = m\vartriangleleft 1_0$, and
\item
connected right planar modules $\cM_\bullet$ for $\cP_\bullet$.
\end{enumerate}
\end{thmalpha}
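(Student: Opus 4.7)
My plan is to construct functors in both directions, parallel to the recalled equivalence between unitary $2\times 2$ multitensor categories $(\cC,X)$ and subfactor planar algebras $\cP_\bullet$, and then verify they are mutually inverse.

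For the direction $(1)\Rightarrow(2)$, starting from a pivotal right $\cC$-module $\Cstar$ category $(\cM,\Tr^\cM)$ with simple basepoint $m=m\vartriangleleft 1_0$, I would define the tower
$$
\cM_k \;:=\; \End_\cM\bigl(m\vartriangleleft X^{\alttens[k]}\bigr),
$$
where $X^{\alttens[k]}$ is the alternating word $X\otimes\overline X\otimes X\otimes\cdots$ of length $k$, so that $m\vartriangleleft X^{\alttens[k]}$ is a well-defined object of $\cM$ in light of $m=m\vartriangleleft 1_0$ and $X=1_0\otimes X\otimes 1_1$. Simplicity of $m$ forces $\cM_0=\bbC$, giving connectedness. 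The shaded planar module tangle action is defined by the standard graphical calculus: external strands represent copies of $X$ or $\overline X$, circular disks filled with elements of $\cP_{n,\pm}=\End_\cC(X^{\alttens[n]})$ represent morphisms in $\cC$, and caps/cups are interpreted via the balanced dual $(\overline X,\ev_X,\coev_X)$. The module boundary of each tangle records the basepoint $m$, so every tangle evaluates to a morphism in some $\End_\cM(m\vartriangleleft X^{\alttens[k]})=\cM_k$. The required traces $\tr_k$ are, up to the factor $d^{-k}$, the composition of $\Tr^\cM_{m\vartriangleleft X^{\alttens[k]}}$ with the right-cap closure; their consistency with tangle gluing is precisely the content of the pivotal compatibility equation displayed in the statement.

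For the direction $(2)\Rightarrow(1)$, given a connected right planar module $\cM_\bullet$, I would build $\cM$ by Karoubi completion, as in the classical recovery of $\cC$ from $\cP_\bullet$. The generating objects are the nonnegative integers $k$, viewed as having endomorphism algebra $\cM_k$; morphisms $k\to \ell$ come from the box space of the planar module corresponding to a module half-disk with $k$ strands on the bottom and $\ell$ on top. Composition, identities, $\Cstar$-structure, and adjoints are obtained by plugging the corresponding planar module tangles; the basepoint is the object $0$, whose endomorphism algebra $\cM_0=\bbC$ is automatically simple. The right $\cC$-action is induced by placing a shaded circular input to the right of the module boundary inside a planar module tangle, which is exactly the data of a right $\cC$-action on the projection category; one then passes to the full Karoubi envelope under direct sums and sub-projections.

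The main obstacle is the verification that these constructions are mutually inverse and that the resulting $(\cM,\Tr^\cM)$ is genuinely a pivotal right $\cC$-module $\Cstar$ category. Finite semisimplicity of $\cM$ reduces, as in \cite{MR2812459,gpa}, to finite-dimensionality of each $\cM_k$ together with stability of projections under the right $\cC$-action. Positivity and faithfulness of $\Tr^\cM$ transport from the corresponding properties of $\tr_k$. The delicate point is the pivotal compatibility equation: this must be extracted from the normalization $\tr_k=d^{-k}\cdot(\text{right-cap tangle})$ combined with sphericality of $\cP_\bullet$, and is the module-level analogue of the classical fact that sphericality of $\cP_\bullet$ corresponds to the balanced dual on $\cC$, applied once more at the module boundary. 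Once these checks are in place, the two functors are readily mutually inverse, since each acts as the identity on the underlying tower $(\cM_k)_{k\geq 0}$ and all remaining structure on either side is uniquely determined by it.
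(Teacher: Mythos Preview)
Your proposal is correct and follows essentially the same approach as the paper: in both directions you use exactly the constructions the paper uses (the tower $\cM_k=\End_\cM(m\vartriangleleft X^{\alttens[k]})$ with the graphical calculus for $(1)\Rightarrow(2)$, and the Karoubi completion of the projection category with the right $\cC$-action given by juxtaposition for $(2)\Rightarrow(1)$). The paper fills in a few details you leave implicit---a ``standard form'' for planar module tangles to make the graphical calculus well-defined, and the precise description of hom-spaces and the trace in the projection category---but the architecture is identical.
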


One passes from (2) to (1) in Theorem \ref{thm:ModuleEquivalence} by taking the category of projections, similar to the correspondence between $\cP_\bullet$ and $(\cC, X)$ \cite{MR2559686,MR3405915}.
One passes from (1) to (2) using the diagrammatic calculus for module categories, similar to how one gets a subfactor planar algebra from $(\cC, X)$ via the diagrammatic calculus for pivotal categories \cite{MR2811311,1808.00323}.

From a pivotal semisimple right $\cC$-module $\Cstar$ category $(\cM, \Tr^\cM)$ together with a choice of simple basepoint $m\in \cM$ with $m= m\vartriangleleft 1_0$, we build a 
tower of finite dimensional von Neumann algebras by setting
$$
M_n := \End_\cM(m \vartriangleleft  \underbrace{X\otimes \overline{X} \otimes \cdots \otimes X^?}_{n \text{ tensorands}})
$$
where for our generator $X\in \cC$, we set $X^? = X$ if $n$ is even and $X^? = \overline{X}$ if $n$ is odd.
The trace $\Tr^\cM$ endows each von Neumann algebra $M_n$ with a faithful tracial state $\tr_n := \Tr^\cM(\id_n)^{-1} \Tr^\cM$ together with canonical Jones projections $e_n \in M_{n+1}$ for all $n\geq 1$.
Based on the parity of $n$, the $e_n$ are defined for $k\geq 0$ by
\begin{align*}
e_{2k+1}
&=
\begin{tikzpicture}[baseline]
 \fill[ctwoshading] (-.3,-.5) rectangle (-.7,0.5);
 \draw[thick,red] (-0.3,-.5) -- (-0.3,0.5);
 \draw (0,-.5) -- (0,.5);
 \filldraw[shaded] (.3,.5) arc (-180:0:.3cm);
 \filldraw[shaded] (.3,-.5) arc (180:0:.3cm);
 \node at (.2,0){\tiny{$2k$}};
\end{tikzpicture} 
\hspace{.3cm}
:=
d^{-1}
\big(
\id_m \vartriangleleft \id_{(X\otimes \overline{X})^{\otimes k}} \otimes (\coev_X \circ \coev_X^\dag)
\big)
\in M_{2k+2}
\\
e_{2k+2}
&=
\begin{tikzpicture}[baseline]
 \fill[ctwoshading] (-.3,-.5) rectangle (-.7,0.5);
 \fill[shaded] (0,-.5) rectangle (1.2,.5);
 \draw[thick,red] (-0.3,-.5) -- (-0.3,0.5);
 \draw (0,-.5) -- (0,.5);
 \filldraw[unshaded] (.3,.5) arc (-180:0:.3cm);
 \filldraw[unshaded] (.3,-.5) arc (180:0:.3cm);
 \node at (.4,0){\tiny{$2k+1$}};
\end{tikzpicture} 
:=
d^{-1}
\big(
\id_m \vartriangleleft \id_X \otimes \id_{(\overline{X}\otimes X)^{\otimes k}} \otimes (\ev_X^\dag \circ \ev_X)
\big)
\in M_{2k+3}.
\end{align*}
Here, $(\overline{X}, \ev_X, \coev_X)$ is the balanced dual of $X$, $m$ is graphically represented by a red strand, and the left hand side of $m$ is shaded red to denote the absence of a left $\cC$-action.

We call $M_\bullet = (M_n, \tr_n, e_{n+1})_{n\geq 0}$ 
a \emph{Markov tower} as it satisfies the following axioms:
\begin{enumerate}[label={\rm(M\arabic*)}]
\item
The projections $(e_n)$ satisfy the Temperley-Lieb-Jones relations with \emph{modulus} $d>0$ (our convention for $d$ is $e_ie_{i\pm 1}e_i = d^{-2} e_i$.)
\item
For all $x\in M_n$, $e_n xe_n = E_n(x) e_n$, where $E_n: M_n \to M_{n-1}$ is the canonical trace-preserving conditional expectation.
\item
For all $n\geq 1$, $E_{n+1}(e_n) = d^{-2}$.
\item
For all $n\geq 1$, we have the Pimsner-Popa pull down property \cite{MR860811}: $M_{n+1} e_n = M_n e_n$, which is equivalent to $M_n e_n M_n$ being a 2-sided ideal in $M_{n+1}$.
\end{enumerate}
One should view a Markov tower as an analog of Popa's $\lambda$-lattices \cite{MR1334479} where we only have one tower of algebras rather than a tower/lattice of commuting squares.
Indeed, one should compare \ref{eq:MarkovJonesProjections} and \ref{eq:MarkovImplement} with (1.3.2) and \ref{eq:MarkovIndex} and \ref{eq:MarkovPullDown} with (1.3.3') from \cite{MR1334479} respectively.
We expect that the notion of Markov tower with some compatibility axioms is the correct notion of a right module for Popa's $\lambda$-lattices (see Remark \ref{rem:LambdaLatticeModule}).
We leave this exploration to a future article as it would take us too far afield.

Markov towers satisfy many nice properties exhibited by standard invariants of finite index ${\rm II}_1$ subfactors from \cite[Ch.~4]{MR999799}; we mention a few here, and we refer the reader to \S\ref{sec:MarkovTowers} for more details.
The traces satisfy the \emph{Markov property} $\tr_{n+2}(x e_n) = d^{-2}\tr_{n+1}(x)$ for every $x\in M_{n+1}$, and the Markov tower has a \emph{principal graph} consisting of the non-reflected part of the Bratteli diagram at each step.
The tower is called \emph{finite depth} if the principal graph is finite.

From a Markov tower, we can form a semisimple $\Cstar$ \emph{projection category} $\cM$, whose simple objects are in canonical bijection with the vertices of the principal graph.
Moreover, the traces and Jones projections canonically equip $\cM$ with the structure of a pivotal right $\cT\cL\cJ(d)$-module $\Cstar$ category.
Now any pointed bipartite graph $(\Gamma,v)$ with a \emph{quantum dimension function} on vertices $\dim: V(\Gamma) \to \bbR_{>0}$ satisfying
$$
d\cdot \dim(v) = \sum_{w\sim v} \dim(w)
$$
gives us a Markov tower of modulus $d$, where we write $w\sim v$ to mean $w$ is connected to $v$, and the sum is taken with multiplicity.
We thus get the following corollary, which should be compared with the non-pivotal case in \cite{MR3420332}.

\begin{coralpha}
\label{cor:TLJPivotalModuleClassification}
Equivalence classes of pivotal $\cT\cL\cJ(d)$-module $\Cstar$ categories with simple basepoint
are in bijection with 
pointed connected bipartite graphs $(\Gamma, v)$ with a quantum dimension function.
\end{coralpha}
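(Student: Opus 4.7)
The plan is to chain together three equivalences: first Theorem \ref{thm:ModuleEquivalence} applied to the subfactor planar algebra $\cT\cL\cJ(d)$; second, a bijection between connected right planar modules over $\cT\cL\cJ(d)$ and Markov towers of modulus $d$; and third, a bijection between Markov towers of modulus $d$ and pointed connected bipartite graphs with a quantum dimension function. The first is the statement of Theorem \ref{thm:ModuleEquivalence}, and the latter two are sketched in the surrounding discussion; the task is to carry them out cleanly and verify that the resulting equivalences assemble into the claimed bijection on equivalence classes.

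For the second equivalence, I would show that every shaded planar module tangle over $\cT\cL\cJ(d)$ whose circular inputs are filled with Temperley-Lieb diagrams is isotopic to a composition of a small list of generating tangles: the unit inclusion, multiplication, trace, and Jones-projection insertion. Under the planar module action these yield, respectively, unital inclusions $\cM_n \hookrightarrow \cM_{n+1}$, associative algebra structures on each $\cM_n$, Markov traces $\tr_n$, and Jones projections $e_n \in \cM_{n+1}$. The Temperley-Lieb skein relations together with the loop value $d$ then translate exactly into the Markov tower axioms (M1)--(M4), and reflection symmetry of tangles yields the $*$-structure. This operadic-to-algebraic dictionary is the main technical ingredient, and I expect it to be the hardest step.

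For the third equivalence, in one direction I would read off the principal graph $\Gamma$ from the Bratteli diagram of $M_\bullet$, take the basepoint $v$ to be the unique simple of $M_0 = \bbC$, and define $\dim(w)$ from the trace weight of a minimal projection at $w$, normalized so that $\dim(v) = 1$. The Markov property $\tr_{n+2}(x e_n) = d^{-2}\tr_{n+1}(x)$ together with trace-preservation of conditional expectation then forces the Perron--Frobenius equation $d \cdot \dim(v') = \sum_{w \sim v'} \dim(w)$ at every vertex. Conversely, given $(\Gamma, v, \dim)$, I would build $M_\bullet$ as a path algebra on $\Gamma$ based at $v$: the algebra $M_n$ is the direct sum over vertices $w$ at distance $n$ from $v$ of the matrix algebra on length-$n$ paths from $v$ to $w$, with trace weights determined by $\dim$ and Jones projections given by the standard path-model formulas. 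One checks these constructions are mutually inverse, and that an isomorphism of pointed weighted graphs lifts to an isomorphism of Markov towers, which in turn corresponds via Theorem \ref{thm:ModuleEquivalence} to an equivalence of pivotal $\cT\cL\cJ(d)$-module $\Cstar$ categories preserving the basepoint.
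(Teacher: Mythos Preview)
Your proposal is correct and arrives at the same classification, but the route differs from the paper's in one notable respect: you pass through connected right planar $\cT\cL\cJ(d)$-modules via Theorem~\ref{thm:ModuleEquivalence}, whereas the paper builds the bijection between connected Markov towers and cyclic pivotal $\cT\cL\cJ(d)$-module $\Cstar$ categories \emph{directly}, without the planar-module intermediary. Concretely, the paper constructs the projection category of a Markov tower (Definition~\ref{def:MarkovProjections}), equips it with a $\cT\cL\cJ(d)$-action by hand (Definition~\ref{def:ModuleFromMarkovTower}), and endows it with the pivotal trace (Definition~\ref{def:PivotalModuleFromMarkovTower}); the inverse direction is Example~\ref{ex:MarkovTowerFromRightModule}. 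This is combined with Proposition~\ref{prop:ClassificationOfMarkovTowers}, which is your third step.

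The trade-off: your approach leverages Theorem~\ref{thm:ModuleEquivalence} as a black box and isolates the remaining work in a clean operadic statement (planar $\cT\cL\cJ(d)$-modules are generated by unit, multiplication, trace, and Jones projection tangles, with relations precisely \ref{eq:MarkovJonesProjections}--\ref{eq:MarkovPullDown}). The paper's approach avoids that generators-and-relations argument but pays for it with the explicit case-by-case verifications of bifunctoriality and associativity of $\vartriangleleft$ in Definition~\ref{def:ModuleFromMarkovTower}. Your third step is the same as the paper's (Proposition~\ref{prop:ClassificationOfMarkovTowers} and the surrounding discussion).
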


We now specialize to the hypotheses of the module embedding theorem, i.e., $\cQ_\bullet$ is a finite depth subfactor planar algebra, $(\cC, X)$ is its corresponding spherical unitary multifusion category of projections with generator $X= 1_0 \otimes X \otimes 1_1$ the unshaded-shaded strand, and $(\cM, \Tr^\cM, m)$ is a pivotal right $\cC$-module $\Cstar$ category with simple basepoint $m = m\vartriangleleft 1_0$.
In this case, the Markov tower $M_\bullet$ constructed above has finite depth, and its principal graph $\Gamma$ is the \emph{fusion graph} of $(\cM,m)$ with respect to $X\in \cC$.
This means there is an $r>0$ such that the inclusion $M_{2r} \subset (M_{2r+1}, \tr_{2r+1})$ is \emph{strongly Markov}, meaning that there is a finite \emph{Pimsner-Popa basis} $\{b\}$ for $M_{2r+1}$ over $M_{2r}$ satisfying $\sum_b be_{2r} b^* = 1_{M_{2r+2}}$, and the \emph{Watatani index} $\sum_b bb^*$ \cite{MR996807} is a scalar (see \cite[1.1.4(c)]{MR1278111}).

By \cite[\S2.3]{MR2812459}, the inclusion $A_0:=M_{2r} \subset (M_{2r+1}, \tr_{2r+1}) =: (A_1, \tr_1)$ has a canonical associated planar $\dag$-algebra $\cP_\bullet$, which is built from the tower of higher relative commutants.
Moreover, by \cite[Thm.~3.8]{MR2812459}, the planar algebra $\cP_\bullet$ is \emph{non-canonically} isomorphic to the bipartite graph planar algebra $\cG_\bullet$ of the Bratteli diagram of the inclusion $A_0 \subset A_1$, which is also the fusion graph $\Gamma$. 
(This isomorphism depends on the loop algebra representation for $A_0 \subset A_1$ from \cite[\S3.1]{MR2812459}, which amounts to choosing compatible bases for the algebras.)

\begin{thmalpha}[Module embedding]
\label{thm:ModulePAEmbedding}
The unital $\dag$-algebra maps $\Phi_{n,\pm}:= \id_m \vartriangleleft \id_{(X\otimes \overline{X})^{\otimes r}}\vartriangleleft - : \cQ_{n,\pm} \to \cP_{n,\pm}$
$$
\begin{tikzpicture}[baseline]
 \draw (0,-.7) -- (0,.7);
 \roundNbox{unshaded}{(0,0)}{.3}{0}{0}{$x$}
 \node at (0,-.9) {\tiny{$n$}};
\end{tikzpicture}
\quad
\begin{tikzpicture}[baseline]
 \clip (0.5,0.9) -- (-0.5,0.9) -- (-0.5,-0.9) -- (0.5,-0.9);
 \draw [|->,thick] (-0.3,0)--(0.3,0);
 \node at (0,0.25) {\scriptsize{$\Phi$}};
\end{tikzpicture}
\quad
\begin{tikzpicture}[baseline]
 \fill[ctwoshading] (-.4,-.7) -- (-0.4,.7) -- (-.7,0.7) -- (-.7,-0.7) -- (-.4,-0.7);
 \draw[thick,red] (-0.4,-.7) -- (-0.4,0.7);
 \draw (0.45,-.7) -- (0.45,.7);
 \draw (0,-.7) -- (0,.7);
 \roundNbox{unshaded}{(0.45,0)}{.3}{0}{0}{$x$}
 \node at (0.45,-.9) {\tiny{$n$}};
 \node at (0,-0.9){\tiny{$2r$}};
 \node at (-0.4,-.9) {\tiny{$m$}};
\end{tikzpicture} 
$$
give a planar $\dag$-algebra embedding $\cQ_\bullet \hookrightarrow \cP_\bullet$.
\end{thmalpha}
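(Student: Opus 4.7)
The plan is to reduce the theorem to Theorem~\ref{thm:ModuleEquivalence}, and then verify by hand that $\Phi$ intertwines a generating set of planar tangles.

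First I would identify $\cP_{n,\pm}$ concretely inside the Markov tower. By \cite{MR2812459}, $\cP_{n,\pm}$ is the higher relative commutant $M_{2r}' \cap M_{2r+n}$ with the appropriate shading, and via Theorem~\ref{thm:ModuleEquivalence} applied to the Markov tower $M_\bullet$, this is canonically identified with the subspace of $\End_\cM\bigl(m \vartriangleleft (X\otimes\overline{X})^{\otimes r}\vartriangleleft Y_n\bigr)$ commuting with $M_{2r}$, where $Y_n$ is the alternating tensor word of length $n$ with the correct starting factor. Since $M_{2r} = \End_\cM(m\vartriangleleft (X\otimes\overline{X})^{\otimes r})$ acts as the identity on the rightmost $n$ strands of $\Phi_{n,\pm}(x)$, that element tautologically commutes with $M_{2r}$, so $\Phi_{n,\pm}$ does land in $\cP_{n,\pm}$.

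Next I would check each $\Phi_{n,\pm}$ is a unital $\dag$-algebra map. Unitality is immediate; multiplicativity is functoriality of the $\cC$-action $m\vartriangleleft -$ (left whiskering preserves composition); and $\dag$-compatibility follows because both $\cQ_\bullet$ and $\cP_\bullet$ inherit their $*$-structure from vertical reflection in the pivotal $\Cstar$-module calculus, which commutes with left whiskering.

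The core step is to verify that $\Phi$ commutes with every planar tangle. It suffices to check this on a generating set of the shaded planar operad: multiplication (done), left and right inclusions, Jones projections, Temperley--Lieb cup/cap, and (partial) trace tangles. By Theorem~\ref{thm:ModuleEquivalence}, the action of any such tangle on $\cQ_{n,\pm}$ is a string-diagrammatic operation in $\cC$, while on $\cP_{n,\pm}$ it is the same string-diagrammatic operation performed in $\cM$ starting from the basepoint $m\vartriangleleft (X\otimes\overline{X})^{\otimes r}$. Since left whiskering by $\id_m\vartriangleleft \id_{(X\otimes \overline{X})^{\otimes r}}$ is a tensor functor compatible with the balanced duals $(\overline{X},\ev_X,\coev_X)$ and with the pivotal trace inherited on $\cM$ from $\cC$, applying any generating tangle to $\Phi(x)$ and simplifying in $\cM$ yields $\Phi$ of the tangle applied to $x$ in $\cC$. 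In particular, the Jones projections $e_n\in\cQ_\bullet$ land on the Jones projections of $\cP_\bullet$ defined by $M_\bullet$, and the Markov trace on $\cQ_\bullet$ pulls back from that of $\cP_\bullet$. Finally, injectivity follows from simplicity of the subfactor planar algebra $\cQ_\bullet$: it has no nonzero proper planar $\dag$-ideals, so $\ker\Phi\in\{(0),\cQ_\bullet\}$, and since $\Phi_{0,+}(1)=1\neq 0$ the kernel vanishes.

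The main obstacle I anticipate is reconciling the ``box calculus'' used in \cite{MR2812459} to define $\cP_\bullet$ from $M_{2r}\subset M_{2r+1}$ (via Pimsner--Popa bases and the loop-algebra representation) with the string-diagrammatic pivotal $\TLJ$-module structure on $\cM$ produced by Theorem~\ref{thm:ModuleEquivalence}. Checking equality tangle-by-tangle is mechanical once this dictionary is pinned down, but making sure shading and parity conventions align so that $\Phi$ commutes with every generator on the nose (not merely up to a rotation or shading flip) is the delicate point.
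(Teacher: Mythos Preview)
Your outline follows the same strategy as the paper: verify $\Phi$ is a planar $\dag$-algebra map by checking it on a generating set of tangles (the paper uses Lemma~\ref{lem:SufficientConditionsForPlanarMap}: Jones projections, right inclusion, left inclusion, right capping, left capping). Your injectivity argument via simplicity of $\cQ_\bullet$ is fine and essentially matches the paper's (which uses that $\cM_\bullet$ is nonzero and connected).

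However, there is a gap in your justification for why $\Phi$ commutes with the generating tangles. You write that ``on $\cP_{n,\pm}$ it is the same string-diagrammatic operation performed in $\cM$,'' invoking Theorem~\ref{thm:ModuleEquivalence}. But $\cP_\bullet$ is \emph{not} defined as a planar module over $\cQ_\bullet$: it is the canonical relative commutant planar $\dag$-algebra of the strongly Markov inclusion $M_{2r}\subset (M_{2r+1},\tr_{2r+1})$ as constructed in \cite{MR2812459}. In that construction, left capping is \emph{defined} by the Pimsner--Popa basis formula $x\mapsto d^{-1}\sum_b bxb^*$, not by any module-categorical string operation. Theorem~\ref{thm:ModuleEquivalence} says nothing about this algebra-theoretic structure agreeing with the pivotal module calculus, so your sentence ``left whiskering \dots\ is a tensor functor compatible with the balanced duals'' does not establish compatibility with left capping in $\cP_\bullet$.

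You correctly identify this as ``the main obstacle,'' but you do not resolve it. The paper's proof does: for $x\in\cQ_{n,+}$ it computes $d^{-2}\sum_b b\,\Phi(x)\,b^*$ directly, uses the Pimsner--Popa relation $\sum_b b e_{2r} b^* = 1$ (available because $r$ is chosen so that $M_{2r}\subset M_{2r+1}$ is strongly Markov), and shows this equals $\Phi$ applied to the $\cQ_\bullet$-left-capping of $x$. That computation is the content of the theorem; the remaining generators (Jones projections, inclusions, right capping) are genuinely mechanical because their $\cP_\bullet$-definitions already coincide with the Markov-tower operations on $M_\bullet$. So your proof is complete once you supply that left-capping calculation; without it, the argument is circular.
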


Choosing $\cM = \cC_{00}\oplus \cC_{10}$ and $m = 1_{0}$ corresponding to the unshaded empty diagram exactly recovers the embedding into the graph planar algebra of the principal graph of $\cQ_\bullet$ from \cite{MR2812459}.
Similarly, we get an embedding into the graph planar algebra of the dual principal graph by choosing $\cM= \cC_{10} \oplus \cC_{11}$ and an arbitrary simple object $m \in \cC_{10}$.

Notice we made three choices in our proof of the Module Embedding Theorem \ref{thm:ModulePAEmbedding}; we picked a simple object $m\in \cM$ with $m=m\vartriangleleft 1_0$, an $r\geq 0$ such that $M_{2r}\subset (M_{2r+1},\tr_{2r+1})$ is strongly Markov, and a planar $\dag$-algebra isomorphism $\cQ_\bullet \cong \cG\cP\cA(\Gamma)_\bullet$.
In our final Section \ref{sec:InvarianceOfEmbedding}, we explain that different choices still produce an equivalent planar $\dag$-algebra embedding $\cQ_\bullet \to \cG\cP\cA(\Gamma)_\bullet$.
Indeed, we show that the two corresponding strongly Markov inclusions are related by a shift and a compression by a projection with central support 1, and these processes yield planar $\dag$-isomorphisms on the associated canonical relative commutant planar algebras.

\paragraph{Acknowledgements.}

This article is the undergraduate research project of Desmond Coles and Srivatsa Srinivas, which was supervised by Peter Huston and David Penneys during Summer 2018, supported by Penneys' NSF DMS CAREER grant 1654159.
The authors would like to thank Corey Jones for many helpful conversations.
Additionally, David Penneys would like to thank Emily Peters and Noah Snyder for helpful conversations.

\section{Modules for subfactor planar algebras} 
\label{sec:Modules}

The standard invariant of a finite index ${\rm II}_1$ subfactor has many axiomatizations, including Popa's $\lambda$-lattices \cite{MR1334479} and Jones' subfactor planar algebras \cite{math.QA/9909027}.
Here, we use the language of subfactor planar algebras.
We discuss the well-known correspondence between subfactor planar algebras and their projection unitary multitensor categories.
We then introduce the notion of a planar module for a subfactor planar algebra, and we show it corresponds to a module category for the projection unitary multitensor category.

\subsection{Unitary multitensor categories and subfactor planar algebras}  
\label{sec:CategoriesPlanarAlgberasLattices}

In this section, we rapidly recall the definitions of a subfactor planar algebra \cite{math.QA/9909027} and its unitary $2\times 2$ multitensor category of projections \cite{MR2811311,1808.00323}.

\begin{defn}
The \emph{shaded planar operad} consists of shaded planar tangles with the operation of composition.
Shaded planar tangles have $r\geq 0$ input disks each with $2k_i$ boundary points, and an output disk with $2k_0$ boundary points.
Internal to the output disk are non-intersecting strings, which either attach 2 distinct boundary points, or are closed loops.
There is also a checkerboard shading, and a distinguished interval marked $\star$ for each input disk and the output disk.
If the $\star$ for the $i$-th disk is on an interval which meets an unshaded region, that disk has \emph{type} $(k_i, +)$, and if it meets a shaded region, the disk has type $(k_i,-)$.
A tangle with $r$ input disks has \emph{type}
 $((k_0,\pm_0);(k_1,\pm_1),\dots, (k_r, \pm_r))$ if the output disk has type $(k_0, \pm_0)$ and the $i$-th input disk has type $(k_i, \pm_i)$.
There is a natural definition of the composite tangle $T \circ_i S$ when the output disk of a tangle $T$ has the same type as the $i$-th input disk of a tangle $T$.
We give a representative example below, and we refer the reader to \cite{MR2679382,MR2972458} for a more precise definition.
$$
\begin{tikzpicture}[baseline = -.1cm]
 \filldraw[shaded]  
  (1.8,-2) .. controls ++(90:.3cm) and ++(-90:.3cm) .. (2.5,-1.4) -- (2.4,-1) -- 
  (1.5,0) .. controls ++(40:.5cm) and ++(270:.5cm) .. (2.5,.6) -- 
  (2.4,1.4) .. controls ++(90:.3cm) and ++(270:.3cm) .. (1.8,2) arc (96:264:2cm) ;
 \filldraw[unshaded] (2,1) circle (.3cm);
 \filldraw[shaded] (2.5,1) .. controls ++(30:.3cm) and ++(-135:.3cm) .. (3.3,1.5)
  arc (49:-49:2cm)  .. controls ++(135:.3cm) and ++(0:.8cm) ..  (2.5,-1)
  .. controls ++(30:1cm) and ++(-30:1cm) .. (2.5,1);
 \filldraw[unshaded] (3.85,.8) .. controls ++(-135:.8cm) and ++(135:.3cm) .. (3.85,-.8) arc (-23:23:2cm);
 \filldraw[unshaded] (.15,.8) .. controls ++(-45:.8cm) and ++(45:.3cm) .. (.15,-.8) arc (203:157:2cm);
 \ncircle{}{(2,0)}{2}{180}{}
 \ncircle{unshaded}{(2.5,1)}{.4}{180}{\scriptsize{$3$}}
 \ncircle{unshaded}{(1.5,0)}{.4}{180}{\scriptsize{$2$}}
 \ncircle{unshaded}{(2.5,-1)}{.4}{90}{\scriptsize{$1$}}
\end{tikzpicture}
\circ_{3}
\begin{tikzpicture}[baseline = -.1cm]
 \filldraw[shaded] (0,-1.2) -- (0,1.2) arc (90:270:1.2cm);
 \filldraw[unshaded] (145:1.2cm) arc (145:215:1.2cm) arc (-35:35:1.2cm);
 \filldraw[unshaded] (0,-1.2) -- (0,1.2) arc (90:-90:1.2cm);
 \filldraw[shaded] (35:1.2cm) arc (35:-35:1.2cm) arc (215:145:1.2cm);
 \ncircle{}{(0,0)}{1.2}{180}{}
 \ncircle{unshaded}{(0,0)}{.4}{180}{\scriptsize{$1$}}
\end{tikzpicture}
=
\begin{tikzpicture}[baseline = -.1cm]
 \filldraw[shaded]  
  (1.8,-2) .. controls ++(90:.3cm) and ++(-90:.3cm) .. (2.5,-1.4) -- (2.4,-1) -- 
  (1.5,0) .. controls ++(40:.5cm) and ++(270:.5cm) .. (2.5,.6) -- 
  (2.4,1.4) .. controls ++(90:.3cm) and ++(270:.3cm) .. (1.8,2) arc (96:264:2cm) ;
 \filldraw[unshaded] (1,1) circle (.3cm);
 \filldraw[shaded] (3.3,1.5) arc (49:-49:2cm)  .. controls ++(135:.3cm) and ++(0:.8cm) ..  (2.5,-1)
  .. controls ++(30:1cm) and ++(210:.3cm) .. (3.3,1.5);
 \filldraw[unshaded] (3.85,.8) .. controls ++(-135:.8cm) and ++(135:.3cm) .. (3.85,-.8) arc (-23:23:2cm);
 \filldraw[unshaded] (.15,.8) .. controls ++(-45:.8cm) and ++(45:.3cm) .. (.15,-.8) arc (203:157:2cm);
 \ncircle{}{(2,0)}{2}{180}{}
 \ncircle{unshaded}{(2.5,1)}{.4}{180}{\scriptsize{$3$}}
 \ncircle{unshaded}{(1.5,0)}{.4}{180}{\scriptsize{$2$}}
 \ncircle{unshaded}{(2.5,-1)}{.4}{90}{\scriptsize{$1$}}
\end{tikzpicture}
$$
The shaded planar operad also has a $\dag$-structure, with the tangle $T^\dag$ obtained by reflecting $T$ about a diameter. 

A \emph{shaded planar algebra} $\cP_\bullet$ consists of a family $\cP_{n,\pm}$ of $\bbC$-vector spaces together with an action of the shaded planar operad.
 That is, each shaded planar tangle $T$ with input disks of type $(k_i, \pm_i)$ for $1\leq i\leq r$ and output disk of type $(k_0,\pm_0)$ defines a multilinear map $Z(T) : \prod_{i=1}^r \cP_{k_i, \pm_i} \to \cP_{k_0, \pm_0}$, and tangle composition corresponds to composition of multilinear maps. Each $\cP_{n,\pm}$ should also have a dagger structure so that for every tangle $T$ and tuple $\eta_1\ldots r$ of inputs, $Z(T^\dag)(\eta_1^\dag,\eta_2^\dag\ldots\eta_r^\dag)=Z(T)(\eta_1,\eta_2,\ldots\eta_n)^\dag$. 

\end{defn}

\begin{nota}
We will try to shade our diagrams as much as possible for a shaded planar algebra.
However, sometimes shading our diagrams requires us to split into many cases.
In order to avoid this, we sometimes suppress the shading when it can be inferred from the indices.
We also tend to suppress the external boundary disk of a shaded planar tangle; when we do so, the $\star$ is always on the left.
For explicit examples, compare \ref{PA:Positive} and \ref{PA:Spherical} in the following definition.
\end{nota}

\begin{defn}
A shaded planar algebra is called a \emph{subfactor planar algebra} if moreover
\begin{enumerate}[label={\rm(PA\arabic*)}]
\item
(finite dimensional) $\dim(\cP_{n,\pm}) <\infty$ for all $n\geq 0$.
\item
(evaluable/connected) $\dim(\cP_{0,\pm}) = 1$.
\item
\label{PA:Positive}
(positive)
$\langle x, y\rangle_{n,\pm} := 
\begin{tikzpicture}[baseline=-.1cm]
 \draw (0,0) -- (.8,0);
 \roundNbox{unshaded}{(0,0)}{.3}{0}{0}{$x$}
 \roundNbox{unshaded}{(1,0)}{.3}{0}{0}{$y^\dag$}
 \node at (.5,.2){\tiny{$n$}};
 \node at (-.45,0){\scriptsize{$\star$}};
 \node at (1.45,0){\scriptsize{$\star$}};
\end{tikzpicture}
$
defines a positive-definite inner product on each $\cP_{n,\pm}$.
\item
\label{PA:Spherical}
(spherical)
for all $x\in \cP_{1,+}$, 
$
\begin{tikzpicture}[baseline=-.1cm]
 \filldraw[shaded] (0,.3) arc (180:0:.3cm) -- (.6,-.3) arc (0:-180:.3cm);
 \roundNbox{unshaded}{(0,0)}{.3}{0}{0}{$x$}
 \node at (-.45,0){\scriptsize{$\star$}};
\end{tikzpicture}
=
\begin{tikzpicture}[baseline=-.1cm]
 \fill[shaded, rounded corners = 5pt] (-.8,-.8) rectangle (.5,.8);
 \filldraw[unshaded] (0,.3) arc (0:180:.3cm) -- (-.6,-.3) arc (-180:0:.3cm);
 \roundNbox{unshaded}{(0,0)}{.3}{0}{0}{$x$}
 \node at (-.45,0){\scriptsize{$\star$}};
\end{tikzpicture}
$.
\end{enumerate}
In a subfactor planar algebra, closed contractible loops can be traded for a multiplicative scalar $d>0$; we call this the \textit{loop parameter}.
By Jones' index rigidity theorem \cite{MR0696688}, $d\in \set{2\cos(\pi/k)}{k\geq 3}\cup [2,\infty)$.

Given a subfactor planar algebra $\cP_\bullet$, we get two towers of finite dimensional von Neumann algebras $\cP_\pm= (\cP_{n,\pm})_{n\geq 0}$ 
with Jones projections for $k\geq 0$ given by
\begin{equation}
\label{eq:JonesProjectionsForPlanarAlgebra}
e_{2k+1,+}
:=
\begin{tikzpicture}[baseline]
 \draw (0,-.5) -- (0,.5);
 \filldraw[shaded] (.3,.5) arc (-180:0:.3cm);
 \filldraw[shaded] (.3,-.5) arc (180:0:.3cm);
 \node at (.2,0){\tiny{$2k$}};
\end{tikzpicture} 
\qquad
e_{2k+2,+}
:=
\begin{tikzpicture}[baseline]
 \fill[shaded] (0,-.5) rectangle (1.2,.5);
 \draw (0,-.5) -- (0,.5);
 \filldraw[unshaded] (.3,.5) arc (-180:0:.3cm);
 \filldraw[unshaded] (.3,-.5) arc (180:0:.3cm);
 \node at (.4,0){\tiny{$2k+1$}};
\end{tikzpicture} 
\qquad
e_{2k+1,-}:=
\begin{tikzpicture}[baseline]
 \fill[shaded] (-.2,-.5) rectangle (1.2,.5);
 \draw (0,-.5) -- (0,.5);
 \filldraw[unshaded] (.3,.5) arc (-180:0:.3cm);
 \filldraw[unshaded] (.3,-.5) arc (180:0:.3cm);
 \node at (.2,0){\tiny{$2k$}};
\end{tikzpicture} 
\qquad
e_{2k+2,-}:=
\begin{tikzpicture}[baseline]
 \fill[shaded] (-.2,-.5) rectangle (0,.5);
 \draw (0,-.5) -- (0,.5);
 \filldraw[shaded] (.3,.5) arc (-180:0:.3cm);
 \filldraw[shaded] (.3,-.5) arc (180:0:.3cm);
 \node at (.4,0){\tiny{$2k+1$}};
\end{tikzpicture}
\end{equation}
and traces for $n\geq 0$ given by
\begin{equation}
\label{eq:TracesForPlanarAlgebra}
\tr_{n,\pm}:= d^{-n}\,
\begin{tikzpicture}[baseline=-.1cm]
 \draw (0,.3) arc (180:0:.3cm) -- (.6,-.3) arc (0:-180:.3cm);
 \roundNbox{unshaded}{(0,0)}{.3}{0}{0}{}
 \node at (.8,0){\tiny{$n$}};
\end{tikzpicture} \,.
\end{equation}
We will see in \S\ref{sec:MarkovTowersAndElementaryProperties} below that $(\cP_{n,\pm}, \tr_{n,\pm}, e_{n+1,\pm})_{n\geq 0}$ has the structure of a \emph{Markov tower}, which comes with a \emph{principal graph}.
The principal graph of $\cP_+$ is finite if and only if the principal graph of $\cP_-$ is finite; in this case, $\cP_\bullet$ is said to have \emph{finite depth}.
\end{defn}

\begin{defn}\label{def:multitensor}
A \emph{unitary $2\times 2$ multitensor category} $\cC$ is an indecomposable rigid $\Cstar$ tensor category which is Karoubi complete such that $1_\cC$ has an orthogonal decomposition into simple objects as $1_\cC = 1_0 \oplus 1_1$.
We write $\cC_{ij} := 1_i \otimes \cC \otimes 1_j$ for $i,j \in \{0,1\}$.
By \cite{MR1444286}, such a $\cC$ is automatically semisimple.
When $\cC$ is finitely semisimple, it is called a unitary $2\times 2$ \emph{multifusion} category \cite{MR3242743}.

We say $X\in \cC_{01}$ \emph{generates} $\cC$ if every object of $\cC$ is isomorphic to a direct summand of an alternating tensor power of $X$ and $\overline{X}$
$$
X^{ \text{alt}\otimes n}
:=
\underbrace{X\otimes \overline{X} \otimes \cdots \otimes X^?}_{n\text{ tensorands}}
\qquad
\qquad
\overline{X}^{ \text{alt}\otimes n}
:=
\underbrace{\overline{X} \otimes X\otimes \cdots \otimes \overline{X}^?}_{n\text{ tensorands}}
$$
where $X^? = X$ if $n$ is odd and $\overline{X}$ when $n$ is even, and $\overline{X}^? = \overline{X}$ when $n$ is odd and $X$ when $n$ is even.
Here, $(\overline{X}, \ev_X, \coev_X)$ is the canonical \emph{balanced} dual of $X$ \cite{MR3342166,1805.09234,1808.00323} which satisfies the zig-zag axioms and the balancing equation
$$
\psi(\ev_X \circ (\id_{\overline{X}} \otimes f) \circ \ev_X^\dag)
=
\psi(\coev_X^\dag \circ (f\otimes \id_{\overline{X}}) \circ \coev_X)
\qquad
\forall f\in \cC(X\to X)
$$
where $\psi : \cC(1_\cC\to 1_\cC) \to \bbC$ is the linear functional such that $\psi(\id_{1_0}) = \psi(\id_{1_1}) = 1$.
\end{defn}

The following theorem is well-known to experts.

\begin{thm}
\label{thm:SubfactorTensorCategoryEquivalence}
There is an equivalence of categories\,\footnote{We suppress the subtlety about the right hand side of this equivalence being a contractible 2-category. 
We refer the reader to \cite{1607.06041,1808.00323} for more details.}
\[
\left\{\, 
\parbox{4.8cm}{\rm Subfactor planar algebras $\cP_\bullet$}\,\left\}
\,\,\,\,\cong\,\,
\left\{\,\parbox{7.5cm}{\rm Pairs $(\cC, X)$ with $\cC$ a unitary $2\times 2$ multitensor category together with a generator $X\in \cC_{01}$}\,\right\}.
\right.\right.
\]
\end{thm}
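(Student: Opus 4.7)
The plan is to construct explicit quasi-inverse functors in both directions and verify they respect the additional structure on each side.

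For the direction $(\cC, X) \rightsquigarrow \cP_\bullet$, I set $\cP_{n,+} := \End_\cC(X^{\text{alt}\otimes n})$ and $\cP_{n,-} := \End_\cC(\overline{X}^{\text{alt}\otimes n})$, and define the action of a shaded planar tangle $T$ using the standard graphical calculus of the pivotal unitary multitensor category: unshaded regions carry $X$-strands, shaded regions carry $\overline{X}$-strands, the shading at the distinguished interval $\star$ determines whether we read an input or output from $\cP_{n,+}$ or $\cP_{n,-}$, caps and cups become the unit/counit maps $\ev_X, \ev_X^\dag, \coev_X, \coev_X^\dag$ of the balanced dual, and internal input disks are slots to be filled with morphisms. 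Operadic composition of tangles then corresponds to composition and tensoring of morphisms. The axioms (PA1)--(PA4) are verified as follows: finite-dimensionality of hom spaces in a multifusion $\Cstar$ category gives (PA1); simplicity of $1_0$ and $1_1$ gives (PA2); the $\Cstar$ structure on $\cC$ together with the fact that the planar closure of $f$ against $g^\dag$ expresses $\Tr(g^\dag \circ f)$ yields positivity (PA3); and sphericality (PA4) is exactly the balancing equation for the balanced dual, cf.\ \cite{1808.00323}.

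For the reverse direction $\cP_\bullet \rightsquigarrow (\cC, X)$, I build $\cC$ as the idempotent completion of the $2$-category whose four hom categories $\cC_{ij}$ are indexed by parity and shading: objects of $\cC_{01}$ are projections in odd unshaded box spaces $\cP_{2k+1,+}$, objects of $\cC_{10}$ are projections in $\cP_{2k+1,-}$, objects of $\cC_{00}$ are projections in $\cP_{2k,+}$, and objects of $\cC_{11}$ are projections in $\cP_{2k,-}$. Morphisms and the $\Cstar$ structure are read off from the appropriate box space via the standard construction of sandwiching an element between two projections; the tensor product is horizontal juxtaposition of tangles with matching shading; duals come from reflecting tangles; and $1_\cC = 1_0 \oplus 1_1$ decomposes as the two empty diagrams. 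The generator $X \in \cC_{01}$ is the identity on $\cP_{1,+}$, the single unshaded-shaded strand. Positivity and sphericality of $\cP_\bullet$ equip $\cC$ with its $\Cstar$ and balanced-pivotal structure, and $X$ generates tautologically since every $n$-box projection lives inside $X^{\text{alt}\otimes n}$ or $\overline{X}^{\text{alt}\otimes n}$.

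The composition check is then nearly formal: starting from $\cP_\bullet$, applying both functors yields a planar algebra whose $n$-box space is $\End$ of $X^{\text{alt}\otimes n}$ in the projection category, and these hom spaces are identified with $\cP_{n,\pm}$ on the nose; starting from $(\cC, X)$, we recover the idempotent completion of the full sub-$2$-category tensor-generated by $X$ and $\overline{X}$, which is all of $\cC$ by the generator hypothesis. The main obstacle is the bookkeeping to enforce shading-parity coherence across the four categories $\cC_{ij}$ simultaneously, and the precise identification of the balanced unitary dual of \cite{MR3342166,1808.00323} with the spherical axiom (PA4); once this dictionary is in place the rest of the equivalence is a direct translation between the two diagrammatic formalisms. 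The footnote in the statement reminds us that the right-hand side is really a contractible $2$-category: different choices of generator or of balanced dual give equivalent but not equal data, so the equivalence is canonical only up to $2$-isomorphism.
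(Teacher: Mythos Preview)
Your proposal is correct and follows essentially the same approach as the paper. The paper treats this theorem as well-known to experts and only sketches the two directions---forming the category of projections with generator the unshaded-shaded strand in one direction, and defining $\cP_{n,\pm}$ as endomorphism algebras of alternating tensor powers with the action of tangles via the pivotal graphical calculus in the other---while deferring the details to \cite{MR2559686,MR3405915,MR2811311,1808.00323}; your write-up simply fleshes out this same outline.
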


Starting with a subfactor planar algebra $\cP_\bullet$, one may form its unitary $2\times 2$ multitensor category of projections $\cC$ \cite{MR2559686,MR3405915,1808.00323}, which comes with a canonical generator corresponding to the unshaded-shaded strand in $\cP_{1,+}$, and the canonical spherical unitary dual functor \cite{MR3342166,1805.09234,1808.00323}.
This unitary $2\times 2$ multitensor category can also be thought of as a unitary 2-category called the \emph{paragroup}; we refer the reader to \cite{MR3157990} for more details.

Starting with a pair $(\cC, X)$, we get a subfactor planar algebra by defining
$$
\cP_{n,+} := \End_\cC(X^{ \text{alt} \otimes n})
\qquad\qquad
\cP_{n,-} := \End_\cC(\overline{X}^{ \text{alt} \otimes n}),
$$
and we define the action of the shaded planar operad via the diagrammatic calculus for pivotal tensor categories.
We refer the reader to \cite{MR2811311,1808.00323} for more details.

\subsection{Modules for unitary multitensor categories and subfactor planar algebras}
\label{ssec:moduledefs}
\label{sec:3TypesOfModules}

We now define the various notions of module for 
\begin{itemize}
\item
a unitary $2\times 2$ multitensor category $\cC$ with its canonical unitary spherical structure and a generator $X\in \cC_{01}$,
and
\item
a subfactor planar algebra $\cQ_\bullet$.
\end{itemize}

\begin{defn}
 \label{def:PivotalModule}
Let $\cC$ be a unitary $2\times 2$ multitensor category.
A \emph{pivotal right $\cC$-module $\Cstar$ category} is a pair $(\cM,\Tr^\cM)$ where
$\cM$ is a semisimple right $\cC$-module $\Cstar$ category, and $\Tr^\cM$ is a family of positive traces $\Tr^\cM_n:\cM(n \to n)\to \bbC$ on each endomorphism space for $n\in \cM$ satisfying the following axioms:
\begin{enumerate}[label={\rm(Tr\arabic*)}]
\item
\label{PivotalModule:tracial}
$\Tr^\cM_m(g \circ f) = \Tr^\cM_n(f\circ g)$ for all $f\in \cM(m\to n)$ and $g\in \cM(n \to m)$.
\item
\label{PivotalModule:positive}
$\Tr^\cM_m(f^\dag \circ f) \geq 0$ for all $f\in \cM(m \to n)$, and $\Tr^\cM_m(f^\dag \circ f) =0$ if and only if $f = 0$.
\item
\label{PivotalModule:compatible}
For all $m\in \cM$ and $c\in \cC$,
$
\Tr^\cM_{m\vartriangleleft c}(f)
=
\Tr^\cM_{m}(
(\id_m \vartriangleleft \coev_c^\dag) \circ (f\vartriangleleft \id_{\overline{c}}) \circ (\id_m \vartriangleleft \coev_c)
)
$
\end{enumerate}
Notice that $\cM = \cM_0 \oplus \cM_1$ where $\cM_0 = \cM \vartriangleleft 1_0$ and $\cM_1 = \cM\vartriangleleft 1_1$.

A pivotal right module category is called \emph{pointed} if it is indecomposable, we have a chosen simple object $m\in \cM$, and $\Tr^\cM$ is normalized so that $\Tr^\cM_m(\id_m) = 1_\bbC$.
Generally, we choose $m\in \cM_0$, but this choice is not essential.

When $\cC$ is generated by a single $X\in \cC_{01}$ and $(\cM,\Tr^\cM,m)$ is a pointed pivotal right module category with $m\in \cM_0$, we define the \emph{cyclic pivotal right module category} $\cM_{m,X}$ to be the (non Karoubi complete!) full subcategory of $\cM$ whose objects are of the form 
$
m \vartriangleleft X^{ \text{alt}\otimes k}
$ 
for $k\geq 0$, which is a pointed pivotal right module category over $\cC_X$, the (non Karoubi complete!) full subcategory of $\cC$ whose objects are of the form $X^{ \text{alt}\otimes k}$ and $\overline{X}^{ \text{alt}\otimes k}$ for $k\geq 0$.
\end{defn}

We next give an appropriate definition of planar modules over a planar algebra as algebras over another operad.\footnote{
Our definition of a planar module over a planar algebra differs significantly from the annular planar modules introduced in \cite{MR1929335}. 
Our planar modules will correspond to the above notion of module over a multitensor category, 
whereas annular planar modules 
are more closely related with representations of the tube algebra/affine annular category, which correspond to objects in the Drinfeld center of the multitensor category \cite{MR3447719}.
}

\begin{defn}
\label{def:PlanarModule}
 The \textit{shaded planar module operad} is a variant of the shaded planar operad, akin to a shaded, stranded version of the Swiss-cheese operad introduced in \cite{MR1718089}. 
 In this operad, the starred region of the boundary of the output disk of a tangle is replaced by a vertical line on the left side of a tangle, and the adjacent region inside the tangle must be unshaded. 
 In addition to the usual input disks, tangles may also have input semidisks, whose boundaries intersect the left wall. 
Similar to the definition of type for an input disk, a semidisk (input or output) has \emph{type} $k_i$ if it has $2k_i$ boundary points which meet $2k_i$ strings.
A tangle with $r$ input semidisks and $s$ input disks has \emph{type}
 $(k_0;k_1,\dots, k_r; (\ell_1,\pm_1),\dots, (\ell_s, \pm_s))$ if the output disk has type $k_0$, the $i$-th input semidisk has type $k_i$, and the $j$-th input disk has type $(\ell_j,\pm_j)$. 
 The operadic composition comes from plugging tangles into semidisks when the types are compatible.
 A representative tangle appears below. 
\end{defn}
$$
\begin{tikzpicture}[baseline = -.1cm]
 \filldraw[shaded] (.3,1.3) .. controls ++(90:.3cm) and ++(270:.3cm) .. (.8,2) -- 
  (1.8,2) .. controls ++(270:.3cm) and ++(90:.3cm) .. (2.5,1.4) --
  (2.5,1) .. controls ++(270:1cm) and ++(-30:1cm) .. (.85,-1.25) --
  (.85,-.75) .. controls ++(30:.3cm) and ++(270:.3cm) .. (1.5,-.4) -- 
  (1.5,.4) .. controls ++(90:.8cm) and ++(270:.8cm) .. (.3,.7);
 \filldraw[unshaded] (2,1) circle (.3cm);
 \filldraw[shaded] (.2,-.7) -- (.2,-.4) arc (180:0:.15cm) -- (.5,-.7);
 \filldraw[shaded]  (.2,-1.3) .. controls ++(270:.3cm) and ++(90:.3cm) .. (.8,-2) -- (1.8,-2)
  (1.8,-2) .. controls ++(90:.3cm) and ++(-30:1.2cm) .. (2.5,-1)
  .. controls ++(-135:1cm) and ++(-45:1cm) .. (.5,-1.3) -- (.2,-1.3);
 \filldraw[shaded] (2.5,1) .. controls ++(30:.3cm) and ++(-135:.3cm) .. (3.3,1.5)
  arc (49:-49:2cm)  .. controls ++(135:.3cm) and ++(0:.8cm) ..  (2.5,-1)
  .. controls ++(30:1cm) and ++(-30:1cm) .. (2.5,1);
 \filldraw[unshaded] (3.85,.8) .. controls ++(-135:.8cm) and ++(135:.3cm) .. (3.85,-.8) arc (-23:23:2cm);
 \halfcircle{}{(0,0)}{2}{2}{}
 \openhalfcircle{}{(0,1)}{.3}{.4}{\scriptsize{$2$}}
 \openhalfcircle{}{(0,-1)}{.3}{.4}{\scriptsize{$1$}}
 \ncircle{unshaded}{(2.5,1)}{.4}{180}{\scriptsize{$5$}}
 \ncircle{unshaded}{(1.5,0)}{.4}{0}{\scriptsize{$4$}}
 \ncircle{unshaded}{(2.5,-1)}{.4}{90}{\scriptsize{$3$}}
\end{tikzpicture}
\,:
(\cM_3 \otimes \cM_1) \otimes (\cP_{2,+}\otimes \cP_{1,-} \otimes \cP_{3,+}) \to \cM_{4}
$$
Tangles of the shaded planar module operad can also be composed with shaded planar tangles, by plugging a shaded planar tangle into an input disk. 
One should think of the box spaces for semidisks as being endomorphisms of objects in a module category, while the involvement of full disks allows a planar algebra to act on the module. 
The input disks and semidisks are also numbered, with the numbering determining the order of the tensor factors in the domain of the action map as depicted above. Like the shaded planar operad, the shaded planar module operad is a symmetric operad, and vector spaces form a symmetric monoidal category, so we often suppress the numbering. 

\begin{defn}
\label{def:RightPlanarModule}
A \emph{right planar module} $\cM_\bullet$ for the subfactor planar algebra $\cP_\bullet$ consists of a sequence of finite dimensional $\bbC$-vector spaces $(\cM_k)_{k\geq 0}$
and a conjugate-linear map $\dag: \cM_k \to \cM_k$ for all $k\geq 0$,
together with an action of the shaded planar module operad on the box spaces $\cM_\bullet$ and $\cP_\bullet$ compatible with the composition of tangles and the shaded planar algebra structure on $\cP_\bullet$, and the $\dag$ operation. 
In other words, the box spaces $\cM_\bullet$ and $\cP_\bullet$ together must have the structure of an algebra over the shaded planar module operad, which must extend the original shaded planar operad algebra structure on $\cP_\bullet$. 

Notice that each of the $\cM_k$ has a $\dag$-algebra structure with multiplication given by the tangle
$$
\begin{tikzpicture}[baseline = -.1cm]
 \draw (.25,-1) -- (.25,1);
 \halfcircle{}{(0,0)}{1}{.3}{}
 \openhalfcircle{}{(0,.4)}{.25}{.25}{\scriptsize{$2$}}
 \openhalfcircle{}{(0,-.4)}{.25}{.25}{\scriptsize{$1$}}
 \node at (.4,0) {\scriptsize{$k$}};
 \node at (.4,.8) {\scriptsize{$k$}};
 \node at (.4,-.8) {\scriptsize{$k$}};
\end{tikzpicture}
:
\cM_k \otimes \cM_k \to \cM_k.
$$
We require that each $\dag$-algebra $\cM_k$ is a finite dimensional $\Cstar/\Wstar$ algebra.
Moreover, we require that for each $k$, the following map $\cM_k \to \cM_0$ is positive, faithful, and tracial:
\begin{equation}
\label{eq:NonnormalizedPlanarModuleTrace}
\Tr_k:=
\begin{tikzpicture}[baseline = -.1cm]
 \draw (.25,.25) arc (180:0:.2cm) -- (.65,-.25) arc (0:-180:.2cm);
 \halfcircle{}{(0,0)}{.8}{.3}{}
 \openhalfcircle{}{(0,0)}{.25}{0}{}
 \node at (.8,0) {\scriptsize{$k$}};
\end{tikzpicture}
:
\cM_k \to \cM_0
\end{equation}

We call $\cM_\bullet$ \emph{connected} if $\dim(\cM_0) = 1$.
In this case, we can canonically identify $\cM_0 = \bbC$ as a $\Cstar$-algebra, and each $\Tr_k$ is a scalar-valued.
We define $\tr_k := d^{-k} \Tr_k$, where $d$ is the loop parameter of $\cP_\bullet$.
Notice that the $\tr_k$ are faithful tracial states.
We will see that, under the correspondence of Theorem \ref{thm:ModuleEquivalence}, connected right planar modules correspond to cyclic pivotal right module $\Cstar$-categories.
\end{defn}

\begin{ex}
 Given a subfactor planar algebra $\cP_\bullet$, $\cP_+ :=( \cP_{k,+})_{k\geq 0}$ is a cyclic right planar module for $\cP_\bullet$, while $\cP_- :=( \cP_{k,-})_{k\geq 0}$ is a right planar module for the dual planar algebra of $\cP_{\bullet}$ obtained by reversing the shading. 
\end{ex}

\begin{ex}
Suppose $\cG_\bullet =\cG\cP\cA(\Gamma)_\bullet$ is the graph planar algebra of the bipartite graph $\Gamma$.
Then for any $+$/even vertex $v$ of $\Gamma$, we get a cyclic right planar module $\cM_\bullet = \cM(\Gamma,v)_\bullet$ by defining $\cM(\Gamma, v)_{k}:= p_v \cG_{k,\pm}$ and action of the planar module operad by
$$
Z_{\cM_\bullet}(\cT)
\left(
\underbrace{m_1\otimes \cdots \otimes m_r}_{\in \cM_\bullet} 
\otimes 
\underbrace{x_1\otimes \cdots\otimes x_s}_{\in \cP_\bullet}
\right)
:=
Z_{\cG_\bullet}(\widetilde{\cT})(p_v\otimes m_1\otimes \cdots \otimes m_r \otimes \Phi(x_1)\otimes \cdots\otimes \Phi(x_s))
$$
where $\widetilde{\cT}$ is obtained from $\cT$ by 
first turning each half-open input semidisk into a closed input disk in the interior of the output disk with its $\star$ on the left, 
rounding out the $90^\circ$ angles on the left boundary into a smooth curve,
putting the external $\star$ on the left hand side,
and inserting one $(0,+)$-type input disk in the left-most region of the new tangle which is numbered first.
We illustrate this procedure on the tangle above:
$$
\cT=
\begin{tikzpicture}[baseline = -.1cm]
 \filldraw[shaded] (.3,1.3) .. controls ++(90:.3cm) and ++(270:.3cm) .. (.8,2) -- 
  (1.8,2) .. controls ++(270:.3cm) and ++(90:.3cm) .. (2.5,1.4) --
  (2.5,1) .. controls ++(270:1cm) and ++(-30:1cm) .. (.85,-1.25) --
  (.85,-.75) .. controls ++(30:.3cm) and ++(270:.3cm) .. (1.5,-.4) -- 
  (1.5,.4) .. controls ++(90:.8cm) and ++(270:.8cm) .. (.3,.7);
 \filldraw[unshaded] (2,1) circle (.3cm);
 \filldraw[shaded] (.2,-.7) -- (.2,-.4) arc (180:0:.15cm) -- (.5,-.7);
 \filldraw[shaded]  (.2,-1.3) .. controls ++(270:.3cm) and ++(90:.3cm) .. (.8,-2) -- (1.8,-2)
  (1.8,-2) .. controls ++(90:.3cm) and ++(-30:1.2cm) .. (2.5,-1)
  .. controls ++(-135:1cm) and ++(-45:1cm) .. (.5,-1.3) -- (.2,-1.3);
 \filldraw[shaded] (2.5,1) .. controls ++(30:.3cm) and ++(-135:.3cm) .. (3.3,1.5)
  arc (49:-49:2cm)  .. controls ++(135:.3cm) and ++(0:.8cm) ..  (2.5,-1)
  .. controls ++(30:1cm) and ++(-30:1cm) .. (2.5,1);
 \filldraw[unshaded] (3.85,.8) .. controls ++(-135:.8cm) and ++(135:.3cm) .. (3.85,-.8) arc (-23:23:2cm);
 \halfcircle{}{(0,0)}{2}{2}{}
 \openhalfcircle{}{(0,1)}{.3}{.4}{\scriptsize{$2$}}
 \openhalfcircle{}{(0,-1)}{.3}{.4}{\scriptsize{$1$}}
 \ncircle{unshaded}{(2.5,1)}{.4}{180}{\scriptsize{$5$}}
 \ncircle{unshaded}{(1.5,0)}{.4}{0}{\scriptsize{$4$}}
 \ncircle{unshaded}{(2.5,-1)}{.4}{90}{\scriptsize{$3$}}
\end{tikzpicture}
\longmapsto
\widetilde{\cT}:=
\begin{tikzpicture}[baseline = -.1cm]
 \filldraw[shaded] (.3,1.3) .. controls ++(90:.3cm) and ++(270:.3cm) .. (.8,2) -- 
  (1.8,2) .. controls ++(270:.3cm) and ++(90:.3cm) .. (2.5,1.4) --
  (2.5,1) .. controls ++(270:1cm) and ++(-30:1cm) .. (.7,-1.25) --
  (.7,-.75) .. controls ++(30:.3cm) and ++(270:.3cm) .. (1.5,-.4) -- 
  (1.5,.4) .. controls ++(90:.8cm) and ++(270:.8cm) .. (.3,.7);
 \filldraw[unshaded] (2,1) circle (.3cm);
 \filldraw[shaded] (.2,-.7) -- (.2,-.4) arc (180:0:.15cm) -- (.5,-.7);
 \filldraw[shaded]  (.2,-1.3) .. controls ++(270:.3cm) and ++(90:.3cm) .. (.8,-2) -- (1.8,-2)
  (1.8,-2) .. controls ++(90:.3cm) and ++(-30:1.2cm) .. (2.5,-1)
  .. controls ++(-135:1cm) and ++(-45:1cm) .. (.5,-1.3) -- (.2,-1.3);
 \filldraw[shaded] (2.5,1) .. controls ++(30:.3cm) and ++(-135:.3cm) .. (3.3,1.5)
  arc (49:-49:2cm)  .. controls ++(135:.3cm) and ++(0:.8cm) ..  (2.5,-1)
  .. controls ++(30:1cm) and ++(-30:1cm) .. (2.5,1);
 \filldraw[unshaded] (3.85,.8) .. controls ++(-135:.8cm) and ++(135:.3cm) .. (3.85,-.8) arc (-23:23:2cm);
 \draw[very thick] (-1,0) arc (180:90:2cm) -- (2,2) arc (90:-90:2cm) -- (1,-2) arc (-90:-180:2cm);
 \ncircle{unshaded}{(-.2,0)}{.25}{180}{\scriptsize{$1$}}
 \ncircle{unshaded}{(.3,1)}{.4}{180}{\scriptsize{$3$}}
 \ncircle{unshaded}{(.4,-1)}{.4}{180}{\scriptsize{$2$}}
 \ncircle{unshaded}{(2.5,1)}{.4}{180}{\scriptsize{$6$}}
 \ncircle{unshaded}{(1.5,0)}{.4}{0}{\scriptsize{$5$}}
 \ncircle{unshaded}{(2.5,-1)}{.4}{90}{\scriptsize{$4$}}
 \node at (-.85,0) {\scriptsize{$\star$}};
\end{tikzpicture}
$$
\end{ex}

\subsection{Equivalence of modules}

In this section, $\cP_\bullet$ will denote a subfactor planar algebra, and $(\cC, X)$ will denote its unitary $2\times 2$ multitensor category of projections, where $X = 1_0 \otimes X \otimes 1_1$ is the generating unshaded-shaded strand.
We now sketch the proof of the following theorem.

\begin{thm*}[Theorem \ref{thm:ModuleEquivalence}]
There is a canonical bijection between equivalence classes of
\begin{enumerate}[label={\rm(\arabic*)}]
\item
indecomposable pivotal right $\cC$-module $\Cstar$ categories $(\cM,\Tr^\cM)$ with simple basepoint $m = m\vartriangleleft 1_0$, and
\item
connected right planar modules $\cM_\bullet$ for $\cP_\bullet$.
\end{enumerate}
\end{thm*}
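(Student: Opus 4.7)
The plan is to follow the same blueprint used for the classical equivalence between $(\cC,X)$ and $\cP_\bullet$ (Theorem \ref{thm:SubfactorTensorCategoryEquivalence}), but adapted to the relative setting. From a pointed pivotal $\cC$-module $\Cstar$ category $(\cM, \Tr^\cM, m)$ one builds $\cM_\bullet$ via diagrammatic calculus for module categories, and from a connected planar module $\cM_\bullet$ one extracts the cyclic module $\cM_{m,X}$ and then Karoubi-completes to obtain $(\cM, \Tr^\cM, m)$. The content of the theorem is that these two constructions are mutually inverse up to the evident notion of equivalence.

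Going from (1) to (2), I would set
$$
\cM_k := \End_\cM\big(m \vartriangleleft X^{\text{alt}\otimes k}\big),
$$
with $\dag$ inherited from the $\Cstar$ structure on $\cM$. The action of the shaded planar module operad is defined by reading tangles in the graphical calculus for pivotal module $\Cstar$ categories: input semidisks are labeled by endomorphisms in $\cM$, input disks by morphisms in $\cC$ (identified with elements of $\cP_{\bullet,\pm}$), strings of $\cC$-type are horizontal composition with $X$ or $\overline X$, and the leftmost ``wall'' represents the basepoint $m$ with its absent left $\cC$-action. The fact that this yields a well-defined operadic action reduces to coherence of the pivotal module structure (zig-zag axioms for $(\overline X, \ev_X, \coev_X)$ plus associativity and unitality of $\vartriangleleft$), exactly as in the non-module case. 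The positivity of $\Tr_k$ follows from \ref{PivotalModule:positive}, and the tracial property together with the compatibility with the partial-trace closure in \eqref{eq:NonnormalizedPlanarModuleTrace} follow from \ref{PivotalModule:tracial} and \ref{PivotalModule:compatible}; connectedness $\dim(\cM_0)=1$ is the assumption that $m$ is simple and lies in $\cM_0$.

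Going from (2) to (1), I would first build a $\dag$-category $\cM_{m,X}^\circ$ whose objects are formal symbols $m_k$ for $k\geq 0$ (representing $m \vartriangleleft X^{\text{alt}\otimes k}$) with
$$
\cM_{m,X}^\circ(m_j \to m_k) :=
\begin{tikzpicture}[baseline=-.1cm]
 \halfcircle{}{(0,0)}{.9}{.3}{}
 \openhalfcircle{}{(0,0)}{.25}{0}{}
 \node at (.55,.55){\tiny{$k$}};
 \node at (.55,-.55){\tiny{$j$}};
\end{tikzpicture}
\text{-shaped box space inside }\cM_\bullet,
$$
i.e.\ the subspace of $\cM_{(j+k)/2}$ (when $j+k$ is even, else zero) cut out by the planar module cap/cup of $(j-k)/2$ pairs; composition is the obvious vertical stacking tangle. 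A right $\cC_X$-action is defined by horizontal juxtaposition of strings, using the $\cP_\bullet$ action for $\cC_X$-morphisms, and the trace $\Tr^\cM_{m_k}$ is defined by the closure tangle of \eqref{eq:NonnormalizedPlanarModuleTrace}. Positivity, faithfulness and the trace-compatibility axioms follow immediately from the corresponding axioms on $\cM_\bullet$. Finally, take the $\dag$-Karoubi completion to get a semisimple $\Cstar$ category $\cM$, extend the action and trace in the obvious way, and retain $m := m_0$ as basepoint. The indecomposability of $\cM$ follows from connectedness: a central idempotent decomposition of $\cM$ would decompose $\End_\cM(m_0) = \cM_0 = \bbC$.

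For mutual inverseness, in direction $(1)\!\to\!(2)\!\to\!(1)$ the point is that the cyclic subcategory $\cM_{m,X}\subset \cM$ Karoubi-completes to all of $\cM$, which holds because $X$ generates $\cC$, $\cM$ is indecomposable, and $m$ is a simple object of $\cM_0$ (so every simple of $\cM$ appears as a summand of some $m \vartriangleleft X^{\text{alt}\otimes k}$). In direction $(2)\!\to\!(1)\!\to\!(2)$ one checks that the box space $\End_\cM(m\vartriangleleft X^{\text{alt}\otimes k})$ computed in the Karoubi completion of $\cM_{m,X}$ is tautologically $\cM_k$, and the two planar-module structures agree on a generating set of tangles, hence everywhere by operadic generation. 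The main obstacle I anticipate is the bookkeeping for the diagrammatic calculus on $\cM$: one must rigorously justify that a ``shaded, half-plane'' diagram whose leftmost region represents the module can be evaluated unambiguously as a morphism in $\cM$, including treating the interaction of $\ev_X,\coev_X$ with the module structure consistently with \ref{PivotalModule:compatible}. Once this diagrammatic calculus is set up as in \cite{MR2811311,1808.00323} but with a left-wall representing the basepoint, every remaining verification is the usual operadic bookkeeping.
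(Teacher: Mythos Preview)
Your proposal is correct and follows essentially the same blueprint as the paper's sketch: from $(\cM,\Tr^\cM,m)$ one sets $\cM_k=\End_\cM(m\vartriangleleft X^{\text{alt}\otimes k})$ and defines the operad action via a standard-form diagrammatic calculus; from $\cM_\bullet$ one builds a $\Cstar$ category and Karoubi-completes, equipping it with the $\cC$-action by horizontal juxtaposition and $\Tr^\cM$ by the closure tangle \eqref{eq:NonnormalizedPlanarModuleTrace}. The only cosmetic difference is that the paper presents the $(2)\!\to\!(1)$ direction as a \emph{category of projections} (objects are all projections in the $\cM_k$), whereas you take the cyclic category on the symbols $m_k$ and then Karoubi-complete; these yield equivalent categories since the projection category is precisely the idempotent completion of your $\cM_{m,X}^\circ$. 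One small point of phrasing: your hom space $\cM_{m,X}^\circ(m_j\to m_k)$ should simply \emph{be} $\cM_{(j+k)/2}$ (with composition defined via the appropriate stacking/capping tangles), not a subspace ``cut out'' by caps and cups---the caps and cups enter in the \emph{composition law}, not as constraints on the hom space itself.
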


As an application, we get a classification of pivotal module $\Cstar$ categories for the 2-shaded Temperley-Lieb-Jones category with parameter $d$ in Corollary \ref{cor:TLJPivotalModuleClassification}, whose proof appears in \S\ref{sec:TLJmodules}.

\begin{defn}
Suppose $(\cM, \Tr^\cM, m)$ is an indecomposable pivotal right $\cC$-module $\Cstar$ category, and $m\in \cM$ is a distinguished simple object with $m= m \vartriangleleft 1_0$.
 We build a connected right planar $\cP_\bullet$-module $\cM_\bullet$ by defining $\cM_k := \End_\cC( m \vartriangleleft X^{\text{alt}\otimes k})$ for $k\geq 0$, and we define the action of the shaded planar module operad via the diagrammatic calculus for $\cM$. The process is similar to that in \cite[Def.~3.2]{MR2812459}.
 
We first define a standard form for tangles of the shaded planar module operad, such that every tangle is isotopic to one in standard form. We say a tangle is in \textit{standard form} if 
\begin{enumerate}[label={\rm(SF\arabic*)}]
 \item Each disk and semidisk, including the output semidisk, is rectangular in shape, with an equal number of strings emerging from the top and bottom,
 \item in the case of a disk, the starred boundary interval includes the left side, and 
 \item a horizontal line through the tangle passes through a disk, semidisk, or extremum of a strand at most once. 
\end{enumerate}

Given a shaded planar module tangle $T$ with type 
$(k_0;k_1,\dots, k_r; (\ell_1,\pm_1),\dots, (\ell_s, \pm_s))$ 
together with appropriate inputs $(f_1,\dots, f_r, x_1,\dots, x_s)$ with $m_i \in \cM_{k_i}$ and $x_j \in \cP_{\ell_j, \pm_j}$,
we begin with the identity morphism of $m \vartriangleleft X^{\text{alt}k_0}$ in $\cM_{k_0}$,
and we move an imaginary horizontal line upwards along the tangle.
Each time the horizontal line passes a disk, semidisk, or local extrema (which can happen only one at a time!), we compose with a morphism from $\cM$.
In more detail, when the horizontal line passes:
\begin{itemize}
\item
the $i$-th input semidisk with vertical stands to the right, we compose with $f_j \vartriangleleft \id$, where $\id$ is the appropriate identity morphism corresponding to the strands to the right of the input semidisk
\item
the $j$-th input disk with vertical strands to the left and right, we compose with $\id_{m}\vartriangleleft \id_l \otimes x \otimes \id_r$ where $\id_l, \id_r$ correspond to the  appropriate identity morphisms corresponding to the strands to the left and right of the input disk
\item
a local extrema with vertical strands to the left and right, we compose with $\id_{m}\vartriangleleft \id_l \otimes v \otimes \id_r$ where $v$ stands for the following (co)evaluation or its dagger depending on the shading:
$$
\begin{tikzpicture}[baseline=-.1cm]
 \fill[shaded] (-.2,-.2) rectangle (.8,.3);
 \filldraw[unshaded] (0,-.2) arc (180:0:.3cm);
\end{tikzpicture}
\leadsto
\ev_X
\qquad\qquad
\begin{tikzpicture}[baseline=-.1cm, yscale=-1]
 \filldraw[shaded] (0,-.2) arc (180:0:.3cm);
\end{tikzpicture}
\leadsto
\coev_X
\qquad\qquad
\begin{tikzpicture}[baseline=-.1cm, yscale=-1]
 \fill[shaded] (-.2,-.2) rectangle (.8,.3);
 \filldraw[unshaded] (0,-.2) arc (180:0:.3cm);
\end{tikzpicture}
\leadsto
\ev_X^\dag
\qquad\qquad
\begin{tikzpicture}[baseline=-.1cm]
 \filldraw[shaded] (0,-.2) arc (180:0:.3cm);
\end{tikzpicture}
\leadsto
\coev_X^\dag,
$$
and $\id_l, \id_r$ are the appropriate identity morphisms as above.
\end{itemize}
The output is the composite morphism in $\cM_{k_0} = \End_\cM(m \vartriangleleft X^{\text{alt}k_0})$.
One then checks that the resulting composite morphism is independent of the choice of standard form for the shaded planar module tangle $T$.

\begin{ex}
Here is an explicit example of a tangle in standard form, together with the corresponding multi-linear map obtained by composing the associated morphisms in $\cM$ from bottom to top:
\begin{align*}
\begin{tikzpicture}[baseline = -.1cm]
 \filldraw[shaded] (.3,-1.5) arc (-180:0:.25cm) -- (.8,0) arc (180:0:.25cm) -- (1.5,0) .. controls ++(90:.4cm) and ++(270:.4cm) .. (.7,.9) -- (.3,.9) -- (.3,-.9);
 \filldraw[shaded] (.3,1.5) -- (.3,2.2) -- (1.5,2.2) arc (90:-90:2.2cm) -- (.3,-2.2) .. controls ++(90:.3cm) and ++(270:1.3cm) .. (1.3,-.6) -- (1.5,-.6) arc (-180:0:.25cm) -- (2,0) .. controls ++(90:1.5cm) and ++(90:1cm) .. (.7,1.5);
 \filldraw[unshaded] (1.5,-2.2) .. controls ++(90:.5cm) and ++(270:1cm) .. (3,0) .. controls ++(90:1cm) and ++(270:.5cm) .. (1.5,2.2) arc (90:-90:2.2cm);
 \halfcircle{}{(0,0)}{2.2}{1.5}{}
 \openhalfcircle{}{(0,1.2)}{.3}{.4}{\scriptsize{$2$}}
 \openhalfcircle{}{(0,-1.2)}{.3}{0}{\scriptsize{$1$}}
 \ncircle{unshaded}{(1.4,-.3)}{.3}{180}{\scriptsize{$3$}}
 \ncircle{unshaded}{(2.8,.575)}{.3}{180}{\scriptsize{$4$}}
  \node at (-2.5,-2.1) {\scriptsize{$\id_m\vartriangleleft \coev_X \otimes \id_{X\otimes \overline{X}}$}};
  \node at (-2.5,-1.5) {\scriptsize{$- \vartriangleleft \id_{\overline{X}\otimes X\otimes \overline{X}}$}};
  \node at (-2.5,-.9) {\scriptsize{$\id_m\vartriangleleft \id_{X\otimes \overline{X}\otimes X} \otimes \ev_X^\dag \otimes \id_{\overline{X}}$}};
  \node at (-2.5,-.3) {\scriptsize{$\id_m\vartriangleleft \id_{X\otimes \overline{X}}\otimes - \otimes \id_{X\otimes \overline{X}}$}};
  \node at (-2.5,.3) {\scriptsize{$\id_m\vartriangleleft \id_{X}\otimes \coev_X^\dag \otimes \id_{\overline{X}\otimes X\otimes \overline{X}}$}};
  \node at (-2.5,.9) {\scriptsize{$\id_m\vartriangleleft \id_{X\otimes \overline{X}\otimes X}\otimes - $}};
  \node at (-2.5,1.5) {\scriptsize{$-\vartriangleleft \id_{X\otimes \overline{X}}$}};
  \node at (-2.5,2.1) {\scriptsize{$\id_m\vartriangleleft \id_{X}\otimes \coev_X^\dag \otimes \id_{\overline{X}}$}};
\end{tikzpicture}
:
\cM_1\otimes \cM_2 \otimes \cP_{2,+} \otimes \cP_{1,-}
&\longrightarrow 
\cM_{2}
\end{align*}
\end{ex}
\end{defn}

\begin{defn}
Given a connected right planar $\cP_\bullet$-module $\cM_\bullet$, we let $\cM$ be its \emph{category of projections}.
The objects of $\cM$ are the orthogonal projections in $\cM_k$ for $k\geq 0$.
The $\Hom$-space $\cM(p \to q)$ for $p\in \cM_j$ and $q\in \cM_k$ is only nonzero if $j \equiv k \mod 2$; in this case, we define
$$
\cM(p \to q)
:=
\set{x\in \cM_{(j+k)/2}}{
\,\,
\begin{tikzpicture}[baseline = -.1cm]
 \draw (.25,-.6) -- (.25,.6);
 \halfcircle{}{(0,0)}{.6}{.4}{}
 \halfcircle{unshaded}{(0,0)}{.25}{.25}{$x$}
 \node at (.4,.4) {\scriptsize{$k$}};
 \node at (.4,-.4) {\scriptsize{$j$}};
\end{tikzpicture}
=
\begin{tikzpicture}[baseline = -.1cm]
 \draw (.4,-1.5) -- (.4,1.5);
 \halfcircle{}{(0,0)}{1.5}{.4}{}
 \halfcircle{unshaded}{(0,.85)}{.25}{.5}{$q$}
 \halfcircle{unshaded}{(0,0)}{.25}{.5}{$x$}
 \halfcircle{unshaded}{(0,-.85)}{.25}{.5}{$p$}
 \node at (.6,1.3) {\scriptsize{$k$}};
 \node at (.6,.4) {\scriptsize{$k$}};
 \node at (.6,-.4) {\scriptsize{$j$}};
 \node at (.6,-1.3) {\scriptsize{$j$}};
\end{tikzpicture}
}.
$$
Composition is given by a suitable version of the usual multiplication tangle, and the $\dag$-structure is given by $\dag$ in $\cM_\bullet$.
Given a projection $p \in \cM_{k}$ and projections $q\in \cP_{n,+}$ and $r\in \cP_{n,-}$, we define depending on parity
$$
p \vartriangleleft q
\underset{\,\,k=2j}{:=}\,\,
\begin{tikzpicture}[baseline = -.1cm]
 \draw (.25,-.6) -- (.25,.6);
 \draw (1.2,-.6) -- (1.2,.6);
 \halfcircle{}{(0,0)}{.6}{1.3}{}
 \halfcircle{unshaded}{(0,0)}{.25}{.25}{$p$}
 \ncircle{unshaded}{(1.2,0)}{.25}{180}{$q$}
 \node at (.5,.4) {\scriptsize{$2j$}};
 \node at (.5,-.4) {\scriptsize{$2j$}};
 \node at (1.4,.4) {\scriptsize{$n$}};
 \node at (1.4,-.4) {\scriptsize{$n$}};
\end{tikzpicture}
\qquad
\qquad
p \vartriangleleft r
\underset{\,\,k=2j+1}{:=}\,\,
\begin{tikzpicture}[baseline = -.1cm]
 \fill[shaded] (.25,-.6) rectangle (1.2,.6);
 \draw (.25,-.6) -- (.25,.6);
 \draw (1.2,-.6) -- (1.2,.6);
 \halfcircle{}{(0,0)}{.6}{1.3}{}
 \halfcircle{unshaded}{(0,0)}{.25}{.25}{$p$}
 \ncircle{unshaded}{(1.2,0)}{.25}{180}{$q$}
 \node at (.7,.4) {\scriptsize{$2j+1$}};
 \node at (.7,-.4) {\scriptsize{$2j+1$}};
 \node at (1.4,.4) {\scriptsize{$n$}};
 \node at (1.4,-.4) {\scriptsize{$n$}};
\end{tikzpicture}\,.
$$
For morphisms $f\in \cM(p \to q)$ 
with $p\in \cM_j$ and $q\in \cM_k$
and $g\in \cC(r \to s)$ 
with $r \in \cP_{m,\pm}$ and $s\in \cP_{n,\pm}$
such that $p\vartriangleleft r$ and $q\vartriangleleft s$ are well-defined,
we define
$$
f\vartriangleleft g
:=
\begin{tikzpicture}[baseline = -.1cm]
 \draw (.25,-.6) -- (.25,.6);
 \draw (1.2,-.6) -- (1.2,.6);
 \halfcircle{}{(0,0)}{.6}{1.3}{}
 \halfcircle{unshaded}{(0,0)}{.25}{.25}{$f$}
 \ncircle{unshaded}{(1.2,0)}{.25}{180}{$g$}
 \node at (.4,.4) {\scriptsize{$k$}};
 \node at (.4,-.4) {\scriptsize{$j$}};
 \node at (1.4,.4) {\scriptsize{$n$}};
 \node at (1.4,-.4) {\scriptsize{$m$}};
\end{tikzpicture}\,.
$$
where shading depends on the parity of $j$ and $k$.
We leave the straightforward verification that $\cM$ is a right $\cC$-module $\Cstar$ category to the reader.
Finally, we replace $\cM$ with its unitary Karoubi completion, which formally adds orthogonal direct sums and then takes the orthogonal projection completion.

The distinguished simple basepoint of $\cM$ is given by the identity projection $1_{\cM_0} \in \cM_0$.
The trace $\Tr^\cM_p : \cM(p \to p) \to \bbC$ for $p\in \cM_k$ is given by the \emph{non-normalized} trace $\Tr_k$ from \eqref{eq:NonnormalizedPlanarModuleTrace} restricted to $p\cM_k p = \cM(p \to p)$. 
By definition, we have assumed $\Tr_k$ to be tracial and positive on endomorphisms. That the trace is also compatible with the action of $\cC$ can be seen by composing the trace tangle with the action tangles. 
\end{defn}

\section{Markov towers and their projection categories} 
\label{sec:MarkovTowers}

So far, we have presented two versions of the concept of a module over a subfactor planar algebra. The algebraic data of each shares a common structure: that of a \emph{Markov tower} of finite dimensional tracial von Neumann algebras. Studying elementary properties of Markov towers will therefore allow us to state many important results about planar modules in single, common language.  
The definition of a Markov tower can obtained from the definition of Popa's $\lambda$-\emph{sequences of commuting squares} from \cite{MR1334479} by forgetting one of the towers, 
analogous to the way one defines a module for an algebraic object by replacing one argument of the algebraic operation with an element from the module. In short, Markov towers are the towers-of-algebras analog of a module category. 
In \S\ref{sec:TLJmodules} below, we will see that Markov towers are exactly a $\lambda$-lattice approach to pivotal Temperley-Lieb-Jones module categories; this motivates the view of subfactor planar modules as simply Markov towers with an additional structure. 

\subsection{Markov towers and their elementary properties}
\label{sec:MarkovTowersAndElementaryProperties}
\begin{defn}
A \emph{Markov tower} $M_\bullet = (M_n, \tr_n, e_{n+1})_{n\geq 0}$ consists of a sequence $(M_n, \tr_n)_{n\geq 0}$ of finite dimensional von Neumann algebras, such that $M_n$ is unitally included in $M_{n+1}$, each $M_n$ has a faithful normal tracial states such that $\tr_{n+1}|_{M_n} = \tr_n$ for all $n\geq 0$, and there is a sequence of \emph{Jones projections} $e_n \in M_{n+1}$ for all $n\geq 1$, such that:
\begin{enumerate}[label={\rm(M\arabic*)}]
\item
\label{eq:MarkovJonesProjections}
The projections $(e_n)$ satisfy the Temperley-Lieb-Jones relations:
\begin{enumerate}[label={\rm(TLJ\arabic*)}]
\item
$e_i^2 = e_i = e_i^*$ for all $i$,
\item
$e_i e_j = e_j e_i$ for $|i-j|>1$, and
\item
there is a fixed constant $d>0$ called the \emph{modulus} such that $e_{i} e_{i\pm 1} e_i = d^{-2} e_i$ for all $i$.
\end{enumerate}
\item
\label{eq:MarkovImplement}
For all $x\in M_n$, $e_n x e_n = E_n(x)e_n$, where $E_n: M_n \to M_{n-1}$ is the canonical faithful trace-preserving conditional expectation.
\item
\label{eq:MarkovIndex}
For all $n\geq 1$, $E_{n+1}(e_n) = d^{-2}$.
\item
\label{eq:MarkovPullDown}
(pull down)
For all $n\geq 1$, $M_{n+1}e_n = M_n e_n$.
\end{enumerate}
We call a Markov tower \emph{connected} if $\dim(M_0) = 1$.
\end{defn}

\begin{rem}
One should think of the preceding definition as obtained from Popa's definition of \emph{$\lambda$-sequence} \cite{MR1334479} and removing one of the two sequences of algebras, together with the commuting square condition.
Compare the existence of Jones projections, \ref{eq:MarkovJonesProjections}, and \ref{eq:MarkovImplement} with (1.3.2), and \ref{eq:MarkovIndex} and \ref{eq:MarkovPullDown} with (1.3.3') from \cite{MR1334479} respectively.
\end{rem}

\begin{rem}\label{pulldowniff}
Observe that $M_n e_n M_n$ is a 2-sided ideal in $M_{n+1}$ for all $n\geq 1$ if and only if the pull down condition holds. Indeed, if the pull down condition holds, then $M_{n+1} M_n e_n M_n \subseteq M_{n+1} e_n M_n = M_n e_n M_n$; the same argument holds on the right by first taking adjoints. Conversely, if $M_n e_n M_n$ is a 2-sided ideal, then $M_{n+1} e_n = (M_{n+1} e_n)e_n \subseteq (M_n e_n M_n) e_n = M_n e_n$.
\end{rem}

\begin{prop}\label{prop:ElementaryMarkov} A Markov tower satisfies the following elementary properties for $n\geq 1$.
\begin{enumerate}[label={\rm(EP\arabic*)}]
\item
\label{EP:Injective}
The map $M_{n}\ni y\mapsto ye_n \in M_{n+1}$ is injective.

\item
\label{EP:UniquePullDown}
For all $x\in M_{n+1}$, $d^{2}E_{n+1}(x e_n)$ is the unique element $y\in M_n$ such that $x e_n = ye_n$ \cite[Lem.~1.2]{MR860811}.

\item
\label{EP:MarkovTraces}
The traces $\tr_{n+1}$ satisfy the following \emph{Markov property} with respect to $M_n$ and $e_n$: for all $x\in M_n$, $\tr_{n+1}(xe_n) = d^{-2} \tr_n(x)$.

\item
\label{EP:CompressM_{n+1}}
$e_n M_{n+1}e_n = M_{n-1}e_n$.

\item
\label{EP:2SidedIdeal}
$X_{n+1}:=M_n e_n M_n$ is a 2-sided ideal of $M_{n+1}$, and thus $M_{n+1}$ splits as a direct sum of von Neumann algebras $X_{n+1}\oplus Y_{n+1}$.
(In \cite[Thm.~4.1.4 and Thm.~4.6.3]{MR999799}, $Y_{n+1}$ is the so-called `new stuff'.)
By convention, we define $Y_0 = M_0$ and $Y_1 = M_1$, so that $X_0 = (0)$ and $X_1 = (0)$.

\item
\label{EP:BasicContruction}
The map $ae_n b\mapsto ap_n b$ gives a $*$-isomorphism from $X_{n+1}=M_n e_n M_n$ to $\langle M_n , p_n\rangle=M_np_nM_n$, the Jones basic construction of $M_{n-1} \subseteq M_n$ acting on $L^2(M_n,\tr_n)$.

\item
\label{EP:OtherMarkovDef}
Under the isomorphism $X_{n+1} \cong M_n p_n M_n$, the canonical non-normalized trace $\Tr_{n+1}$ on the Jones basic construction algebra $M_np_nM_n$ satisfying $\Tr_{n+1}(ap_nb) = \tr_n(ab)$ for $a,b\in M_n$ equals $d^2 \tr_{n+1}|_{X_{n+1}}$.

\item
\label{EP:NewStuff}
If $y\in Y_{n+1}$ and $x\in X_{n}$, then $yx = 0$ in $M_{n+1}$.
Hence $E_{n+1}(Y_{n+1}) \subseteq Y_{n}$.
(``The new stuff comes only from the old new stuff" \cite{MR999799}.)

\item
\label{EP:FiniteDepth}
If $Y_n =(0)$, then $Y_{k} = (0)$ for all $k\geq n$.

\end{enumerate}
\end{prop}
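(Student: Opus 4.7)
The plan is to prove \ref{EP:Injective}--\ref{EP:FiniteDepth} in the stated order, since each depends on earlier items. The main tools are the trace-preserving conditional expectations $E_{n+1}$, the pull-down property \ref{eq:MarkovPullDown}, the implementation identity \ref{eq:MarkovImplement}, and the TLJ relations \ref{eq:MarkovJonesProjections}. Items \ref{EP:Injective}--\ref{EP:CompressM_{n+1}} are short manipulations; \ref{EP:2SidedIdeal}--\ref{EP:OtherMarkovDef} require the standard characterization of the Jones basic construction; and \ref{EP:NewStuff}--\ref{EP:FiniteDepth} rest on a single TLJ observation together with faithfulness of $E_{n+1}$.

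For \ref{EP:Injective}, if $ye_n = 0$, then $e_n y^*y e_n = E_n(y^*y)e_n = 0$ by \ref{eq:MarkovImplement}; applying $E_{n+1}$ and using \ref{eq:MarkovIndex} gives $d^{-2}E_n(y^*y) = 0$, whence $y = 0$ by faithfulness of $E_n$. For \ref{EP:UniquePullDown}, if $xe_n = ye_n$ with $y \in M_n$, applying $E_{n+1}$ and \ref{eq:MarkovIndex} yields $y = d^{2}E_{n+1}(xe_n)$. For \ref{EP:MarkovTraces}, compute $\tr_{n+1}(xe_n) = \tr_n(E_{n+1}(xe_n)) = \tr_n(x)E_{n+1}(e_n) = d^{-2}\tr_n(x)$. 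For \ref{EP:CompressM_{n+1}}, I will first show $M_{n-1}$ commutes with $e_n$: if $x \in M_{n-1}$, then $E_n(x) = x$, so \ref{eq:MarkovImplement} yields $e_n x e_n = xe_n$, and taking adjoints gives $e_n x e_n = e_n x$, whence $xe_n = e_nx$. Then $M_{n-1}e_n = e_n M_{n-1} e_n \subseteq e_n M_{n+1}e_n$, and the reverse inclusion follows from pull-down: for $z \in M_{n+1}$, write $ze_n = we_n$ with $w\in M_n$, so $e_n z e_n = e_n w e_n = E_n(w)e_n \in M_{n-1}e_n$.

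Item \ref{EP:2SidedIdeal} follows from pull-down as in Remark \ref{pulldowniff}, together with the fact that any two-sided ideal in a finite-dimensional von Neumann algebra is cut out by a central projection. For \ref{EP:BasicContruction}, I will invoke the universal characterization of the Jones basic construction: $\langle M_n, p_n\rangle = M_np_nM_n$ is characterized among $*$-algebras containing $M_n$ by the existence of a projection $p_n$ satisfying $p_nxp_n = E_n(x)p_n$ for $x \in M_n$ and the corresponding pull-down condition. Since $e_n \in X_{n+1}$ obeys the same relations, the assignment $ap_nb \mapsto ae_nb$ extends to a surjective $*$-homomorphism, and injectivity follows by comparing the faithful traces on both sides. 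Then \ref{EP:OtherMarkovDef} is a direct computation from \ref{EP:MarkovTraces}: $\tr_{n+1}(ae_nb) = \tr_{n+1}(bae_n) = d^{-2}\tr_n(ba) = d^{-2}\tr_n(ab) = d^{-2}\Tr_{n+1}(ap_nb)$.

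The crux of \ref{EP:NewStuff} and \ref{EP:FiniteDepth} is the inclusion $X_n \subseteq X_{n+1}$, which I would extract from the TLJ relation $e_{n-1} = d^2\, e_{n-1}e_n e_{n-1}$: for $a,b \in M_{n-1}$, $ae_{n-1}b = d^2(ae_{n-1})e_n(e_{n-1}b) \in M_ne_nM_n = X_{n+1}$. Since $X_{n+1}$ and $Y_{n+1}$ are orthogonal ideals in $M_{n+1}$, this gives $Y_{n+1}\cdot X_n = 0$. For $y\in Y_{n+1}$ and $x \in X_n$, we then have $\tr_n(E_{n+1}(y)x) = \tr_{n+1}(yx) = 0$, and trace-orthogonality of $M_n = X_n \oplus Y_n$ forces $E_{n+1}(y) \in Y_n$. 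For \ref{EP:FiniteDepth}, if $Y_n = (0)$ and $y \in Y_{n+1}$, then $y^*y \in Y_{n+1}$ satisfies $E_{n+1}(y^*y) \in Y_n = (0)$; faithfulness of $E_{n+1}$ forces $y=0$, hence $Y_{n+1}=(0)$, and induction finishes the argument. The main obstacle will be \ref{EP:BasicContruction}, where identifying $X_{n+1}$ with the Jones basic construction requires a careful invocation of the universal presentation rather than any further manipulation inside the Markov tower itself.
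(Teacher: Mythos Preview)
Your proposal is correct and largely parallels the paper's proof. The arguments for \ref{EP:Injective}--\ref{EP:2SidedIdeal}, \ref{EP:OtherMarkovDef}, and \ref{EP:FiniteDepth} are essentially identical to the paper's (your \ref{EP:Injective} and \ref{EP:CompressM_{n+1}} are slightly more elaborate but follow the same ideas).

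Two items differ in packaging. For \ref{EP:BasicContruction}, the paper does not invoke a universal characterization; instead it directly shows that $\sum a_i p_n b_i = 0$ implies $\sum a_i e_n b_i = 0$ by sandwiching with $p_n a(\,\cdot\,)bp_n$ to extract $\sum E_n(aa_i)E_n(b_ib)=0$ for all $a,b\in M_n$, and then replaying the same identity with $e_n$ in place of $p_n$. This simultaneously gives well-definedness and injectivity of $ae_nb\mapsto ap_nb$ without appealing to traces or a recognition lemma. Your approach via the universal property is also valid (and is exactly what the paper later formalizes as Lemma~\ref{lem:BasicConstructionRecognition}), but be careful that your ``injectivity by comparing faithful traces'' does not silently presuppose \ref{EP:OtherMarkovDef}. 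For \ref{EP:NewStuff}, you organize the key TLJ identity $e_{n-1}=d^2 e_{n-1}e_ne_{n-1}$ into the clean statement $X_n\subseteq X_{n+1}$ and then use trace-orthogonality; the paper instead reduces to the case where $y$ is a central projection in $Y_{n+1}$ (so $ye_n=0$) and applies the same TLJ identity directly, finishing with the central support $z_n$ of $e_{n-1}$ in $M_n$ in place of your trace argument. Both routes are short; yours avoids the reduction to central projections, while the paper's avoids the appeal to trace-orthogonality of the ideal decomposition.
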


\begin{proof} 
\mbox{}
\begin{enumerate}[label={\rm(EP\arabic*)}]
\item
By \ref{eq:MarkovIndex}, $d^2E_{n+1}(ye_n) = y $, so the proposed map has a left inverse.

\item
This follows directly from \ref{eq:MarkovPullDown} and \ref{EP:Injective}.

\item
By \ref{eq:MarkovIndex}, for $x\in M_n$, we have $\tr_{n+1}(xe_n) = \tr_n(E_{n+1}(xe_n)) = \tr_n(x E_{n+1}(e_n)) = d^{-2} \tr_n(x)$.

\item
By \ref{eq:MarkovPullDown}, $e_n M_{n+1} e_n= e_nM_n e_n$.
By \ref{eq:MarkovImplement}, $e_n M_n e_n = M_{n-1}e_n$.

\item
That $M_ne_nM_n$ is a 2-sided ideal is equivalent to \ref{eq:MarkovPullDown} as in Remark \ref{pulldowniff}.

\item
It suffices to show the map is injective, which also shows it is well-defined. 
Suppose $\sum a_i p_n b_i = 0$.
Then for all $a,b\in M_n$, we have $0=p_na\left(\sum a_i p_n b_i\right) bp_n = \sum E_{n}(aa_i)E_n(b_ib)p_n$, and therefore $\sum E_{n}(aa_i)E_n(b_ib) = 0$ as $M_n \ni x\mapsto xp_n \in \langle M_n, p_n\rangle$ is injective by \ref{EP:Injective} applied to the Jones tower for $M_{n-1} \subset (M_n,\tr_n)$, which is a Markov tower.
Hence 
$$
0 = \sum E_{n}(aa_i)E_n(b_ib)e_n = e_na\left(\sum a_i e_n b_i\right) be_n
$$ 
for all $a,b\in M_n$, and thus $\sum a_i e_n b_i = 0$, so the map is injective. 

\item
For $a,b\in M_n$, by \ref{EP:MarkovTraces}, 
$\Tr_{n+1}(ap_n b) = \tr_n(ab) = \tr_n(ba) = d^2\tr_{n+1}(bae_n) = d^2 \tr_{n+1}(ae_n b)$.

\item
Since $X_0 = (0)$ and $X_1 = (0)$ by definition, we may assume $n\geq 2$.
As in the proof of \cite[Thm.~4.6.3.vi]{MR999799}, we may assume $y$ is a central projection in $M_{n+1}$ such that $y e_{n} = 0$.
Then for all $ae_{n-1} b \in X_n$, by \ref{eq:MarkovJonesProjections}, $y ae_{n-1} b = d^2 yae_{n-1} e_{n}e_{n-1} b = d^2 ae_{n-1} ye_{n} e_{n-1} b = 0$.
The final claim follows from $z_{n}E_{n+1}(y) = E_{n+1}(z_n y)= 0$ where $z_n$ is the central support of $e_{n-1}$ in $M_n$.

\item
This follows immediately from \ref{EP:NewStuff}.
\qedhere

\end{enumerate}
\end{proof}

\begin{rem}
\label{rem:InfiniteDimensionalMarkovTower}
 The foregoing observations all hold in the case where the $M_n$ are arbitrary tracial von Neumann algebras. In this paper, we restrict our attention to the finite dimensional case because of the following, in which we obtain a principal graph for a Markov tower. To generalize to the infinite case, a measure-theoretic replacement for the principal graph would need to be introduced. 
\end{rem}

Notice that by \ref{EP:BasicContruction}, the Bratteli diagram for the inclusion $M_{n}\subset M_{n+1}$ consists of the reflection of the Bratteli diagram for the inclusion $M_{n-1} \subset M_n$, together with possibly some new edges and vertices corresponding to simple summands of $Y_{n+1}$. 
By \ref{EP:NewStuff}, the new vertices at level $n+1$ only connect to the vertices that were new at level $n$. This leads to the following definition.

\begin{defn}
The \emph{principal graph} of the Markov tower $M_\bullet$ consists of the \emph{new} vertices at every level $n$ of the Bratteli diagram, together with all the edges connecting them.
A Markov tower is said to have \emph{finite depth} if the principal graph is finite.
\end{defn}

It follows that a Markov tower has finite depth if and only if there is $n\in \bbN$ such that $Y_n = (0)$, as in \ref{EP:FiniteDepth}. 
Let $M_\bullet$ be a Markov tower with finite depth, and take the minimal integer $n\in \bbN$ such that $Y_n=(0)$. 
Notice that for $k<n$, the Bratteli diagram of $M_k\subseteq M_{k+1}$ contains the reflection of the Bratteli diagram of $M_{k-1}\subseteq M_{k}$, along with additional vertices and edges which are part of the principal graph. 
At the base, all of the Bratteli diagram for $M_0\subseteq M_1$ is part of the principal graph. 
We can therefore `unravel' the Bratteli diagram for $M_{n}\subseteq M_{n+1}$ to obtain the principal graph for the Markov tower $M_\bullet$.

\begin{fact}\label{Fact:BratteliPrincipal}
 If a Markov tower $M_\bullet$ has finite depth and $n\in \bbN$ is such that $Y_n=(0)$, then for $k\geq n$, there is a canonical graph isomorphism between the principal graph of $M_\bullet$ and the Bratteli diagram for $M_{k}\subseteq M_{k+1}$.
\end{fact}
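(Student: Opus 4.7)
By \ref{EP:FiniteDepth} applied to the hypothesis $Y_n = (0)$, we have $Y_j = (0)$ for all $j \geq n$. Hence for each such $j$, $M_j = X_j = M_{j-1}e_{j-1}M_{j-1}$, which by \ref{EP:BasicContruction} is $*$-isomorphic to the Jones basic construction $\langle M_{j-1}, e_{j-1}\rangle$ of $M_{j-2} \subseteq M_{j-1}$. The key input from classical theory is that for a Jones basic construction $N \subseteq M \subseteq \langle M, e\rangle$, the set $\Irr(\langle M, e\rangle)$ is canonically in bijection with $\Irr(N)$ via central supports, and the inclusion matrix of $M \subseteq \langle M, e\rangle$ equals, under this bijection, the transpose of the inclusion matrix of $N \subseteq M$.

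The plan is to prove by induction on $j \geq 1$ that the Bratteli diagram of $M_{j-1} \subseteq M_j$ is canonically isomorphic, as a bipartite graph, to the \emph{principal graph through level $j$}: the subgraph of the principal graph on vertices $\bigsqcup_{l \leq j} V_l^{\text{new}}$, with bipartite structure inherited by parity. The base case $j=1$ is immediate since $X_0 = X_1 = (0)$ forces $V_0^{\text{new}} = V_0$, $V_1^{\text{new}} = V_1$, and every edge of Bratteli $M_0 \subseteq M_1$ to be ``new''. For the inductive step, decompose $M_j = X_j \oplus Y_j$: by \ref{EP:BasicContruction}, the sub-Bratteli $M_{j-1} \subseteq X_j$ is the reflection of Bratteli $M_{j-2} \subseteq M_{j-1}$, which by induction is the principal graph through level $j-1$ (reflection is a bipartite-graph isomorphism); the $Y_j$-component contributes the new vertices $V_j^{\text{new}}$ together with their incident edges to $V_{j-1}$, and \ref{EP:NewStuff} ensures these edges land only in $V_{j-1}^{\text{new}}$. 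Gluing these two pieces assembles the principal graph through level $j$.

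For any $j \geq n$, the hypothesis $V_l^{\text{new}} = \emptyset$ for $l \geq n$ implies the principal graph through level $j$ equals the full principal graph. Specializing $j = k+1$ for any $k \geq n$ yields the desired canonical isomorphism. The main obstacle is the parity bookkeeping inside the inductive step: one must verify that the iterated basic-construction bijections send the top/bottom of Bratteli $M_{j-1} \subseteq M_j$ consistently to the odd/even side of the principal graph (with the correspondence depending on the parity of $j$), and that edge multiplicities are preserved under each reflection, so that the gluing is canonical and independent of how far back one unfolds.
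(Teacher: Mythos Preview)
Your proposal is correct and follows essentially the same approach as the paper. The paper does not give a formal proof of this Fact; it is stated as a consequence of the paragraph immediately preceding it, which explains that each Bratteli diagram $M_{k}\subseteq M_{k+1}$ consists of the reflection of the previous one together with the new vertices and edges from $Y_{k+1}$, so that once $Y_n=(0)$ one can ``unravel'' the Bratteli diagram of $M_n\subseteq M_{n+1}$ to recover the principal graph. Your induction on $j$ is exactly a formalization of this unraveling, invoking \ref{EP:BasicContruction} for the reflected part, \ref{EP:NewStuff} for the restriction on where new edges attach, and \ref{EP:FiniteDepth} to propagate $Y_j=(0)$ for $j\geq n$; the parity bookkeeping you flag is real but routine, and the paper likewise leaves it implicit.
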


\begin{defn}
The principal graph $\Gamma$ of a Markov tower $M_\bullet$ has a \emph{quantum dimension function} $\dim : V(\Gamma) \to \bbR_{>0}$ given as follows.
Let $v \in V(\Gamma)$, and let $p\in M_k$ be a minimal projection with $k$ minimal corresponding to the vertex $v$.
We define $\dim(v) :=d^k \tr_k(p)$, and we note this dimension is independent of the choice of $p \in M_k$ representing $v$.
Moreover, the quantum dimension function $\dim$ satisfies the Frobenius-Perron property
\begin{equation}
\label{eq:QuantumDimension}
d\cdot \dim(v) 
=
\sum_{w\sim v}
\dim(w)
\end{equation}
where we write $w\sim v$ to mean $w$ is connected to $v$, and the above sum is taken with multiplicity.
\end{defn}

\subsection{Examples of Markov towers}

We discuss various examples of Markov towers in great detail.

\begin{ex}
The Temperley-Lieb-Jones algebras of modulus $d\geq 2$ with the usual Jones projections and Markov traces form a Markov tower with principal graph $A_{\infty}$.
\end{ex}

\begin{ex}
\label{ex:MarkovTowerFromRightModule}
Suppose $(\cM,m, \Tr^\cM)$ is a cyclic pivotal $\cT\cL\cJ(d)$-module $\Cstar$ category.
Let $X\in \cT\cL\cJ(d)$ be the generating object corresponding to the unshaded-shaded strand.
As described in the introduction, we get a Markov tower by defining  
$M_n:=\End_{\cC}(m\vartriangleleft \Xalt)$, $\tr_n := \Tr^\cM_{m\vartriangleleft \Xalt}(\id_{m\vartriangleleft \Xalt})^{-1} \Tr^\cM_{m\vartriangleleft \Xalt}$, and Jones projections depending on parity by 
\begin{align*}
e_{2k+1}
&=
\begin{tikzpicture}[baseline]
 \fill[ctwoshading] (-.3,-.5) rectangle (-.7,0.5);
 \draw[thick,red] (-0.3,-.5) -- (-0.3,0.5);
 \draw (0,-.5) -- (0,.5);
 \filldraw[shaded] (.3,.5) arc (-180:0:.3cm);
 \filldraw[shaded] (.3,-.5) arc (180:0:.3cm);
 \node at (.2,0){\tiny{$2k$}};
\end{tikzpicture} 
\hspace{.3cm}
:=
d^{-1}
\big(
\id_m \vartriangleleft \id_{(X\otimes \overline{X})^{\otimes k}} \otimes (\coev_X \circ \coev_X^\dag)
\big)
\in M_{2k+2}
\\
e_{2k+2}
&=
\begin{tikzpicture}[baseline]
 \fill[ctwoshading] (-.3,-.5) rectangle (-.7,0.5);
 \fill[shaded] (0,-.5) rectangle (1.2,.5);
 \draw[thick,red] (-0.3,-.5) -- (-0.3,0.5);
 \draw (0,-.5) -- (0,.5);
 \filldraw[unshaded] (.3,.5) arc (-180:0:.3cm);
 \filldraw[unshaded] (.3,-.5) arc (180:0:.3cm);
 \node at (.4,0){\tiny{$2k+1$}};
\end{tikzpicture} 
:=
d^{-1}
\big(
\id_m \vartriangleleft \id_X \otimes \id_{(\overline{X}\otimes X)^{\otimes k}} \otimes (\ev_X^\dag \circ \ev_X)
\big)
\in M_{2k+3}.
\end{align*}
The principal graph of $M_\bullet$ is precisely the fusion graph of $\cM$ with respect to $X$.
\end{ex}

\begin{ex}
\label{example:MarkovTowerFromPlanarModule}
We obtain the equivalent connected right planar module for the subfactor planar algebra $\cT\cL\cJ(d)_\bullet$ to Example \ref{ex:MarkovTowerFromRightModule} under Theorem \ref{thm:ModuleEquivalence} as follows.
We define $M_k : = \cM_k$ with its $\dag$-algebra structure and faithful tracial state $\tr_k$ from Definition \ref{def:RightPlanarModule}.
Jones projections are defined depending on parity by
$$
e_{2k+1}
:=
\begin{tikzpicture}[baseline]
 \draw (0,-.5) -- (0,.5);
 \filldraw[shaded] (.3,.5) arc (-180:0:.3cm);
 \filldraw[shaded] (.3,-.5) arc (180:0:.3cm);
 \node at (.2,0){\tiny{$2k$}};
 \halfcircle{}{(-.2,0)}{.5}{1.1}{}
\end{tikzpicture} 
\qquad
e_{2k+2,+}
:=
\begin{tikzpicture}[baseline]
 \halfcircle{shaded}{(-.2,0)}{.5}{1.1}{}
 \fill[unshaded] (-.18,-.48) rectangle (-.005,.48);
 \draw (0,-.5) -- (0,.5);
 \filldraw[unshaded] (.3,.48) arc (-180:0:.3cm);
 \filldraw[unshaded] (.3,-.48) arc (180:0:.3cm);
 \node at (.4,0){\tiny{$2k+1$}};
\end{tikzpicture} 
\,.
$$
\end{ex}

\begin{lem}
 \label{lem:FiniteDepthAlgebraHasFiniteDepthModule}
Suppose $\cP_\bullet$ is a finite depth subfactor planar algebra and $\cM_\bullet$ is a right planar module for $\cP_\bullet$.
Then the associated Markov tower $M_\bullet$ has finite depth, with $\operatorname{depth}(M_\bullet) \leq \operatorname{depth}(\cP_\bullet)$.
\end{lem}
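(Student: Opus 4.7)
The plan is to translate the lemma to the category-theoretic setting via Theorem~\ref{thm:ModuleEquivalence}, where the finite depth hypothesis becomes the statement that the relevant tensor category is multifusion, making finiteness of the principal graph essentially automatic. First I would reduce to the connected case: the minimal central projections of $\cM_0$ give a direct sum decomposition of $\cM_\bullet$ into connected right planar modules, and the Markov tower $M_\bullet$ decomposes compatibly with principal graph equal to the disjoint union of those of the connected summands. I may therefore assume $\cM_\bullet$ is connected, and by Theorem~\ref{thm:ModuleEquivalence} it corresponds to a pointed cyclic pivotal right $\cC$-module $\Cstar$ category $(\cM, \Tr^\cM, m)$, where $\cC$ is the unitary $2\times 2$ multitensor category of projections of $\cP_\bullet$ with generator $X$.

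By the equivalence of Theorem~\ref{thm:SubfactorTensorCategoryEquivalence}, the finite depth hypothesis on $\cP_\bullet$ means $\cC$ is a unitary multifusion category with only finitely many isomorphism classes of simple objects. Moreover, unpacking how depth is read off from the Markov towers $\cP_\pm$, every simple object of $\cC$ appears as a direct summand of $X^{\text{alt}\otimes j}$ or $\overline{X}^{\text{alt}\otimes j}$ for some $j \leq \operatorname{depth}(\cP_\bullet)$. Under the projection-category construction from \S\ref{sec:3TypesOfModules}, the vertices of the principal graph of $M_\bullet$ correspond bijectively to isomorphism classes of simple objects of the cyclic module subcategory $\cM_{m,X}$ generated by $m$, with a vertex at level $k$ whenever the corresponding simple first appears as a summand of $m \vartriangleleft X^{\text{alt}\otimes k}$; this should follow directly from the projection-category construction together with Fact~\ref{Fact:BratteliPrincipal}.

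To conclude, any simple summand $\widetilde{m}$ of $m \vartriangleleft X^{\text{alt}\otimes k}$ must be a summand of $m \vartriangleleft c$ for some simple $c \in \cC$ appearing in $X^{\text{alt}\otimes k}$, and such a $c$ is itself a summand of $X^{\text{alt}\otimes j}$ for some $j \leq \operatorname{depth}(\cP_\bullet)$; hence $\widetilde{m}$ appears as a summand of $m \vartriangleleft X^{\text{alt}\otimes j}$. Thus every vertex of the principal graph of $M_\bullet$ appears at some level $\leq \operatorname{depth}(\cP_\bullet)$, which simultaneously yields finiteness of the principal graph and the bound $\operatorname{depth}(M_\bullet) \leq \operatorname{depth}(\cP_\bullet)$. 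The main obstacle is verifying the bijection between principal graph vertices of $M_\bullet$ and simple objects of $\cM_{m,X}$ at the claimed levels, which I expect to be a routine diagram chase once the projection-category construction has been carefully set up.
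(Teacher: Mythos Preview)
Your approach is correct in spirit and would work, but it takes a substantially longer and higher-level route than the paper's proof. The paper's argument is a direct two-line computation at the level of towers of algebras: let $r$ be minimal with $\cP_{r+1,+} = \cP_{r,+} e_{r,+}\cP_{r,+}$, and take a Pimsner--Popa basis $\{b\}$ for $\cP_{r+1,+}$ over $\cP_{r,+}$, so $\sum_b b e_{r,+} b^* = 1_{\cP_{r+1,+}}$. The module action sends this identity to $1_{M_{r+1}}$ (the tangle with only straight strings acts as the identity in both), so the \emph{same} elements $\{b\}$ form a Pimsner--Popa basis for $M_{r+1}$ over $M_r$. Hence $M_{r+1} = M_r e_r M_r$, i.e.\ $Y_{r+1} = (0)$, and finite depth with the claimed bound follows from \ref{EP:FiniteDepth}.

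The contrast is instructive. Your argument invokes Theorem~\ref{thm:ModuleEquivalence} to pass to module categories, then uses that $\cC$ is multifusion together with the bijection between principal-graph vertices and simple objects of $\cM_{m,X}$; that bijection is exactly Corollary~\ref{cor:SemisimpleProjectionCategory}, which appears \emph{after} this lemma in the paper and whose proof (while independent) is nontrivial. So your route, while not circular, front-loads machinery the paper only develops later. The paper's argument, by contrast, needs nothing beyond the Markov-tower axioms and the observation that the identity element transfers; it is the towers-of-algebras incarnation of your categorical statement ``every simple of $\cC$ appears by depth $r$,'' packaged as ``the basic construction is already saturated at level $r$.'' Your approach does have the advantage of making the depth bound conceptually transparent, but for a lemma this localized the paper's direct argument is preferable.
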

\begin{proof}
Let $r$ be minimal such that $\cP_{r+1,+} = \cP_{r,+} e_{r,+}\cP_{r,+}$, and let $\{b\}$ be a Pimsner-Popa basis for $\cP_{r+1,+}$ over $\cP_{r,+}$ so that $\sum_b b e_{r,+} b^* = 1_{\cP_{r+1,+}}$.
Since 
 \[1_{M_{r+1}}=
 \begin{tikzpicture}[baseline]
  \halfcircle{}{(0,0)}{.5}{.5}{}
  \draw (.35,-.5) -- (.35,.5);
  \node at (.7,0) {\tiny{$r+1$}};
 \end{tikzpicture}
 =
 \begin{tikzpicture}[baseline]
  \halfcircle{}{(0,0)}{.5}{1}{}
  \ncircle{}{(.5,-.1)}{.25}{180}{}
  \draw (.5,-.5) -- (.5,.5);
  \node at (.8,.27) {\tiny{$r+1$}};
 \end{tikzpicture}
 =1_{\cP_{r+1,+}}\text{,}\]
 we have that
$\{b\}$ is a Pimsner-Popa basis for $M_{r+1}$ over $M_r$.
Hence $M_\bullet$ has finite depth by \ref{EP:FiniteDepth}.
The last claim follows immediately.
\end{proof}

\begin{defn}
\label{def:StronglyMarkovInclusion}
Recall from \cite{MR1278111} that an inclusion of finite von Neumann algebras $A_0\subset A_1$ with a faithful normal tracial state $\tr_1$ on $A_1$ is called a \emph{Markov inclusion} 
if
the canonical faithful normal semifinite trace on the Jones basic construction $A_2 = JA_0'J = \langle A_1, e_1\rangle \subset B(L^2(A_1, \tr_1))$ given by the extension of $xe_1y \mapsto \tr_1(xy)$ is finite and $\Tr_2(1)^{-1}\Tr_2|_{A_1}=\tr_1$.

Following \cite{MR2812459}, we call such an inclusion \emph{strongly Markov} if moreover there is a \emph{Pimsner-Popa basis} for $A_1$ over $A_0$, which is a finite subset $\{b\}\subset A_1$ such that $1_{A_2} = \sum_b b e_1b^*$.
This is equivalent to $x = \sum_b b E_{A_0}(b^*x)$ for all $x\in A_1$, and also to $A_2 = A_1e_1A_1$ by \cite[Prop.~3(b)]{MR561983}
(see also \cite{MR996807}).

Given a strongly Markov inclusion $A_0\subset (A_1, \tr_1)$, 
its \emph{Watatani index} \cite{MR996807} is the scalar $[A_1:A_0]:=\Tr_2(1) = \sum_b bb^* $.
We refer the reader to \cite[1.1.4(c)]{MR1278111} for other equivalent properties for the Watatani index in the presence of a Pimsner-Popa basis. 
We may iterate the Jones basic construction to get a tower of von Neumann algebras $(A_n ,\tr_n, e_{n+1})_{n\geq 0}$ with faithful tracial states such that each inclusion $A_{n}\subset (A_{n+1}, \tr_{n+1})$ is strongly Markov with index $[A_{n+1}: A_n] = [A_1:A_0]$ \cite{MR2812459}.
By \cite[Prop.~2.7.3]{MR996807}, the relative commutants $A_i' \cap A_j$ for $i\leq j$ are always finite dimensional von Neumann algebras.
\end{defn}

\begin{nota}
From this point on, we reserve the notation $A_\bullet=(A_n,\tr_n, e_{n+1})_{n\geq 0}$ for the Jones tower of a strongly Markov inclusion of tracial von Neumann algebras and $M_\bullet = (M_n, \tr_n, e_{n+1})_{n\geq 0}$ for a Markov tower.
\end{nota}

\begin{ex}
 Given a strongly Markov inclusion of tracial von Neumann algebras $A_0\subset (A_1, \tr_1)$, its Jones tower $(A_n, \tr_n, e_{n+1})_{n\geq 0}$ is a (possibly infinite dimensional) Markov tower, as in Remark \ref{rem:InfiniteDimensionalMarkovTower}.

Taking the relative commutant with $A_0$, we get a Markov tower of \emph{finite dimensional} von Neumann algebras $(A_0'\cap A_n , \tr_n|_{A_0'\cap A_n} , e_{n+1})_{n\geq 0}$.
Similarly, $(A_1'\cap A_{n+1} , \tr_{n+1}|_{A_1'\cap A_{n+1}} , e_{n+2})_{n\geq 0}$ is a Markov tower of finite dimensional von Neumann algebras.
\end{ex}

We now classify all connected Markov towers in terms of pointed bipartite graphs and quantum dimension functions.

\begin{ex}
Suppose $(\Gamma, v)$ is a locally finite pointed bipartite graph with countably many vertices, and $\dim : V(\Gamma) \to \bbR_{>0}$ is a quantum dimension function satisfying \eqref{eq:QuantumDimension}.
We construct a connected Markov tower $M_\bullet$ by defining $M_0 = \bbC$, and inductively constructing each $M_k$ as dictated by the principal graph $\Gamma$ starting at $v$ in the usual way \cite{MR999799,MR1473221}.
We define the trace vector for $M_k$ by normalizing the vector obtained from $\dim$ applied to the minimal projections appearing at level $k$.

It is straightforward to check that $M_\bullet$ has principal graph $\Gamma$ with basepoint $v$ corresponding to $1_{M_0}$.
Moreover, by construction, the quantum dimension function of $M_\bullet$ is exactly $\dim$.
\end{ex}

Indeed, the above example can be easily generalized to the following result.

\begin{prop}
\label{prop:ClassificationOfMarkovTowers}
Connected Markov towers $M_\bullet$ are classified up to $*$-isomorphism by pointed bipartite graphs $(\Gamma,v)$ with a quantum dimension function $\dim: V(\Gamma) \to \bbR_{>0}$ satisfying \eqref{eq:QuantumDimension}.
\end{prop}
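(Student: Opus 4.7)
The construction of a Markov tower from data $(\Gamma,v,\dim)$ is given in the example immediately preceding the proposition, and the assignment of data to a Markov tower $M_\bullet$ takes the principal graph with basepoint $v$ corresponding to the unique minimal projection of $M_0=\bbC$, together with the quantum dimension function. The composition that starts from $(\Gamma,v,\dim)$, builds the tower, and then reads off the data recovers the original triple immediately by construction (the Bratteli diagram of the constructed tower unfolds to $\Gamma$ at $v$ by design, and the traces are normalized to match $\dim$). The nontrivial content is to show that two connected Markov towers $M_\bullet$ and $M'_\bullet$ with the same principal graph, basepoint, and quantum dimension function are $*$-isomorphic as Markov towers, i.e., there is a sequence of $*$-isomorphisms $\phi_n:M_n\to M'_n$ respecting traces, unital inclusions, and Jones projections.

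I would build the family $\phi_n$ inductively, requiring that each $\phi_n$ is trace-preserving, $\phi_{n+1}|_{M_n}=\phi_n$, and $\phi_{n+1}(e_n)=e'_n$, with base case $\phi_0=\id_\bbC$. For the inductive step, I split $M_{n+1}=X_{n+1}\oplus Y_{n+1}$ and $M'_{n+1}=X'_{n+1}\oplus Y'_{n+1}$ as in \ref{EP:2SidedIdeal}. On the basic construction summand, \ref{EP:BasicContruction} canonically identifies $X_{n+1}$ with the Jones basic construction $\langle M_n,p_n\rangle$ of $M_{n-1}\subset(M_n,\tr_n)$, and identifies $e_n$ with $p_n$; similarly for $X'_{n+1}$. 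Because the Jones basic construction is functorial in the trace-preserving inclusion, the already-built data $(\phi_{n-1},\phi_n)$ extends canonically to a $*$-isomorphism $X_{n+1}\to X'_{n+1}$ sending $e_n$ to $e'_n$ and restricting to $\phi_n$ on $M_n z_{X_{n+1}}$. On the ``new stuff'' summand $Y_{n+1}$, Fact \ref{Fact:BratteliPrincipal} together with the discussion preceding it shows that the simple summands correspond to the new vertices of $\Gamma$ at level $n+1$, with the multiplicities of the inclusion $M_n\hookrightarrow Y_{n+1}$ and the trace values on minimal projections determined entirely by $\Gamma$ and $\dim$ via \eqref{eq:QuantumDimension}; the analogous statement holds for $Y'_{n+1}$.

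The main obstacle is aligning these two partial extensions along the common subalgebra $M_n\subset M_{n+1}$. The $X_{n+1}$-piece restricts correctly to $\phi_n$ on $M_n z_{X_{n+1}}$ by the functoriality of the basic construction, but on the $Y_{n+1}$-piece the extension must be chosen with care. The key tool is the standard fact that two trace-preserving $*$-isomorphisms between finite-dimensional tracial von Neumann algebras which extend a given isomorphism of a common subalgebra with matching Bratteli inclusions differ by conjugation by a unitary in the ambient algebra. After choosing any trace-preserving $*$-isomorphism $Y_{n+1}\to Y'_{n+1}$ compatible with the Bratteli data and then adjusting by conjugation by an appropriate unitary $u\in M'_n$ (included unitally into $M'_{n+1}$), the resulting map $\phi_{n+1}$ satisfies all three requirements: it restricts to $\phi_n$, preserves the trace, and sends $e_n$ to $e'_n$. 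Completing the induction yields the desired isomorphism $M_\bullet\cong M'_\bullet$ of Markov towers, proving that the classification is complete.
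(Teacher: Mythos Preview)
The paper does not actually prove this proposition; it simply states it as an easy generalization of the preceding example, leaving the details to the reader. Your proposal fills in precisely those details, and your inductive strategy---splitting $M_{n+1}=X_{n+1}\oplus Y_{n+1}$ via \ref{EP:2SidedIdeal}, handling $X_{n+1}$ by the functoriality of the basic construction through \ref{EP:BasicContruction} and \ref{EP:OtherMarkovDef}, and matching the new stuff $Y_{n+1}$ via \ref{EP:NewStuff} and the principal-graph data---is the natural and correct one.

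There is one small imprecision worth flagging. The conjugating unitary should live in $Y'_{n+1}$, not in $M'_n$: you only need to adjust the $Y$-summand $\psi:Y_{n+1}\to Y'_{n+1}$ so that it restricts to $\phi_n|_{Y_n}$ on the image of $Y_n$, and conjugating $\psi$ by a unitary of $Y'_{n+1}$ accomplishes this without touching the already-correct $X$-piece (in particular without disturbing $e_n\mapsto e'_n$). Conjugating the whole preliminary $\phi_{n+1}$ by a unitary in $M'_n$, as written, would generally spoil the restriction to $\phi_n$ on $M_n$. An even cleaner phrasing avoids the conjugation altogether: since the Bratteli diagrams of $Y_n\subset Y_{n+1}$ and $Y'_n\subset Y'_{n+1}$ agree and the traces match, the standard classification of inclusions of finite-dimensional tracial von Neumann algebras directly furnishes a trace-preserving isomorphism $Y_{n+1}\to Y'_{n+1}$ extending $\phi_n|_{Y_n}$.
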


\subsection{Operations on Markov towers to produce new Markov towers}
\label{sec:OperationsOnMarkovTowers}

In this section, we describe various operations on a Markov tower $M_\bullet = (M_n, \tr_n, e_{n+1})_{n\geq 0}$ which yield new Markov towers.
We begin with shifting and compressing the tower.
We then study the multistep tower.
For each of these operations, we discuss how the principal graph changes. 

We omit the proof of the following straightforward proposition.
\begin{prop}[Shifting a Markov tower]
\label{prop:ShiftMarkovTower}
Suppose $M_\bullet=(M_n, \tr_n, e_{n+1})_{n\geq 0}$ is a Markov tower.
For any $k\geq 1$, $M_{\bullet+k}:=(M_{n+k}, \tr_{n+k}, e_{n+k+1})_{n\geq 0}$ is also a Markov tower.
\end{prop}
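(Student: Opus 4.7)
The plan is to verify each of the defining axioms of a Markov tower for the shifted data $(M_{n+k}, \tr_{n+k}, e_{n+k+1})_{n\geq 0}$, observing that each axiom is stated locally and is inherited from the corresponding axiom for $M_\bullet$. The proof is essentially a bookkeeping exercise with no serious obstacle; the only point that takes a moment of thought is that the shifted tower's canonical trace-preserving conditional expectations coincide with those of $M_\bullet$.

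First, I would record the basic structural data: each $M_{n+k}$ is a finite dimensional von Neumann algebra, the inclusion $M_{n+k}\subset M_{n+k+1}$ is unital, and the state $\tr_{n+k}$ is a faithful normal tracial state with $\tr_{n+k+1}|_{M_{n+k}}=\tr_{n+k}$ by hypothesis on $M_\bullet$. Also, $e_{n+k+1}\in M_{n+k+1}$ is provided by the original tower. Next, I would note that the canonical trace-preserving conditional expectation from $M_{n+k+1}$ onto $M_{n+k}$ associated to the shifted tower is forced by uniqueness to be the restriction of the conditional expectation $E_{n+k+1}$ from the original tower, so there is no ambiguity in interpreting \ref{eq:MarkovImplement} and \ref{eq:MarkovIndex} after shifting.

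Then I would verify the four axioms in order. For \ref{eq:MarkovJonesProjections}, the projections $(e_{n+k+1})_{n\geq 0}$ are a subsequence of $(e_j)_{j\geq 1}$, so they satisfy the Temperley-Lieb-Jones relations with the same modulus $d$. For \ref{eq:MarkovImplement}, given $x\in M_{n+k}$, the identity $e_{n+k}x e_{n+k}=E_{n+k}(x)e_{n+k}$ is exactly the original axiom applied at level $n+k\geq k\geq 1$. For \ref{eq:MarkovIndex}, $E_{n+k+1}(e_{n+k})=d^{-2}$ is again the original axiom at level $n+k$. For \ref{eq:MarkovPullDown}, $M_{n+k+1}e_{n+k}=M_{n+k}e_{n+k}$ is the original pull-down at level $n+k$.

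The hard part is essentially nonexistent, but the one subtle matching to highlight is that the indexing convention in the shifted tower demands $e_{n+1}\in M_{n+1}$ for $n\geq 0$, which corresponds in the original to $e_{n+k+1}\in M_{n+k+1}$; the condition $n\geq 0$ in the shifted axioms translates to $n+k\geq k\geq 1$, precisely the range in which \ref{eq:MarkovJonesProjections}--\ref{eq:MarkovPullDown} are available for $M_\bullet$. This confirms all axioms transfer without any edge-case, and $M_{\bullet+k}$ is a Markov tower (with the same modulus $d$). If desired, one could also record that the principal graph of $M_{\bullet+k}$ is obtained from that of $M_\bullet$ by truncating the initial $k$ levels, but this is not required for the statement.
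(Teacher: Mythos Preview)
Your proposal is correct and is exactly the direct axiom-by-axiom verification one would expect; the paper in fact omits the proof entirely, stating only that it is straightforward. One small aside: your closing remark that the principal graph is obtained by ``truncating the initial $k$ levels'' is not quite right---it is the Bratteli diagram that gets truncated, while the principal graph itself is unchanged (see Remark~\ref{Rem:ShiftTowerEffects})---but as you note, this is not part of the proposition.
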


\begin{remark}
 \label{Rem:ShiftTowerEffects}
Notice that shifting a Markov tower simply truncates the Bratteli diagram, and by Fact \ref{Fact:BratteliPrincipal}, the principal graph is unchanged.
\end{remark}

Given a Markov tower $M_\bullet$, we obtain another Markov tower by compression by a non-zero projection $p\in P(M_0)$.
First, for all $n\geq 0$, we define a faithful trace $\tr_n^p$ on $pM_n p$ by
\begin{equation}
\label{eq:CompressedTrace}
\tr^p_n(x) := \tr_n(p)^{-1}\tr_n(pxp).
\end{equation}
It is straightforward to verify that the unique trace-preserving conditional expectation is given by 
\begin{equation}
\label{eq:CompressedConditionalExpectation}
E^p_n : pM_np \to pM_{n-1}p
\qquad
\qquad
E^p_n(pxp) := E_n(pxp) = pE_n(x)p
\end{equation}
Notice that since $[e_n,p] = 0$ for all $n\in \bbN$, for all $pxp \in pM_n p$ we have 
\begin{equation}
\label{eq:CompressionImplementsConditionalExpectation}
e_np (pxp) e_np = p e_nxe_np = pE_n(x)e_np = E_n^p(pxp)e_np,
\end{equation}
so the conditional expectation is implemented by $e_np$.

\begin{prop}
Suppose $M_\bullet = (M_n,\tr_n, e_{n+1})_{n\geq0}$ is a Markov tower of finite dimensional von Neumann algebras and $p\in P(M_0)$ is a nonzero projection.
Then $pMp_\bullet:= (pM_np, \tr_n^p, pe_{n+1})_{n\geq0}$ is a Markov tower, where $\tr_n^p$ is defined as in \eqref{eq:CompressedTrace}.
\end{prop}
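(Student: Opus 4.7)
The plan is to verify the four Markov axioms directly, exploiting the single key observation that the projection $p \in M_0$ commutes with every Jones projection $e_n$ for $n \geq 1$. Indeed, since $p \in M_0 \subseteq M_{n-1}$, axiom \ref{eq:MarkovImplement} gives $e_n p e_n = E_n(p)e_n = pe_n$; taking adjoints yields $e_n p e_n = e_n p$, so $[p,e_n]=0$. This commutation is precisely what allows $p$ to serve as the unit of the compressed tower and what makes each $pe_n$ an honest self-adjoint idempotent.

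Next I would install the tower-of-algebras structure. Each $pM_np$ is a finite dimensional von Neumann algebra with unit $p$, the inclusions $pM_np \subseteq pM_{n+1}p$ are unital, and the state $\tr_n^p$ defined in \eqref{eq:CompressedTrace} is a faithful normal tracial state. The compatibility $\tr_{n+1}^p|_{pM_np} = \tr_n^p$ reduces to the identity $\tr_{n+1}(p) = \tr_n(p)$, which is itself an instance of trace compatibility for $M_\bullet$ applied to $p \in M_0$. The unique trace-preserving conditional expectation is the one recorded in \eqref{eq:CompressedConditionalExpectation}, and \eqref{eq:CompressionImplementsConditionalExpectation} already shows that $pe_n$ implements it.

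Finally I would check the four Markov axioms for $pMp_\bullet$. For \ref{eq:MarkovJonesProjections}, the $pe_n$ are self-adjoint idempotents (as commuting projections), commute at non-adjacent indices (inherited from $M_\bullet$), and satisfy $(pe_i)(pe_{i\pm 1})(pe_i) = p\,e_ie_{i\pm 1}e_i = d^{-2}pe_i$, so the TLJ relations hold with the same modulus $d$. Axiom \ref{eq:MarkovImplement} for $pMp_\bullet$ is exactly \eqref{eq:CompressionImplementsConditionalExpectation}. For \ref{eq:MarkovIndex}, $E^p_{n+1}(pe_n) = pE_{n+1}(e_n)p = d^{-2}p$, which is $d^{-2}$ times the unit of $pM_np$. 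For the pull-down \ref{eq:MarkovPullDown}, using $[p,e_n]=0$ and the pull-down for $M_\bullet$, one computes $pM_{n+1}p\cdot pe_n = pM_{n+1}e_np = pM_ne_np = pM_np\cdot pe_n$.

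I do not anticipate a substantive obstacle. Every axiom reduces to a short calculation combining $[p,e_n]=0$ with the corresponding axiom for $M_\bullet$, and the compressed tower keeps the same modulus $d$. The only place worth being mildly careful is the trace compatibility across the tower, which requires the observation $\tr_n(p) = \tr_0(p)$ for all $n$; this is automatic from $p \in M_0$ and the compatibility $\tr_{n+1}|_{M_n}=\tr_n$.
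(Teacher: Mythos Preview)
Your proposal is correct and follows essentially the same approach as the paper: both proofs hinge on $[p,e_n]=0$ and then verify \ref{eq:MarkovJonesProjections}--\ref{eq:MarkovPullDown} by the same one-line computations, with \ref{eq:MarkovImplement} cited from \eqref{eq:CompressionImplementsConditionalExpectation} and \ref{eq:MarkovPullDown} via $pM_{n+1}p(pe_n)=pM_{n+1}e_np=pM_ne_np=pM_np(pe_n)$. Your write-up is slightly more explicit in justifying $[p,e_n]=0$ and trace compatibility, but otherwise matches the paper.
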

\begin{proof}
First, it is easy to see that the projections $(pe_n)_{n\geq 1}$ satisfy the Temperley-Lieb-Jones relations \ref{eq:MarkovJonesProjections}, since $[e_n,p]=0$ for all $n\geq 0$.
That $pe_n$ implements the trace-preserving conditional expectation $pM_np \to pM_{n-1}p$ as in \ref{eq:MarkovImplement} was shown above in \eqref{eq:CompressionImplementsConditionalExpectation}.
Using \eqref{eq:CompressedConditionalExpectation}, this immediately implies that $E_{n+1}^p(pe_n) = pE_{n+1}(e_n) = d^{-2}p = d^{-2} 1_{M_n}$, so \ref{eq:MarkovIndex} holds.
Finally, for all $n\geq 1$, $pM_{n+1}p (pe_n) = pM_{n+1}e_np = pM_ne_np = pM_npe_n$, so we have \ref{eq:MarkovPullDown}.
\end{proof}

\begin{remark}
 \label{Rem:CompressionTowerEffects}
We can determine the Bratteli diagram and principal graph for $pMp_\bullet$, as follows. 
If $p$ has central support $1$, then the Bratteli diagram is unchanged. 
In general, any vertices on the bottom row corresponding to simple summands of $M_0$ where $p$ does not have support disappear, as well as those edges no longer supported from below. 
By proceeding up the tower and, at each level, removing those vertices and edges no longer supported from below, we obtain the Bratteli diagram for $pMp_\bullet$.
\end{remark}

\begin{nota}
\label{nota:TLJK Diagrams}
We will make heavy use of the string diagrammatic representation of Temperley-Lieb-Jones diagrams.
Ordinarily, for subfactors and planar algebras, Kauffman diagrams \cite{MR899057} are drawn with strings going from \emph{bottom to top}. 
We put the number $k$ above or next to a strand to denote a bundle of $k$ parallel strands, and the label is omitted for single strands.
For example, the generators $E_i = de_i$ are represented by
$$
E_i
=
\begin{tikzpicture}[baseline = .3cm]
 \draw[thick, rounded corners = 5pt] (0,0) rectangle (2,.8);
 \draw (.2,0) -- (.2,.8);
 \draw (1,0) -- (1,.8);
 \draw (.4,.8) arc (-180:0:.2cm);
 \draw (.4,0) arc (180:0:.2cm);
 \node at (.35,.4) {\scriptsize{$i$}};
 \node at (1.5,.4) {\scriptsize{$n{-}i{-}2$}};
\end{tikzpicture}\,.
$$
Of particular importance will be the \emph{cabled/multi-step} Jones projections from \cite{MR965748} which were of importance in \cite{MR1424954,MR2812459}:
\begin{align*}
f^{j+k}_j
&:=
d^{k(k-1)}(e_{j+k}e_{j+k-1}\cdots e_{j+1})(e_{j+k+1}e_{j+k}\cdots e_{j+2})\cdots(e_{j+2k-1}e_{j+2k-2}\cdots e_{j+k})
\\
F^{j+k}_j
&:=
\begin{tikzpicture}[baseline = .3cm]
 \draw[thick, rounded corners = 5pt] (0,0) rectangle (1,.8);
 \draw (.2,0) -- (.2,.8);
 \draw (.4,.8) arc (-180:0:.2cm);
 \draw (.4,0) arc (180:0:.2cm);
 \node at (.4,1) {\scriptsize{$k$}};
 \node at (.4,-.2) {\scriptsize{$k$}};
 \node at (.2,1) {\scriptsize{$j$}};
\end{tikzpicture}
=
d^k f^{j+k}_j.
\end{align*}
We record the following relation for later use:
\begin{equation}
\label{eq:MultistepRelation}
f^{j+k}_j
=
d^{k(k-1)}(e_{j+k}e_{j+k+1}\cdots e_{j+2k-2}e_{j+2k-1})
\cdot\,
\begin{tikzpicture}[baseline = .3cm]
 \draw[thick, rounded corners = 5pt] (0,0) rectangle (2,.8);
 \draw (.2,0) -- (.2,.8);
 \draw (1.8,0) -- (1.8,.8);
 \draw (.6,.8) arc (-180:0:.25cm);
 \draw (.6,0)  .. controls ++(90:.35cm) and ++(270:.35cm) .. (1.5,.8);
 \draw (1,0) arc (180:0:.25cm);
 \node at (.6,1) {\scriptsize{$k{-}1$}};
 \node at (1,-.2) {\scriptsize{$k{-}1$}};
 \node at (.2,-.2) {\scriptsize{$j$}};
\end{tikzpicture}
\end{equation}

Now suppose we fix $j\geq 0$ and $k\geq 1$.
For $n\in \bbN$, define the $k$-cabled Jones projections
$
g_n := f^{j+nk}_{j+(n-1)k}
$.
It is straightforward to verify using Kauffman's diagrammatic calculus for Temperley-Lieb-Jones algebras that the projections $(g_n)_{n\in \bbN}$ satisfy the Temperley-Lieb-Jones relations \ref{eq:MarkovJonesProjections} with $d^{-2}$ replaced with $d^{-2k}$.
\end{nota}

We now show that taking every $k$-th algebra in a Markov tower gives us another Markov tower.

\begin{prop}
\label{prop:MultistepJonesProjections}
Suppose $M_\bullet=(M_n, \tr_n, e_{n+1})$ is a Markov tower, and let $j\geq 0$ and $k\geq 1$.
Define $g_n \in M_{j+(n+1)k}$ as in Notation \ref{nota:TLJK Diagrams}.
Then $M_{j+k\bullet}:=(M_{j+nk}, \tr_{j+nk}, g_{n+1})_{n\geq 0}$ is a Markov tower.
\end{prop}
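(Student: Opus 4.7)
The plan is to verify each of the Markov tower axioms \ref{eq:MarkovJonesProjections}--\ref{eq:MarkovPullDown} for $M_{j+k\bullet}$ by iterating the corresponding axioms of the original tower $M_\bullet$ on the explicit expression $g_n = f^{j+nk}_{j+(n-1)k}$ for the cabled Jones projections. The trace compatibility $\tr_{j+(n+1)k}|_{M_{j+nk}} = \tr_{j+nk}$ is immediate, and the Temperley--Lieb--Jones relations \ref{eq:MarkovJonesProjections} with modulus $d^k$ are already recorded in Notation \ref{nota:TLJK Diagrams} via a routine check in Kauffman's calculus.

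The heart of the proof is to establish a multistep version of \ref{eq:MarkovImplement}: for $n \geq 1$ and $x \in M_{j+nk}$,
\[
g_n\, x\, g_n \;=\; E'_n(x)\, g_n,
\]
where $E'_n := E_{j+(n-1)k+1}\circ E_{j+(n-1)k+2}\circ\cdots\circ E_{j+nk} : M_{j+nk} \to M_{j+(n-1)k}$ is the $k$-fold iterated trace-preserving conditional expectation. I would prove this by induction on $k$. The base case $k=1$ is \ref{eq:MarkovImplement} of the original tower applied to $g_n = e_{j+n}$. For the inductive step, I would use the recursive factorization \eqref{eq:MultistepRelation} to separate $g_n$ as the product of an outermost column of single-step Jones projections and a lower-step cabled projection, and then apply \ref{eq:MarkovImplement} of $M_\bullet$ strand-by-strand to pull the conditional expectation out past $x$ through the intermediate algebras $M_{j+nk-1} \supset \cdots \supset M_{j+(n-1)k}$.

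Axiom \ref{eq:MarkovIndex} for the new tower, namely $E'_{n+1}(g_n) = d^{-2k}$, then becomes a direct computation. Applying the iterated conditional expectation $E_{j+nk+1}\circ\cdots\circ E_{j+(n+1)k}$ to $g_n$ and peeling off one outermost Jones projection at a time via the original \ref{eq:MarkovIndex} together with the Temperley--Lieb--Jones relation $e_i e_{i\pm 1} e_i = d^{-2} e_i$, each peel contributes a factor of $d^{-2}$ and strictly reduces the order of the cabled expression; after $k$ steps one is left with the scalar $d^{-2k}$, as required.

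For the pull-down axiom \ref{eq:MarkovPullDown}, the cleanest route is via Remark \ref{pulldowniff}: it suffices to show that $M_{j+nk}\, g_n\, M_{j+nk}$ is a two-sided ideal in $M_{j+(n+1)k}$. This follows by $k$ successive applications of the original \ref{eq:MarkovPullDown}: since $g_n$ contains the Jones projections $e_{j+(n-1)k+1}, \ldots, e_{j+(n+1)k-1}$, one can pull each of the $k$ extra basic construction steps across $g_n$ using the relation $M_i\,e_{i-1} = M_{i-1}\,e_{i-1}$, terminating any element of $M_{j+(n+1)k}$ inside $M_{j+nk}\, g_n\, M_{j+nk}$. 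The main obstacle I expect is the index bookkeeping in the inductive step for the multistep \ref{eq:MarkovImplement}; the diagrammatic Kauffman picture makes the argument visually transparent, but writing out the induction algebraically requires some care to track precisely which Jones projections are being moved past $x$ at each stage.
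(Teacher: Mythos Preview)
Your treatment of \ref{eq:MarkovJonesProjections}, \ref{eq:MarkovImplement}, and \ref{eq:MarkovIndex} matches the paper, which likewise dismisses these as routine inductions in the Kauffman calculus. The divergence is in \ref{eq:MarkovPullDown}.

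The paper proves \ref{eq:MarkovPullDown} by \emph{strong induction on $k$}. Using \eqref{eq:MultistepRelation} (with $j=0$), it absorbs the first $k-1$ factors $e_{nk},\dots,e_{(n+1)k-2}$ into $M_{(n+1)k}$ and applies the original pull-down once to reach $M_{(n+1)k-1}\cdot e_{(n+1)k-1}\cdot D$ for a specific Temperley--Lieb word $D$. A diagrammatic isotopy then reorganizes $e_{(n+1)k-1}D$ so as to expose the $(k-1)$-step cabled projection $f^{nk}_{(n-1)k+1}$ sandwiched between elements of $M_{(n+1)k-1}$; the induction hypothesis for increment $k-1$ (applied to the tower shifted by $(n-1)k+1$) pulls $M_{(n+1)k-1}$ down to $M_{nk}$ in one stroke, and a final isotopy recovers $g_n$.

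Your proposal of ``$k$ successive applications of the original \ref{eq:MarkovPullDown}'' has a gap. After one pull-down you obtain $M_{(n+1)k-1}\,e_{(n+1)k-1}\,e_{(n+1)k-2}\cdots e_{nk}$, but the factor $e_{(n+1)k-1}$ lies in $M_{(n+1)k}\setminus M_{(n+1)k-1}$ and is wedged between the algebra and the next projection $e_{(n+1)k-2}$. It cannot simply be absorbed, so you cannot directly invoke $M_{(n+1)k-1}e_{(n+1)k-2}=M_{(n+1)k-2}e_{(n+1)k-2}$ to continue. Some genuine reorganization of the Temperley--Lieb word is needed at this point; that reorganization is precisely the content of the paper's isotopy-and-induction step, and your sketch supplies no substitute for it. The phrase ``$g_n$ contains $e_{(n-1)k+1},\dots,e_{(n+1)k-1}$'' is true but not enough: the order in which these projections occur in the product matters, and iterating pull-down naively gets stuck after one step.
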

\begin{proof}
We saw Condition \ref{eq:MarkovJonesProjections} holds from the diagrammatic calculus, and Conditions \ref{eq:MarkovImplement} and \ref{eq:MarkovIndex} are straightforward induction arguments.

We prove \ref{eq:MarkovPullDown} by strong induction on $k$.
The base case $k=1$ is exactly \ref{eq:MarkovPullDown} for the original Markov tower.
Now suppose that \ref{eq:MarkovPullDown} holds for any multi-step towers with increment less than $k$.
Consider the multi-step tower of algebras $(M_{j+nk})_{n\geq 0}$, which has increment $k$.
By Proposition \ref{prop:ShiftMarkovTower}, we may assume $j=0$.
Using \eqref{eq:MultistepRelation} and \ref{eq:MarkovPullDown} for the original Markov tower, we have 
\begin{align*}
M_{(n+1)k} g_n 
&= 
M_{(n+1)k} f^{(n-1)k+k}_{(n-1)k} 
= 
M_{(n+1)k}
(e_{nk}e_{nk+1}\cdots e_{(n+1)k-2}e_{(n+1)k-1})
\cdot\,
\begin{tikzpicture}[baseline = .3cm]
 \draw[thick, rounded corners = 5pt] (0,0) rectangle (2,.8);
 \draw (.2,0) -- (.2,.8);
 \draw (1.8,0) -- (1.8,.8);
 \draw (.6,.8) arc (-180:0:.25cm);
 \draw (.6,0)  .. controls ++(90:.35cm) and ++(270:.35cm) .. (1.5,.8);
 \draw (1,0) arc (180:0:.25cm);
 \node at (.6,1) {\scriptsize{$k{-}1$}};
 \node at (1,-.2) {\scriptsize{$k{-}1$}};
 \node at (0,-.2) {\scriptsize{$(n{-}1)k$}};
\end{tikzpicture}
\\&=
M_{(n+1)k-1} 
e_{(n+1)k-1}
\cdot \,
\begin{tikzpicture}[baseline = .3cm]
 \draw[thick, rounded corners = 5pt] (0,0) rectangle (2,.8);
 \draw (.2,0) -- (.2,.8);
 \draw (1.8,0) -- (1.8,.8);
 \draw (.6,.8) arc (-180:0:.25cm);
 \draw (.6,0)  .. controls ++(90:.35cm) and ++(270:.35cm) .. (1.5,.8);
 \draw (1,0) arc (180:0:.25cm);
 \node at (.6,1) {\scriptsize{$k{-}1$}};
 \node at (1,-.2) {\scriptsize{$k{-}1$}};
 \node at (0,-.2) {\scriptsize{$(n{-}1)k$}};
\end{tikzpicture}
=
M_{(n+1)k-1} 
\begin{tikzpicture}[baseline = .3cm]
 \draw[thick, rounded corners = 5pt] (0,0) rectangle (2,.8);
 \draw (.2,0) -- (.2,.8);
 \draw (.8,.8) arc (-180:0:.25cm);
 \draw (1.5,.8) arc (-180:0:.15cm);
 \draw (1,0) arc (180:0:.3cm);
 \node at (.8,1) {\scriptsize{$k{-}1$}};
 \node at (1.6,-.2) {\scriptsize{$k$}};
 \node at (.2,-.2) {\scriptsize{$(n{-}1)k$}};
\end{tikzpicture}
\,.
\end{align*}
Since we may perform isotopy in the Temperley-Lieb-Jones subalgebra of $M_{(n+1)k}$, we may decompose the diagram on the right hand side as follows:
$$
\begin{tikzpicture}[baseline = .3cm]
 \draw[thick, rounded corners = 5pt] (0,0) rectangle (2,.8);
 \draw (.2,0) -- (.2,.8);
 \draw (.8,.8) arc (-180:0:.25cm);
 \draw (1.5,.8) arc (-180:0:.15cm);
 \draw (1,0) arc (180:0:.3cm);
 \node at (.8,1) {\scriptsize{$k{-}1$}};
 \node at (1.6,-.2) {\scriptsize{$k$}};
 \node at (.4,-.2) {\scriptsize{$(n{-}1)k$}};
\end{tikzpicture}
=
d^{k-2}
\begin{tikzpicture}[baseline = 1.4cm]
 \draw[thick, rounded corners = 5pt] (0,0) rectangle (2.2,3);
 \draw (.2,0) -- (.2,3);
 \draw (.6,3) arc (-180:0:.2cm);
 \draw (.4,3) .. controls ++(270:.35cm) and ++(90:.35cm) .. (1,2) arc (-180:0:.3cm) -- (1.6,3);
 \draw (1.8,3) -- (1.8,2) arc (0:-180:.5cm) arc (0:180:.2cm) -- (.4,1) arc (-180:0:.2cm) arc (180:0:.4cm) arc (-180:0:.2cm) -- (2,3);
 \draw (1.2,1) circle (.2cm);
 \draw (.8,0) arc (180:0:.4cm);
 \draw[dashed] (0,1) -- (2.2,1);
 \draw[dashed] (0,2) -- (2.2,2);
 \node at (1.2,.7) {\scriptsize{$k{-}2$}};
 \node at (1.2,2.5) {\scriptsize{$k{-}2$}};
 \node at (1.75,.2) {\scriptsize{$k$}};
 \node at (.2,-.2) {\scriptsize{$(n{-}1)k$}};
 \node at (4.2,1.5) {$\leftarrow d\cdot f^{(n-1)k+1+(k-1)}_{(n-1)k+1}$};
\end{tikzpicture}
$$
By the induction hypothesis, we have
$$
M_{(n+1)k-1}
\begin{tikzpicture}[baseline = .3cm]
 \draw[thick, rounded corners = 5pt] (0,0) rectangle (2,.8);
 \draw (.2,0) -- (.2,.8);
 \draw (.8,.8) arc (-180:0:.25cm);
 \draw (1.5,.8) arc (-180:0:.15cm);
 \draw (1,0) arc (180:0:.3cm);
 \node at (.8,1) {\scriptsize{$k{-}1$}};
 \node at (1.6,-.2) {\scriptsize{$k$}};
 \node at (.4,-.2) {\scriptsize{$(n{-}1)k$}};
\end{tikzpicture}
=
M_{nk+(k-1)}
\begin{tikzpicture}[baseline = 1.4cm]
 \draw[thick, rounded corners = 5pt] (0,0) rectangle (2.2,3);
 \draw (.2,0) -- (.2,3);
 \draw (.6,3) arc (-180:0:.2cm);
 \draw (.4,3) .. controls ++(270:.35cm) and ++(90:.35cm) .. (1,2) arc (-180:0:.3cm) -- (1.6,3);
 \draw (1.8,3) -- (1.8,2) arc (0:-180:.5cm) arc (0:180:.2cm) -- (.4,1) arc (-180:0:.2cm) arc (180:0:.4cm) arc (-180:0:.2cm) -- (2,3);
 \draw (1.2,1) circle (.2cm);
 \draw (.8,0) arc (180:0:.4cm);
 \draw[dashed] (0,1) -- (2.2,1);
 \draw[dashed] (0,2) -- (2.2,2);
 \node at (1.2,.7) {\scriptsize{$k{-}2$}};
 \node at (1.2,2.5) {\scriptsize{$k{-}2$}};
 \node at (1.75,.2) {\scriptsize{$k$}};
 \node at (.2,-.2) {\scriptsize{$(n{-}1)k$}};
\end{tikzpicture}
=
M_{nk}\,
\begin{tikzpicture}[baseline = .9cm]
 \draw[thick, rounded corners = 5pt] (0,0) rectangle (2.2,2);
 \draw (.2,0) -- (.2,2);
 \draw (1,2) arc (-180:0:.3cm);
 \draw (1.8,2) arc (0:-180:.5cm); 
 \draw (.4,2) -- (.4,1) arc (-180:0:.2cm) arc (180:0:.4cm) arc (-180:0:.2cm) -- (2,2);
 \draw (1.2,1) circle (.2cm);
 \draw (.8,0) arc (180:0:.4cm);
 \draw[dashed] (0,1) -- (2.2,1);
 \node at (1.2,.7) {\scriptsize{$k{-}2$}};
 \node at (1.2,2.2) {\scriptsize{$k{-}2$}};
 \node at (1.75,.2) {\scriptsize{$k$}};
 \node at (.2,-.2) {\scriptsize{$(n{-}1)k$}};
\end{tikzpicture}
=
M_{nk} g_n.
$$
This completes the proof.
\end{proof}

\begin{remark}
 \label{Rem:MultistepTowerEffects}
If $M_\bullet$ is a Markov tower, then we know from Proposition \ref{prop:MultistepJonesProjections} that 
$M_{k\bullet}$ is also a Markov tower. 
The Bratteli diagram for $M_{k\bullet}$ can be read off the original Bratteli diagram quite easily: the vertices of the level of the Bratteli diagram corresponding to $M_{kn}$ are the same in both towers, while the number of edges between two vertices in the new diagram is the number of upward paths between those vertices in the old diagram.
Note that, since a vertex of the multistep principal graph may belong to the `old stuff' in the original Bratteli diagram, the number of edges between adjacent vertices in the multistep principal graph is not simply the number of paths in the original principal graph.
In the case where $k$ is odd, taking the $k$-step basic construction therefore collapses the vertices of each $k$ levels of the principal graph into one level; when $k$ is even, we lose the odd part of the principal graph entirely, but aside from this, the situation is the same.
\end{remark}

\subsection{The projection category of a Markov tower}

We now define the category of projections of a Markov tower.

\begin{defn}
\label{def:MarkovProjections}
Let $M_\bullet = (M_n, \tr_n, e_{n+1})_{n\geq 0}$ be a Markov tower.
We define the category $\cM$ to be the unitary Karoubi completion (formally adding orthogonal direct sums, and then taking the orthogonal projection completion) of the $\Cstar$ category $\cM_0$ with finite dimensional $\Hom$-spaces defined as follows.
\begin{itemize}
\item
The objects of $\cM_0$ are the symbols $[n]$ for $n\geq 0$.
\item
Given $n,k \geq 0$, we define 
$\cM_0([n] \to [n+2k]):= M_{n+k}$
and
$\cM_0([n+2k] \to [n]) := M_{n+k}$.
\item
The identity morphism in $\cM_0([n] \to [n])$ is $1_{M_n}$.
\item
For $x\in \cM_0([n]\to [n+2k])$ or $x\in \cM_0([n+2k] \to [n])$, we define $x^\dag := x^* \in M_{n+k}$.
\item
We define composition in three cases. In each, we make use of Temperley-Lieb diagrams. The Jones projections of a Markov tower generate an image of the Markov tower of Temperley-Lieb algebras, so each such diagram represents a well-defined element of $M_\bullet$. 
\begin{enumerate}[label={\rm(C\arabic*)}]
\item
\label{compose:upup}
If $x\in \cM_0([n] \to [n+2i])$ and $y\in \cM_0([n+2i] \to [n+2i+2j])$, we define 
$$
y\circ x
:=
d^iE^{n+2i+j}_{n+i+j}
\left(
\raisebox{.3cm}{$
\underbrace{
y\cdot x \cdot
\begin{tikzpicture}[baseline = .3cm]
 \draw[thick, rounded corners = 5pt] (0,0) rectangle (1.5,.8);
 \draw (.2,0) -- (.2,.8);
 \draw (.5,.8) arc (-180:0:.2cm);
 \draw (.5,0) .. controls ++(90:.3cm) and ++(270:.3cm) .. (1.2,.8);
 \draw (.8,0) arc (180:0:.2cm);
 \node at (.2,1) {\scriptsize{$n$}};
 \node at (.5,1) {\scriptsize{$i$}};
 \node at (1.2,1) {\scriptsize{$j$}};
 \node at (.8,-.2) {\scriptsize{$i$}};
\end{tikzpicture}
}_{\in M_{n+2i+j}}
$}
\right)
\in M_{n+i+j}
=
\cM_0([n] \to [n+2i+2j]).
$$
We define the composite $x^\dag \circ y^\dag := (y\circ x)^\dag$, which defines composition 
$$
\cM_0([n+2i+2j] \to [n+2i]) \otimes \cM_0([n+2i] \to [n]) \to \cM_0([n+2i+2j] \to [n]).
$$
To show $x^\dag \circ y^\dag$ is well-defined, we check that when $i=j=0$,
\begin{equation}
\label{eq:n to n to n}
x^\dag\circ y^\dag= x^* y^* = (y x)^* = (y\circ x)^\dag.
\end{equation}

\item
\label{compose:updown}
If $x\in \cM_0([n] \to [n+2i+2j])$ and $y \in \cM_0([n+2i+2j] \to [n+2i])$
we define
$$
y\circ x
:=
d^iE^{n+2i + j}_{n+i}
\left(
\raisebox{.3cm}{$
\underbrace{
y\cdot x\cdot
\begin{tikzpicture}[baseline = .3cm]
 \draw[thick, rounded corners = 5pt] (0,0) rectangle (1.5,.8);
 \draw (.2,0) -- (.2,.8);
 \draw (.8,.8) arc (-180:0:.2cm);
 \draw (1.2,0) .. controls ++(90:.3cm) and ++(270:.3cm) .. (.5,.8);
 \draw (.5,0) arc (180:0:.2cm);
 \node at (.2,1) {\scriptsize{$n$}};
 \node at (.5,1) {\scriptsize{$j$}};
 \node at (.8,1) {\scriptsize{$i$}};
 \node at (.5,-.2) {\scriptsize{$i$}};
\end{tikzpicture}
}_{\in M_{n+2i +j}}
$}
\right)
\in
M_{n+i}
=
\cM_0([n] \to [n+2i]).
$$
As above, we define the composite $x^\dag \circ y^\dag := (y\circ x)^\dag$, which defines composition
$$
\cM_0([m] \to [m+2k]) \otimes \cM_0([n+2j] \to [n]) \to \cM_0([m] \to [n]).
$$
To show that $x^\dag \circ y^\dag$ is well-defined, we check that when $i=0$, 
\begin{equation}
\label{eq:n to n+2k to n}
x^\dag \circ y^\dag 
=
E^{n+j}_{n}(x^* y^*)
=
E^{n+j}_{n}((yx)^*) 
=
E^{n+j}_{n}(y x)^*
=
(y\circ x)^\dag.
\end{equation}

\item
\label{compose:downup}
If $x\in \cM_0([n+2i] \to [n])$ and $y\in \cM_0([n] \to [n+2i+2j])$, we define
$$
y\circ x 
:=
y\cdot
d^{-i}\,
\begin{tikzpicture}[baseline = .3cm]
 \draw[thick, rounded corners = 5pt] (0,0) rectangle (1.5,.8);
 \draw (.2,0) -- (.2,.8);
 \draw (.8,.8) arc (-180:0:.2cm);
 \draw (1.2,0) .. controls ++(90:.3cm) and ++(270:.3cm) .. (.5,.8);
 \draw (.5,0) arc (180:0:.2cm);
 \node at (.2,1) {\scriptsize{$n$}};
 \node at (.5,1) {\scriptsize{$j$}};
 \node at (.8,1) {\scriptsize{$i$}};
 \node at (.5,-.2) {\scriptsize{$i$}};
\end{tikzpicture}
\cdot x
\in
M_{n+2i+j}
=
\cM_0([n+2i] \to [n+2i+2j]).
$$
As above, we define the composite $x^\dag \circ y^\dag := (y\circ x)^\dag$, which defines composition
$$
\cM_0([n+2i+2j] \to [n]) \otimes \cM_0([n] \to [n+2i]) \to \cM_0([n+2i+2j] \to [n+2i]).
$$
To show that $x^\dag \circ y^\dag$ is well-defined, we check that when $j=0$, 
\begin{equation}
\label{eq:n+2j to n to n+2j}
x^\dag \circ y^\dag 
=
x^* \cdot
d^{-i}\,
\begin{tikzpicture}[baseline = .3cm]
 \draw[thick, rounded corners = 5pt] (0,0) rectangle (1.2,.8);
 \draw (.2,0) -- (.2,.8);
 \draw (.5,.8) arc (-180:0:.2cm);
 \draw (.5,0) arc (180:0:.2cm);
 \node at (.2,1) {\scriptsize{$n$}};
 \node at (.5,1) {\scriptsize{$i$}};
 \node at (.5,-.2) {\scriptsize{$i$}};
\end{tikzpicture}
\cdot y^*
=
\left(
y \cdot
d^{-i}\,
\begin{tikzpicture}[baseline = .3cm]
 \draw[thick, rounded corners = 5pt] (0,0) rectangle (1.2,.8);
 \draw (.2,0) -- (.2,.8);
 \draw (.5,.8) arc (-180:0:.2cm);
 \draw (.5,0) arc (180:0:.2cm);
 \node at (.2,1) {\scriptsize{$n$}};
 \node at (.5,1) {\scriptsize{$i$}};
 \node at (.5,-.2) {\scriptsize{$i$}};
\end{tikzpicture}
\cdot x
\right)^*
=
(y\circ x)^\dag.
\end{equation}
\end{enumerate}
\end{itemize}
Showing that composition is associative directly from the definitions above is a highly non-trivial exercise using the axioms \ref{eq:MarkovJonesProjections} -- \ref{eq:MarkovPullDown} of a Markov tower.
A better way to prove associativity is to prove that each $4\times 4$ (possibly non-associative) \emph{linking algebra} \cite{MR808930} 
$$
\cL:=
\resizebox{.95\hsize}{!}{$
\left(\begin{array}{llll}
\cM_0([n] \to[n])
&
\cM_0([n{+}2i] \to[n])
&
\cM_0([n{+}2(i{+}j)] \to[n])
&
\cM_0([n{+}2(i{+}j{+}k)] \to[n])
\\
\cM_0([n] \to[n{+}2i])
&
\cM_0([n{+}2i] \to[n{+}2i])
&
\cM_0([n{+}2(i{+}j)] \to[n{+}2i])
&
\cM_0([n{+}2(i{+}j{+}k)] \to[n{+}2i])
\\
\cM_0([n] \to[n{+}2(i{+}j)])
&
\cM_0([n{+}2i] \to[n{+}2(i{+}j)])
&
\cM_0([n{+}2(i{+}j)] \to[n{+}2(i{+}j)])
&
\cM_0([n{+}2(i{+}j{+}k)] \to[n{+}2(i{+}j)])
\\
\cM_0([n] \to[n{+}2(i{+}j{+}k)])
&
\cM_0([n{+}2i] \to[n{+}2(i{+}j{+}k)])
&
\cM_0([n{+}2(i{+}j)] \to[n{+}2(i{+}j{+}k)])
&
\cM_0([n{+}2(i{+}j{+}k)] \to[n{+}2(i{+}j{+}k)])
\end{array}\right)
$}
$$ 
is $\dag/*$-isomorphic to a von Neumann algebra, which is necessarily associative!
This technique also offers the advantage that it simultaneously proves $\cM_0$ is $\Cstar$.\footnote{
Just as being a $\Cstar$ algebra is a property of a complex $*$-algebra, being a $\Cstar$ category is a property of a $\bbC$-linear dagger category.
}

Notice we have an equality of sets
\begin{equation}
\label{eq:LinkingAlgebra}
\cL=
\left(
\begin{array}{llll}
M_n & M_{n{+}i} & M_{n{+}i{+}j} & M_{n{+}i{+}j{+}k}
\\
M_{n{+}i} & M_{n{+}2i} & M_{n{+}2i{+}j} & M_{n{+}2i{+}j{+}k}
\\
M_{n{+}i{+}j} & M_{n{+}2i{+}j} & M_{n{+}2i{+}2j} & M_{n{+}2i{+}2j{+}k}
\\
M_{n{+}i{+}j{+}k} & M_{n{+}2i{+}j{+}k} & M_{n{+}2i{+}2j{+}k} & M_{n{+}2i{+}2j{+}2k}
\end{array}
\right).
\end{equation}
We define the following map entry-wise; that is, for an element $x \in \cL$, we plug $x_{ab}$ into the input disk in the $ab$-th entry of the map $\pi :\cL \to p\operatorname{Mat}_4(M_{n+2i+2j+2k})p$ given by
\begin{equation}
\label{eq:4x4Map}
\resizebox{.92\hsize}{!}{
$
\begin{pmatrix}
d^{{-}i{-}j{-}k}\,
\begin{tikzpicture}[baseline=-.1cm]
 \draw (.2,.6) arc (-180:0:.2cm);
 \draw (.2,-.6) arc (180:0:.2cm);
 \draw (.8,.6) arc (-180:0:.2cm);
 \draw (.8,-.6) arc (180:0:.2cm);
 \draw (1.4,.6) arc (-180:0:.2cm);
 \draw (1.4,-.6) arc (180:0:.2cm);
 \draw (0,-.6) -- (0,.6);
 \roundNbox{unshaded}{(0,0)}{.2}{0}{0}{}
 \node at (0,-.8) {\scriptsize{$n$}};
 \node at (0,.8) {\scriptsize{$n$}};
 \node at (.2,.8) {\scriptsize{$i$}};
 \node at (.6,.8) {\scriptsize{$i$}};
 \node at (.8,.8) {\scriptsize{$j$}};
 \node at (1.2,.8) {\scriptsize{$j$}};
 \node at (1.4,.8) {\scriptsize{$k$}};
 \node at (1.8,.8) {\scriptsize{$k$}};
 \node at (.2,-.8) {\scriptsize{$i$}};
 \node at (.6,-.8) {\scriptsize{$i$}};
 \node at (.8,-.8) {\scriptsize{$j$}};
 \node at (1.2,-.8) {\scriptsize{$j$}};
 \node at (1.4,-.8) {\scriptsize{$k$}};
 \node at (1.8,-.8) {\scriptsize{$k$}};
\end{tikzpicture}
& 
d^{{-}i{-}j{-}k}\,
\begin{tikzpicture}[baseline=-.1cm]
 \draw (.2,.6) arc (-180:0:.2cm);
 \draw (.2,-.6) -- (.2,.1) arc (180:0:.2cm) -- (.6,-.6);
 \draw (.8,.6) arc (-180:0:.2cm);
 \draw (.8,-.6) arc (180:0:.2cm);
 \draw (1.4,.6) arc (-180:0:.2cm);
 \draw (1.4,-.6) arc (180:0:.2cm);
 \draw (0,-.6) -- (0,.6);
 \roundNbox{unshaded}{(0,-.1)}{.2}{0}{.2}{}
 \node at (0,-.8) {\scriptsize{$n$}};
 \node at (0,.8) {\scriptsize{$n$}};
 \node at (.2,.8) {\scriptsize{$i$}};
 \node at (.6,.8) {\scriptsize{$i$}};
 \node at (.8,.8) {\scriptsize{$j$}};
 \node at (1.2,.8) {\scriptsize{$j$}};
 \node at (1.4,.8) {\scriptsize{$k$}};
 \node at (1.8,.8) {\scriptsize{$k$}};
 \node at (.2,-.8) {\scriptsize{$i$}};
 \node at (.6,-.8) {\scriptsize{$i$}};
 \node at (.8,-.8) {\scriptsize{$j$}};
 \node at (1.2,-.8) {\scriptsize{$j$}};
 \node at (1.4,-.8) {\scriptsize{$k$}};
 \node at (1.8,-.8) {\scriptsize{$k$}};
\end{tikzpicture}
&
d^{{-}i{-}j{-}k}\,
\begin{tikzpicture}[baseline=-.1cm]
 \draw (.2,-.6) -- (.2,0) .. controls ++(90:.45cm) and ++(90:.45cm) .. (1.2,0) -- (1.2,-.6);
 \draw (.4,-.6) -- (.4,0) .. controls ++(90:.2cm) and ++(90:.2cm) ..  (1,0) -- (1,-.6);
 \draw (.2,.6) arc (-180:0:.2cm);
 \draw (.8,.6) arc (-180:0:.2cm);
 \draw (1.4,.6) arc (-180:0:.2cm);
 \draw (1.4,-.6) arc (180:0:.2cm);
 \draw (0,-.6) -- (0,.6);
 \roundNbox{unshaded}{(0,-.2)}{.2}{0}{.4}{}
 \node at (0,-.8) {\scriptsize{$n$}};
 \node at (0,.8) {\scriptsize{$n$}};
 \node at (.2,.8) {\scriptsize{$i$}};
 \node at (.6,.8) {\scriptsize{$i$}};
 \node at (.8,.8) {\scriptsize{$j$}};
 \node at (1.2,.8) {\scriptsize{$j$}};
 \node at (1.4,.8) {\scriptsize{$k$}};
 \node at (1.8,.8) {\scriptsize{$k$}};
 \node at (.2,-.8) {\scriptsize{$i$}};
 \node at (.4,-.8) {\scriptsize{$j$}};
 \node at (1,-.8) {\scriptsize{$j$}};
 \node at (1.2,-.8) {\scriptsize{$i$}};
 \node at (1.4,-.8) {\scriptsize{$k$}};
 \node at (1.8,-.8) {\scriptsize{$k$}};
\end{tikzpicture}
&
d^{{-}i{-}j{-}k}\,
\begin{tikzpicture}[baseline=-.1cm]
 \draw (.2,-.6) -- (.2,0) .. controls ++(90:.45cm) and ++(90:.45cm) .. (1.8,0) -- (1.8,-.6);
 \draw (.4,-.6) -- (.4,0) .. controls ++(90:.3cm) and ++(90:.3cm) ..  (1.4,0) -- (1.4,-.6);
 \draw (.6,-.6) -- (.6,0) .. controls ++(90:.15cm) and ++(90:.15cm) ..  (1,0) -- (1,-.6);
 \draw (.2,.6) arc (-180:0:.2cm);
 \draw (.8,.6) arc (-180:0:.2cm);
 \draw (1.4,.6) arc (-180:0:.2cm);
 \draw (0,-.6) -- (0,.6);
 \roundNbox{unshaded}{(0,-.2)}{.2}{0}{.6}{}
 \node at (0,-.8) {\scriptsize{$n$}};
 \node at (0,.8) {\scriptsize{$n$}};
 \node at (.2,.8) {\scriptsize{$i$}};
 \node at (.6,.8) {\scriptsize{$i$}};
 \node at (.8,.8) {\scriptsize{$j$}};
 \node at (1.2,.8) {\scriptsize{$j$}};
 \node at (1.4,.8) {\scriptsize{$k$}};
 \node at (1.8,.8) {\scriptsize{$k$}};
 \node at (.2,-.8) {\scriptsize{$i$}};
 \node at (.4,-.8) {\scriptsize{$j$}};
 \node at (.6,-.8) {\scriptsize{$k$}};
 \node at (1,-.8) {\scriptsize{$k$}};
 \node at (1.4,-.8) {\scriptsize{$j$}};
 \node at (1.8,-.8) {\scriptsize{$i$}};
\end{tikzpicture}
\\
d^{{-}i{-}j{-}k}\,
\begin{tikzpicture}[baseline=-.1cm, yscale = -1]
 \draw (.2,.6) arc (-180:0:.2cm);
 \draw (.8,.6) arc (-180:0:.2cm);
 \draw (.2,-.6) -- (.2,.1) arc (180:0:.2cm) -- (.6,-.6);
 \draw (.8,-.6) arc (180:0:.2cm);
 \draw (1.4,.6) arc (-180:0:.2cm);
 \draw (1.4,-.6) arc (180:0:.2cm);
 \draw (0,-.6) -- (0,.6);
 \roundNbox{unshaded}{(0,-.1)}{.2}{0}{.2}{}
 \node at (0,-.8) {\scriptsize{$n$}};
 \node at (0,.8) {\scriptsize{$n$}};
 \node at (.2,.8) {\scriptsize{$i$}};
 \node at (.6,.8) {\scriptsize{$i$}};
 \node at (.8,.8) {\scriptsize{$j$}};
 \node at (1.2,.8) {\scriptsize{$j$}};
 \node at (1.4,.8) {\scriptsize{$k$}};
 \node at (1.8,.8) {\scriptsize{$k$}};
 \node at (.2,-.8) {\scriptsize{$i$}};
 \node at (.6,-.8) {\scriptsize{$i$}};
 \node at (.8,-.8) {\scriptsize{$j$}};
 \node at (1.2,-.8) {\scriptsize{$j$}};
 \node at (1.4,-.8) {\scriptsize{$k$}};
 \node at (1.8,-.8) {\scriptsize{$k$}};
\end{tikzpicture}
&
d^{{-}j{-}k}\,
\begin{tikzpicture}[baseline=-.1cm]
 \draw (.6,.6) arc (-180:0:.2cm);
 \draw (.6,-.6) arc (180:0:.2cm);
 \draw (1.4,.6) arc (-180:0:.2cm);
 \draw (1.4,-.6) arc (180:0:.2cm);
 \draw (0,-.6) -- (0,.6);
 \roundNbox{unshaded}{(0,0)}{.2}{0}{0}{}
 \node at (0,-.8) {\scriptsize{$n{+}2i$}};
 \node at (0,.8) {\scriptsize{$n{+}2i$}};
 \node at (.6,.8) {\scriptsize{$j$}};
 \node at (1,.8) {\scriptsize{$j$}};
 \node at (1.4,.8) {\scriptsize{$k$}};
 \node at (1.8,.8) {\scriptsize{$k$}};
 \node at (.6,-.8) {\scriptsize{$j$}};
 \node at (1,-.8) {\scriptsize{$j$}};
 \node at (1.4,-.8) {\scriptsize{$k$}};
 \node at (1.8,-.8) {\scriptsize{$k$}};
\end{tikzpicture}
&
d^{{-}j{-}k}\,
\begin{tikzpicture}[baseline=-.1cm]
 \draw (.6,.6) arc (-180:0:.2cm);
 \draw (.6,-.6) -- (.6,.1) arc (180:0:.2cm) -- (1,-.6);
 \draw (1.4,.6) arc (-180:0:.2cm);
 \draw (1.4,-.6) arc (180:0:.2cm);
 \draw (0,-.6) -- (0,.6);
 \roundNbox{unshaded}{(0,-.1)}{.2}{0}{.6}{}
 \node at (0,-.8) {\scriptsize{$n{+}2i$}};
 \node at (0,.8) {\scriptsize{$n{+}2i$}};
 \node at (.6,.8) {\scriptsize{$j$}};
 \node at (.6,-.8) {\scriptsize{$j$}};
 \node at (1.4,.8) {\scriptsize{$k$}};
 \node at (1.8,.8) {\scriptsize{$k$}};
 \node at (1,.8) {\scriptsize{$j$}};
 \node at (1,-.8) {\scriptsize{$j$}};
 \node at (1.4,-.8) {\scriptsize{$k$}};
 \node at (1.8,-.8) {\scriptsize{$k$}};
\end{tikzpicture}
&
d^{{-}j{-}k}\,
\begin{tikzpicture}[baseline=-.1cm]
 \draw (.6,-.6) -- (.6,0) .. controls ++(90:.45cm) and ++(90:.45cm) ..  (1.6,0) -- (1.6,-.6);
 \draw (.8,-.6) -- (.8,0) .. controls ++(90:.3cm) and ++(90:.3cm) ..  (1.2,0) -- (1.2,-.6);
 \draw (.6,.6) arc (-180:0:.2cm);
 \draw (1.2,.6) arc (-180:0:.2cm);
 \draw (0,-.6) -- (0,.6);
 \roundNbox{unshaded}{(0,-.2)}{.2}{0}{.8}{}
 \node at (0,-.8) {\scriptsize{$n{+}2i$}};
 \node at (0,.8) {\scriptsize{$n{+}2i$}};
 \node at (.6,.8) {\scriptsize{$j$}};
 \node at (1,.8) {\scriptsize{$j$}};
 \node at (1.2,.8) {\scriptsize{$k$}};
 \node at (1.6,.8) {\scriptsize{$k$}};
 \node at (.6,-.8) {\scriptsize{$j$}};
 \node at (.8,-.8) {\scriptsize{$k$}};
 \node at (1.2,-.8) {\scriptsize{$k$}};
 \node at (1.6,-.8) {\scriptsize{$j$}};
\end{tikzpicture}
\\
d^{{-}i{-}j{-}k}\,
\begin{tikzpicture}[baseline=-.1cm, yscale = -1]
 \draw (.2,-.6) -- (.2,0) .. controls ++(90:.45cm) and ++(90:.45cm) .. (1.2,0) -- (1.2,-.6);
 \draw (.4,-.6) -- (.4,0) .. controls ++(90:.2cm) and ++(90:.2cm) ..  (1,0) -- (1,-.6);
 \draw (.2,.6) arc (-180:0:.2cm);
 \draw (.8,.6) arc (-180:0:.2cm);
 \draw (1.4,.6) arc (-180:0:.2cm);
 \draw (1.4,-.6) arc (180:0:.2cm);
 \draw (0,-.6) -- (0,.6);
 \roundNbox{unshaded}{(0,-.2)}{.2}{0}{.4}{}
 \node at (0,-.8) {\scriptsize{$n$}};
 \node at (0,.8) {\scriptsize{$n$}};
 \node at (.2,.8) {\scriptsize{$i$}};
 \node at (.6,.8) {\scriptsize{$i$}};
 \node at (.8,.8) {\scriptsize{$j$}};
 \node at (1.2,.8) {\scriptsize{$j$}};
 \node at (1.4,.8) {\scriptsize{$k$}};
 \node at (1.8,.8) {\scriptsize{$k$}};
 \node at (.2,-.8) {\scriptsize{$i$}};
 \node at (.4,-.8) {\scriptsize{$j$}};
 \node at (1,-.8) {\scriptsize{$j$}};
 \node at (1.2,-.8) {\scriptsize{$i$}};
 \node at (1.4,-.8) {\scriptsize{$k$}};
 \node at (1.8,-.8) {\scriptsize{$k$}};
\end{tikzpicture}
&
d^{-j-k}
\,
\begin{tikzpicture}[baseline=-.1cm, yscale =-1]
 \draw (.6,.6) arc (-180:0:.2cm);
 \draw (.6,-.6) -- (.6,.1) arc (180:0:.2cm) -- (1,-.6);
 \draw (1.4,.6) arc (-180:0:.2cm);
 \draw (1.4,-.6) arc (180:0:.2cm);
 \draw (0,-.6) -- (0,.6);
 \roundNbox{unshaded}{(0,-.1)}{.2}{0}{.6}{}
 \node at (0,-.8) {\scriptsize{$n{+}2i$}};
 \node at (0,.8) {\scriptsize{$n{+}2i$}};
 \node at (.6,.8) {\scriptsize{$j$}};
 \node at (.6,-.8) {\scriptsize{$j$}};
 \node at (1.4,.8) {\scriptsize{$k$}};
 \node at (1.8,.8) {\scriptsize{$k$}};
 \node at (1,.8) {\scriptsize{$j$}};
 \node at (1,-.8) {\scriptsize{$j$}};
 \node at (1.4,-.8) {\scriptsize{$k$}};
 \node at (1.8,-.8) {\scriptsize{$k$}};
\end{tikzpicture}
&
d^{-k}
\begin{tikzpicture}[baseline=-.1cm]
 \draw (0,-.6) -- (0,.6);
 \draw (.8,.6) arc (-180:0:.2cm);
 \draw (.8,-.6) arc (180:0:.2cm);
 \roundNbox{unshaded}{(0,0)}{.2}{0}{0}{}
 \node at (0,-.8) {\scriptsize{$n{+}2i{+}2j$}};
 \node at (0,.8) {\scriptsize{$n{+}2i{+}2j$}};
 \node at (.8,.8) {\scriptsize{$k$}};
 \node at (1.2,.8) {\scriptsize{$k$}};
 \node at (.8,-.8) {\scriptsize{$k$}};
 \node at (1.2,-.8) {\scriptsize{$k$}};
\end{tikzpicture}
&
d^{-k}\,
\begin{tikzpicture}[baseline=-.1cm]
 \draw (0,-.6) -- (0,.6);
 \draw (1,.6) arc (-180:0:.2cm);
 \draw (1,-.6) -- (1,.1) arc (180:0:.2cm) -- (1.4,-.6);
 \roundNbox{unshaded}{(0,-.1)}{.2}{0}{1}{}
 \node at (0,-.8) {\scriptsize{$n{+}2i{+}2j$}};
 \node at (0,.8) {\scriptsize{$n{+}2i{+}2j$}};
 \node at (1,.8) {\scriptsize{$k$}};
 \node at (1.4,.8) {\scriptsize{$k$}};
 \node at (1,-.8) {\scriptsize{$k$}};
 \node at (1.4,-.8) {\scriptsize{$k$}};
\end{tikzpicture}
\\
d^{{-}i{-}j{-}k}\,
\begin{tikzpicture}[baseline=-.1cm, yscale = -1]
 \draw (.2,-.6) -- (.2,0) .. controls ++(90:.45cm) and ++(90:.45cm) .. (1.8,0) -- (1.8,-.6);
 \draw (.4,-.6) -- (.4,0) .. controls ++(90:.3cm) and ++(90:.3cm) ..  (1.4,0) -- (1.4,-.6);
 \draw (.6,-.6) -- (.6,0) .. controls ++(90:.15cm) and ++(90:.15cm) ..  (1,0) -- (1,-.6);
 \draw (.2,.6) arc (-180:0:.2cm);
 \draw (.8,.6) arc (-180:0:.2cm);
 \draw (1.4,.6) arc (-180:0:.2cm);
 \draw (0,-.6) -- (0,.6);
 \roundNbox{unshaded}{(0,-.2)}{.2}{0}{.6}{}
 \node at (0,-.8) {\scriptsize{$n$}};
 \node at (0,.8) {\scriptsize{$n$}};
 \node at (.2,.8) {\scriptsize{$i$}};
 \node at (.6,.8) {\scriptsize{$i$}};
 \node at (.8,.8) {\scriptsize{$j$}};
 \node at (1.2,.8) {\scriptsize{$j$}};
 \node at (1.4,.8) {\scriptsize{$k$}};
 \node at (1.8,.8) {\scriptsize{$k$}};
 \node at (.2,-.8) {\scriptsize{$i$}};
 \node at (.4,-.8) {\scriptsize{$j$}};
 \node at (.6,-.8) {\scriptsize{$k$}};
 \node at (1,-.8) {\scriptsize{$k$}};
 \node at (1.4,-.8) {\scriptsize{$j$}};
 \node at (1.8,-.8) {\scriptsize{$i$}};
\end{tikzpicture}
&
d^{-j-k}
\begin{tikzpicture}[baseline=-.1cm, yscale = -1]
 \draw (.6,-.6) -- (.6,0) .. controls ++(90:.45cm) and ++(90:.45cm) ..  (1.6,0) -- (1.6,-.6);
 \draw (.8,-.6) -- (.8,0) .. controls ++(90:.3cm) and ++(90:.3cm) ..  (1.2,0) -- (1.2,-.6);
 \draw (.6,.6) arc (-180:0:.2cm);
 \draw (1.2,.6) arc (-180:0:.2cm);
 \draw (0,-.6) -- (0,.6);
 \roundNbox{unshaded}{(0,-.2)}{.2}{0}{.8}{}
 \node at (0,-.8) {\scriptsize{$n{+}2i$}};
 \node at (0,.8) {\scriptsize{$n{+}2i$}};
 \node at (.6,.8) {\scriptsize{$j$}};
 \node at (1,.8) {\scriptsize{$j$}};
 \node at (1.2,.8) {\scriptsize{$k$}};
 \node at (1.6,.8) {\scriptsize{$k$}};
 \node at (.6,-.8) {\scriptsize{$j$}};
 \node at (.8,-.8) {\scriptsize{$k$}};
 \node at (1.2,-.8) {\scriptsize{$k$}};
 \node at (1.6,-.8) {\scriptsize{$j$}};
\end{tikzpicture}
&
d^{-k}
\begin{tikzpicture}[baseline=-.1cm, yscale = -1]
 \draw (0,-.6) -- (0,.6);
 \draw (1,.6) arc (-180:0:.2cm);
 \draw (1,-.6) -- (1,.1) arc (180:0:.2cm) -- (1.4,-.6);
 \roundNbox{unshaded}{(0,-.1)}{.2}{0}{1}{}
 \node at (0,-.8) {\scriptsize{$n{+}2i{+}2j$}};
 \node at (0,.8) {\scriptsize{$n{+}2i{+}2j$}};
 \node at (1,.8) {\scriptsize{$k$}};
 \node at (1.4,.8) {\scriptsize{$k$}};
 \node at (1,-.8) {\scriptsize{$k$}};
 \node at (1.4,-.8) {\scriptsize{$k$}};
\end{tikzpicture}
&
\begin{tikzpicture}[baseline=-.1cm, yscale =-1]
 \draw (0,-.6) -- (0,.6);
 \roundNbox{unshaded}{(0,0)}{.2}{0}{0}{}
 \node at (0,-.8) {\scriptsize{$n{+}2i{+}2j{+}2k$}};
 \node at (0,.8) {\scriptsize{$n{+}2i{+}2j{+}2k$}};
\end{tikzpicture}
\end{pmatrix}
$}
\end{equation}
where $p\in \operatorname{Mat}_4(M_{n+2i+2j+2k})$ is the following projection:
$$
p:=
\operatorname{diag}\left(
d^{{-}i{-}j{-}k}\,
\begin{tikzpicture}[baseline = -.1cm]
 \draw[thick, rounded corners = 5pt] (-.2,-.3) rectangle (2,.3);
 \draw (.2,.3) arc (-180:0:.2cm);
 \draw (.2,-.3) arc (180:0:.2cm);
 \draw (.8,.3) arc (-180:0:.2cm);
 \draw (.8,-.3) arc (180:0:.2cm);
 \draw (1.4,.3) arc (-180:0:.2cm);
 \draw (1.4,-.3) arc (180:0:.2cm);
 \draw (0,-.3) -- (0,.3);
 \node at (0,-.5) {\scriptsize{$n$}};
 \node at (0,.5) {\scriptsize{$n$}};
 \node at (.2,.5) {\scriptsize{$i$}};
 \node at (.6,.5) {\scriptsize{$i$}};
 \node at (.8,.5) {\scriptsize{$j$}};
 \node at (1.2,.5) {\scriptsize{$j$}};
 \node at (1.4,.5) {\scriptsize{$k$}};
 \node at (1.8,.5) {\scriptsize{$k$}};
 \node at (.2,-.5) {\scriptsize{$i$}};
 \node at (.6,-.5) {\scriptsize{$i$}};
 \node at (.8,-.5) {\scriptsize{$j$}};
 \node at (1.2,-.5) {\scriptsize{$j$}};
 \node at (1.4,-.5) {\scriptsize{$k$}};
 \node at (1.8,-.5) {\scriptsize{$k$}};
\end{tikzpicture}
\,,
d^{-j-k}
\begin{tikzpicture}[baseline = -.1cm]
 \draw[thick, rounded corners = 5pt] (-.2,-.3) rectangle (1.8,.3);
 \draw (.6,.3) arc (-180:0:.2cm);
 \draw (.6,-.3) arc (180:0:.2cm);
 \draw (1.2,.3) arc (-180:0:.2cm);
 \draw (1.2,-.3) arc (180:0:.2cm);
 \draw (0,-.3) -- (0,.3);
 \node at (0,-.5) {\scriptsize{$n{+}2i$}};
 \node at (0,.5) {\scriptsize{$n{+}2i$}};
 \node at (.6,.5) {\scriptsize{$j$}};
 \node at (1,.5) {\scriptsize{$j$}};
 \node at (1.2,.5) {\scriptsize{$k$}};
 \node at (1.6,.5) {\scriptsize{$k$}};
 \node at (.6,-.5) {\scriptsize{$j$}};
 \node at (1,-.5) {\scriptsize{$j$}};
 \node at (1.2,-.5) {\scriptsize{$k$}};
 \node at (1.6,-.5) {\scriptsize{$k$}};
\end{tikzpicture}
\,,
d^{-k}
\begin{tikzpicture}[baseline = -.1cm]
 \draw[thick, rounded corners = 5pt] (-.2,-.3) rectangle (1.4,.3);
 \draw (.8,.3) arc (-180:0:.2cm);
 \draw (.8,-.3) arc (180:0:.2cm);
 \draw (0,-.3) -- (0,.3);
 \node at (0,-.5) {\scriptsize{$n{+}2i{+}2j$}};
 \node at (0,.5) {\scriptsize{$n{+}2i{+}2j$}};
 \node at (.8,.5) {\scriptsize{$k$}};
 \node at (1.2,.5) {\scriptsize{$k$}};
 \node at (.8,-.5) {\scriptsize{$k$}};
 \node at (1.2,-.5) {\scriptsize{$k$}};
\end{tikzpicture}
\,,
1_{n+2i+2j+2k}
\right).
$$
In Proposition \ref{prop:InjectiveAlgebraMap} below, we verify the map $\pi$ is an injective unital algebra map satisfying $\pi(x^\dag) = \pi(x)^*$, and is thus an isomorphism onto its image.
Thus $\im(\pi)$ is a unital $*$-subalgebra of the finite dimensional von Neumann algebra $p\operatorname{Mat}_4(M_{n+2i+2j+2k})p$, which means $\im(\pi)$ is a von Neumann algebra by the finite dimensional bicommutant theorem \cite[Thm.~3.2.1]{JonesVNA}.
By looking at the $2\times 2$ and $3\times 3$ corners of the linking algebra $\cL$ and \eqref{eq:4x4Map},
we immediately see: 
\begin{itemize}
\item
$\dag$ is a dagger structure on $\cM_0$,
\item
for every $f\in \cM_0([n]\to [n+2j])$, there is a $g\in \cM_0([n] \to [n])$ and an $h\in \cM_0([n+2j]\to [n+2j])$ such that 
$f^\dag \circ f = g^\dag \circ g$ and $f\circ f^\dag = h^\dag \circ h$, and
\item
there are (pullback) norms on the (finite dimensional) $\Hom$-spaces $\cM_0([n]\to [n+2i])$ and $\cM_0([n+2j] \to [n])$ which are sub-multiplicative with respect to composition and which satisfy the $\Cstar$ axiom $\|f^\dag \circ f \| = \|f^2\|$.\footnote{
Notice that in a $\Cstar$ category, these norms can be recovered from spectral radii together with the positivity and $\Cstar$ axioms.
Thus these norms are \emph{not} part of the data of the $\Cstar$ category.
}
\end{itemize}
Hence $\cM_0$ is $\Cstar$, and thus so is its unitary Karoubi completion $\cM$.
\end{defn}

\begin{prop}
\label{prop:InjectiveAlgebraMap}
The map $\pi: \cL \to p\operatorname{Mat}_4(M_{n+2i+2j+2k})p$ from Definition \ref{def:MarkovProjections} is an injective unital algebra map such that $\pi(x^\dag) = \pi(x)^*$ for all $x\in \cL$.
\end{prop}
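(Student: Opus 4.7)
The plan is to verify the four claims (well-definedness of the target corner, unitality, compatibility with $\dag/*$, and multiplicativity) essentially by inspection of the diagrams in \eqref{eq:4x4Map}, leaving multiplicativity as the substantive case analysis, and then to deduce injectivity from Proposition \ref{prop:ElementaryMarkov}\ref{EP:Injective}.

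First, I will observe that for each pair of indices $(a,b)$, the $(a,b)$-entry of $\pi(x)$ lies in $p_a M_{n+2i+2j+2k} p_b$, since the diagram defining $\pi_{ab}$ is of the form $p_a \cdot (\text{gluing with } x_{ab}) \cdot p_b$; here $p_a$ denotes the $a$-th diagonal entry of $p$. Unitality $\pi(1_\cL)=p$ follows because when $x_{aa}=1_{M_{?}}$, the box in the $(a,a)$-diagram becomes an identity strand, and the diagram collapses exactly to $p_a$. Compatibility with $\dag$ comes from the observation that the diagram representing $\pi_{ba}$ is the vertical reflection of the diagram representing $\pi_{ab}$ (including the identical normalizing power of $d$), so applying $\dag = *$ to the box and reflecting commutes with $*$ in $M_{n+2i+2j+2k}$.

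The main step is multiplicativity. Given $x,y\in\cL$ supported in single entries $x_{ab}$ and $y_{cd}$, the product $\pi(x)\pi(y)$ in $p\operatorname{Mat}_4(M_{n+2i+2j+2k})p$ is supported in entry $(a,d)$ and vanishes unless $b=c$ (the orthogonal cabled-Jones projections $p_b,p_c$ collide in the middle). When $b=c$, stacking the two diagrams produces a middle band of cups, caps, and cabled Jones projections; I will reduce it using the Temperley-Lieb-Jones relations \ref{eq:MarkovJonesProjections}, the implementation of the conditional expectation by Jones projections \ref{eq:MarkovImplement}, the index condition \ref{eq:MarkovIndex}, and the pull-down property \ref{eq:MarkovPullDown} to the single diagram defining $\pi_{ad}$ evaluated on the product given by the appropriate composition rule. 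Concretely, the three composition rules \ref{compose:upup}, \ref{compose:updown}, \ref{compose:downup} in Definition \ref{def:MarkovProjections} correspond exactly to the three cases $a<b<d$, $a<b>d$ (or $a>b<d$ with the analogous arrangement), and $a>b<d$ respectively; the factor $d^i E^{n+2i+j}_{n+i+j}$ (or its analogue) is precisely what the middle TLJ loop inserts after using \ref{EP:UniquePullDown} to convert a product of the form $z e_{n+i+j}$ into $d^2 E_{n+i+j+1}(z e_{n+i+j}) e_{n+i+j}$. The remaining cases follow from these three by applying the already-established $\pi(x^\dag)=\pi(x)^*$.

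Finally, for injectivity: since $\pi$ preserves the $4\times 4$ entry decomposition, it suffices to show each entry map $\pi_{ab}:\cM_0([\,\cdot\,]\to[\,\cdot\,])\to p_a M_{n+2i+2j+2k}p_b$ is injective. Each such map has the form $y\mapsto u\cdot y\cdot v$ where $u,v$ are TLJ diagrams built from a sequence of Jones projections together with identity strands. Using Proposition \ref{prop:MultistepJonesProjections} to view cabled Jones projections as ordinary Jones projections in an appropriate multi-step Markov tower, iterated application of \ref{EP:Injective} shows that each of these maps is injective. The main obstacle in the whole proof is the bookkeeping for multiplicativity, but the payoff---that $\im(\pi)$ sits as a unital $*$-subalgebra of a finite-dimensional von Neumann algebra and is therefore automatically associative and $\Cstar$---is what makes this indirect route preferable to checking associativity and the $\Cstar$ axiom on $\cM_0$ directly.
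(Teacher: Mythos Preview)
Your approach is essentially the same as the paper's: reduce to the three composition cases, use the $\dag$-compatibility to halve the work, and push the diagrammatic simplification through with the Markov-tower axioms. Two points of comparison are worth noting.

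First, the paper isolates a single technical identity (Lemma~\ref{lem:LeftKink}) that does the heavy lifting in Cases~1 and~2: it says that applying $d^i E^{n+j+2i}_{n+j+i}$ together with the ``crossed-cup'' TLJ element followed by the appropriate nested-cup projection recovers the original $x$ times the standard nested-cup projection. Your sketch gestures at this (``the middle band of cups, caps, and cabled Jones projections reduces using \ref{eq:MarkovJonesProjections}--\ref{eq:MarkovPullDown}''), but the lemma itself takes a careful six-line computation, and the Case~2 argument additionally invokes Proposition~\ref{prop:MultistepJonesProjections} to bring a cabled conditional expectation into play. Without isolating this lemma, the verification of multiplicativity is not really ``bookkeeping''; so while your plan is right, the substantive content lives exactly where you say ``I will reduce it.''

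Second, the paper treats injectivity, unitality, and $\dag$-compatibility as clear by inspection of \eqref{eq:4x4Map}. Your more explicit argument for injectivity via iterated \ref{EP:Injective} through Proposition~\ref{prop:MultistepJonesProjections} is correct and arguably cleaner than leaving it to inspection, but it is not needed: each entry map visibly has a left inverse given by capping off the added cups, which immediately gives injectivity.
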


We begin with the following lemma.

 \begin{lem}
 \label{lem:LeftKink}
For all $x \in M_{n+2i+j}$,
$
d^i
E^{n+j+2i}_{n+j+i}
\left(
x
\cdot
\begin{tikzpicture}[baseline = .3cm]
 \draw[thick, rounded corners = 5pt] (0,0) rectangle (1.5,.8);
 \draw (.2,0) -- (.2,.8);
 \draw (.5,.8) arc (-180:0:.2cm);
 \draw (.5,0) .. controls ++(90:.3cm) and ++(270:.3cm) .. (1.2,.8);
 \draw (.8,0) arc (180:0:.2cm);
 \node at (.2,1) {\scriptsize{$n$}};
 \node at (.5,1) {\scriptsize{$i$}};
 \node at (1.2,1) {\scriptsize{$j$}};
 \node at (.8,-.2) {\scriptsize{$i$}};
\end{tikzpicture}
\right)
\cdot
\begin{tikzpicture}[baseline=.3cm]
 \draw[thick, rounded corners = 5pt] (0,0) rectangle (1.6,.8);
 \draw (.2,0) -- (.2,.8);
 \draw (0.4,.8) arc (-180:0:.5);
 \draw (0.6,.8) arc (-180:0:.3);
 \draw (0.4,0) arc (180:0:.2);
 \draw (1,0) arc (180:0:.2);
 \node at (.2,-.2) {\scriptsize{$n$}};
 \node at (.4,1) {\scriptsize{$j$}};
 \node at (.6,1) {\scriptsize{$i$}};
 \node at (.4,-.2) {\scriptsize{$i$}};
 \node at (1,-.2) {\scriptsize{$j$}};
\end{tikzpicture}
=
x\cdot
\begin{tikzpicture}[baseline=.3cm]
 \draw[thick, rounded corners = 5pt] (0,0) rectangle (1.6,.8);
 \draw (.2,0) -- (.2,.8);
 \draw (0.4,0) arc (180:0:.2);
 \draw (1,0) arc (180:0:.2);
 \draw (0.4,.8) arc (-180:0:.2);
 \draw (1,.8) arc (-180:0:.2);
 \node at (.2,-.2) {\scriptsize{$n$}};
 \node at (.4,-.2) {\scriptsize{$i$}};
 \node at (1,-.2) {\scriptsize{$j$}};
 \node at (.4,1) {\scriptsize{$i$}};
 \node at (1,1) {\scriptsize{$j$}};
\end{tikzpicture}
$\,.
\end{lem}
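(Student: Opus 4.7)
The plan is to recognize both sides as diagrams in the Temperley--Lieb--Jones subalgebra of $M_\bullet$ (generated by the Jones projections $e_k$) acted on from the left by $x$, and then to reduce the identity to a purely diagrammatic isotopy together with a conditional-expectation computation.

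The key preliminary step is to observe that $d^i E^{n+j+2i}_{n+j+i}$, iterated from \ref{eq:MarkovImplement} and \ref{eq:MarkovIndex}, acts on $y \in M_{n+j+2i}$ by ``closing off'' its rightmost $i$ strands on the right. Concretely, one uses that $d^2 E_{k+1}(y e_k) = y$ for $y \in M_k$ (which follows from \ref{EP:UniquePullDown} and \ref{eq:MarkovIndex}) together with $e_k z e_k = E_k(z) e_k$ to express the composite $E_{n+j+i+1} \circ \cdots \circ E_{n+j+2i}$ as an $i$-fold Temperley--Lieb sandwich producing exactly the ``right-closure'' cup-cap diagram consisting of $i$ nested caps above and $i$ nested cups below on the rightmost strands. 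The $d^i$ prefactor is precisely what is needed to cancel the $d^{-i}$ that accumulates from the $i$ applications.

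Applying this to $y = x \cdot D_1$, where $D_1$ is the inner ``left-kink'' tangle, the element $x$ commutes through the right-closure (its through-strings remain through-strings, and the closure touches only strands strictly to the right of the features of $D_1$ that intersect the rightmost $i$ positions). Thus the left-hand side simplifies to $x \cdot D_1^{\mathrm{cl}} \cdot D_2$, where $D_1^{\mathrm{cl}}$ is the Temperley--Lieb--Jones diagram obtained from $D_1$ by capping its rightmost $i$ strands off on the right.

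The final step is a pure Kauffman-diagrammatic calculation inside the Temperley--Lieb--Jones subalgebra: I would verify that $D_1^{\mathrm{cl}} \cdot D_2$ is planar-isotopic to the RHS diagram. The composition produces nested cups and caps which can be straightened using the zig-zag isotopy (encoded algebraically by the relation $e_k e_{k\pm 1} e_k = d^{-2} e_k$), turning the kink-plus-closure into the side-by-side cup/cap structure on the right-hand side. The main obstacle is the bookkeeping: tracking the powers of $d$ arising from the $i$ conditional-expectation steps, from loop contractions during the straightening, and from any multistep-Jones-projection rewrites analogous to \eqref{eq:MultistepRelation}. Since every manipulation takes place in the image of Temperley--Lieb--Jones inside $M_\bullet$, no input beyond axioms \ref{eq:MarkovJonesProjections}--\ref{eq:MarkovPullDown} is needed.
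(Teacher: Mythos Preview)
There is a genuine gap. Your central move—that $d^i E^{n+j+2i}_{n+j+i}$ acts on an arbitrary $y\in M_{n+j+2i}$ by ``closing off its rightmost $i$ strands,'' and that therefore ``$x$ commutes through the right-closure''—is not available in an abstract Markov tower. The identification of conditional expectation with right-capping is a \emph{planar-algebraic} fact; here we only have the axioms \ref{eq:MarkovJonesProjections}--\ref{eq:MarkovPullDown}, and for a general $y$ the element $E_k(y)\in M_{k-1}$ has no diagrammatic description. An arbitrary $x\in M_{n+2i+j}$ has no ``through-strings,'' and since $E^{n+j+2i}_{n+j+i}(xD_1)\in M_{n+j+i}$ while $x\notin M_{n+j+i}$, you cannot rewrite this as $x\cdot(\text{some TL element})$. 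The formula $d^2E_{k+1}(ye_k)=y$ you cite requires $y\in M_k$; after pull-down \ref{eq:MarkovPullDown} one gets \emph{some} element of the smaller algebra, but not $x$ itself, so the subsequent ``pure Kauffman isotopy'' never gets off the ground.

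What survives from your idea is the multistep implementing relation $E^{n+j+2i}_{n+j+i}(z)\,g=g\,z\,g$ for $g=f^{n+j+2i}_{n+j+i}$ (Proposition \ref{prop:MultistepJonesProjections}), but $g$ lives in $M_{n+j+3i}$, not in $M_{n+2i+2j}$, so it does not interact directly with the outer factor $D_2$. The paper's proof resolves this by working in $M_{n+j+3i}$: it starts from the right-hand side, inserts an auxiliary $d^iE^{n+j+3i}_{n+j+2i}$ applied to an explicit TL element (which evaluates to $1$), and then repeatedly uses the bimodule property $E(azb)=a\,E(z)\,b$ for $a,b$ in the smaller algebra together with \ref{eq:MarkovImplement} to shuttle $x$ and the TL factors in and out of the expectations. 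The crucial point is that $x$ is moved only via the bimodule property—never by any diagrammatic commutation—and the conditional expectations are only ever evaluated on explicit Temperley--Lieb elements, where the computation is legitimate.
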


\begin{proof}

We calculate
\begin{align*}
x\cdot
\begin{tikzpicture}[baseline=.3cm]
 \draw[thick, rounded corners = 5pt] (0,0) rectangle (1.6,.8);
 \draw (.2,0) -- (.2,.8);
 \draw (0.4,0) arc (180:0:.2);
 \draw (1,0) arc (180:0:.2);
 \draw (0.4,.8) arc (-180:0:.2);
 \draw (1,.8) arc (-180:0:.2);
 \node at (.2,-.2) {\scriptsize{$n$}};
 \node at (.4,-.2) {\scriptsize{$i$}};
 \node at (1,-.2) {\scriptsize{$j$}};
 \node at (.4,1) {\scriptsize{$i$}};
 \node at (1,1) {\scriptsize{$j$}};
\end{tikzpicture}
&=
d^{i}
E^{n+j+3i}_{n+j+2i}\left(
\begin{tikzpicture}[baseline = .3cm]
 \draw[thick, rounded corners = 5pt] (0,0) rectangle (1.4,.8);
 \draw (.2,0) -- (.2,.8);
 \draw (.4,0) -- (.4,.8);
 \draw (.6,0) -- (.6,.8);
 \draw (.8,.8) arc (-180:0:.2cm);
 \draw (.8,0) arc (180:0:.2cm);
 \node at (.8,1) {\scriptsize{$i$}};
 \node at (.8,-.2) {\scriptsize{$i$}};
 \node at (.2,1) {\scriptsize{$n$}};
 \node at (.4,1) {\scriptsize{$i$}};
 \node at (.6,1) {\scriptsize{$j$}};
\end{tikzpicture}
\right)
\cdot
x\cdot
d^{-i}\,
\begin{tikzpicture}[baseline = .3cm]
 \draw[thick, rounded corners = 5pt] (0,0) rectangle (1.6,.8);
 \draw (.2,0) -- (.2,.8);
 \draw (1.4,0) -- (1.4,.8);
 \draw (.5,.8) arc (-180:0:.2cm);
 \draw (.5,0) .. controls ++(90:.3cm) and ++(270:.3cm) .. (1.2,.8);
 \draw (.8,0) arc (180:0:.2cm);
 \node at (.2,1) {\scriptsize{$n$}};
 \node at (.5,1) {\scriptsize{$i$}};
 \node at (1.2,1) {\scriptsize{$j$}};
 \node at (.8,-.2) {\scriptsize{$i$}};
 \node at (1.4,1) {\scriptsize{$j$}};
\end{tikzpicture}
\cdot
\begin{tikzpicture}[baseline=.3cm]
 \draw[thick, rounded corners = 5pt] (0,0) rectangle (1.6,.8);
 \draw (.2,0) -- (.2,.8);
 \draw (0.4,.8) arc (-180:0:.5);
 \draw (0.6,.8) arc (-180:0:.3);
 \draw (0.4,0) arc (180:0:.2);
 \draw (1,0) arc (180:0:.2);
 \node at (.2,-.2) {\scriptsize{$n$}};
 \node at (.4,1) {\scriptsize{$j$}};
 \node at (.6,1) {\scriptsize{$i$}};
 \node at (.4,-.2) {\scriptsize{$i$}};
 \node at (1,-.2) {\scriptsize{$j$}};
\end{tikzpicture}
\displaybreak[1]\\&=
E^{n+j+3i}_{n+j+2i}\left(
\begin{tikzpicture}[baseline = .3cm]
 \draw[thick, rounded corners = 5pt] (0,0) rectangle (1.4,.8);
 \draw (.2,0) -- (.2,.8);
 \draw (.4,0) -- (.4,.8);
 \draw (.6,0) -- (.6,.8);
 \draw (.8,.8) arc (-180:0:.2cm);
 \draw (.8,0) arc (180:0:.2cm);
 \node at (.8,1) {\scriptsize{$i$}};
 \node at (.8,-.2) {\scriptsize{$i$}};
 \node at (.2,1) {\scriptsize{$n$}};
 \node at (.4,1) {\scriptsize{$i$}};
 \node at (.6,1) {\scriptsize{$j$}};
\end{tikzpicture}
\cdot
x\cdot
\begin{tikzpicture}[baseline = .3cm]
 \draw[thick, rounded corners = 5pt] (0,0) rectangle (1.6,.8);
 \draw (.2,0) -- (.2,.8);
 \draw (1.4,0) -- (1.4,.8);
 \draw (.5,.8) arc (-180:0:.2cm);
 \draw (.5,0) .. controls ++(90:.3cm) and ++(270:.3cm) .. (1.2,.8);
 \draw (.8,0) arc (180:0:.2cm);
 \node at (.2,1) {\scriptsize{$n$}};
 \node at (.5,1) {\scriptsize{$i$}};
 \node at (1.2,1) {\scriptsize{$j$}};
 \node at (.8,-.2) {\scriptsize{$i$}};
 \node at (1.4,1) {\scriptsize{$i$}};
\end{tikzpicture}
\right)
\cdot
\begin{tikzpicture}[baseline=.3cm]
 \draw[thick, rounded corners = 5pt] (0,0) rectangle (1.6,.8);
 \draw (.2,0) -- (.2,.8);
 \draw (0.4,.8) arc (-180:0:.5);
 \draw (0.6,.8) arc (-180:0:.3);
 \draw (0.4,0) arc (180:0:.2);
 \draw (1,0) arc (180:0:.2);
 \node at (.2,-.2) {\scriptsize{$n$}};
 \node at (.4,1) {\scriptsize{$j$}};
 \node at (.6,1) {\scriptsize{$i$}};
 \node at (.4,-.2) {\scriptsize{$i$}};
 \node at (1,-.2) {\scriptsize{$j$}};
\end{tikzpicture}
\displaybreak[1]\\&=
E^{n+j+3i}_{n+j+2i}\left(
\begin{tikzpicture}[baseline = .3cm]
 \draw[thick, rounded corners = 5pt] (0,0) rectangle (1.4,.8);
 \draw (.2,0) -- (.2,.8);
 \draw (.4,0) -- (.4,.8);
 \draw (.6,0) -- (.6,.8);
 \draw (.8,.8) arc (-180:0:.2cm);
 \draw (.8,0) arc (180:0:.2cm);
 \node at (.8,1) {\scriptsize{$i$}};
 \node at (.8,-.2) {\scriptsize{$i$}};
 \node at (.2,1) {\scriptsize{$n$}};
 \node at (.4,1) {\scriptsize{$i$}};
 \node at (.6,1) {\scriptsize{$j$}};
\end{tikzpicture}
\cdot
x\cdot
\begin{tikzpicture}[baseline = -.5cm]
 \draw[thick, rounded corners = 5pt] (0,.8) rectangle (1.8,-1.6);
 \draw[dashed] (0,0) -- (1.8,0);
 \draw[dashed] (0,-.8) -- (1.8,-.8); 
 \draw (.2,-1.6) -- (.2,.8);
 \draw (1.6,.8) -- (1.6,0) arc (0:-180:.2cm) arc (0:180:.2cm) -- (.8,-.8) arc (-180:0:.2cm) arc (180:0:.2cm) -- (1.6,-1.6);
 \draw (.5,.8) arc (-180:0:.2cm);
 \draw (.5,-1.6) -- (.5,0) .. controls ++(90:.3cm) and ++(270:.3cm) .. (1.2,.8);
 \draw (.8,-1.6) arc (180:0:.2cm);
 \node at (.2,1) {\scriptsize{$n$}};
 \node at (.5,1) {\scriptsize{$i$}};
 \node at (1.2,1) {\scriptsize{$j$}};
 \node at (.8,-1.8) {\scriptsize{$i$}};
 \node at (1.6,1) {\scriptsize{$i$}};
\end{tikzpicture}
\right)
\cdot
\begin{tikzpicture}[baseline=.3cm]
 \draw[thick, rounded corners = 5pt] (0,0) rectangle (1.6,.8);
 \draw (.2,0) -- (.2,.8);
 \draw (0.4,.8) arc (-180:0:.5);
 \draw (0.6,.8) arc (-180:0:.3);
 \draw (0.4,0) arc (180:0:.2);
 \draw (1,0) arc (180:0:.2);
 \node at (.2,-.2) {\scriptsize{$n$}};
 \node at (.4,1) {\scriptsize{$j$}};
 \node at (.6,1) {\scriptsize{$i$}};
 \node at (.4,-.2) {\scriptsize{$i$}};
 \node at (1,-.2) {\scriptsize{$j$}};
\end{tikzpicture}
\displaybreak[1]\\&=
E^{n+j+3i}_{n+j+2i}\left(
d^i\,
E^{n+j+2i}_{n+j+i}\left(
x\cdot
\begin{tikzpicture}[baseline = .3cm]
 \draw[thick, rounded corners = 5pt] (0,0) rectangle (1.5,.8);
 \draw (.2,0) -- (.2,.8);
 \draw (.5,.8) arc (-180:0:.2cm);
 \draw (.5,0) .. controls ++(90:.3cm) and ++(270:.3cm) .. (1.2,.8);
 \draw (.8,0) arc (180:0:.2cm);
 \node at (.2,1) {\scriptsize{$n$}};
 \node at (.5,1) {\scriptsize{$i$}};
 \node at (1.2,1) {\scriptsize{$j$}};
 \node at (.8,-.2) {\scriptsize{$i$}};
\end{tikzpicture}
\right)
\cdot
\begin{tikzpicture}[baseline = -.1cm]
 \draw[thick, rounded corners = 5pt] (0,.8) rectangle (1.8,-.8);
 \draw[dashed] (0,0) -- (1.8,0);
 \draw (.2,-.8) -- (.2,.8);
 \draw (1.6,.8) -- (1.6,0) arc (0:-180:.2cm) arc (0:180:.2cm) -- (.8,-.8);
 \draw (.8,.8) arc (-180:0:.2cm);
 \draw (.5,-.8) -- (.5,.8);
 \draw (1.2,-.8) arc (180:0:.2cm);
 \node at (.2,1) {\scriptsize{$n$}};
 \node at (.5,1) {\scriptsize{$j$}};
 \node at (1.2,1) {\scriptsize{$i$}};
 \node at (1.6,1) {\scriptsize{$i$}};
 \node at (1.6,-1) {\scriptsize{$i$}};
\end{tikzpicture}
\right)
\cdot
\begin{tikzpicture}[baseline=.3cm]
 \draw[thick, rounded corners = 5pt] (0,0) rectangle (1.6,.8);
 \draw (.2,0) -- (.2,.8);
 \draw (0.4,.8) arc (-180:0:.5);
 \draw (0.6,.8) arc (-180:0:.3);
 \draw (0.4,0) arc (180:0:.2);
 \draw (1,0) arc (180:0:.2);
 \node at (.2,-.2) {\scriptsize{$n$}};
 \node at (.4,1) {\scriptsize{$j$}};
 \node at (.6,1) {\scriptsize{$i$}};
 \node at (.4,-.2) {\scriptsize{$i$}};
 \node at (1,-.2) {\scriptsize{$j$}};
\end{tikzpicture}
\displaybreak[1]\\&=
d^i
E^{n+j+2i}_{n+j+i}\left(
x\cdot
\begin{tikzpicture}[baseline = .3cm]
 \draw[thick, rounded corners = 5pt] (0,0) rectangle (1.5,.8);
 \draw (.2,0) -- (.2,.8);
 \draw (.5,.8) arc (-180:0:.2cm);
 \draw (.5,0) .. controls ++(90:.3cm) and ++(270:.3cm) .. (1.2,.8);
 \draw (.8,0) arc (180:0:.2cm);
 \node at (.2,1) {\scriptsize{$n$}};
 \node at (.5,1) {\scriptsize{$i$}};
 \node at (1.2,1) {\scriptsize{$j$}};
 \node at (.8,-.2) {\scriptsize{$i$}};
\end{tikzpicture}
\right)
\cdot
E^{n+j+3i}_{n+j+2i}\left(
\begin{tikzpicture}[baseline = -.1cm]
 \draw[thick, rounded corners = 5pt] (0,.8) rectangle (1.8,-.8);
 \draw[dashed] (0,0) -- (1.8,0);
 \draw (.2,-.8) -- (.2,.8);
 \draw (1.6,.8) -- (1.6,0) arc (0:-180:.2cm) arc (0:180:.2cm) -- (.8,-.8);
 \draw (.8,.8) arc (-180:0:.2cm);
 \draw (.5,-.8) -- (.5,.8);
 \draw (1.2,-.8) arc (180:0:.2cm);
 \node at (.2,1) {\scriptsize{$n$}};
 \node at (.5,1) {\scriptsize{$j$}};
 \node at (1.2,1) {\scriptsize{$i$}};
 \node at (1.6,1) {\scriptsize{$i$}};
 \node at (1.6,-1) {\scriptsize{$i$}};
\end{tikzpicture}
\right)
\cdot
\begin{tikzpicture}[baseline=.3cm]
 \draw[thick, rounded corners = 5pt] (0,0) rectangle (1.6,.8);
 \draw (.2,0) -- (.2,.8);
 \draw (0.4,.8) arc (-180:0:.5);
 \draw (0.6,.8) arc (-180:0:.3);
 \draw (0.4,0) arc (180:0:.2);
 \draw (1,0) arc (180:0:.2);
 \node at (.2,-.2) {\scriptsize{$n$}};
 \node at (.4,1) {\scriptsize{$j$}};
 \node at (.6,1) {\scriptsize{$i$}};
 \node at (.4,-.2) {\scriptsize{$i$}};
 \node at (1,-.2) {\scriptsize{$j$}};
\end{tikzpicture}
\displaybreak[1]\\&=
d^i
E^{n+j+2i}_{n+j+i}\left(
x\cdot
\begin{tikzpicture}[baseline = .3cm]
 \draw[thick, rounded corners = 5pt] (0,0) rectangle (1.5,.8);
 \draw (.2,0) -- (.2,.8);
 \draw (.5,.8) arc (-180:0:.2cm);
 \draw (.5,0) .. controls ++(90:.3cm) and ++(270:.3cm) .. (1.2,.8);
 \draw (.8,0) arc (180:0:.2cm);
 \node at (.2,1) {\scriptsize{$n$}};
 \node at (.5,1) {\scriptsize{$i$}};
 \node at (1.2,1) {\scriptsize{$j$}};
 \node at (.8,-.2) {\scriptsize{$i$}};
\end{tikzpicture}
\right)
\cdot
d^{-i}\,
\begin{tikzpicture}[baseline = .3cm]
 \draw[thick, rounded corners = 5pt] (0,.8) rectangle (1.8,0);
 \draw (.2,0) -- (.2,.8);
 \draw (1.5,.8) -- (1.5,0);
 \draw (.8,.8) arc (-180:0:.2cm);
 \draw (.5,0) -- (.5,.8);
 \draw (.8,0) arc (180:0:.2cm);
 \node at (.2,1) {\scriptsize{$n$}};
 \node at (.5,1) {\scriptsize{$j$}};
 \node at (1.2,1) {\scriptsize{$i$}};
 \node at (1.5,1) {\scriptsize{$j$}};
 \node at (.8,-.2) {\scriptsize{$i$}};
\end{tikzpicture}
\cdot
\begin{tikzpicture}[baseline=.3cm]
 \draw[thick, rounded corners = 5pt] (0,0) rectangle (1.6,.8);
 \draw (.2,0) -- (.2,.8);
 \draw (0.4,.8) arc (-180:0:.5);
 \draw (0.6,.8) arc (-180:0:.3);
 \draw (0.4,0) arc (180:0:.2);
 \draw (1,0) arc (180:0:.2);
 \node at (.2,-.2) {\scriptsize{$n$}};
 \node at (.4,1) {\scriptsize{$j$}};
 \node at (.6,1) {\scriptsize{$i$}};
 \node at (.4,-.2) {\scriptsize{$i$}};
 \node at (1,-.2) {\scriptsize{$j$}};
\end{tikzpicture}
\displaybreak[1]\\&=
d^i
E^{n+j+2i}_{n+j+i}
\left(
x
\cdot
\begin{tikzpicture}[baseline = .3cm]
 \draw[thick, rounded corners = 5pt] (0,0) rectangle (1.5,.8);
 \draw (.2,0) -- (.2,.8);
 \draw (.5,.8) arc (-180:0:.2cm);
 \draw (.5,0) .. controls ++(90:.3cm) and ++(270:.3cm) .. (1.2,.8);
 \draw (.8,0) arc (180:0:.2cm);
 \node at (.2,1) {\scriptsize{$n$}};
 \node at (.5,1) {\scriptsize{$i$}};
 \node at (1.2,1) {\scriptsize{$j$}};
 \node at (.8,-.2) {\scriptsize{$i$}};
\end{tikzpicture}
\right)
\cdot
\begin{tikzpicture}[baseline=.3cm]
 \draw[thick, rounded corners = 5pt] (0,0) rectangle (1.6,.8);
 \draw (.2,0) -- (.2,.8);
 \draw (0.4,.8) arc (-180:0:.5);
 \draw (0.6,.8) arc (-180:0:.3);
 \draw (0.4,0) arc (180:0:.2);
 \draw (1,0) arc (180:0:.2);
 \node at (.2,-.2) {\scriptsize{$n$}};
 \node at (.4,1) {\scriptsize{$j$}};
 \node at (.6,1) {\scriptsize{$i$}};
 \node at (.4,-.2) {\scriptsize{$i$}};
 \node at (1,-.2) {\scriptsize{$j$}};
\end{tikzpicture}\,.
\qedhere
\end{align*}
\end{proof}

\begin{proof}[Proof of Proposition \ref{prop:InjectiveAlgebraMap}]
By inspection of the definition of $\pi$ from \eqref{eq:4x4Map}, it is clear that $\pi$ is injective, unital, $\bbC$-linear, and respects the $\dag$-structure. 
The difficulty is in seeing that $\pi$ is an algebra homomorphism. 
In the following, we suppress the rightmost $2k$ strings of entries of \eqref{eq:4x4Map}, as well as the factor $d^{-k}$, since they are essentially inert when only three objects are considered. 
Because $\pi$ respects $\dag$, as in Definition \ref{def:MarkovProjections}, we only need to consider 3 cases of composition.
\item[\underline{Case 1:}]
Let $x:[n]\rightarrow [n+2i]$ and $y:[n+2i]\rightarrow [n+2i+2j]$. 
Then 
$$
\pi(y\circ x)
=
d^{-j}E_{n+i+j}^{n+2i+j}
\left(yx \cdot 
 \begin{tikzpicture}[baseline=.3cm]
  \draw (.2,0) -- (.2,.8);
  \node at (.2,1) {\scriptsize{$n$}};
  \draw (.4,.8) arc (-180:0:.2cm);
  \node at (.4,1) {\scriptsize{$i$}};
  \draw (.4,0) .. controls ++(90:.3cm) and ++(270:.3cm) .. (1,.8);
  \node at (1,1) {\scriptsize{$j$}};
  \draw (.6,0) arc (180:0:.2cm);
  \node at (.6,-.2) {\scriptsize{$i$}};
  \draw[thick, rounded corners = 5pt] (0,0) rectangle (1.2,.8);
 \end{tikzpicture}
 \right) 
\cdot 
 \begin{tikzpicture}[baseline=.3cm]
  \draw (.2,0) -- (.2,.8);
  \node at (.2,1) {\scriptsize{$n$}};
  \draw (.4,0) arc (180:0:.2cm);
  \node at (.4,-.2) {\scriptsize{$i$}};
  \draw (1,0) arc (180:0:.2cm);
  \node at (1,-.2) {\scriptsize{$j$}};
  \draw (.4,.8) arc (-180:0:.5cm);
  \node at (.4,1) {\scriptsize{$i$}};
  \draw (.6,.8) arc (-180:0:.3cm);
  \node at (.6,1) {\scriptsize{$j$}};
  \draw[thick, rounded corners = 5pt] (0,0) rectangle (1.6,.8);
 \end{tikzpicture}
\underset{\text{(Lem.~\ref{lem:LeftKink})}}{=} 
d^{-i-j} yx \cdot 
 \begin{tikzpicture}[baseline=.3cm]
  \draw (.2,0) -- (.2,.8);
  \node at (.2,1) {\scriptsize{$n$}};
  \draw (.4,0) arc (180:0:.2cm);
  \node at (.4,-.2) {\scriptsize{$i$}};
  \draw (.4,.8) arc (-180:0:.2cm);
  \node at (.4,1) {\scriptsize{$i$}};
  \draw (1,0) arc (180:0:.2cm);
  \node at (1,-.2) {\scriptsize{$j$}};
  \draw (1,.8) arc (-180:0:.2cm);
  \node at (1,1) {\scriptsize{$j$}};
  \draw[thick, rounded corners = 5pt] (0,0) rectangle (1.6,.8);
 \end{tikzpicture}
 = \pi(y) \cdot \pi(x).
$$

\item[\underline{Case 2:}]
Let $x:[n]\rightarrow [n+2i+2j]$ and $y:[n+2i+2j]\rightarrow [n+2i]$. 
Then 
\begin{align*}
\pi(y\circ x)
&=
d^{-j}
E_{n+i}^{n+2i+j}\left(yx\cdot 
\begin{tikzpicture}[baseline = .3cm]
 \draw[thick, rounded corners = 5pt] (0,0) rectangle (1.5,.8);
 \draw (.2,0) -- (.2,.8);
 \draw (.8,.8) arc (-180:0:.2cm);
 \draw (1.2,0) .. controls ++(90:.3cm) and ++(270:.3cm) .. (.5,.8);
 \draw (.5,0) arc (180:0:.2cm);
 \node at (.2,1) {\scriptsize{$n$}};
 \node at (.5,1) {\scriptsize{$j$}};
 \node at (.8,1) {\scriptsize{$i$}};
 \node at (.5,-.2) {\scriptsize{$i$}};
\end{tikzpicture}
\right)\cdot
 \begin{tikzpicture}[baseline=.3cm]
  \draw (.2,0) -- (.2,.8);
  \node at (.2,1) {\scriptsize{$n$}};
  \draw (.4,0) arc (180:0:.2cm);
  \node at (.4,-.2) {\scriptsize{$i$}};
  \draw (.4,.8) arc (-180:0:.2cm);
  \node at (.4,1) {\scriptsize{$i$}};
  \draw (1,0) arc (180:0:.2cm);
  \node at (1,-.2) {\scriptsize{$j$}};
  \draw (1,.8) arc (-180:0:.2cm);
  \node at (1,1) {\scriptsize{$j$}};
  \draw[thick, rounded corners = 5pt] (0,0) rectangle (1.6,.8);
 \end{tikzpicture}
\\&
=
d^{-i-j}E_{n+i}^{n+2i}\left(
\underbrace{
E_{n+2i}^{n+2i+j}\left(yx\cdot
\begin{tikzpicture}[baseline = .3cm]
 \draw[thick, rounded corners = 5pt] (0,0) rectangle (1.5,.8);
 \draw (.2,0) -- (.2,.8);
 \draw (.8,.8) arc (-180:0:.2cm);
 \draw (1.2,0) .. controls ++(90:.3cm) and ++(270:.3cm) .. (.5,.8);
 \draw (.5,0) arc (180:0:.2cm);
 \node at (.2,1) {\scriptsize{$n$}};
 \node at (.5,1) {\scriptsize{$j$}};
 \node at (.8,1) {\scriptsize{$i$}};
 \node at (.5,-.2) {\scriptsize{$i$}};
\end{tikzpicture}
\right)
}_{z}\cdot
 \begin{tikzpicture}[baseline=.3cm]
  \draw (.2,0) -- (.2,.8);
  \node at (.2,1) {\scriptsize{$n$}};
  \draw (.4,0) arc (180:0:.2cm);
  \node at (.4,-.2) {\scriptsize{$i$}};
  \draw (.4,.8) arc (-180:0:.2cm);
  \node at (.4,1) {\scriptsize{$i$}};
  \draw[thick, rounded corners = 5pt] (0,0) rectangle (1,.8);
 \end{tikzpicture}
\right)\cdot\,
 \begin{tikzpicture}[baseline=.3cm]
  \draw (.2,0) -- (.2,.8);
  \node at (.2,1) {\scriptsize{$n$}};
  \draw (.4,0) arc (180:0:.2cm);
  \node at (.4,-.2) {\scriptsize{$i$}};
  \draw (.4,.8) arc (-180:0:.2cm);
  \node at (.4,1) {\scriptsize{$i$}};
  \draw (1,0) arc (180:0:.2cm);
  \node at (1,-.2) {\scriptsize{$j$}};
  \draw (1,.8) arc (-180:0:.2cm);
  \node at (1,1) {\scriptsize{$j$}};
  \draw[thick, rounded corners = 5pt] (0,0) rectangle (1.6,.8);
 \end{tikzpicture}
\\&
\underset{
\substack{
\text{(Lem.~\ref{lem:LeftKink} for}
\\
\text{ 
$z$ with $j=0$)}
}
}
{=}
d^{-2i-j}
E_{n+2i}^{n+2i+j}\left(yx\cdot
\begin{tikzpicture}[baseline = .3cm]
 \draw[thick, rounded corners = 5pt] (0,0) rectangle (1.5,.8);
 \draw (.2,0) -- (.2,.8);
 \draw (.8,.8) arc (-180:0:.2cm);
 \draw (1.2,0) .. controls ++(90:.3cm) and ++(270:.3cm) .. (.5,.8);
 \draw (.5,0) arc (180:0:.2cm);
 \node at (.2,1) {\scriptsize{$n$}};
 \node at (.5,1) {\scriptsize{$j$}};
 \node at (.8,1) {\scriptsize{$i$}};
 \node at (.5,-.2) {\scriptsize{$i$}};
\end{tikzpicture}
\right)\cdot
 \begin{tikzpicture}[baseline=.3cm]
  \draw (.2,0) -- (.2,.8);
  \node at (.2,1) {\scriptsize{$n$}};
  \draw (.4,0) arc (180:0:.2cm);
  \node at (.4,-.2) {\scriptsize{$i$}};
  \draw (.4,.8) arc (-180:0:.2cm);
  \node at (.4,1) {\scriptsize{$i$}};
  \draw (1,0) arc (180:0:.2cm);
  \node at (1,-.2) {\scriptsize{$j$}};
  \draw (1,.8) arc (-180:0:.2cm);
  \node at (1,1) {\scriptsize{$j$}};
  \draw[thick, rounded corners = 5pt] (0,0) rectangle (1.6,.8);
 \end{tikzpicture}
 \\&
 \underset{\text{(Prop.~\ref{prop:MultistepJonesProjections})}}{=}
 d^{-2i-2j}\,
 \begin{tikzpicture}[baseline=.3cm]
  \draw (.2,0) -- (.2,.8);
  \node at (.2,1) {\scriptsize{$n$}};
  \draw (.4,0) -- (.4,.8);
  \node at (.4,1) {\scriptsize{$i$}};
  \draw (.6,0) -- (.6,.8);
  \node at (.6,1) {\scriptsize{$i$}};
  \draw (.8,0) arc (180:0:.2cm);
  \node at (.8,-.2) {\scriptsize{$j$}};
  \draw (.8,.8) arc (-180:0:.2cm);
  \node at (.8,1) {\scriptsize{$j$}};
  \draw[thick, rounded corners = 5pt] (0,0) rectangle (1.4,.8);
 \end{tikzpicture}
\cdot yx \cdot 
\begin{tikzpicture}[baseline = .3cm]
 \draw[thick, rounded corners = 5pt] (0,0) rectangle (1.7,.8);
 \draw (.2,0) -- (.2,.8);
 \draw (.8,.8) arc (-180:0:.2cm);
 \draw (1.2,0) .. controls ++(90:.3cm) and ++(270:.3cm) .. (.5,.8);
 \draw (.5,0) arc (180:0:.2cm);
 \draw (1.5,0) -- (1.5,.8);
 \node at (.2,1) {\scriptsize{$n$}};
 \node at (.5,1) {\scriptsize{$j$}};
 \node at (.8,1) {\scriptsize{$i$}};
 \node at (.5,-.2) {\scriptsize{$i$}};
 \node at (1.5,1) {\scriptsize{$j$}};
\end{tikzpicture}
\cdot
 \begin{tikzpicture}[baseline=.3cm]
  \draw (.2,0) -- (.2,.8);
  \node at (.2,1) {\scriptsize{$n$}};
  \draw (.4,0) arc (180:0:.2cm);
  \node at (.4,-.2) {\scriptsize{$i$}};
  \draw (.4,.8) arc (-180:0:.2cm);
  \node at (.4,1) {\scriptsize{$i$}};
  \draw (1,0) arc (180:0:.2cm);
  \node at (1,-.2) {\scriptsize{$j$}};
  \draw (1,.8) arc (-180:0:.2cm);
  \node at (1,1) {\scriptsize{$j$}};
  \draw[thick, rounded corners = 5pt] (0,0) rectangle (1.6,.8);
 \end{tikzpicture} 
\\&=
 d^{-i-2j}\,
 \begin{tikzpicture}[baseline=.3cm]
  \draw (.2,0) -- (.2,.8);
  \node at (.2,1) {\scriptsize{$n$}};
  \draw (.4,0) -- (.4,.8);
  \node at (.4,1) {\scriptsize{$i$}};
  \draw (.6,0) -- (.6,.8);
  \node at (.6,1) {\scriptsize{$i$}};
  \draw (.8,0) arc (180:0:.2cm);
  \node at (.8,-.2) {\scriptsize{$j$}};
  \draw (.8,.8) arc (-180:0:.2cm);
  \node at (.8,1) {\scriptsize{$j$}};
  \draw[thick, rounded corners = 5pt] (0,0) rectangle (1.4,.8);
 \end{tikzpicture}
\cdot yx \cdot 
 \begin{tikzpicture}[baseline=.3cm]
  \draw (.2,0) -- (.2,.8);
  \node at (.2,1) {\scriptsize{$n$}};
  \draw (.4,0) arc (180:0:.2cm);
  \node at (.4,-.2) {\scriptsize{$i$}};
  \draw (1,0) arc (180:0:.2cm);
  \node at (1,-.2) {\scriptsize{$j$}};
  \draw (.4,.8) arc (-180:0:.5cm);
  \node at (.4,1) {\scriptsize{$i$}};
  \draw (.6,.8) arc (-180:0:.3cm);
  \node at (.6,1) {\scriptsize{$j$}};
  \draw[thick, rounded corners = 5pt] (0,0) rectangle (1.6,.8);
 \end{tikzpicture}
\\&=
\pi(y)\cdot \pi(x).
\end{align*}

\item[\underline{Case 3:}]
Let $x:[n+2i]\rightarrow [n]$ and $y:[n]\to [n+2i+2j]$.
Then
\begin{align*}
 \pi(y\circ x) 
 &= 
 d^{-j}\left(
 y\cdot\,
d^{-i}\,
\begin{tikzpicture}[baseline = .3cm]
 \draw[thick, rounded corners = 5pt] (0,0) rectangle (1.5,.8);
 \draw (.2,0) -- (.2,.8);
 \draw (.8,.8) arc (-180:0:.2cm);
 \draw (1.2,0) .. controls ++(90:.3cm) and ++(270:.3cm) .. (.5,.8);
 \draw (.5,0) arc (180:0:.2cm);
 \node at (.2,1) {\scriptsize{$n$}};
 \node at (.5,1) {\scriptsize{$j$}};
 \node at (.8,1) {\scriptsize{$i$}};
 \node at (.5,-.2) {\scriptsize{$i$}};
\end{tikzpicture}
 \,\cdot x 
 \right)
 \cdot\,
 \begin{tikzpicture}[baseline=.3cm]
  \draw (.2,0) -- (.2,.8);
  \node at (.2,1) {\scriptsize{$n$}};
  \draw (.4,0) -- (.4,.8);
  \node at (.4,1) {\scriptsize{$i$}};
  \draw (.6,0) -- (.6,.8);
  \node at (.6,1) {\scriptsize{$i$}};
  \draw (.8,0) arc (180:0:.2cm);
  \node at (.8,-.2) {\scriptsize{$j$}};
  \draw (.8,.8) arc (-180:0:.2cm);
  \node at (.8,1) {\scriptsize{$j$}};
  \draw[thick, rounded corners = 5pt] (0,0) rectangle (1.4,.8);
 \end{tikzpicture}
 =
 d^{-i-j}
 y\cdot
\begin{tikzpicture}[baseline = .3cm]
 \draw[thick, rounded corners = 5pt] (0,0) rectangle (1.7,.8);
 \draw (.2,0) -- (.2,.8);
 \draw (.8,.8) arc (-180:0:.2cm);
 \draw (1.2,0) .. controls ++(90:.3cm) and ++(270:.3cm) .. (.5,.8);
 \draw (.5,0) arc (180:0:.2cm);
 \draw (1.5,0) -- (1.5,.8);
 \node at (.2,1) {\scriptsize{$n$}};
 \node at (.5,1) {\scriptsize{$j$}};
 \node at (.8,1) {\scriptsize{$i$}};
 \node at (.5,-.2) {\scriptsize{$i$}};
 \node at (1.5,1) {\scriptsize{$j$}};
\end{tikzpicture}
\,\cdot\,
 \begin{tikzpicture}[baseline=.3cm]
  \draw (.2,0) -- (.2,.8);
  \node at (.2,1) {\scriptsize{$n$}};
  \draw (.4,0) -- (.4,.8);
  \node at (.4,1) {\scriptsize{$i$}};
  \draw (.6,0) -- (.6,.8);
  \node at (.6,1) {\scriptsize{$i$}};
  \draw (.8,0) arc (180:0:.2cm);
  \node at (.8,-.2) {\scriptsize{$j$}};
  \draw (.8,.8) arc (-180:0:.2cm);
  \node at (.8,1) {\scriptsize{$j$}};
  \draw[thick, rounded corners = 5pt] (0,0) rectangle (1.4,.8);
 \end{tikzpicture}
\cdot x 
\\&= 
d^{-i-j}
y\cdot\, 
 \begin{tikzpicture}[baseline=.3cm]
  \draw (.2,0) -- (.2,.8);
  \node at (.2,1) {\scriptsize{$n$}};
  \draw (.4,0) arc (180:0:.2cm);
  \node at (.4,-.2) {\scriptsize{$i$}};
  \draw (1,0) arc (180:0:.2cm);
  \node at (1,-.2) {\scriptsize{$j$}};
  \draw (.4,.8) arc (-180:0:.5cm);
  \node at (.4,1) {\scriptsize{$i$}};
  \draw (.6,.8) arc (-180:0:.3cm);
  \node at (.6,1) {\scriptsize{$j$}};
  \draw[thick, rounded corners = 5pt] (0,0) rectangle (1.6,.8);
 \end{tikzpicture}
\,\cdot x
=
\pi(y)\cdot \pi(x).
\qedhere
\end{align*}
\end{proof}

\begin{cor}
\label{cor:SemisimpleProjectionCategory}
Let $M_\bullet$ be a Markov sequence and let $\cM$ be its unitary Karoubi completion. 
Then $\cM$ is semisimple, and the isomorphism classes of simple objects are in canonical bijection with the vertices of the principal graph.
\end{cor}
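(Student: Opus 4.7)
The plan is to simultaneously establish semisimplicity of $\cM$ and the bijection between iso classes of simples and principal graph vertices, using the two-sided ideal decomposition $M_n = X_n \oplus Y_n$ from \ref{EP:2SidedIdeal} together with the Morita equivalence $X_n \cong \langle M_{n-1}, e_{n-1}\rangle$ from \ref{EP:BasicContruction}.

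First I would observe that $\cM$ is semisimple. By Proposition \ref{prop:InjectiveAlgebraMap}, $\cM_0$ is a $\Cstar$ category with finite dimensional $\Hom$-spaces $\cM_0([n]\to[m]) = M_{(n+m)/2}$. Each $\End_{\cM_0}([n]) = M_n$ is a finite dimensional von Neumann algebra; hence in the unitary Karoubi completion $\cM$, every $[n]$ decomposes as an orthogonal direct sum of simple objects $p\cdot[n]$ indexed by the minimal projections $p\in M_n$, and every simple object of $\cM$ arises this way. Two minimal projections in the same minimal central summand of $M_n$ are Murray--von Neumann equivalent inside $M_n = \End_\cM([n])$ and hence give isomorphic simples; two minimal projections in different minimal central summands of $M_n$ give non-isomorphic simples (any morphism between them in $\cM_0([n]\to[n]) = M_n$ is killed by the central projections). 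So iso classes of simples in $\cM$ are in bijection with equivalence classes (under iso in $\cM$) of minimal central projections in $\bigsqcup_{n\geq 0} M_n$.

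Next I would reduce every such equivalence class to one coming from some $Y_k$. Let $z\in M_n$ be a minimal central projection. If $z\in Y_n$, we are done. If $z\in X_n$, then by \ref{EP:BasicContruction} the center of $X_n$ is identified with the center of $M_{n-2}$; let $z'\in M_{n-2}$ be the corresponding minimal central projection, and choose a minimal projection $q \leq z'$ in $M_{n-2}$. I claim $q\cdot[n-2] \cong (qe_{n-1})\cdot[n]$ in $\cM$, witnessed by $u := q$, regarded as an element of $M_{n-2} \subseteq M_{n-1} = \cM_0([n-2]\to[n])$. Applying composition rules \ref{compose:upup}, \ref{compose:updown}, \ref{compose:downup} together with the Markov identities $E_n(e_{n-1}) = d^{-2}$ (from \ref{eq:MarkovIndex}) and $qe_{n-1} = e_{n-1}q$ (from \ref{eq:MarkovImplement}, since $q\in M_{n-2}$) gives $u^\dag\circ u = q$ and $u\circ u^\dag = qe_{n-1}$, so $u$ is a partial isometry exhibiting the desired isomorphism. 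Iterating, every minimal central projection of $\bigsqcup_n M_n$ is equivalent in $\cM$ to one in some $Y_k$ (the iteration terminates because all $M_n$ are finite dimensional and $X_0 = X_1 = (0)$).

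For the converse distinctness, suppose $p\in Y_n$ and $p'\in Y_m$ are minimal projections with $p\cdot[n] \cong p'\cdot[m]$ in $\cM$; without loss of generality $n\leq m$ and $n\equiv m\pmod 2$. Pick a partial isometry $v\in \cM(p\to p')\subseteq M_{(n+m)/2}$ with $v^\dag \circ v = p$ and $v\circ v^\dag = p'$. Assuming $n < m$, applying rule \ref{compose:downup} (with $i = (m-n)/2$ and $j = 0$) gives $v\circ v^\dag = v\cdot f^{n+i}_n \cdot v^\dag$ inside $M_m$, where $f^{n+i}_n$ is the cabled Jones projection of Notation \ref{nota:TLJK Diagrams}. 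Since for $i \geq 1$ the product defining $f^{n+i}_n$ involves $e_{m-1}$, it lies in the two-sided ideal $X_m = M_{m-1}e_{m-1}M_{m-1}$, hence so does $v\circ v^\dag = p'$. This contradicts $p'\in Y_m$ and $X_m\cap Y_m = (0)$ from \ref{EP:NewStuff}. Therefore $n = m$, and the bijection follows from the level-$n$ correspondence established in the semisimplicity paragraph. The main obstacle is executing the explicit verification $u^\dag\circ u = q$ and $u\circ u^\dag = qe_{n-1}$ from the composition rules: each is a one-line calculation using the conditional expectation formulas, but keeping the indices and normalizations straight requires care. Once this bookkeeping is in place, both the reduction and the distinctness arguments reduce to the standard ideal-decomposition picture.
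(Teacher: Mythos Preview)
Your proof is correct and follows essentially the same approach as the paper: both establish semisimplicity from the finite-dimensional $\Cstar$ structure, then use the partial isometry $q \in M_{n-1} = \cM_0([n-2]\to[n])$ to witness $q\cdot[n-2]\cong (qe_{n-1})\cdot[n]$ (the paper cites \ref{compose:updown} and \ref{compose:downup} for exactly this computation), and recurse through the $X_n\oplus Y_n$ decomposition. Your explicit distinctness argument---observing that for $n<m$ the composite $v\circ v^\dag$ factors through the cabled Jones projection $f^{n+i}_n\in X_m$ and hence cannot equal $p'\in Y_m$---is a welcome elaboration of a step the paper compresses into the phrase ``by recursion,'' but the underlying strategy is identical.
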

\begin{proof}
All endomorphism algebras of $\cM_0$ are finite dimensional $\Cstar$ algebras which are semisimple, and thus $\cM$ is semisimple. 
By \ref{EP:BasicContruction}, every minimal projection in $X_{n+2}$ for $n\geq 0$ is equivalent to a minimal projection in $M_{n}$ via a partial isometry in $\cM_0([n] \to [n+2])$. 
Explicitly, $p\in M_n$ is equivalent to $p e_{n+1} \in M_{n+2}$ via the morphism $p\in M_{n+1} = \cM_0([n] \to [n+2])$, which is a partial isometry using the definition of composition \ref{compose:updown} and \ref{compose:downup} in $\cM_0$, and this exhausts all equivalence classes of minimal projections in $X_{n+2}$.
By recursion, we see that the equivalence classes of minimal projections in $M_\bullet$ are in canonical bijective correspondence with the minimal projections in the $(Y_n)_{n\geq 0}$, which are exactly the vertices of the principal graph.
\end{proof}

\begin{rem}
\label{rem:HowMCategoryChangesUnderMarkovTowerOperations}
By Remark \ref{Rem:ShiftTowerEffects}, the category of projections of a Markov tower is invariant (up to equivalence) under applying shifts. 
By Remark \ref{Rem:CompressionTowerEffects}, the category of projections of the compression $pMp_\bullet $ is the subcategory of the category of projections of $M_\bullet$ generated by minimal projections under $p$. 
In particular, if $p\neq 0$ and the Bratteli diagram of $M_\bullet$ is connected, then the two categories of projections are again equivalent. 
Finally, in the case of the multistep tower, we see by Remark \ref{Rem:MultistepTowerEffects} that the category of projections of $M_{k\bullet}$ is equivalent to the category of projections for $M_\bullet$ when $k$ is odd, and to the subcategory generated by the even part when $k$ is even.
\end{rem}

\subsection{Temperley-Lieb-Jones module categories} 
\label{sec:TLJmodules}

We now show that the category of projections of a connected Markov tower of modulus $d$ can be canonically endowed with the structure of a cyclic pivotal right Temperley-Lieb-Jones ($\cT\cL\cJ(d)$) module $\Cstar$ category.
Moreover, all cyclic pivotal right $\cT\cL\cJ(d)$-module $\Cstar$ categories arise in this way.
Combined with the classification of connected Markov towers of modulus $d$ from Proposition \ref{prop:ClassificationOfMarkovTowers}, we get the following result, which should be compared with \cite{MR3420332} in the non-pivotal setting.

\begin{cor*}[Corollary \ref{cor:TLJPivotalModuleClassification}]
Cyclic pivotal right module $\Cstar$ categories for $\cT\cL\cJ(d)$ are classified by triples $(\Gamma, \dim, v_0)$ where $\Gamma=(V_+, V_- , E)$ is a bipartite graph, $v_0$ is a distinguished vertex, and $\dim : V_+\amalg V_- \to \bbR_{>0}$ is a function satisfying $\dim(v_0) = 1$ and
$$
\sum_{w\sim v} \dim(w) = d \dim(v),
$$
where we write $w\sim v$ to mean $w$ is connected to $v$, and the above sum is taken with multiplicity.
\end{cor*}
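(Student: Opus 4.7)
The plan is to assemble the classification by chaining three bijective correspondences that are already set up in the paper: pivotal right $\cT\cL\cJ(d)$-module $\Cstar$ categories correspond to connected right planar modules over $\cT\cL\cJ(d)_\bullet$; these correspond in turn to connected Markov towers of modulus $d$; and Markov towers correspond to pointed bipartite graphs with a quantum dimension function.

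First, I apply Theorem \ref{thm:ModuleEquivalence} to the subfactor planar algebra $\cP_\bullet = \cT\cL\cJ(d)_\bullet$, whose associated unitary $2\times 2$ multitensor category $(\cC, X)$ is the $2$-shaded Temperley--Lieb--Jones category with $X$ the unshaded-shaded strand. This converts the problem of classifying pointed pivotal right $\cT\cL\cJ(d)$-module $\Cstar$ categories into the problem of classifying connected right planar $\cT\cL\cJ(d)_\bullet$-modules $\cM_\bullet$, and the fusion graph with respect to $X$ on the module-categorical side agrees with the principal graph of the associated Markov tower on the planar-algebraic side.

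Second, I show that the forgetful assignment from connected right planar modules $\cM_\bullet$ to connected Markov towers of modulus $d$, given by setting $M_k := \cM_k$ with Jones projections and traces as in Example \ref{example:MarkovTowerFromPlanarModule}, is a bijection. The axioms of a planar module immediately produce a Markov tower, so one direction is Example \ref{example:MarkovTowerFromPlanarModule}. For the inverse, one must extend a Markov tower structure to a full action of the shaded planar module operad that restricts to $\cT\cL\cJ(d)_\bullet$ on circular inputs. The point is that $\cT\cL\cJ(d)_\bullet$ is the \emph{free} subfactor planar algebra with loop parameter $d$, in the sense that each $\cT\cL\cJ(d)_{n,\pm}$ is spanned by products of Jones projections. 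Hence, after isotoping a shaded planar module tangle into a standard form built from the multiplication tangle, Jones projections, trace tangles, and unit caps/cups, one reads off a well-defined composition of Markov-tower operations. Compatibility under isotopy is exactly the content of axioms \ref{eq:MarkovJonesProjections}--\ref{eq:MarkovPullDown} together with the Markov trace property \ref{EP:MarkovTraces}, so the resulting operad action is uniquely determined and extends $\cT\cL\cJ(d)_\bullet$. This procedure inverts Example \ref{example:MarkovTowerFromPlanarModule}.

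Third, Proposition \ref{prop:ClassificationOfMarkovTowers} identifies connected Markov towers of modulus $d$ with pointed bipartite graphs $(\Gamma, v_0)$ together with a quantum dimension function $\dim : V(\Gamma) \to \bbR_{>0}$ satisfying the Frobenius--Perron identity \eqref{eq:QuantumDimension}, with the normalization $\dim(v_0) = 1$ matching the condition $\dim(M_0) = 1$ for connectedness, equivalently $\Tr^\cM_m(\id_m) = 1_\bbC$ on the module-categorical side. Composing the three bijections yields the stated classification.

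The main obstacle is the second step: showing that the extra data in a planar module structure beyond that of a Markov tower is forced when the ambient planar algebra is Temperley--Lieb--Jones. Once one verifies that every isotopy of a shaded planar module tangle into standard form can be realized as a sequence of moves corresponding precisely to axioms \ref{eq:MarkovJonesProjections}--\ref{eq:MarkovPullDown} and the planar module $\Cstar$/tracial compatibility, the well-definedness of the induced operad action follows, and the two constructions are mutually inverse by inspection of generators.
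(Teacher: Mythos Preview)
Your chain of bijections is correct in outline, but the route differs from the paper's. You factor through Theorem \ref{thm:ModuleEquivalence} and then argue separately that connected right planar $\cT\cL\cJ(d)_\bullet$-modules are the same as connected Markov towers of modulus $d$, using that $\cT\cL\cJ(d)_\bullet$ is freely generated by Jones projections. The paper instead bypasses the planar module formalism entirely at this point: it goes directly from a Markov tower to a pivotal $\cT\cL\cJ(d)$-module $\Cstar$ category by constructing the projection category (Definition \ref{def:MarkovProjections}), equipping it with a $\cT\cL\cJ(d)$-action by hand (Definition \ref{def:ModuleFromMarkovTower}), and endowing it with the compatible trace (Definition \ref{def:PivotalModuleFromMarkovTower}). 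The inverse is Example \ref{ex:MarkovTowerFromRightModule}, and one checks these are mutually inverse; combined with Proposition \ref{prop:ClassificationOfMarkovTowers} this gives the classification.

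The substantive technical content sits in the same place in both approaches: one must verify that the Markov tower axioms \ref{eq:MarkovJonesProjections}--\ref{eq:MarkovPullDown} suffice to make a $\cT\cL\cJ(d)$-action well defined. You phrase this as ``compatibility under isotopy of standard forms,'' which is correct in spirit but is precisely the hard part. The paper handles it not by a direct isotopy argument but via the linking-algebra trick of Proposition \ref{prop:InjectiveAlgebraMap} (for associativity of composition in the projection category) followed by explicit case checks for functoriality of $\vartriangleleft$ in Definition \ref{def:ModuleFromMarkovTower}. Your route has the appeal of reusing Theorem \ref{thm:ModuleEquivalence} as a black box, but be aware that filling in your second step rigorously amounts to essentially the same computations the paper carries out in \S\ref{sec:MarkovTowers}; invoking ``freeness'' of $\cT\cL\cJ(d)_\bullet$ does not by itself discharge the well-definedness obligation.
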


\begin{defn}
\label{def:ModuleFromMarkovTower}
 If $M_\bullet$ is a Markov tower with modulus $d$, the corresponding $\cT\cL\cJ(d)$-module is just the category $\cM$ of projections of $M_\bullet$, as in Definition \ref{def:MarkovProjections}.  
The $\TLJ$ action on $\cM$ comes from the fact that $TLJ_\bullet$ is the initial Markov tower for a given parameter. 
By construction, to define an action of $\TLJ$ on $\cM$, it suffices to define the action of the objects $[m,\pm]_{\cT\cL\cJ}$ on $1_{[n]_\cM}$ for every $n$ and $m$, and then define the action functorially on morphisms. 
 
We set $[m]_\cM\vartriangleleft [n]_{\TLJ}:=[m+n]_\cM$. 
For morphisms, we first consider the case where one morphism is the identity. 
 If $g:[a]_{\TLJ}\to[b]_{\TLJ}$, we first add $n$ strings to the left to obtain $1_n \otimes_{\TLJ} g \in \TLJ([a+n]_{\TLJ} \to [b+n]_{\TLJ})$, and we define $1_{M_n}\vartriangleleft g\in \cM([a+n]_\cM \to [b+n]_\cM)$ to be the image of the element $1_n \otimes_{\TLJ} g$ in $M_{n+(a+b)/2}$. 
 The case of $f\vartriangleleft 1$ is more complicated. 
 If $f\in \cM([n]\to [n+2k])=M_{n+k}$, then we define 
\[
f\vartriangleleft 1_{j}
:=
\begin{cases}
f\cdot
 \begin{tikzpicture}[baseline=.3cm]
  \draw (.2,0) -- (.2,.8);
  \node at (.2,1) {\scriptsize{$n$}};
  \draw (.6,0) arc (180:0:.2cm);
  \node at (.6,-.2) {\scriptsize{$k$}};
  \draw (.4,0) .. controls ++(90:.3cm) and ++(270:.3cm) .. (1,.8);
  \node at (1.3,.5) {\scriptsize{$j-k$}};
  \draw (.4,.8) arc (-180:0:.2cm);
  \node at (.4,1) {\scriptsize{$k$}};
  \draw[thick, rounded corners = 5pt] (0,0) rectangle (1.7,.8);
 \end{tikzpicture}
&
\text{if }k\geq j
\\
f\cdot
 \begin{tikzpicture}[baseline=.3cm]
  \draw (.2,0) -- (.2,.8);
  \node at (.2,1) {\scriptsize{$n$}};
  \draw (.4,0) arc (180:0:.2cm);
  \node at (.4,-.2) {\scriptsize{$k$}};
  \draw (1,0) .. controls ++(90:.3cm) and ++(270:.3cm) .. (.4,.8);
  \node at (1.3,.3) {\scriptsize{$k-j$}};
  \draw (.6,.8) arc (-180:0:.2cm);
  \node at (.6,1) {\scriptsize{$k$}};
  \draw[thick, rounded corners = 5pt] (0,0) rectangle (1.7,.8);
 \end{tikzpicture}
&
\text{if }j\leq k.
\end{cases}
\]
Notice these formulas agree when $j=k$.
For $f\in \cM([n]\to[m])$ where $m<n$, we define $f\vartriangleleft 1 := (f^\dag \vartriangleleft 1)^\dag$.
 
 Next, we check that $(f\vartriangleleft1)\circ(1\vartriangleleft g)=(1\vartriangleleft g)\circ(f\vartriangleleft1)$. 
 We illustrate the case $f:[n]_{\cM}\to[n+2k]_{\cM}$ and $g:[m]_{\TLJ}\to [m+2\ell]_{\TLJ}$, where both $m\ge k$ and $\ell\ge k$. 
 In other words, $g\in[k+j]_{\TLJ}\to[k+j+2(k+i)]_{\TLJ}$ for some non-negative $i$ and $j$. The other cases are similar.
 
 Let $\iota$ denote the inclusion in $M_\bullet$. In the following, since $g\in\TLJ$, we represent $g$ by a ticket within Temperley-Lieb tangles, which we may freely move via isotopy.
 By definition, we have 
 \begingroup\allowdisplaybreaks 
 \begin{align*}
  (f\vartriangleleft 1_{2i+j+3k})\circ(1_{n}\vartriangleleft g) 
  &
  \underset{\ref{compose:upup}}{=}
  d^{i+k} 
  E^{n+j+4k+2i}_{n+j+3k+i}\left(\iota(f)\cdot
  \begin{tikzpicture}[baseline=.3cm]
   \begin{scope}[shift={(0,1)}]
    \begin{scope}[shift={(.1,0)}]
     \draw (.1,0) -- (.1,.8);
     \node at (.1,1) {\scriptsize{$n$}};
     \draw (.3,.8) arc (-180:0:.2cm);
     \node at (.3,1) {\scriptsize{$k$}};
     \node at (.7,1) {\scriptsize{$k$}};
     \begin{scope}[shift={(.2,0)}] 
      \draw (.3,0)  .. controls ++(90:.3cm) and ++(270:.3cm) .. (.9,.8);
      \node at (.9,1) {\scriptsize{$j$}};
      \draw (.5,0)  .. controls ++(90:.3cm) and ++(270:.3cm) .. (1.1,.8);
      \node at (1.1,1) {\scriptsize{$k$}};
      \draw (.7,0)  .. controls ++(90:.3cm) and ++(270:.3cm) .. (1.3,.8);
      \node at (1.3,1) {\scriptsize{$i$}};
      \draw (.9,0)  .. controls ++(90:.3cm) and ++(270:.3cm) .. (1.5,.8);
      \node at (1.5,1) {\scriptsize{$k$}};
     \end{scope}
     \begin{scope}[shift={(.4,0)}] 
      \draw (1.1,0)  .. controls ++(90:.3cm) and ++(270:.3cm) .. (1.7,.8);
      \node at (1.7,1) {\scriptsize{$i$}};
      \draw (1.3,0) arc (180:0:.2cm);
      \node at (1.8,.3) {\scriptsize{$k$}};
     \end{scope}
    \end{scope}
    \draw[dashed] (0,0) -- (2.6,0);
   \end{scope}
   \draw (.2,0) -- (.2,1); 
   \draw (.6,0) -- (.6,1); 
   \draw (.8,0) -- (.8,1);
   \draw (1,0) -- (1,1);
   \draw (1.2,0) -- (1.2,1);
   \draw[thick, rounded corners = 5pt, fill=white] (.4,.2) rectangle (1.5,.8);
   \node at (.95,.5) {$g$};
   \draw (1.6,0) -- (1.6,1); 
   \draw (1.8,0) -- (1.8,1);
   \draw (2.2,0) -- (2.2,1);
   \draw[dashed] (0,0) -- (2.6,0);
   \begin{scope}[shift={(0,-1)}]
    \begin{scope}[shift={(.1,0)}]
     \draw (.1,0) -- (.1,1);
     \node at (.1,-0.2) {\scriptsize{$n$}};
     \begin{scope}[shift={(.2,0)}] 
      \draw (.3,0) -- (.3,1);
      \node at (.3,-0.2) {\scriptsize{$j$}};
      \draw (.5,0) -- (.5,1);
      \node at (.5,-0.2) {\scriptsize{$k$}};
      \draw (.7,1) arc (-180:0:.4cm); 
      \draw (.9,1) arc (-180:0:.2cm); 
      \draw (1,0)  .. controls ++(90:.3cm) and ++(270:.3cm) .. (1.9,1);
      \node at (1,-0.2) {\scriptsize{$k$}};
      \draw (1.3,0) arc (180:0:.4cm);
      \node at (1.3,-0.2) {\scriptsize{$i$}};
      \node at (2.1,-0.2) {\scriptsize{$i$}};
      \draw (1.5,0) arc (180:0:.2cm);
      \node at (1.5,-0.2) {\scriptsize{$k$}};
      \node at (1.9,-0.2) {\scriptsize{$k$}};
     \end{scope}
    \end{scope}
   \end{scope}
   \draw[thick, rounded corners = 5pt] (0,-1) rectangle (2.6,1.8);
  \end{tikzpicture}
  \right)
  \\&= 
  d^{i+k}  
  E^{n+j+4k+i}_{n+j+3k+i}\left(\iota(f)\cdot E^{n+j+4k+2i}_{n+j+4k+i}\left(
  \begin{tikzpicture}[baseline=.3cm]
   \begin{scope}[shift={(0,1)}]
    \begin{scope}[shift={(.1,0)}]
     \draw (.1,0) -- (.1,.8);
     \node at (.1,1) {\scriptsize{$n$}};
     \draw (.3,.8) arc (-180:0:.2cm);
     \node at (.3,1) {\scriptsize{$k$}};
     \begin{scope}[shift={(.2,0)}] 
      \draw (.3,0)  .. controls ++(90:.3cm) and ++(270:.3cm) .. (.9,.8);
      \node at (.9,1) {\scriptsize{$j$}};
      \draw (.5,0)  .. controls ++(90:.3cm) and ++(270:.3cm) .. (1.1,.8);
      \node at (1.1,1) {\scriptsize{$k$}};
      \draw (.7,0)  .. controls ++(90:.3cm) and ++(270:.3cm) .. (1.3,.8);
      \node at (1.3,1) {\scriptsize{$i$}};
      \draw (.9,0)  .. controls ++(90:.3cm) and ++(270:.3cm) .. (1.5,.8);
      \node at (1.5,1) {\scriptsize{$k$}};
     \end{scope}
     \begin{scope}[shift={(.4,0)}] 
      \draw (1.1,0)  .. controls ++(90:.3cm) and ++(270:.3cm) .. (1.7,.8);
      \node at (1.7,1) {\scriptsize{$i$}};
      \draw (1.3,0) arc (180:0:.2cm);
      \node at (1.8,.3) {\scriptsize{$k$}};
     \end{scope}
    \end{scope}
    \draw[dashed] (0,0) -- (2.6,0);
   \end{scope}
   \draw (.2,0) -- (.2,1); 
   \draw (.6,0) -- (.6,1); 
   \draw (.8,0) -- (.8,1);
   \draw (1,0) -- (1,1);
   \draw (1.2,0) -- (1.2,1);
   \draw[thick, rounded corners = 5pt, fill=white] (.4,.2) rectangle (1.5,.8);
   \node at (.95,.5) {$g$};
   \draw (1.6,0) -- (1.6,1); 
   \draw (1.8,0) -- (1.8,1);
   \draw (2.2,0) -- (2.2,1);
   \draw[dashed] (0,0) -- (2.6,0);
   \begin{scope}[shift={(0,-1)}]
    \begin{scope}[shift={(.1,0)}]
     \draw (.1,0) -- (.1,1);
     \node at (.1,-0.2) {\scriptsize{$n$}};
     \begin{scope}[shift={(.2,0)}] 
      \draw (.3,0) -- (.3,1);
      \node at (.3,-0.2) {\scriptsize{$j$}};
      \draw (.5,0) -- (.5,1);
      \node at (.5,-0.2) {\scriptsize{$k$}};
      \draw (.7,1) arc (-180:0:.4cm); 
      \draw (.9,1) arc (-180:0:.2cm); 
      \draw (1,0)  .. controls ++(90:.3cm) and ++(270:.3cm) .. (1.9,1);
      \node at (1,-0.2) {\scriptsize{$k$}};
      \draw (1.3,0) arc (180:0:.4cm);
      \node at (1.3,-0.2) {\scriptsize{$i$}};
      \node at (2.1,-0.2) {\scriptsize{$i$}};
      \draw (1.5,0) arc (180:0:.2cm);
      \node at (1.5,-0.2) {\scriptsize{$k$}};
      \node at (1.9,-0.2) {\scriptsize{$k$}};
     \end{scope}
    \end{scope}
   \end{scope}
   \draw[thick, rounded corners = 5pt] (0,-1) rectangle (2.6,1.8);
  \end{tikzpicture}
 \right)\right)
  \\&= 
  d^{i+k}  
  E^{n+j+4k+i}_{n+j+3k+i}\left(\iota(f)\cdot E^{n+j+4k+2i}_{n+j+4k+i}\left(
  \begin{tikzpicture}[baseline=.3cm]
   \begin{scope}[shift={(0,0)}] 
    \begin{scope}[shift={(.1,0)}]
     \draw (.1,0) -- (.1,.8);
     \draw (.3,.8) arc (-180:0:.2cm);
     \begin{scope}[shift={(.2,0)}] 
      \draw (.3,0)  .. controls ++(90:.3cm) and ++(270:.3cm) .. (.9,.8);
      \draw (.5,0)  .. controls ++(90:.3cm) and ++(270:.3cm) .. (1.1,.8);
      \draw (.7,0)  .. controls ++(90:.3cm) and ++(270:.3cm) .. (1.3,.8);
      \draw (.9,0)  .. controls ++(90:.3cm) and ++(270:.3cm) .. (1.5,.8);
     \end{scope}
     \begin{scope}[shift={(.4,0)}] 
      \draw (1.1,0)  .. controls ++(90:.3cm) and ++(270:.3cm) .. (1.7,.8);
      \draw (1.3,0) arc (180:0:.2cm);
     \end{scope}
    \end{scope}
    \draw[dashed] (0,0) -- (2.6,0);
   \end{scope}
   \begin{scope}[shift={(0,.8)}] 
    \draw (.2,0) -- (.2,1);
    \node at (.2,1.2) {\scriptsize{$n$}};
    \draw (.4,0) -- (.4,1); 
    \node at (.4,1.2) {\scriptsize{$k$}};
    \draw (.8,0) -- (.8,1); 
    \node at (.8,1.2) {\scriptsize{$k$}};
    \draw (1.2,0) -- (1.2,1);
    \node at (1.2,1.2) {\scriptsize{$j$}};
    \draw (1.4,0) -- (1.4,1);
    \node at (1.4,1.2) {\scriptsize{$k$}};
    \draw (1.6,0) -- (1.6,1);
    \node at (1.6,1.2) {\scriptsize{$i$}};
    \draw (1.8,0) -- (1.8,1);
    \node at (1.8,1.2) {\scriptsize{$k$}};
    \draw[thick, rounded corners = 5pt, fill=white] (1,.2) rectangle (2,.8);
    \node at (1.5,.5) {$g$};
    \draw (2.2,0) -- (2.2,1);
    \node at (2.2,1.2) {\scriptsize{$i$}};
    \draw[dashed] (0,0) -- (2.6,0);
   \end{scope}
   \begin{scope}[shift={(0,-1)}]
    \begin{scope}[shift={(.1,0)}]
     \draw (.1,0) -- (.1,1);
     \node at (.1,-0.2) {\scriptsize{$n$}};
     \begin{scope}[shift={(.2,0)}] 
      \draw (.3,0) -- (.3,1);
      \node at (.3,-0.2) {\scriptsize{$j$}};
      \draw (.5,0) -- (.5,1);
      \node at (.5,-0.2) {\scriptsize{$k$}};
      \draw (.7,1) arc (-180:0:.4cm); 
      \draw (.9,1) arc (-180:0:.2cm); 
      \draw (1,0)  .. controls ++(90:.3cm) and ++(270:.3cm) .. (1.9,1);
      \node at (1,-0.2) {\scriptsize{$k$}};
      \draw (1.3,0) arc (180:0:.4cm);
      \node at (1.3,-0.2) {\scriptsize{$i$}};
      \node at (2.1,-0.2) {\scriptsize{$i$}};
      \draw (1.5,0) arc (180:0:.2cm);
      \node at (1.5,-0.2) {\scriptsize{$k$}};
      \node at (1.9,-0.2) {\scriptsize{$k$}};
     \end{scope}
    \end{scope}
   \end{scope}
   \draw[thick, rounded corners = 5pt] (0,-1) rectangle (2.6,1.8);
  \end{tikzpicture}
 \right)\right)
 \\&=d^{i+k}  
E^{n+j+4k+i}_{n+j+3k+i}\left(
 \begin{tikzpicture}[baseline=.3cm]
  \draw (.2,0) -- (.2,1);
  \node at (.2,1.2) {\scriptsize{$n$}};
  \draw (.4,0) -- (.4,1);
  \node at (.4,1.2) {\scriptsize{$k$}};
  \draw (.6,0) -- (.6,1);
  \node at (.6,1.2) {\scriptsize{$k$}};
  \draw (1,0) -- (1,1);
  \node at (1,1.2) {\scriptsize{$j$}};
  \draw (1.2,0) -- (1.2,1);
  \node at (1.2,1.2) {\scriptsize{$k$}};
  \draw (1.4,0) -- (1.4,1);
  \node at (1.4,1.2) {\scriptsize{$i$}};
  \draw (1.6,0) -- (1.6,1);
  \node at (1.6,1.2) {\scriptsize{$k$}};
  \draw[thick, rounded corners = 5pt, fill=white] (.8,.2) rectangle (1.8,.8);
  \node at (1.3,.5) {$g$};
  \draw[thick,rounded corners = 5pt] (0,0) rectangle (2,1);
 \end{tikzpicture}
 \cdot\iota(f)\cdot E^{n+j+4k+2i}_{n+j+4k+i}\left(
  \begin{tikzpicture}[baseline=-0.2cm]
   \begin{scope}[shift={(.1,0)}]
    \draw (.1,0) -- (.1,.8);
    \draw (.3,.8) arc (-180:0:.2cm);
    \draw (.3,0)  .. controls ++(90:.3cm) and ++(270:.3cm) .. (.9,.8);
    \draw (.5,0)  .. controls ++(90:.3cm) and ++(270:.3cm) .. (1.1,.8);
    \draw (.7,0)  .. controls ++(90:.3cm) and ++(270:.3cm) .. (1.3,.8);
    \draw (.9,0)  .. controls ++(90:.3cm) and ++(270:.3cm) .. (1.5,.8);
    \node at (.1,1) {\scriptsize{$n$}};
    \node at (.3,1) {\scriptsize{$k$}}; 
    \node at (.7,1) {\scriptsize{$k$}}; 
    \node at (.9,1) {\scriptsize{$j$}};
    \node at (1.1,1) {\scriptsize{$k$}};
    \node at (1.3,1) {\scriptsize{$k$}};
    \node at (1.5,1) {\scriptsize{$i$}};
    \draw (1.3,0)  .. controls ++(90:.3cm) and ++(270:.3cm) .. (1.9,.8);
    \node at (1.9,1) {\scriptsize{$i$}};
    \draw (1.5,0) arc (180:0:.2cm);
    \node at (1.9,.3) {\scriptsize{$k$}};
   \end{scope}
   \draw[dashed] (0,0) -- (2.4,0);
   \begin{scope}[shift={(.1,-1)}]
    \draw (.1,0) -- (.1,1);
    \node at (.1,-.2) {\scriptsize{$n$}};
    \draw (.3,0) -- (.3,1);
    \node at (.3,-.2) {\scriptsize{$j$}};
    \draw (.5,0) -- (.5,1);
    \node at (.5,-.2) {\scriptsize{$k$}};
    \draw (.7,1) arc (-180:0:.4cm); 
    \draw (.9,1) arc (-180:0:.2cm); 
    \draw (1.1,0)  .. controls ++(90:.3cm) and ++(270:.3cm) .. (1.9,1);
    \node at (1.1,-0.2) {\scriptsize{$k$}};
    \draw (1.3,0) arc (180:0:.4cm);
    \node at (1.3,-0.2) {\scriptsize{$i$}};
    \node at (2.1,-0.2) {\scriptsize{$i$}};
    \draw (1.5,0) arc (180:0:.2cm);
    \node at (1.5,-0.2) {\scriptsize{$k$}};
    \node at (1.9,-0.2) {\scriptsize{$k$}};
   \end{scope}
   \draw[thick, rounded corners = 5pt] (0,-1) rectangle (2.4,0.8);
  \end{tikzpicture}
  \right)\right)
  \\&= 
d^{i+k}  
E^{n+j+4k+i}_{n+j+3k+i}\left(
  \begin{tikzpicture}[baseline=.3cm]
   \draw (.2,0) -- (.2,1);
   \node at (.2,1.2) {\scriptsize{$n$}};
   \draw (.4,0) -- (.4,1);
   \node at (.4,1.2) {\scriptsize{$k$}};
   \draw (.6,0) -- (.6,1);
   \node at (.6,1.2) {\scriptsize{$k$}};
   \draw (1,0) -- (1,1);
   \node at (1,1.2) {\scriptsize{$j$}};
   \draw (1.2,0) -- (1.2,1);
   \node at (1.2,1.2) {\scriptsize{$k$}};
   \draw (1.4,0) -- (1.4,1);
   \node at (1.4,1.2) {\scriptsize{$i$}};
   \draw (1.6,0) -- (1.6,1);
   \node at (1.6,1.2) {\scriptsize{$k$}};
   \draw[thick, rounded corners = 5pt, fill=white] (.8,.2) rectangle (1.8,.8);
   \node at (1.3,.5) {$g$};
   \draw[thick,rounded corners = 5pt] (0,0) rectangle (2,1);
  \end{tikzpicture}
  \cdot\iota(f)\cdot
d^{-i}
  \begin{tikzpicture}[baseline=-0.1cm]
   \begin{scope}[shift={(.1,0)}]
    \draw (.1,0) -- (.1,.8);
    \node at (.1,1) {\scriptsize{$n$}};
    \draw (.3,.8) arc (-180:0:.2cm);
    \node at (.3,1) {\scriptsize{$k$}};
    \node at (.7,1) {\scriptsize{$k$}};
    \draw (.3,0) .. controls ++(90:.3cm) and ++(270:.3cm) .. (.9,.8);
    \node at (.9,1) {\scriptsize{$j$}};
    \draw (.5,0) arc (180:0:.2cm); 
    \draw (1.3,0) -- (1.3,.8);
    \node at (1.3,1) {\scriptsize{$k$}};
    \draw (1.5,0) -- (1.5,.8);
    \node at (1.5,1) {\scriptsize{$i$}};
    \draw (1.7,0) -- (1.7,.8);
    \node at (1.7,1) {\scriptsize{$k$}};
   \end{scope}
   \draw[dashed] (0,0) -- (2,0);
   \begin{scope}[shift={(.1,-0.8)}]
    \draw (.1,0) -- (.1,.8);
    \node at (.1,-0.2) {\scriptsize{$n$}};
    \draw (.3,0) -- (.3,.8);
    \node at (.3,-0.2) {\scriptsize{$j$}};
    \draw (.5,0) -- (.5,.8);
    \node at (.5,-0.2) {\scriptsize{$k$}};
    \draw (.9,.8) arc (-180:0:.2cm); 
    \draw (.9,0) .. controls ++(90:.3cm) and ++(270:.3cm) .. (1.5,.8);
    \node at (0.9,-0.2) {\scriptsize{$i$}};
    \draw (1.1,0) .. controls ++(90:.3cm) and ++(270:.3cm) .. (1.7,.8);
    \node at (1.1,-0.2) {\scriptsize{$k$}};
    \draw (1.3,0) arc (180:0:.2cm);
    \node at (1.3,-.2) {\scriptsize{$k$}};
    \node at (1.7,-.2) {\scriptsize{$k$}};
   \end{scope}
   \draw[thick, rounded corners = 5pt] (0,-0.8) rectangle (2,0.8);
  \end{tikzpicture}
\right)\\
&\underset{\ref{compose:upup}}{=}
(1\vartriangleleft g)\circ(f\vartriangleleft 1).
 \end{align*}
 \endgroup
 To show that $\vartriangleleft$ is a well-defined bifunctor, it remains to show that for morphisms $f$ and $g$ in $\cM$ and $h$ and $k$ in $\TLJ$, we have $(g\vartriangleleft 1)\circ(f\vartriangleleft 1)=(g\circ f)\vartriangleleft 1$ and $(1\vartriangleleft k)\circ(1\vartriangleleft h)=1\vartriangleleft(k\circ h)$. 
 Functoriality in the right variable comes directly from the definition, but functoriality in the left variable is more involved and done in cases.
 We illustrate a representative case. 
 Suppose $f:[n]_{\cM}\to[n+2i+2j]_{\cM}$ and $g:[n+2i+2j]_{\cM}\to[n+2i]_{\cM}$, and set $m:=k+i+j$. 
 Then
 \begin{align*}
  (g\vartriangleleft 1_m)\circ (f\vartriangleleft 1_m) 
  &= 
  d^{i+j}
  E^{n+3i+k+2j}_{n+2i+k+j}\left(
  \begin{tikzpicture}[baseline=.3cm]
   \draw (.2,0) -- (.2,.8);
   \node at (.2,1) {\scriptsize{$n$}};
   \draw (.4,0) -- (.4,.8);
   \node at (.4,1) {\scriptsize{$i$}};
   \draw (.6,0) -- (.6,.8);
   \node at (.6,1) {\scriptsize{$i$}};
   \draw (.8,0) arc (180:0:.2cm);
   \node at (.8,-.2) {\scriptsize{$j$}};
   \node at (1.2,-.2) {\scriptsize{$j$}};
   \draw (.8,.8) .. controls ++(-90:.3cm) and ++(-270:.3cm) .. (1.4,0);
   \node at (.8,1) {\scriptsize{$k$}};
   \draw (1,.8) .. controls ++(-90:.3cm) and ++(-270:.3cm) .. (1.6,0);
   \node at (1,1) {\scriptsize{$i$}};
   \draw (1.2,.8) arc (-180:0:.2cm);
   \node at (1.2,1) {\scriptsize{$j$}};
   \node at (1.6,1) {\scriptsize{$j$}};
   \draw[thick, rounded corners = 5pt] (0,0) rectangle (1.8,.8);
  \end{tikzpicture}
  \cdot\iota(g)\cdot\iota(f)\cdot
  \begin{tikzpicture}[baseline=.3cm]
   \begin{scope}[shift={(0,.4)}]
    \draw (.2,0) -- (.2,1);
    \node at (.2,1.2) {\scriptsize{$n$}};
    \draw (.4,1) arc (-180:0:.4cm);
    \node at (.4,1.2) {\scriptsize{$j$}};
    \node at (1.2,1.2) {\scriptsize{$j$}};
    \draw (.6,1) arc (-180:0:.2cm);
    \node at (.6,1.2) {\scriptsize{$i$}};
    \node at (1,1.2) {\scriptsize{$i$}};
    \draw (.4,0) .. controls ++(90:.3cm) and ++(270:.3cm) .. (1.5,1);
    \node at (1.5,1.2) {\scriptsize{$k$}};
    \draw (.8,0) arc (180:0:.4cm); 
    \draw (1,0) arc (180:0:.2cm); 
    \draw (2,0) -- (2,1);
    \node at (2,1.2) {\scriptsize{$i$}};
   \end{scope}
   \draw[dashed] (0,.4) -- (2.2,.4);
   \begin{scope}[shift={(0,-.4)}]
    \draw (.2,0) -- (.2,.8);
    \node at (.2,-.2) {\scriptsize{$n$}};
    \draw (.4,0) -- (.4,.8);
    \node at (.4,-.2) {\scriptsize{$k$}};
    \begin{scope}[shift={(.2,0)}]
     \draw (.6,0) -- (.6,.8);
     \node at (.6,-.2) {\scriptsize{$i$}};
     \draw (.8,0) -- (.8,.8);
     \node at (.8,-.2) {\scriptsize{$j$}};
     \draw (1,0) arc (180:0:.2cm);
     \node at (1,-.2) {\scriptsize{$i$}};
     \node at (1.4,-.2) {\scriptsize{$i$}};
     \draw (1.2,.8) .. controls ++(-90:.3cm) and ++(-270:.3cm) .. (1.6,0);
     \node at (1.6,-.2) {\scriptsize{$i$}};
     \draw (1.4,.8) arc (-180:0:.2cm); 
    \end{scope}
   \end{scope}
   \draw[thick, rounded corners = 5pt] (0,-.4) rectangle (2.2,1.4);
  \end{tikzpicture}
  \right)
  \\&= 
  d^{-i}
  E^{n+3i+k+2j}_{n+2i+k+j}\left(
  \begin{tikzpicture}[baseline=.3cm]
   \begin{scope}[shift={(0,.4)}]
    \draw (.2,0) -- (.2,.8);
    \node at (.2,1) {\scriptsize{$n$}};
    \draw (.4,0) -- (.4,.8);
    \node at (.4,1) {\scriptsize{$i$}};
    \draw (.6,0) -- (.6,.8);
    \node at (.6,1) {\scriptsize{$i$}};
    \draw (.8,0) arc (180:0:.2cm);
    \node at (.9,.35) {\scriptsize{$j$}}; 
    \draw (1,.8) .. controls ++(-90:.3cm) and ++(-270:.3cm) .. (1.4,0);
    \node at (1,1) {\scriptsize{$k$}};
    \draw (1.2,.8) .. controls ++(-90:.3cm) and ++(-270:.3cm) .. (1.6,0);
    \node at (1.2,1) {\scriptsize{$i$}};
    \draw (1.4,.8) arc (-180:0:.2cm);
    \node at (1.4,1) {\scriptsize{$j$}};
    \node at (1.8,1) {\scriptsize{$j$}};
   \draw[dashed] (0,0) -- (2,0);
   \end{scope}
   \begin{scope}[shift={(0,-.4)}]
    \draw (.2,0) -- (.2,.8);
    \node at (.2,-.2) {\scriptsize{$n$}};
    \draw (.4,0) -- (.4,.8);
    \node at (.4,-.2) {\scriptsize{$i$}};
    \draw (.6,0) -- (.6,.8);
    \node at (.6,-.2) {\scriptsize{$i$}};
    \draw (.8,.8) arc (-180:0:.2cm); 
    \draw (.8,0) arc (180:0:.2cm);
    \node at (.8,-.2) {\scriptsize{$j$}};
    \node at (1.2,-.2) {\scriptsize{$j$}};
    \draw (1.4,0) -- (1.4,.8);
    \node at (1.4,-.2) {\scriptsize{$k$}};
    \draw (1.6,0) -- (1.6,.8);
    \node at (1.6,-.2) {\scriptsize{$i$}};
   \end{scope}
   \draw[thick, rounded corners = 5pt] (0,-.4) rectangle (2,1.2);
  \end{tikzpicture}
  \cdot\iota(g)\cdot\iota(f)\cdot
  \begin{tikzpicture}[baseline=.3cm]
   \begin{scope}[shift={(0,.8)}]
    \draw (.2,0) -- (.2,.8);
    \node at (.2,1) {\scriptsize{$n$}};
    \draw (.6,.8) .. controls ++(-90:.3cm) and ++ (-270:.3cm) .. (1,0);
    \node at (.6,1) {\scriptsize{$j$}};
    \draw (.8,.8) arc (-180:0:.2cm);
    \node at (.8,1) {\scriptsize{$i$}};
    \node at (1.2,1) {\scriptsize{$i$}};
    \draw (.4,0) arc (180:0:.2cm); 
    \draw (1.4,0) -- (1.4,.8);
    \node at (1.4,1) {\scriptsize{$j$}};
    \draw (1.6,0) -- (1.6,.8);
    \node at (1.6,1) {\scriptsize{$k$}};
    \draw (1.8,0) -- (1.8,.8);
    \node at (1.8,1) {\scriptsize{$i$}};
    \draw[dashed] (0,0) -- (2,0);
   \end{scope}
   \begin{scope}[shift={(0,0)}]
    \draw (.2,0) -- (.2,.8); 
    \draw (.4,0) -- (.4,.8); 
    \draw (.8,0) -- (.8,.8); 
    \node at (.6,.4) {\scriptsize{$i$}}; 
    \draw (1,.8) arc (-180:0:.2cm); 
    \draw (1,0) arc(180:0:.2cm);
    \node at (1.2,.4) {\scriptsize{$j$}}; 
    \draw (1.6,0) -- (1.6,.8); 
    \draw (1.8,0) -- (1.8,.8); 
    \draw[dashed] (0,0) -- (2,0);
   \end{scope}
   \begin{scope}[shift={(0,-.8)}]
    \draw (.2,0) -- (.2,.8);
    \node at (.2,-.2) {\scriptsize{$n$}};
    \draw (.4,.8) arc (-180:0:.2cm); 
    \draw (1,.8) arc (-180:0:.2cm); 
    \draw (1.6,.8) .. controls ++(-90:.3cm) and ++(-270:.3cm) .. (.9,0);
    \node at (.9,-.2) {\scriptsize{$k$}};
    \draw (1.8,.8) .. controls ++(-90:.3cm) and ++(-270:.3cm) .. (1.1,0);
    \node at (1.1,-.2) {\scriptsize{$i$}};
    \draw (1.3,0) arc (180:0:.2cm);
    \node at (1.3,-.2) {\scriptsize{$j$}};
    \node at (1.7,-.2) {\scriptsize{$j$}};
   \end{scope}
   \draw[thick, rounded corners=5pt] (0,-.8) rectangle (2,1.6);
  \end{tikzpicture}
  \right)
  \\&= 
  d^{-i+j}
  E^{n+3i+k+2j}_{n+2i+k+j}\left(
  \begin{tikzpicture}[baseline=.3cm]
   \draw (.2,0) -- (.2,.8);
   \node at (.2,1) {\scriptsize{$n$}};
   \draw (.4,0) -- (.4,.8);
   \node at (.4,1) {\scriptsize{$i$}};
   \draw (.6,0) -- (.6,.8);
   \node at (.6,1) {\scriptsize{$i$}};
   \begin{scope}[shift={(.4,0)}] 
   \draw (.4,0) arc (180:0:.2cm);
    \node at (.4,-.2) {\scriptsize{$j$}};
    \node at (.8,-.2) {\scriptsize{$j$}};
    \draw (.6,.8) .. controls ++(-90:.3cm) and ++(-270:.3cm) .. (1,0);
    \node at (.6,1) {\scriptsize{$k$}};
    \draw (.8,.8) .. controls ++(-90:.3cm) and ++(-270:.3cm) .. (1.2,0);
    \node at (.8,1) {\scriptsize{$i$}};
    \draw (1,.8) arc (-180:0:.2cm);
    \node at (1,1) {\scriptsize{$j$}};
    \node at (1.4,1) {\scriptsize{$j$}};
   \end{scope}
   \draw[thick, rounded corners=5pt] (0,0) rectangle (2,.8);
  \end{tikzpicture}
  \cdot\iota\left(E^{n+2i+j}_{n+2i}\left(g\cdot\iota(f)\cdot 
  \begin{tikzpicture}[baseline=.3cm]
   \draw (.2,0) -- (.2,.8);
   \node at (.2,1) {\scriptsize{$n$}};
   \draw (.6,.8) .. controls ++(-90:.3cm) and ++ (-270:.3cm) .. (1,0);
   \node at (.6,1) {\scriptsize{$j$}};
   \draw (.8,.8) arc (-180:0:.2cm);
   \node at (.8,1) {\scriptsize{$i$}};
   \node at (1.2,1) {\scriptsize{$i$}};
   \draw (.4,0) arc (180:0:.2cm);
   \node at (.4,-.2) {\scriptsize{$i$}};
   \node at (.8,-.2) {\scriptsize{$i$}};
   \draw[thick, rounded corners=5pt] (0,0) rectangle (1.4,.8);
  \end{tikzpicture}
  \right)\right)
  \cdot
  \begin{tikzpicture}[baseline=.3cm]
   \begin{scope}[shift={(0,.4)}]
    \draw (.2,0) -- (.2,.8); 
    \draw (.4,0) -- (.4,.8); 
    \draw (.8,0) -- (.8,.8); 
    \node at (.6,.4) {\scriptsize{$i$}}; 
    \draw (1,.8) arc (-180:0:.2cm); 
    \draw (1,0) arc(180:0:.2cm);
    \node at (1.2,.4) {\scriptsize{$j$}}; 
    \draw (1.6,0) -- (1.6,.8); 
    \draw (1.8,0) -- (1.8,.8); 
    \draw[dashed] (0,0) -- (2.3,0);
   \end{scope}
   \begin{scope}[shift={(0,-.4)}]
    \draw (.2,0) -- (.2,1);
    \node at (.2,-.2) {\scriptsize{$n$}};
    \draw (.4,.8) arc (-180:0:.2cm); 
    \draw (1,.8) arc (-180:0:.2cm); 
    \draw (1.6,.8) .. controls ++(-90:.3cm) and ++(-270:.3cm) .. (.8,0);
    \node at (.8,-.2) {\scriptsize{$k$}};
    \draw (1.8,.8) .. controls ++(-90:.3cm) and ++(-270:.3cm) .. (1,0);
    \node at (1,-.2) {\scriptsize{$i$}};
    \draw (1.3,0) arc (180:0:.4cm);
    \node at (1.3,-.2) {\scriptsize{$j$}};
    \node at (2.1,-.2) {\scriptsize{$j$}};
    \draw (1.5,0) arc (180:0:.2cm);
    \node at (1.5,-.2) {\scriptsize{$i$}};
    \node at (1.9,-.2) {\scriptsize{$i$}};
   \end{scope}
   \draw[thick, rounded corners=5pt] (0,-.4) rectangle (2.3,1.2);
  \end{tikzpicture}
  \right)
  \\&=
   d^{-i+j}
  \iota\left(E^{n+2i+j}_{n+2i}\left(g\cdot\iota(f)\cdot 
  \begin{tikzpicture}[baseline=.3cm]
   \draw (.2,0) -- (.2,.8);
   \node at (.2,1) {\scriptsize{$n$}};
   \draw (.6,.8) .. controls ++(-90:.3cm) and ++ (-270:.3cm) .. (1,0);
   \node at (.6,1) {\scriptsize{$j$}};
   \draw (.8,.8) arc (-180:0:.2cm);
   \node at (.8,1) {\scriptsize{$i$}};
   \node at (1.2,1) {\scriptsize{$i$}};
   \draw (.4,0) arc (180:0:.2cm);
   \node at (.4,-.2) {\scriptsize{$i$}};
   \node at (.8,-.2) {\scriptsize{$i$}};
   \draw[thick, rounded corners=5pt] (0,0) rectangle (1.4,.8);
  \end{tikzpicture}
  \right)\right)\cdot E^{n+3i+k+2j}_{n+2i+k+j}\left(
  \begin{tikzpicture}[baseline=.3cm]
   \begin{scope}[shift={(0,.8)}]
    \draw (.2,0) -- (.2,.8);
    \node at (.2,1) {\scriptsize{$n$}};
    \draw (.4,0) -- (.4,.8);
    \node at (.4,1) {\scriptsize{$i$}};
    \draw (.8,0) -- (.8,.8);
    \node at (.8,1) {\scriptsize{$i$}};
    \begin{scope}[shift={(.6,0)}] 
     \draw (.4,0) arc (180:0:.2cm); 
     \draw (.6,.8) .. controls ++(-90:.3cm) and ++(-270:.3cm) .. (1,0);
     \node at (.6,1) {\scriptsize{$k$}};
     \draw (.8,.8) .. controls ++(-90:.3cm) and ++(-270:.3cm) .. (1.2,0);
     \node at (.8,1) {\scriptsize{$i$}};
     \draw (1,.8) arc (-180:0:.2cm);
     \node at (1,1) {\scriptsize{$j$}};
     \node at (1.4,1) {\scriptsize{$j$}};
    \end{scope}
    \draw[dashed] (0,0) -- (2.4,0);
   \end{scope}
   \begin{scope}[shift={(0,0)}]
    \draw (.2,0) -- (.2,.8); 
    \draw (.4,0) -- (.4,.8); 
    \draw (.8,0) -- (.8,.8); 
    \node at (.6,.4) {\scriptsize{$i$}}; 
    \draw (1,.8) arc (-180:0:.2cm); 
    \draw (1,0) arc(180:0:.2cm);
    \node at (1.2,.4) {\scriptsize{$j$}}; 
    \draw (1.6,0) -- (1.6,.8); 
    \draw (1.8,0) -- (1.8,.8); 
    \draw[dashed] (0,0) -- (2.4,0);
   \end{scope}
   \begin{scope}[shift={(0,-.8)}]
    \draw (.2,0) -- (.2,.8);
    \node at (.2,-.2) {\scriptsize{$n$}};
    \draw (.4,.8) arc (-180:0:.2cm);
    \node at (.6,.5) {\scriptsize{$i$}}; 
    \draw (1,.8) arc (-180:0:.2cm); 
    \draw (1.6,.8) .. controls ++(-90:.3cm) and ++(-270:.3cm) .. (.9,0);
    \node at (.9,-.2) {\scriptsize{$k$}};
    \draw (1.8,.8) .. controls ++(-90:.3cm) and ++(-270:.3cm) .. (1.1,0);
    \node at (1.1,-.2) {\scriptsize{$i$}};
    \begin{scope}[shift={(.3,0)}] 
     \draw (1.1,0) arc (180:0:.4cm);
     \node at (1.1,-.2) {\scriptsize{$j$}};
     \node at (1.9,-.2) {\scriptsize{$j$}};
     \draw (1.3,0) arc (180:0:.2cm);
     \node at (1.3,-.2) {\scriptsize{$i$}};
     \node at (1.7,-.2) {\scriptsize{$i$}};
    \end{scope}
   \end{scope}
   \draw[thick, rounded corners=5pt] (0,-.8) rectangle (2.4,1.6);
  \end{tikzpicture}
  \right)
  \\&= 
   d^{-2i}
  \iota\left(E^{n+2i+j}_{n+2i}\left(g\cdot\iota(f)\cdot 
  \begin{tikzpicture}[baseline=.3cm]
   \draw (.2,0) -- (.2,.8);
   \node at (.2,1) {\scriptsize{$n$}};
   \draw (.6,.8) .. controls ++(-90:.3cm) and ++ (-270:.3cm) .. (1,0);
   \node at (.6,1) {\scriptsize{$j$}};
   \draw (.8,.8) arc (-180:0:.2cm);
   \node at (.8,1) {\scriptsize{$i$}};
   \node at (1.2,1) {\scriptsize{$i$}};
   \draw (.4,0) arc (180:0:.2cm);
   \node at (.4,-.2) {\scriptsize{$i$}};
   \node at (.8,-.2) {\scriptsize{$i$}};
   \draw[thick, rounded corners=5pt] (0,0) rectangle (1.4,.8);
  \end{tikzpicture}
  \right)\right)\cdot
  \begin{tikzpicture}[baseline=.3cm]
   \draw (.2,0) -- (.2,.8);
   \node at (.2,1) {\scriptsize{$n$}};
   \draw (.4,.8) arc (-180:0:.2cm);
   \node at (.4,1) {\scriptsize{$i$}};
   \node at (.8,1) {\scriptsize{$i$}};
   \draw (.6,0) .. controls ++(90:.3cm) and ++(270:.3cm) .. (1,.8);
   \node at (1,1) {\scriptsize{$k$}};
   \draw (.8,0) .. controls ++(90:.3cm) and ++(270:.3cm) .. (1.2,.8);
   \node at (1.2,1) {\scriptsize{$j$}};
   \draw (1,0) arc (180:0:.2cm);
   \node at (1,-.2) {\scriptsize{$i$}};
   \node at (1.4,-.2) {\scriptsize{$i$}};
   \draw[thick, rounded corners=5pt] (0,0) rectangle (1.6,.8);
  \end{tikzpicture}
  \\&= 
  d^{-3i}
  \iota\left(
  \underbrace{E^{n+2i+j}_{n+2i}\left(g\cdot\iota(f)\cdot 
  \begin{tikzpicture}[baseline=.3cm]
   \draw (.2,0) -- (.2,.8);
   \node at (.2,1) {\scriptsize{$n$}};
   \draw (.6,.8) .. controls ++(-90:.3cm) and ++ (-270:.3cm) .. (1,0);
   \node at (.6,1) {\scriptsize{$j$}};
   \draw (.8,.8) arc (-180:0:.2cm);
   \node at (.8,1) {\scriptsize{$i$}};
   \node at (1.2,1) {\scriptsize{$i$}};
   \draw (.4,0) arc (180:0:.2cm);
   \node at (.4,-.2) {\scriptsize{$i$}};
   \node at (.8,-.2) {\scriptsize{$i$}};
   \draw[thick, rounded corners=5pt] (0,0) rectangle (1.4,.8);
  \end{tikzpicture}
  \right)}_{z}
  \right)\cdot
  \begin{tikzpicture}[baseline=.3cm]
   \begin{scope}[shift={(0,.4)}]
    \draw (.2,0) -- (.2,.8);
    \node at (.2,1) {\scriptsize{$n$}};
    \draw (.4,0) arc (180:0:.2cm); 
    \draw (.4,.8) arc (-180:0:.2cm);
    \node at (.4,1) {\scriptsize{$i$}};
    \node at (.8,1) {\scriptsize{$i$}};
    \draw (1,0) -- (1,.8);
    \node at (1,1) {\scriptsize{$k$}};
    \draw (1.2,0) -- (1.2,.8);
    \node at (1.2,1) {\scriptsize{$j$}};
    \node at (.6,.35) {\scriptsize{$i$}};
    \draw[dashed] (0,0) -- (1.6,0);
   \end{scope}
   \begin{scope}[shift={(0,-.4)}]
    \draw (.2,0) -- (.2,.8);
    \node at (.2,-.2) {\scriptsize{$n$}};
    \draw (.4,.8) arc (-180:0:.2cm); 
    \draw (.6,0) .. controls ++(90:.3cm) and ++(270:.3cm) .. (1,.8);
    \node at (.6,-.2) {\scriptsize{$k$}};
    \draw (.8,0) .. controls ++(90:.3cm) and ++(270:.3cm) .. (1.2,.8);
    \node at (.8,-.2) {\scriptsize{$j$}};
    \draw (1,0) arc (180:0:.2cm);
    \node at (1,-.2) {\scriptsize{$i$}};
    \node at (1.4,-.2) {\scriptsize{$i$}};
   \end{scope}
   \draw[thick, rounded corners=5pt] (0,-.4) rectangle (1.6,1.2);
  \end{tikzpicture}
  \\& 
  \underset{
  \substack{
  \text{(Lem.~\ref{lem:LeftKink} for}
  \\
  \text{ 
  $z$ with $j=0$)}
  }
  }
  {=}
  d^{-2i}
  \iota\left(E^{n+2i}_{n+i}\left(E^{n+2i+j}_{n+2i}\left(g\cdot\iota(f)\cdot
  \begin{tikzpicture}[baseline=.3cm]
   \draw (.2,0) -- (.2,.8);
   \node at (.2,1) {\scriptsize{$n$}};
   \draw (.6,.8) .. controls ++(-90:.3cm) and ++ (-270:.3cm) .. (1,0);
   \node at (.6,1) {\scriptsize{$j$}};
   \draw (.8,.8) arc (-180:0:.2cm);
   \node at (.8,1) {\scriptsize{$i$}};
   \node at (1.2,1) {\scriptsize{$i$}};
   \draw (.4,0) arc (180:0:.2cm);
   \node at (.4,-.2) {\scriptsize{$i$}};
   \node at (.8,-.2) {\scriptsize{$i$}};
   \draw[thick, rounded corners=5pt] (0,0) rectangle (1.4,.8);
  \end{tikzpicture}
  \right)\cdot
  \begin{tikzpicture}[baseline=.3cm]
   \draw (.2,0) -- (.2,.8);
   \node at (.2,1) {\scriptsize{$n$}};
   \draw (.4,0) arc (180:0:.2cm);
   \node at (.4,-.2) {\scriptsize{$i$}};
   \node at (.8,-.2) {\scriptsize{$i$}};
   \draw (.4,.8) arc (-180:0:.2cm);
   \node at (.4,1) {\scriptsize{$i$}};
   \node at (.8,1) {\scriptsize{$i$}};
   \draw[thick, rounded corners=5pt] (0,0) rectangle (1,.8);
  \end{tikzpicture}
  \right)\right)\cdot
  \begin{tikzpicture}[baseline=.3cm]
   \begin{scope}[shift={(0,.4)}]
    \draw (.2,0) -- (.2,.8);
    \node at (.2,1) {\scriptsize{$n$}};
    \draw (.4,0) arc (180:0:.2cm); 
    \draw (.4,.8) arc (-180:0:.2cm);
    \node at (.4,1) {\scriptsize{$i$}};
    \node at (.8,1) {\scriptsize{$i$}};
    \draw (1,0) -- (1,.8);
    \node at (1,1) {\scriptsize{$k$}};
    \draw (1.2,0) -- (1.2,.8);
    \node at (1.2,1) {\scriptsize{$j$}};
    \node at (.6,.35) {\scriptsize{$i$}};
    \draw[dashed] (0,0) -- (1.6,0);
   \end{scope}
   \begin{scope}[shift={(0,-.4)}]
    \draw (.2,0) -- (.2,.8);
    \node at (.2,-.2) {\scriptsize{$n$}};
    \draw (.4,.8) arc (-180:0:.2cm); 
    \draw (.6,0) .. controls ++(90:.3cm) and ++(270:.3cm) .. (1,.8);
    \node at (.6,-.2) {\scriptsize{$k$}};
    \draw (.8,0) .. controls ++(90:.3cm) and ++(270:.3cm) .. (1.2,.8);
    \node at (.8,-.2) {\scriptsize{$j$}};
    \draw (1,0) arc (180:0:.2cm);
    \node at (1,-.2) {\scriptsize{$i$}};
    \node at (1.4,-.2) {\scriptsize{$i$}};
   \end{scope}
   \draw[thick, rounded corners=5pt] (0,-.4) rectangle (1.6,1.2);
  \end{tikzpicture}
  \\&= 
  d^{-i}
  \iota\left(E^{n+2i+j}_{n+i}\left(g\cdot\iota(f)\cdot
  \begin{tikzpicture}[baseline=.3cm]
   \draw (.2,0) -- (.2,.8);
   \node at (.2,1) {\scriptsize{$n$}};
   \draw (.6,.8) .. controls ++(-90:.3cm) and ++ (-270:.3cm) .. (1,0);
   \node at (.6,1) {\scriptsize{$j$}};
   \draw (.8,.8) arc (-180:0:.2cm);
   \node at (.8,1) {\scriptsize{$i$}};
   \node at (1.2,1) {\scriptsize{$i$}};
   \draw (.4,0) arc (180:0:.2cm);
   \node at (.4,-.2) {\scriptsize{$i$}};
   \node at (.8,-.2) {\scriptsize{$i$}};
   \draw[thick, rounded corners=5pt] (0,0) rectangle (1.4,.8);
  \end{tikzpicture}
  \right)\right)\cdot
  \begin{tikzpicture}[baseline=.3cm]
   \draw (.2,0) -- (.2,.8);
   \node at (.2,1) {\scriptsize{$n$}};
   \draw (.4,.8) arc (-180:0:.2cm);
   \node at (.4,1) {\scriptsize{$i$}};
   \node at (.8,1) {\scriptsize{$i$}};
   \draw (.6,0) .. controls ++(90:.3cm) and ++(270:.3cm) .. (1,.8);
   \node at (1,1) {\scriptsize{$k$}};
   \draw (.8,0) .. controls ++(90:.3cm) and ++(270:.3cm) .. (1.2,.8);
   \node at (1.2,1) {\scriptsize{$j$}};
   \draw (1,0) arc (180:0:.2cm);
   \node at (1,-.2) {\scriptsize{$i$}};
   \node at (1.4,-.2) {\scriptsize{$i$}};
   \draw[thick, rounded corners=5pt] (0,0) rectangle (1.6,.8);
  \end{tikzpicture}
  \\&= (g\circ f)\vartriangleleft1_m.
 \end{align*}

 It is easier to check that the action is associative.
Since we have already shown that $\vartriangleleft$ is a bifunctor, it suffices to check associativity of triples of morphisms when two are the identity. 
 That $1\vartriangleleft(1\otimes h)=(1\vartriangleleft1)\vartriangleleft h$ and that $1\vartriangleleft(g\otimes 1)=(1\vartriangleleft g)\vartriangleleft 1$ for all $g,h\in\TLJ$ follow directly from the definitions of $\vartriangleleft$ and $\otimes$. 
 Finally, in case $f\in\cM([n]\to[n+2i])$, we have 
  \[f\vartriangleleft(1_{[j+i]}\otimes1_{[k+i]}) = f\vartriangleleft 1_{[j+k+2i]}
  = \iota(f)\cdot
  \begin{tikzpicture}[baseline=.3cm]
   \draw (.2,0) -- (.2,.8);
   \node at (.2,1) {\scriptsize{$n$}};
   \draw (.4,.8) arc (-180:0:.2cm);
   \node at (.4,1) {\scriptsize{$i$}};
   \node at (.8,1) {\scriptsize{$i$}};
   \draw (.6,0) .. controls ++(90:.3cm) and ++(270:.3cm) .. (1,.8);
   \node at (1,1) {\scriptsize{$j$}};
   \draw (.8,0) .. controls ++(90:.3cm) and ++(270:.3cm) .. (1.2,.8);
   \node at (1.2,1) {\scriptsize{$i$}};
   \draw (1,0) .. controls ++(90:.3cm) and ++(270:.3cm) .. (1.4,.8);
   \node at (1.4,1) {\scriptsize{$k$}};
   \draw (1.2,0) arc (180:0:.2cm);
   \node at (1,-.2) {\scriptsize{$i$}};
   \node at (1.6,-.2) {\scriptsize{$i$}};
   \draw[thick, rounded corners=5pt] (0,0) rectangle (1.8,.8);
  \end{tikzpicture}
  = \iota(f)\cdot
  \begin{tikzpicture}[baseline=.3cm]
   \begin{scope}[shift={(0,.4)}]
    \draw (.2,0) -- (.2,.8);
    \node at (.2,1) {\scriptsize{$n$}};
    \draw (.4,.8) arc (-180:0:.2cm);
    \node at (.4,1) {\scriptsize{$i$}};
    \node at (.8,1) {\scriptsize{$i$}};
    \draw (.6,0) .. controls ++(90:.3cm) and ++(270:.3cm) .. (1,.8);
    \node at (1,1) {\scriptsize{$j$}};
    \draw (.8,0) arc (180:0:.2cm); 
    \draw (1.6,0) -- (1.6,.8);
    \node at (1.6,1) {\scriptsize{$i$}};
    \draw (1.8,0) -- (1.8,.8);
    \node at (1.8,1) {\scriptsize{$k$}};
    \draw[dashed] (0,0) -- (2.2,0);
   \end{scope}
   \begin{scope}[shift={(0,-.4)}]
    \draw (.2,0) -- (.2,.8);
    \node at (.2,-.2) {\scriptsize{$n$}};
    \draw (.6,0) -- (.6,.8);
    \node at (.6,-.2) {\scriptsize{$j$}};
    \draw (.8,0) -- (.8,.8);
    \node at (.8,-.2) {\scriptsize{$i$}};
    \draw (1.2,.8) arc (-180:0:.2cm); 
    \draw (1.4,0) .. controls ++(90:.3cm) and ++(270:.3cm) .. (1.8,.8);
    \node at (1.4,-.2) {\scriptsize{$k$}};
    \draw (1.6,0) arc (180:0:.2cm);
    \node at (1.6,-.2) {\scriptsize{$i$}};
    \node at (2,-.2) {\scriptsize{$i$}};
   \end{scope}
   \draw[thick, rounded corners=5pt] (0,-.4) rectangle (2.2,1.2);
  \end{tikzpicture}
  = (f\vartriangleleft1_{[j+i]})\vartriangleleft1_{[k+i]}\]
 The other cases are similar.
\end{defn}

\begin{rem}
 Notice that the principal graph of $M_\bullet$ is exactly the fusion graph for the associated $\cT\cL\cJ$-module category $\cM$ with respect to the unshaded-shaded strand $X\in \cT\cL\cJ$ with basepoint the simple projection $1_{M_0}\in \cM$. By Remark \ref{rem:HowMCategoryChangesUnderMarkovTowerOperations}, the operation of shifting the Markov tower does not change $\cM$ (up to equivalence), but corresponds to replacing the basepoint $1_{M_0}$ with $[0]\vartriangleleft X^{\text{alt}\otimes 2n}$. 
 Similarly, compressing by a minimal projection $p\in M_n$ corresponds to moving the basepoint to $p$.
 In contrast, the multistep basic construction, which may affect the principal graph, is analagous to replacing $X\in\TLJ$ with $X^{\text{alt}\otimes 2n}$, without changing the basepoint of $\cM$. The $\TLJ$-module structure on the category of projections $\cM^{(n)}$ of $M_{n\bullet}$ comes from combining the action of the subcategory $\TLJ^{(n)}$ of $\TLJ$ generated by subobjects of $[kn]_{\TLJ}$
 and the pivotal $\TLJ$-right module structure of $\TLJ^{(n)}$.
\end{rem}

\begin{rem}
 \label{rem:TLAsCatOfProjections}
Observe that we may identify the \emph{tensor} category $\TLJ$ as the category of projections of the Markov tower ${TLJ}_\bullet$, where the tensor structure is given by the $\TLJ$-module structure from Definition \ref{def:ModuleFromMarkovTower}. 
 One should think of the definitions for composition and tensor product in the category of projections as being obtained by isotoping the much simpler definitions for a planar algebra into a form that can be written down in terms of the data of a Markov tower. 
 Notice that under this identification, $\ev_{[n]}\in\cT\cL\cJ([2n]\to[0])$ and $\coev_{[n]}\in\cT\cL\cJ([0]\to[2n])$ are both identified with $1_n\in TLJ_n$ under Definition \ref{def:MarkovProjections}; one then checks they satisfy the zig-zag axioms using the definitions of $\vartriangleleft$ from Definition \ref{def:ModuleFromMarkovTower} and composition from Definition \ref{def:MarkovProjections}.
\end{rem}

\begin{defn}[Pivotal module structure] 
\label{def:PivotalModuleFromMarkovTower}
The category $\cM$ obtains a unitary trace $\Tr^\cM$ from the Markov tower $(M_\bullet,\tr_\bullet)$, by renormalizing so that isomorphic projections have the same trace.
For $[n] \in \cM$, we define $\Tr^\cM_{[n]} : \cM([n] \to [n]) = M_n \to \bbC$ by $d^n \tr_n$.
It is clear that $\Tr^\cM_{[n]}$ satisfies \ref{PivotalModule:positive} for all $n\geq 0$.
Now if $x \in \cM([n] \to [n+ 2k])= M_{n+k}$ and $y\in \cM ([n+ 2k] \to [n]) = M_{n+ k}$, observe that 
\begin{align*}
\Tr^\cM_{[n]}(y\circ x)
&\underset{\ref{compose:updown}}{:=}
d^n\tr_n(E^{n+k}_n(yx))
=
d^n \tr_{n+k}(yx)
=
d^{n+k}
\cdot
\tr_{n+k}
\left(
x \cdot
d^{k}\,
\begin{tikzpicture}[baseline = .3cm]
 \draw[thick, rounded corners = 5pt] (0,0) rectangle (1.2,.8);
 \draw (.2,0) -- (.2,.8);
 \draw (.5,.8) arc (-180:0:.2cm);
 \draw (.5,0) arc (180:0:.2cm);
 \node at (.2,1) {\scriptsize{$n$}};
 \node at (.5,1) {\scriptsize{$k$}};
 \node at (.5,-.2) {\scriptsize{$k$}};
\end{tikzpicture}\,
\cdot y
\right)
\\&\underset{\ref{compose:downup}}{=:}
\Tr^\cM_{[n+2k]}(x\circ y),
\end{align*}
and thus $\Tr^\cM$ satisfies \ref{PivotalModule:tracial}.
To see that $\Tr^\cM$ satisfies \ref{PivotalModule:compatible}, we must show that for all $n$, $k$, and $f\in\cM([n]\to[n])$,
 \[\Tr^\cM_{[n]\vartriangleleft[k]}(f)=\Tr^\cM_{[n]}\left((1\vartriangleleft\coev_{[k]}^\dag)\circ(f\vartriangleleft1_{[k]})\circ(1\vartriangleleft\coev_{[k]})\right).
 \]
 Now by Remark \ref{rem:TLAsCatOfProjections}, it is straightforward to show the left and right sides of the above equation are respectively equal to the left and right sides of the following equation, which trivially holds:
  \[
  \Tr(f)
  =
  \Tr\left(f\cdot E^{n+2k}_n\left(d^{k}\,
  \begin{tikzpicture}[baseline=.3cm]
   \draw (.2,0) -- (.2,.8);
   \node at (.2,1) {\scriptsize{$n$}};
   \draw (.4,.8) arc (-180:0:.2cm);
   \node at (.4,1) {\scriptsize{$k$}};
   \draw (.4,0) arc (180:0:.2cm);
   \node at (.4,-.2) {\scriptsize{$k$}};
   \draw[thick, rounded corners = 5pt] (0,0) rectangle (1,.8);
  \end{tikzpicture}
 \right)\right)
 =
 \Tr\left(E^{n+2k}_n\left(\iota(f)\cdot d^{k}\,
  \begin{tikzpicture}[baseline=.3cm]
   \draw (.2,0) -- (.2,.8);
   \node at (.2,1) {\scriptsize{$n$}};
   \draw (.4,.8) arc (-180:0:.2cm);
   \node at (.4,1) {\scriptsize{$k$}};
   \draw (.4,0) arc (180:0:.2cm);
   \node at (.4,-.2) {\scriptsize{$k$}};
   \draw[thick, rounded corners = 5pt] (0,0) rectangle (1,.8);
  \end{tikzpicture}
 \right)\right)
 \text{.}
 \]
\end{defn}

\begin{proof}[Proof of Corollary \ref{cor:TLJPivotalModuleClassification}]
We saw in Definitions \ref{def:ModuleFromMarkovTower} and \ref{def:PivotalModuleFromMarkovTower} how a connected Markov tower of modulus $d$ gives us a cyclic pivotal right $\cT\cL\cJ(d)$-module $\Cstar$ category.
We saw in Example \ref{ex:MarkovTowerFromRightModule} that given a cyclic pivotal right $\cT\cL\cJ(d)$-module $\Cstar$ category $(\cM, m,\Tr^\cM)$, 
defining $M_n := \End(m \vartriangleleft X^{\text{alt}\otimes n})$ and $\tr_n := \Tr^\cM_{m \vartriangleleft X^{\text{alt}\otimes n}}$ defines a connected Markov tower.
One now shows these two processes are mutually inverse up to dagger equivalence.
\end{proof}

\begin{rem}
\label{rem:LambdaLatticeModule}
While the process of defining a tensor structure on the category of projections $\cP$ of a Markov tower $P_\bullet$ obtained from a planar algebra $\cP_\bullet$ is fairly straightforward and similar to Remark \ref{rem:TLAsCatOfProjections}, 
it is far less obvious for a Markov tower coming from a standard $\lambda$-lattice as in \cite{MR1334479}.
Given a standard $\lambda$-lattice $A_{\bullet\bullet}$ with $\lambda = d^{-2}$, we expect that a Markov tower $M_\bullet$ of modulus $d$ such that $M_n \supset A_{0n}$ for all $n\geq 0$ satisfying certain compatibility conditions is the equivalent notion of a right module for $A_{\bullet\bullet}$ in the spirit of Theorems \ref{thm:ModuleEquivalence} and \ref{thm:SubfactorTensorCategoryEquivalence}.
$$
\begin{tikzpicture}
 \node at (0,1) {$M_{0}$};
 \node at (.5,1) {$\subset$};
 \node at (1,1) {$M_{1}$};
 \node at (1.5,1) {$\subset$};
 \node at (2,1) {$M_{2}$};
 \node at (2.5,1) {$\subset$};
 \node at (3,1) {$M_{3}$};
 \node at (3.5,1) {$\subset$};
 \node at (4,1) {$\cdots$};
 \node at (0,.5) {$\cup$};
 \node at (1,.5) {$\cup$};
 \node at (2,.5) {$\cup$};
 \node at (3,.5) {$\cup$};
 \node at (0,0) {$A_{00}$};
 \node at (.5,0) {$\subset$};
 \node at (1,0) {$A_{01}$};
 \node at (1.5,0) {$\subset$};
 \node at (2,0) {$A_{02}$};
 \node at (2.5,0) {$\subset$};
 \node at (3,0) {$A_{03}$};
 \node at (3.5,0) {$\subset$};
 \node at (4,0) {$\cdots$};
 \node at (1,-.5) {$\cup$};
 \node at (2,-.5) {$\cup$};
 \node at (3,-.5) {$\cup$};
 \node at (1,-1) {$A_{11}$};
 \node at (1.5,-1) {$\subset$};
 \node at (2,-1) {$A_{12}$};
 \node at (2.5,-1) {$\subset$};
 \node at (3,-1) {$A_{13}$};
 \node at (3.5,-1) {$\subset$};
 \node at (4,-1) {$\cdots$};
 \node at (2,-1.5) {$\cup$};
 \node at (3,-1.5) {$\cup$};
 \node at (2,-2) {$A_{22}$};
 \node at (2.5,-2) {$\subset$};
 \node at (3,-2) {$A_{23}$};
 \node at (3.5,-2) {$\subset$};
 \node at (4,-2) {$\cdots$};
 \node at (3,-2.5) {$\cup$};
 \node at (3,-3) {$A_{33}$};
 \node at (3.5,-3) {$\subset$};
 \node at (4,-3) {$\cdots$};
 \node at (4,-3.5) {$\ddots$};
\end{tikzpicture}
$$
We leave this exploration to a future joint article, as it would take us too far afield.
\end{rem}

\section{The canonical planar algebra from a strongly Markov inclusion} 

We begin this section by recalling the construction of the canonical planar $\dag$-algebra from a strongly Markov inclusion.
The reader is advised to review the definition of a strongly Markov inclusion from Definition \ref{def:StronglyMarkovInclusion} before proceeding.
We then discuss various operations on the inclusion, and how such operations affect (or do not affect!) the planar algebra.

\subsection{The canonical relative commutant planar algebra}
\label{sec:StronglyMarkovPA}

There is a canonical planar algebra structure on the towers of relative commutants, called the \textit{canonical planar $\dag$-algebra of a strongly Markov inclusion} corresponding to $A_0\subseteq (A_1,\tr_1)$. 
Denote by $(A_n,\tr_n)_{n\geq 0}$ the Jones tower for inclusion $A_0\subset (A_1,\tr_1)$.
The box spaces are defined by the relative commutants
$$
\cP_{n,+}:=A_0'\cap A_{n} 
\qquad\qquad
\cP_{n,-}:=A_1'\cap A_{n+1},
$$
which are finite dimensional by \cite[Prop.~2.7.3]{MR996807}.
We refer the reader to \cite[\S2.3]{MR2812459} for the action of tangles.
The $\dag$-structure is given by $*$ in the relative commutants. 
We remark that one important feature of this construction is that it depends on the existence of a Pimsner-Popa basis for $A_1$ over $A_0$, but not on a choice of basis. 

The following theorem uniquely characterizes the canonical relative commutant planar $\dag$-algebra.

\begin{thm}[{\cite[Thm.~2.50]{MR2812459}}]
Given a strongly Markov inclusion $A_0 \subset (A_1, \tr_1)$, there is a unique planar $\dag$-algebra $\cP_\bullet$ of modulus $d = [A_1: A_0]^{1/2}$ whose box spaces are given by
$$
\cP_{n,+}:=A_0'\cap A_{n} 
\qquad\qquad
\cP_{n,-}:=A_1'\cap A_{n+1},
$$
such that
\begin{enumerate}[label={\rm(PA\arabic*)}]
\item
The $\dag$-structure of $\cP_{n,\pm}$ is given by $x^\dag = x^*$ in the relative commutant, and stacking corresponds to multiplication in the relative commutant:
$$
\begin{tikzpicture}[baseline=.35cm]
 \draw (0,-.5) -- (0,1.4);
 \node at (-.2,1.35) {\scriptsize{$n$}};
 \node at (-.2,.45) {\scriptsize{$n$}};
 \node at (-.2,-.45) {\scriptsize{$n$}};
 \roundNbox{unshaded}{(0,0)}{.25}{0}{0}{$y$}
 \roundNbox{unshaded}{(0,.9)}{.25}{0}{0}{$x$}
\end{tikzpicture}
=
xy \in \cP_{n,\pm}.
$$
\item
The Jones projection $e_{n} \in A_0'\cap A_n$ for the strongly Markov inclusion $A_{n-1}\subset A_n$ is given by the following tangles depending on parity for $k\geq 0$:
$$
e_{2k+1,+}
:=
\begin{tikzpicture}[baseline]
 \draw (0,-.5) -- (0,.5);
 \filldraw[shaded] (.3,.5) arc (-180:0:.3cm);
 \filldraw[shaded] (.3,-.5) arc (180:0:.3cm);
 \node at (.2,0){\tiny{$2k$}};
\end{tikzpicture} 
\qquad
e_{2k+2,+}
:=
\begin{tikzpicture}[baseline]
 \fill[shaded] (0,-.5) rectangle (1.2,.5);
 \draw (0,-.5) -- (0,.5);
 \filldraw[unshaded] (.3,.5) arc (-180:0:.3cm);
 \filldraw[unshaded] (.3,-.5) arc (180:0:.3cm);
 \node at (.4,0){\tiny{$2k+1$}};
\end{tikzpicture} \,.
$$

\item
For $x\in \cP_{n,+} = A_0'\cap A_n$ and $\{b\}$ a Pimsner-Popa basis for $A_1$ over $A_0$,
\begin{itemize}
\item
$
\begin{tikzpicture}[baseline=-.1cm]
 \draw (.4,-.5) -- (.4,.5);
 \draw (0,-.5) -- (0,.5);
 \node at (-.2,.45) {\scriptsize{$n$}};
 \node at (-.2,-.45) {\scriptsize{$n$}};
 \roundNbox{unshaded}{(0,0)}{.25}{0}{0}{$x$}
\end{tikzpicture}
=
x\in \cP_{n+1,+}=A_0'\cap A_{n+1}
$,

\item
$
\begin{tikzpicture}[baseline=-.1cm]
 \draw (.15,-.25) arc (-180:0:.15cm) -- (.45,.25) arc (0:180:.15cm);
 \draw (0,-.5) -- (0,.5);
 \node at (-.4,.45) {\scriptsize{$n-1$}};
 \node at (-.4,-.45) {\scriptsize{$n-1$}};
 \roundNbox{unshaded}{(0,0)}{.25}{0}{0}{$x$}
\end{tikzpicture}
=
dE^{A_n}_{A_{n-1}}(x) 
\in 
\cP_{n-1,+}=A_0'\cap A_{n-1}
$, and

\item
$
\begin{tikzpicture}[baseline=-.1cm, xscale=-1]
 \fill[shaded] (.6,-.5) -- (.6,.5) -- (0,.5) -- (0,-.5);
 \filldraw[unshaded] (.15,-.25) arc (-180:0:.15cm) -- (.45,.25) arc (0:180:.15cm);
 \draw (0,-.5) -- (0,.5);
 \node at (-.4,.45) {\scriptsize{$n-1$}};
 \node at (-.4,-.45) {\scriptsize{$n-1$}};
 \roundNbox{unshaded}{(0,0)}{.25}{0}{0}{$x$}
\end{tikzpicture}
=
d E^{A_0'}_{A_1'}(x)
=
d^{-1}\sum_b bxb^*
\in 
\cP_{n,-}
=
A_1'\cap A_n
$.
\end{itemize}

\item
For $x\in \cP_{n,-} = A_1'\cap A_{n+1}$, 
$
\begin{tikzpicture}[baseline=-.1cm, xscale=-1]
 \fill[shaded] (.4,-.5) -- (.4,.5) -- (0,.5) -- (0,-.5);
 \draw (.4,-.5) -- (.4,.5);
 \draw (0,-.5) -- (0,.5);
 \node at (-.2,.45) {\scriptsize{$n$}};
 \node at (-.2,-.45) {\scriptsize{$n$}};
 \roundNbox{unshaded}{(0,0)}{.25}{0}{0}{$x$}
\end{tikzpicture}
=
x\in \cP_{n+1,+}= A_0'\cap A_{n+1}
$.
\end{enumerate}
\end{thm}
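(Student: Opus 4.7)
The plan is to construct the planar $\dag$-algebra by defining the action of the shaded planar operad on the relative commutant box spaces generator-by-generator, verifying that the prescribed action is well-defined, and then appealing to a presentation of the shaded planar operad. Uniqueness is then nearly automatic because properties (PA1)--(PA4) pin down the action on a generating family of tangles.

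For existence, the first step is to show that the shaded planar operad is generated, up to isotopy, by a short list of elementary tangles: the stacking (multiplication) tangle, the ``add a string on the right'' inclusion tangle, the Jones projections $e_{n,\pm}$, the shading-preserving right cap (corresponding to $d\cdot E^{A_n}_{A_{n-1}}$), and the shading-flipping right cap (corresponding to $d\cdot E^{A_0'}_{A_1'}$). This reduction is standard and follows from the Morse-theoretic picture of planar tangles in Jones's original planar algebra paper: any tangle is isotopic to a standard form obtained by sweeping a horizontal line from bottom to top, with each elementary event along the sweep being one of the listed generators. Next, one checks that each generator really does map between the correct relative commutants: $e_n \in A_0' \cap A_n$ because $e_n$ commutes with $A_{n-1}\supset A_0$; $E^{A_n}_{A_{n-1}}$ is $A_0$-bimodular and so preserves commutation with $A_0$; and $\sum_b b(\cdot)b^*$ lands in $A_1'\cap A_n$ via a short computation using the Pimsner-Popa basis identity $\sum_b b e_1 b^* = 1$ together with the relation $b^*a = E^{A_1}_{A_0}(b^*a)$ for $a\in A_0$.

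The main obstacle is showing that these generator-level definitions satisfy every relation of the shaded planar operad, so that the assignment extends unambiguously to an action of the full operad. The required identities organize into a few families: the Temperley-Lieb-Jones relations for $(e_n)$, which hold because $A_0\subset A_1$ is strongly Markov; the basic construction identities $e_n x e_n = E^{A_n}_{A_{n-1}}(x) e_n$ and $E^{A_{n+1}}_{A_n}(e_n) = d^{-2}$; the Pimsner-Popa pull-down identity, which is the key tool for reducing tangles to standard form and which lets one absorb nested Jones projections into conditional expectations; the Markov trace property $\tr_{n+1}(xe_n) = d^{-2}\tr_n(x)$; and the compatibility between the two kinds of caps, which amounts to the identity $\sum_b \tr_n(bxb^*) = d^2\tr_n(x)$ for $x\in A_0'\cap A_n$, using the Watatani index identity $\sum_b bb^* = d^2$. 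The cleanest organization is to induct on the number of internal disks and cups/caps in a tangle, using the pull-down identity as the fundamental reduction, and checking at each stage that the value is unchanged under the local isotopy moves and disk-substitution relations of the operad. Verifying the $\dag$-structure is compatible reduces to the observation that reflection of tangles exchanges cups and caps and transposes stacking, which corresponds to taking $*$ on the operator side.

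For uniqueness, suppose $Z_1$ and $Z_2$ are two planar $\dag$-algebra structures on the stated box spaces that satisfy (PA1)--(PA4). By hypothesis, $Z_1$ and $Z_2$ coincide on each of the elementary generators above---multiplication is forced by (PA1), Jones projections by (PA2), inclusion and both flavors of cap by (PA3), and the reshading of $\cP_{n,-}$ by (PA4). Since every tangle is a composition of these generators and both $Z_i$ are morphisms of the planar operad, $Z_1 = Z_2$ on all tangles. One can alternatively argue conceptually via Theorem~\ref{thm:SubfactorTensorCategoryEquivalence}: the data (PA1)--(PA4) recover the underlying unitary $2\times 2$ multitensor category with distinguished generator (namely, the bimodule category generated by ${}_{A_0}L^2(A_1)_{A_1}$), and hence determine the subfactor planar algebra uniquely.
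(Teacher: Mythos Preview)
This theorem is not proved in the paper; it is quoted verbatim from \cite[Thm.~2.50]{MR2812459} as a black box characterizing the canonical relative commutant planar $\dag$-algebra, and the paper provides no argument of its own. So there is nothing here to compare your proposal against.

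That said, your outline is broadly the approach taken in \cite{MR2812459}: reduce to a generating family of tangles via a standard-form argument, assign operators to generators, and verify the operad relations using the strongly Markov axioms (Jones projection relations, implementation of conditional expectation, Markov property, pull-down). A couple of small corrections to your sketch: the third item in (PA3) is a \emph{left} cap, not a ``shading-flipping right cap,'' and the check that $d^{-1}\sum_b bxb^*$ lands in $A_1'\cap A_n$ uses $a\sum_b bxb^* = \sum_b bE_1(b^*a)xb^* = \sum_b bxE_1(b^*a)b^* = (\sum_b bxb^*)a$ for $a\in A_1$ and $x\in A_0'$, not the identity $b^*a = E^{A_1}_{A_0}(b^*a)$ you wrote (which is false in general). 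Your uniqueness argument is fine and matches the spirit of Lemma~\ref{lem:SufficientConditionsForPlanarMap}, which the paper does use repeatedly: once the action on the listed generators is fixed, the planar algebra structure is determined.
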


We will proceed with the same general technique for defining the embedding of planar algebras as in \cite{MR2812459}, in that we will also initially embed into the canonical $\dag$-planar algebra, and then make use of the following theorem:

\begin{thm}[{\cite[Theorem 3.28]{MR2812459}}] \label{planaralgebraisomorphism}
The canonical planar $\dag$-algebra associated to the strongly Markov inclusion of finite dimensional von Neumann algebras $A_0\subseteq (A_1,\tr_1)$ is isomorphic to the bipartite graph planar $\dag$-algebra of the Bratteli diagram for the inclusion $A_0\subseteq A_1$.
\end{thm}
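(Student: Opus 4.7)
The plan is to construct an explicit $*$-isomorphism via Ocneanu's loop algebra formalism, then invoke the uniqueness characterization of the canonical planar $\dag$-algebra from the preceding theorem. First, I would fix a coherent system of matrix units for the Jones tower $A_\bullet$. Starting with minimal central projections and matrix units in $A_0$ and $A_1$, together with the Pimsner--Popa basis $\{b\}$ for $A_1$ over $A_0$, one inductively builds matrix units for every $A_n$ indexed by pairs of length-$n$ paths on the Bratteli diagram $\Gamma$ of $A_0\subset A_1$, in such a way that the inclusion $A_{n-1}\subset A_n$ becomes the natural path-extension inclusion. Crucially, the quantum dimensions arising from the traces $\tr_n$ agree (up to normalization) with the Frobenius--Perron vertex weights on $\Gamma$ that are built into the definition of $\cG\cP\cA(\Gamma)_\bullet$; this uses the Markov property \ref{EP:MarkovTraces} and that the index $[A_1:A_0]=d^2$ forces the weight equation \eqref{eq:QuantumDimension}.

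Second, I would identify the relative commutant box spaces with the graph planar algebra box spaces. An element $x\in A_0'\cap A_n$ must commute with all matrix units of $A_0$, which forces its nonzero matrix coefficients to be indexed by pairs of paths with common starting vertex. This canonically identifies $A_0'\cap A_n$ with the space of formal linear combinations of pairs of length-$n$ loops on $\Gamma$ based at the even vertices -- exactly $\cG\cP\cA(\Gamma)_{n,+}$. A parallel identification using an analogous path model for the shifted tower starting at $A_1$ identifies $A_1'\cap A_{n+1}$ with $\cG\cP\cA(\Gamma)_{n,-}$.

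Third, I would verify that this vector-space identification extends to a planar $\dag$-algebra isomorphism. By the uniqueness theorem from Section~\ref{sec:StronglyMarkovPA}, it suffices to check compatibility with the generators of the operad: the multiplication tangle, the Jones projections $e_n$, the downward conditional expectation $E^{A_n}_{A_{n-1}}$ (right cup-cap), and the leftward conditional expectation $E^{A_0'}_{A_1'}=d^{-2}\sum_b b\cdot b^*$ (left cup-cap). The first three are essentially formal under the loop-model identification: multiplication and $\dag$ correspond to concatenation and reversal of path-pairs, and with respect to the matrix units, $e_n$ becomes the standard Temperley--Lieb cup-cap projection with the correct $d$-scaling supplied by \ref{EP:MarkovTraces}.

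The main obstacle will be the leftward conditional expectation. One must show that $d^{-1}\sum_b b x b^*$ acts on loops exactly as the left-cap tangle in $\cG\cP\cA(\Gamma)_\bullet$, which on a basis loop-pair extends by adjacent edges weighted by the ratio $\sqrt{\dim(w)/\dim(v)}$ of Frobenius--Perron eigenvector entries. The key idea is to choose the Pimsner--Popa basis $\{b\}$ compatibly with the matrix-unit system, so that each $b$ is supported on a single pair of length-one paths in $A_1$ over $A_0$; the identity $\sum_b b e_1 b^*=1_{A_2}$ together with the Watatani index equation $\sum_b bb^*=d^2 1$ then pins down the coefficients and forces the weights to coincide with the graph planar algebra weights. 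Once this computation is complete, the planar $\dag$-algebra isomorphism follows immediately from the universal property, giving the theorem.
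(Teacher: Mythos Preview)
The paper does not prove this theorem; it is quoted verbatim as \cite[Theorem 3.28]{MR2812459} and used as a black box. So there is no ``paper's own proof'' to compare against here.

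That said, your outline is essentially the argument of the cited reference \cite{MR2812459}: one fixes a loop/path-algebra presentation of the Jones tower via coherent matrix units (cf.\ \cite[\S3.1]{MR2812459}), identifies the relative commutants $A_0'\cap A_n$ and $A_1'\cap A_{n+1}$ with the loop spaces $\cG\cP\cA(\Gamma)_{n,\pm}$, and then checks the generating tangles listed in the uniqueness theorem (multiplication, Jones projections, right capping $E^{A_n}_{A_{n-1}}$, left capping $E^{A_0'}_{A_1'}$, and the left inclusion). Your identification of the left-cap computation as the crux is accurate, and your plan to pick the Pimsner--Popa basis compatibly with the matrix-unit system is exactly what is done there. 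One small correction: in the present paper $\cG\cP\cA(\Gamma)_{n,+}$ is spanned by pairs of length-$n$ paths with the same \emph{start and end} vertices (loops of length $2n$), not merely pairs sharing a start vertex; your description ``pairs of length-$n$ loops'' is ambiguous and should be tightened to ``pairs of length-$n$ paths with common endpoints,'' since commuting with $A_0$ only fixes the start vertex while the rectangular box shape of $A_0'\cap A_n$ elements in the path model fixes both.
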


The following two subsections  describe two isomorphisms between canonical $\dag$-planar algebras of related strongly Markov inclusions. 
Both are well known to experts, and will motivate constructions detailed in the later sections of this article.

\subsection{The shift isomorphism} 
\label{ssec:shift}

In the rest of this article, we will make extensive use of the following lemma adapted from \cite[Lem.~2.49]{MR2812459}, which provides sufficient conditions for a collection of maps to be a morphism of shaded planar $\dag$-algebras.

\begin{lem}[{\cite[Variation of Lem.~2.49]{MR2812459}}]
\label{lem:SufficientConditionsForPlanarMap}
Suppose $\Phi_{n,\pm} : \cP_{n,\pm} \to \cQ_{n,\pm}$ is a collection of unital $\dag$-algebra maps such that
\begin{enumerate}[label={\rm(\arabic*)}]
\item
$\Phi$ maps Jones projections in $\cP_{n,+}$ to Jones projections in $\cQ_{n,+}$,
\item
$\Phi$ commutes with the action of the following tangles:
$$
\underset{
\substack{
\text{right inclusion}
\\ 
\cP_{n,+} \to \cP_{n+1,+}
}}
{
\begin{tikzpicture}
 \draw (.4,-.5) -- (.4,.5);
 \draw (0,-.5) -- (0,.5);
 \node at (-.2,.45) {\scriptsize{$n$}};
 \node at (-.2,-.45) {\scriptsize{$n$}};
 \roundNbox{unshaded}{(0,0)}{.25}{0}{0}{}
\end{tikzpicture}
}
\qquad\qquad
\underset{
\substack{
\text{left inclusion}
\\ 
\cP_{n,-} \to \cP_{n+1,+}
}}
{
\begin{tikzpicture}[xscale=-1]
 \fill[shaded] (.4,-.5) -- (.4,.5) -- (0,.5) -- (0,-.5);
 \draw (.4,-.5) -- (.4,.5);
 \draw (0,-.5) -- (0,.5);
 \node at (-.2,.45) {\scriptsize{$n$}};
 \node at (-.2,-.45) {\scriptsize{$n$}};
 \roundNbox{unshaded}{(0,0)}{.25}{0}{0}{}
\end{tikzpicture}
}
\qquad\qquad
\underset{
\substack{
\text{right capping}
\\ 
\cP_{n,+} \to \cP_{n-1,+}
}}
{
\begin{tikzpicture}
 \draw (.15,-.25) arc (-180:0:.15cm) -- (.45,.25) arc (0:180:.15cm);
 \draw (0,-.5) -- (0,.5);
 \node at (-.4,.45) {\scriptsize{$n-1$}};
 \node at (-.4,-.45) {\scriptsize{$n-1$}};
 \roundNbox{unshaded}{(0,0)}{.25}{0}{0}{}
\end{tikzpicture}
}
\qquad\qquad
\underset{
\substack{
\text{left capping}
\\ 
\cP_{n,+} \to \cP_{n-1,-}
}}
{
\begin{tikzpicture}[xscale=-1]
 \fill[shaded] (.6,-.5) -- (.6,.5) -- (0,.5) -- (0,-.5);
 \filldraw[unshaded] (.15,-.25) arc (-180:0:.15cm) -- (.45,.25) arc (0:180:.15cm);
 \draw (0,-.5) -- (0,.5);
 \node at (-.4,.45) {\scriptsize{$n-1$}};
 \node at (-.4,-.45) {\scriptsize{$n-1$}};
 \roundNbox{unshaded}{(0,0)}{.25}{0}{0}{}
\end{tikzpicture}
}.
$$
\end{enumerate}
Then $\Phi$ is a morphism of shaded planar $\dag$-algebras.
\end{lem}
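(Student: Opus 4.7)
The plan is to adapt the argument of \cite[Lem.~2.49]{MR2812459}: reduce planarity of $\Phi = (\Phi_{n,\pm})$ to compatibility with a short list of generating tangles for the shaded planar operad, and then verify that each such generator is accounted for by the hypotheses. A convenient generating set consists of the multiplication and unit tangles, the Jones projections $e_{n,\pm}$, the four inclusion and capping tangles displayed in the lemma, and the $\dag$-reflection. Multiplication, unit, and $\dag$ are preserved because each $\Phi_{n,\pm}$ is a unital $\dag$-algebra map; Jones projections in $\cP_{n,+}$ by hypothesis (1); and the four displayed tangles by hypothesis (2). The $-$-shaded Jones projections and the shaded analogues of the listed inclusions/cappings are obtained from the listed generators by post-composing with $\dag$, so compatibility with them is automatic from the $\dag$-algebra hypothesis.

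The core technical step is the generation statement: every shaded planar tangle $T$ admits a decomposition into operadic compositions of the above elementary tangles. One concrete route is to isotope $T$ into a standard form in which every horizontal slice meets at most one input disk, one multiplication vertex, one local extremum of a strand, or one Jones projection; reading $T$ slice by slice from bottom to top then writes $Z_{\cP}(T)$ as an alternating composition of the generators. Given such a decomposition, the identity
\[
\Phi\bigl(Z_{\cP}(T)(x_1,\ldots,x_r)\bigr) = Z_{\cQ}(T)\bigl(\Phi(x_1),\ldots,\Phi(x_r)\bigr)
\]
follows by induction on the number of elementary pieces, using $\Phi$-compatibility at each step.

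The main obstacle I expect is the generation statement itself, and in particular the reduction of cabled / multi-step Jones projections (which appear inside any tangle with nested cups and caps) to the basic ones $e_{n,\pm}$ via the inclusion and capping tangles. This is precisely the technical content of the proof of \cite[Lem.~2.49]{MR2812459}, and its adaptation here is essentially formal: the only departure from the original statement is that the $\Phi_{n,\pm}$ are required to be unital $\dag$-algebra maps rather than merely unital algebra maps, which only strengthens the hypothesis. No further ingredients beyond those in \cite{MR2812459} should be required.
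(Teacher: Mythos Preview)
The paper does not give its own proof of this lemma; it simply cites it as a variation of \cite[Lem.~2.49]{MR2812459} and uses it as a black box. Your proposal correctly identifies this and sketches the standard argument behind that citation (generation of the planar operad by multiplication, units, Jones projections, the four displayed tangles, and $\dag$), so there is nothing to compare: your approach \emph{is} the intended one, and your observation that the only change from the original statement is strengthening to unital $\dag$-algebra maps is exactly right.
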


Suppose that $A_0\subset A_1 \subset \left(A_2,\tr_2,e_1\right)$ is a strongly Markov inclusion of von Neumann algebras and $\left(A_n,\tr_n,e_n\right)_{n\geq 0}$ is the tower obtained by iterating the basic construction. 
We know from \cite[Cor.~2.18]{MR2812459} that for any $0\leq k \leq n$, the inclusion $A_k \subset A_n$ is strongly Markov. 
Thus, we can find a Pimsner-Popa basis $B$ for $A_n$ over $A_k$.

By \cite[Prop.~2.20]{MR2812459}, for $0\leq j\leq 2n$, we can represent $A_j$ on $L^2(A_n, \tr_n)$ via the multistep basic construction, and $J_n A_{2n-j} J_n = A_j'\cap B(L^2(A_n, \tr_n))$ where $J_n$ is the modular conjugation.
We get a canonical trace on $A_j'$ by $\tr_j'(x):=\tr_{2n-j}(J_n x^*J_n)$ as discussed in Remark \cite[Rem.~2.21]{MR2812459}.

\begin{prop}[{\cite[Prop.~2.24]{MR2812459}}]
\label{prop:LeftCapping}
Let $0\leq j \leq k \leq n $. 
Let $\{b\}$ be a Pimsner-Popa basis for $A_k$ over $A_j$.
The conditional expectation $E_{A'_k}^{A'_j}: (A'_j\cap B(L^2(A_n,\tr_n)),\tr'_j)\rightarrow (A'_k \cap B(L^2(A_n,\tr_n)),\tr'_k)$ is given by:
\[
E^{A'_j}_{A'_k}(x)
=
d^{-2(k-j)} \sum_{b } bxb^{\ast}
\]
and is independent of the choice of basis.
\end{prop}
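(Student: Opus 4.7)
The plan is to verify that $\Phi(x):=d^{-2(k-j)}\sum_b bxb^*$ is the (unique) trace-preserving conditional expectation from $A'_j$ onto $A'_k$; independence from the choice of Pimsner-Popa basis will then follow automatically from uniqueness.

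First, I would show $\Phi(A'_j)\subseteq A'_k$. The Pimsner-Popa identity $y=\sum_b b\,E^{A_k}_{A_j}(b^*y)$, applied to $y^*c$ and then conjugated, yields the dual relation $c^*y=\sum_b E^{A_k}_{A_j}(c^*yb)\,b^*$ for all $y,c\in A_k$. For $x\in A'_j$ and $y\in A_k$, expanding $yb=\sum_c cE^{A_k}_{A_j}(c^*yb)$ and commuting the $A_j$-valued coefficient past $x$ gives
\[
y\sum_b bxb^* = \sum_c cx\Bigl(\sum_b E^{A_k}_{A_j}(c^*yb)\,b^*\Bigr) = \Bigl(\sum_c cxc^*\Bigr)y,
\]
so $\Phi(x)$ commutes with $A_k$. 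Next, for $x\in A'_k$, $bxb^*=xbb^*$, so $\Phi(x)=d^{-2(k-j)}x\sum_b bb^*$; since $A_j\subset A_k$ is strongly Markov of Watatani index $d^{2(k-j)}$ with $\sum_b bb^*=[A_k:A_j]$ (Definition \ref{def:StronglyMarkovInclusion}), we obtain $\Phi|_{A'_k}=\id$. Bimodularity of $\Phi$ over $A'_k$ is immediate because every $b\in A_k$ commutes with $A'_k$, and positivity is manifest from the sum-of-conjugations form.

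The main remaining point is trace preservation, which I would handle via the modular conjugation dictionary on $L^2(A_n,\tr_n)$: the map $y\mapsto J_ny^*J_n$ is a trace-preserving $*$-isomorphism $A_{2n-j}\cong A'_j$ that restricts to $A_{2n-k}\cong A'_k$, and hence transports the canonical trace-preserving conditional expectation $E^{A_{2n-j}}_{A_{2n-k}}$ on the reversed tower to the trace-preserving conditional expectation $E^{A'_j}_{A'_k}$. One then matches our $\Phi$ with this transport either by applying $J_n(\,\cdot\,)^*J_n$ to the Pimsner-Popa formula for $E^{A_{2n-j}}_{A_{2n-k}}$ written against any Pimsner-Popa basis for $A_{2n-j}$ over $A_{2n-k}$, or by invoking Tomiyama's theorem (to conclude $\Phi$ is some conditional expectation) together with a single explicit trace computation to pin down the scalar. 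The main obstacle lies precisely in this alignment: a Pimsner-Popa basis for $A_k$ over $A_j$ need not correspond under modular conjugation to a basis for $A_{2n-j}$ over $A_{2n-k}$, so the match requires care in bookkeeping, and it is for this reason that we route the trace-preservation step through the abstract uniqueness of the conditional expectation rather than a head-on computation of $\tr'_j(\sum_b bxb^*)$.
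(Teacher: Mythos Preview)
The paper does not supply a proof of this proposition; it is quoted directly from \cite[Prop.~2.24]{MR2812459} and used as a black box. So there is nothing in the paper to compare your argument against, and your write-up should be judged on its own.

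Your verification that $\Phi$ lands in $A_k'$, restricts to the identity on $A_k'$, is $A_k'$-bimodular, and is positive is correct and cleanly done; by Tomiyama this already shows $\Phi$ is \emph{a} conditional expectation onto $A_k'$. The genuine gap is exactly where you flag it: trace preservation. You propose two routes but carry out neither, and the second one (``a single explicit trace computation to pin down the scalar'') is not as innocent as it sounds, because the individual summands $bxb^*$ need not lie in $A_j'$, so you cannot simply apply $\tr_j'$ termwise and use traciality. The honest way through is the $J_n$-transport you mention: since $\tr_j'(y)=\tr_{2n-j}(J_ny^*J_n)$ is the vector state $\langle\,\cdot\,\Omega,\Omega\rangle$ at $\Omega=\widehat{1}\in L^2(A_n)$, one reduces to showing $\sum_b\langle xb^*\Omega,b^*\Omega\rangle=d^{2(k-j)}\langle x\Omega,\Omega\rangle$, and this is where the multistep Jones projection $f^k_j$ (with $\sum_b b f^k_j b^*=1$ and $f^k_j\Omega$ computable) enters. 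That computation is the actual content of the cited proof, and your sketch stops just short of it. As written, your argument establishes that $\Phi$ is \emph{some} conditional expectation and that basis-independence would follow once it is identified with the trace-preserving one, but it does not close that loop.
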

From the definition of the canonical $\dag$-planar algebra $\cP_\bullet$ of $A_\bullet$, one can work out that (in the language of Lemma \ref{lem:SufficientConditionsForPlanarMap}), right capping is simply the conditional expectation in the Markov tower $A_\bullet$, while left capping is the expectation on relative commutants described in the previous proposition. 
Thus, left capping is how the Pimsner-Popa basis shows itself graphically. 
The full details can be found in \cite[Prop.~2.47]{MR2812459}. 

\begin{cor}
\label{cor:OntoCor}
Adding $k$ strings to the left of $x$ gives a unital $\ast$-algebra isomorphisms
$\cP_{n,\pm} \to \cP_{n+k,\pm'}$
where $\pm' = \pm$ if $k$ is even and $\mp$ if $k$ is odd.
\[
\begin{tikzpicture}[baseline=-.1cm]
\draw (0,-.7) -- (0,.7);
\roundNbox{unshaded}{(0,0)}{.3}{0}{0}{$x$}
\node at (0,-.9) {\tiny{$n$}};
\end{tikzpicture}
\,\,\mapsto
\begin{tikzpicture}[baseline=-.1cm]
\draw (0.45,-.7) -- (0.45,.7);
\draw (0,-.7) -- (0,.7);
\roundNbox{unshaded}{(0.45,0)}{.3}{0}{0}{$x$}
\node at (0.45,-.9) {\tiny{$n$}};
\node at (0,-0.9){\tiny{$k$}};
\end{tikzpicture}
\]

\end{cor}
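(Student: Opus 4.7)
The plan is to realize the tangle operation $\Phi_{n,\pm}$ (adjoining $k$ strings on the left) as a unital $*$-algebra map and to produce an explicit two-sided inverse from the shaded planar operad, so that the statement reduces to a diagrammatic computation modulo the strong Markov property of $A_0\subseteq(A_1,\tr_1)$.

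The first step is to verify that $\Phi_{n,\pm}$ is a unital $*$-algebra homomorphism. Unitality is automatic: the tangle sends the identity box with $n$ strands to the identity box with $n+k$ strands. Multiplicativity is a consequence of operadic composition: the stacking tangle defining the product in $\cP_{n+k,\pm'}$ factors as "stack in $\cP_{n,\pm}$, then append $k$ vertical strands on the left", so $\Phi(xy)=\Phi(x)\Phi(y)$. The $\dag$ structure is preserved because horizontal reflection commutes with left-attachment of straight strands, so $\Phi(x^\dag)=\Phi(x)^\dag$. These three verifications parallel the checklist in Lemma \ref{lem:SufficientConditionsForPlanarMap}.

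The second step is to define a candidate inverse $\Psi_{n+k,\pm'}:\cP_{n+k,\pm'}\to\cP_{n,\pm}$ as $d^{-k}$ times the tangle that caps off the leftmost $k$ strands with nested caps (the shading alternating at each cap). By Proposition \ref{prop:LeftCapping}, a single left cap realizes the conditional expectation $E^{A_{j-1}'}_{A_j'}=d^{-2}\sum_b b(\cdot)b^\ast$ for a Pimsner-Popa basis $\{b\}$ of $A_j$ over $A_{j-1}$, so iterating $k$ times realizes $E^{A_0'}_{A_k'}$. A direct diagrammatic calculation with the Temperley-Lieb-Jones relations shows that $\Psi\circ\Phi$ produces $k$ closed loops times the identity tangle on $n$ strands; the loops contribute $d^k$, which cancels the normalization, so $\Psi\circ\Phi=\mathrm{id}_{\cP_{n,\pm}}$. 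This already yields injectivity of $\Phi$.

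The main obstacle is the reverse composition $\Phi\circ\Psi=\mathrm{id}_{\cP_{n+k,\pm'}}$, which is equivalent to surjectivity. The plan here is to pass to the bipartite graph planar algebra via Theorem \ref{planaralgebraisomorphism}: in the loop-algebra model, elements of $\cP_{n+k,\pm'}$ are linear combinations of pairs of length-$(n+k)$ loops based at a fixed vertex, and the tangle $\Phi\circ\Psi$ acts on such a pair by summing over loop pre- and post-fixes weighted by the Pimsner-Popa expansion. Using the identity $\sum_b be_k b^\ast=1$ for a basis $\{b\}$ of $A_k$ over $A_0$ guaranteed by strong Markovness, one checks that this sum collapses to the original pair of loops. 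The delicate part of the argument will be tracking the precise normalization factors of $d^{\pm k}$ through the multi-step Jones basic construction and matching them against the left-capping normalization chosen above; this is where the Watatani-index equality $[A_k:A_0]=d^{2k}$ from Definition \ref{def:StronglyMarkovInclusion} enters decisively.
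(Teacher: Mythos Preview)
Your Steps 1 and 2 are fine and essentially match what the paper needs: the candidate inverse $\Psi$ given by $d^{-k}$ times $k$ nested left caps is exactly the right object, and $\Psi\circ\Phi=\id$ is the easy direction. The gap is in Step 3. Passing to the loop-algebra model via Theorem \ref{planaralgebraisomorphism} is both unnecessary and too restrictive: that theorem is stated for finite dimensional $A_0,A_1$, whereas the corollary is asserted for an arbitrary strongly Markov inclusion. More importantly, your sketch that ``this sum collapses to the original pair of loops'' never says what forces the collapse. The identity $\sum_b b e_k b^\ast=1$ alone does not make $\Phi\circ\Psi$ equal to the identity on a general box space; something about the \emph{target} space has to be used.

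The paper's proof supplies exactly that missing observation, and it is a one-line operator-algebraic fact rather than a graph-planar-algebra computation. The image of $\Phi$ is the relative commutant $A_k'\cap A_{n+k}$. For $x$ there, $x$ already commutes with $A_k$, so the conditional expectation onto $A_k'$ fixes it: $E^{A_0'}_{A_k'}(x)=x$. Proposition \ref{prop:LeftCapping} expresses this expectation as $d^{-2k}\sum_b bxb^\ast$, and the Vaughan Jones trick cited at the step marked $(!)$ rewrites that sum diagrammatically as $d^{-k}$ times $k$ left caps on $x$, with $k$ fresh vertical strands placed to the left; this is precisely $\Phi(\Psi(x))$ in your notation. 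So $\Phi\circ\Psi=\id$ follows instantly from $x\in A_k'$, with no detour through a loop model and no normalization bookkeeping beyond the single factor $d^{-k}$. Your $\Psi$ was correct; what you were missing is that the codomain sits inside $A_k'$, which is what makes the left-capping expectation act as the identity there.
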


\begin{proof} 
We first prove the result for $\cP_{n,+} = A_0'\cap A_n$. 
The following implicitly uses a trick due to Vaughan Jones that can be found in \cite[Theorem~4.1]{MR2812459} along with the pictorial description of $f^{n}_{n-k}$ in the Multistep Basic Construction \cite[Remark~2.44]{MR2812459} in the equality marked $(!)$. 
Let $B$ be a Pimsner-Popa basis for $A_k$ over $A_0$.
For $x\in A'_k\cap A_{n+k}$, the element $y\in A_0'\cap A_n$ is uniquely determined by
\begin{equation*}
x
=
E_{A'_k}^{A_0'}(x)
=
d^{-2k} \sum_{b \in B} bxb^{\ast}
\underset{(!)}{=}
d^{-k}
\begin{tikzpicture}[baseline=-.1cm]
\draw (-.15,.3) arc(0:180:.15) -- (-.45,-.3) arc(-180:0:.15);
\draw (-.75,-.7) -- (-.75,.7);
\draw (0,-.7) -- (0,.7);
\roundNbox{unshaded}{(0,0)}{.3}{0}{0}{$x$};
\node at (-.55,0) {\tiny{$k$}};
\node at (-.85,0) {\tiny{$k$}};
\end{tikzpicture}
=
\begin{tikzpicture}[baseline=-.1cm]
\draw (-.15,.3) arc(0:180:.15) -- (-.45,-.3) arc(-180:0:.15);
\draw (-1.5,-.7) -- (-1.5,.7);
\draw (0,-.7) -- (0,.7);
\roundNbox{unshaded}{(0,0)}{.3}{0}{0}{$x$};
\draw[dashed] (-.75,-.5) -- (.5,-.5) -- (.5,.5) -- (-.75,.5) -- (-.75,-.5);
\node at (-.55,0) {\tiny{$k$}};
\node at (-1.6,0) {\tiny{$k$}};
\node at (0,-.9) {\tiny{$n$}};
\node at (-1.05,0) {\tiny{$d^{-k}$}};
\end{tikzpicture}
\quad
=
\begin{tikzpicture}[baseline=-.1cm]
\draw (0.45,-.7) -- (0.45,.7);
\draw (0,-.7) -- (0,.7);
\roundNbox{unshaded}{(0.45,0)}{.3}{0}{0}{$y$}
\node at (0.45,-.9) {\tiny{$n$}};
\node at (0,-0.9){\tiny{$k$}};
\end{tikzpicture}\,.
\end{equation*}
The proof is similar for $\cP_{n,-}$.
\end{proof}

Recall that the canonical planar $\dag$-algebra for the inclusion $A_0 \subseteq (A_1,\tr_1)$ is denoted by $\cP_{\bullet}$.
We denote the canonical planar $\dag$-algebra for $A_2 \subset (A_3,\tr_3)$ by $\cQ_{\bullet}$.

\begin{thm}[Shift Isomorphism]
\label{ShiftIso}
The map $\Phi:\cP_{\bullet} \to \cQ_{\bullet}$, obtained by adding two strings in front of elements of $\cP_{n,\pm}$
\[
\begin{tikzpicture}[baseline=-.1cm]
\draw (0,-.7) -- (0,.7);
\roundNbox{unshaded}{(0,0)}{.3}{0}{0}{$x$};
\node at (0,-.9) {\tiny{$n$}};
\end{tikzpicture}
\,\,\mapsto
\begin{tikzpicture}[baseline=-.1cm]
\draw (0.45,-.7) -- (0.45,.7);
\draw (0,-.7) -- (0,.7);
\roundNbox{unshaded}{(0.45,0)}{.3}{0}{0}{$x$}
\node at (0.45,-.9) {\tiny{$n$}};
\node at (0,-0.9){\tiny{$2$}};
\end{tikzpicture}
\qquad
\qquad
\qquad
\begin{tikzpicture}[baseline=-.1cm]
\fill[shaded] (-.5,-.7) rectangle (0,.7);
\draw (0,-.7) -- (0,.7);
\roundNbox{unshaded}{(0,0)}{.3}{0}{0}{$x$};
\node at (0,-.9) {\tiny{$n$}};
\end{tikzpicture}
\,\,\mapsto
\begin{tikzpicture}[baseline=-.1cm]
\fill[shaded] (-.3,-.7) rectangle (0.45,.7);
\draw (0.45,-.7) -- (0.45,.7);
\draw (0,-.7) -- (0,.7);
\roundNbox{unshaded}{(0.45,0)}{.3}{0}{0}{$x$}
\node at (0.45,-.9) {\tiny{$n$}};
\node at (0,-0.9){\tiny{$2$}};
\end{tikzpicture}
\]
defines a planar $\dag$-algebra isomorphism between $\cP_{\bullet}$ and $\cQ_{\bullet}$.
\end{thm}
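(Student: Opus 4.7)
The plan is to apply Lemma~\ref{lem:SufficientConditionsForPlanarMap}, which reduces the verification of the planar $\dag$-algebra morphism property to: (a) each $\Phi_{n,\pm}$ being a unital $\dag$-algebra isomorphism, (b) $\Phi$ preserving Jones projections, and (c) $\Phi$ commuting with the four canonical generating tangles (right/left inclusion and right/left capping). The first item is immediate from Corollary~\ref{cor:OntoCor} with $k=2$: adding two strings to the left realizes a unital $*$-algebra isomorphism $\cP_{n,+}\to A_2'\cap A_{n+2}=\cQ_{n,+}$ (and similarly $\cP_{n,-}\to A_3'\cap A_{n+3}=\cQ_{n,-}$), and this isomorphism is manifestly $\dag$-preserving since $\dag$ in both planar algebras is the adjoint inherited from the ambient tower.

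For item (b), the key observation is that prepending two through-strings to the tangle $e_{n-1,+}$ defining the Jones projection in $\cP_{n,+}$ yields, by planar isotopy, the Jones projection tangle $e_{n+1,+}$ one level higher; evaluated in $\cP_\bullet$ this gives the element $e_{n+1}\in A_0'\cap A_{n+2}$, which lies in $A_2'\cap A_{n+2}$ and is precisely the Jones projection for the shifted Jones tower $A_2\subset A_3\subset\cdots$, i.e., the Jones projection of $\cQ_{n,+}$. For item (c), commutation with the right-side tangles (right inclusion and right capping) is essentially immediate since these tangles act on the opposite side of the diagram from the two added strings; the underlying algebraic identity is that the shifted conditional expectation $E^{A_{n+2}}_{A_{n+1}}$ governing right capping in $\cQ$ restricts appropriately on $\operatorname{im}(\Phi)$ to match $E^{A_n}_{A_{n-1}}$. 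The left inclusion is just the inclusion $A_1'\cap A_{n+1}\subset A_0'\cap A_{n+1}$, which is plainly compatible with the shift by two.

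The main obstacle is the left capping, since it invokes a Pimsner-Popa basis: in $\cP$ it is given by $x\mapsto d^{-1}\sum_b bxb^*$ for a PP basis $\{b\}$ of $A_1$ over $A_0$, while in $\cQ$ it uses a PP basis $\{b'\}$ of $A_3$ over $A_2$. I would verify the required identity $\Phi(d^{-1}\sum_b bxb^*)=d^{-1}\sum_{b'} b'\Phi(x)(b')^*$ by choosing bases compatibly under the shift of basic constructions and then invoking the uniqueness characterization from Corollary~\ref{cor:OntoCor}; alternatively, one can observe that both sides satisfy the characterizing equation of that corollary and must therefore agree.
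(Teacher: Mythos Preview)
Your proposal is correct and follows essentially the same route as the paper: both invoke Corollary~\ref{cor:OntoCor} for bijectivity on box spaces and then verify the hypotheses of Lemma~\ref{lem:SufficientConditionsForPlanarMap}. The only place worth a remark is left capping: rather than ``choosing bases compatibly under the shift,'' the paper simply takes a Pimsner--Popa basis $B$ for $A_3$ over $A_2$, computes the left cap in $\cQ_\bullet$ as $d^{-1}\sum_{b\in B} b\Phi(x)b^*$, and then applies the same diagrammatic trick used in the proof of Corollary~\ref{cor:OntoCor} (from \cite[Theorem~4.1]{MR2812459}) to recognize this as $\Phi$ applied to the left cap of $x$ in $\cP_\bullet$ --- which is precisely your second alternative.
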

\begin{proof}
The map is an isomorphism between box spaces due to Corollary \ref{cor:OntoCor}. 
In order to show that this map commutes with the action of tangles, we just have to show that it satisfies the requirements of Lemma \ref{lem:SufficientConditionsForPlanarMap}. 
We draw the string diagrams of $\cQ_{\bullet}$ in blue in order to increase clarity.
\begin{enumerate}[label={\rm(\arabic*)}]
\item (Right Inclusion)
$
\begin{tikzpicture}[baseline=-.1cm]
\draw[blue] (-.2,-0.7) -- (-.2,0.7);
\draw[blue] (0.45,-0.7) -- (0.45,0.7);
\roundNbox{unshaded, draw=blue}{(-.2,0)}{.3}{.15}{.15}{\textcolor{blue}{{$\Phi(x)$}}};
\node at (-.2,-0.9){\textcolor{blue}{\tiny{$n$}}};
\end{tikzpicture}
\quad
= 
\quad
\begin{tikzpicture}[baseline=-.1cm]
\draw (-0.2,-0.7) -- (-0.2,0.7);
\draw (0.2,-0.7) -- (0.2,0.7);
\roundNbox{unshaded}{(0,0)}{.3}{0.15}{0.15}{{$\Phi(x)$}};
\draw (0.6,-0.7) -- (0.6,0.7);
\node at (-0.2,-0.9){\tiny{$2$}};
\node at (0.2,-0.9){\tiny{$n$}};
\end{tikzpicture}
\quad
=
\quad
\begin{tikzpicture}[baseline=-.1cm]
\draw (0,-0.7) -- (0,0.7);
\draw (-0.6,-0.7) -- (-0.6,0.7);
\draw[dashed] (-0.45,-0.5) rectangle (0.6,0.5);
\roundNbox{unshaded}{(0,0)}{.3}{0}{0}{{$x$}};
\draw (0.45,-0.7) -- (0.45,0.7);
\node at (0,-0.9){\tiny{$n$}};
\node at (-0.6,-0.9){\tiny{$2$}};
\end{tikzpicture}
$
\item (Left Inclusion)
$
\begin{tikzpicture}[baseline=-.1cm,xscale=-1]
\fill[Bshading] (-.2,-0.7) rectangle (0.45,0.7);
\draw[blue] (-.2,-0.7) -- (-.2,0.7);
\draw[blue] (0.45,-0.7) -- (0.45,0.7);
\roundNbox{unshaded, draw=blue}{(-.3,0)}{.3}{.15}{.15}{\textcolor{blue}{{$\Phi(x)$}}};
\node at (0,-0.9){\textcolor{blue}{\tiny{$n$}}};
\end{tikzpicture}
\quad
= 
\quad
\begin{tikzpicture}[baseline=-.1cm,xscale=-1]
\fill[shaded] (-0.3,-0.7) rectangle (0.6,0.7);
\draw (-0.2,-0.7) -- (-0.2,0.7);
\draw (0.2,-0.7) -- (0.2,0.7);
\roundNbox{unshaded}{(0,0)}{.3}{0.15}{0.15}{{$\Phi(x)$}};
\draw (0.6,-0.7) -- (0.6,0.7);
\node at (-0.2,-0.9){\tiny{$n$}};
\node at (0.2,-0.9){\tiny{$2$}};
\end{tikzpicture}
\quad
=
\quad
\begin{tikzpicture}[baseline=-.1cm,xscale=-1]
\fill[shaded] (0,-0.7) rectangle (0.75,0.7);
\draw (0,-0.7) -- (0,0.7);
\draw (0.75,-0.7) -- (0.75,0.7);
\draw[dashed] (-0.45,-0.5) rectangle (0.6,0.5);
\roundNbox{unshaded}{(0,0)}{.3}{0}{0}{{$x$}};
\draw (0.45,-0.7) -- (0.45,0.7);
\node at (0,-0.9){\tiny{$n$}};
\node at (0.45,-0.9){\tiny{$2$}};
\end{tikzpicture}
$
\item (Right Capping)
$
\begin{tikzpicture}[baseline=-.1cm]
\draw[blue] (0,-0.7) -- (0,0.7);
\draw[blue] (0.2,0.3) arc(180:0:0.2) -- (0.6,-0.3) arc(0:-180:0.2);
\roundNbox{unshaded, draw=blue}{(0,0)}{.3}{.15}{.15}{\textcolor{blue}{{$\Phi(x)$}}};
\node at (0,-0.9){\textcolor{blue}{\tiny{$n$}}};
\end{tikzpicture}
\quad
=
\quad
\begin{tikzpicture}[baseline=-.1cm]
\draw (-0.2,-0.7) -- (-0.2,0.7);
\draw (0,-0.7) -- (0,0.7);
\roundNbox{unshaded}{(0,0)}{.3}{0.15}{0.15}{{$\Phi(x)$}};
\draw (0.2,0.3) arc(180:0:0.2) -- (0.6,-0.3) arc(0:-180:0.2);
\node at (-0.2,-0.9){\tiny{$2$}};
\node at (0,-0.9){\tiny{$n$}};
\end{tikzpicture}
\quad
=
\quad
\begin{tikzpicture}[baseline=-.1cm]
\draw (0,-0.7) -- (0,0.7);
\draw (-0.6,-0.7) -- (-0.6,0.7);
\draw[dashed] (-0.45,-0.55) rectangle (0.6,0.55);
\roundNbox{unshaded}{(0,0)}{.3}{0}{0}{{$x$}};
\draw (0.15,0.3) arc(180:0:0.15) -- (0.45,-0.3) arc(0:-180:0.15);
\node at (0,-0.9){\tiny{$n$}};
\node at (-0.6,-0.9){\tiny{$2$}};
\end{tikzpicture}
$
\item (Left Capping) We again apply the trick from \cite[Theorem~4.1]{MR2812459}. Let $B$ be a Pimsner-Popa basis of $A_3$ over $A_2$.
Then
\begin{equation*}
\begin{tikzpicture}[baseline=-0.1]
\fill[Bshading] (-0.8,-0.7) rectangle (0,0.7);
\draw[blue] (0,-0.7) -- (0,0.7);
\filldraw[unshaded,draw=blue] (-0.2,0.3) arc(0:180:0.2) -- (-0.6,-0.3) arc(-180:0:0.2);
\roundNbox{unshaded,draw=blue}{(0,0)}{0.3}{.15}{.15}{\textcolor{blue}{{$\Phi(x)$}}};
\node at (0,-0.9){\textcolor{blue}{\tiny{$n$}}};
\end{tikzpicture}
\quad
=
\quad
d^{-1}\sum_{b\in B} b\Phi(x)b^{\ast}
\quad
=
\begin{tikzpicture}[baseline=-0.1]
\fill[shaded] (-1.05,-0.7) rectangle (0,0.7);
\draw (-0.75,-0.7) -- (-0.75,0.7);
\draw (0,-0.7) -- (0,0.7);
\filldraw[unshaded] (-0.15,0.3) arc(0:180:0.15) -- (-0.45,-0.3) arc(-180:0:0.15);
\roundNbox{unshaded}{(0,0)}{0.3}{0}{0}{{$x$}};
\node at (0,-0.9){\tiny{$n$}};
\node at (-0.75,-0.9){\tiny{$2$}};
\draw[dashed] (-0.6,-0.55) rectangle (0.45,0.55);
\end{tikzpicture}
\,.
\qedhere
\end{equation*}
\end{enumerate}
\end{proof}

\begin{rem}
By Remark \ref{Rem:ShiftTowerEffects}, the categories of projections of $\cP_\bullet$ and $\cQ_\bullet$ as in Theorem \ref{ShiftIso} are equivalent.
\end{rem}


\subsection{The compression isomorphism}
\label{sec:CompressionIso}

The following lemma is well known to experts. 
We provide a proof for convenience and completeness.

\begin{lem}
\label{lem:CompressRelativeCommutant}
Suppose $N\subset M\subset B(H)$ is an inclusion of von Neumann algebras and $p\in P(N)$.
\begin{enumerate}[label={\rm(\arabic*)}]
\item
$p(N'\cap M) = pN' \cap pMp$.
\item
Suppose the central support of $p$ in $N$ is $z\in Z(N)$.
The map $x\mapsto px$ is an isomorphism $z(N'\cap M) \to p(N'\cap M)$.
\end{enumerate}
\end{lem}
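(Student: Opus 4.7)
My plan is to deduce both statements from the standard compression isomorphism for von Neumann algebras. Using that $p \in N$ has central support $z$, I will fix a family of partial isometries $\{u_\alpha\} \subseteq N$ with $u_\alpha^* u_\alpha \leq p$ and $\sum_\alpha u_\alpha u_\alpha^* = z$. These implement a $*$-isomorphism between $zN'$ and $pN'p$, where $pN'p = pN'$ since $p \in N$ commutes with every element of $N'$. The essential feature I will exploit is that each $u_\alpha$ lies in $N \subseteq M$, so conjugation by the $u_\alpha$ preserves the subalgebra $M$.

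For part (1), the forward inclusion $p(N'\cap M) \subseteq pN' \cap pMp$ is immediate, since any $x \in N' \cap M$ commutes with $p$, giving $px = pxp$. For the reverse inclusion, I take $y \in pN' \cap pMp$, write $y = pn'$ with $n' \in N'$, and form the lift $x := \sum_\alpha u_\alpha y u_\alpha^*$. A short calculation using $u_\alpha p = u_\alpha$ (which is equivalent to $u_\alpha^* u_\alpha \leq p$) and the fact that $n'$ commutes with $u_\alpha \in N$ collapses $u_\alpha y u_\alpha^*$ to $n' u_\alpha u_\alpha^*$, so $x = n'z$, which lies in $N'$. Each summand lies in $M$ (since $u_\alpha \in N \subseteq M$ and $y \in M$), so $x \in M$ as well, and one checks $px = n'pz = n'p = y$.

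Part (2) then follows quickly. The map $\phi(x) := px$ is a $*$-homomorphism because $p$ commutes with $N' \cap M$. Surjectivity is automatic since $pz = p$: any $px \in p(N'\cap M)$ equals $p(zx)$ with $zx \in z(N'\cap M)$. Injectivity is inherited from the injectivity of the compression isomorphism on $zN'$: if $x \in z(N'\cap M) \subseteq zN'$ satisfies $px = 0$, the inverse formula gives $x = \sum_\alpha u_\alpha (px) u_\alpha^* = 0$.

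The main technical step is the computation in part (1) showing that the lift $\sum_\alpha u_\alpha y u_\alpha^*$ simplifies to an element simultaneously in $N'$ and $M$; everything else is a direct application of the compression machinery.
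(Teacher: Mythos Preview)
Your proof is correct and takes a genuinely different route from the paper's argument.

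For part (1), the paper gives a spatial construction: given a unitary $u \in pN' \cap pMp$, it defines an operator $u_0$ directly on the closure of $NpH$ by $u_0(np\xi) := npu\xi$, verifies $u_0$ is an isometry, and checks that $u_0q$ (where $q$ is the projection onto $\overline{NpH}$) lies in $N' \cap M$ with $u_0qp = u$. This is essentially a reproof of the classical identity $(pNp)' = pN'$ done by hand, with an extra check that the extension lands in $M$. For part (2), the paper proves injectivity by a unitary averaging argument: if $px = 0$ then $xupu^* = 0$ for all $u \in U(N)$, and taking the supremum over $u$ gives $xz = 0$.

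Your approach is more algebraic and more unified: you invoke the partial isometry implementation of the compression isomorphism $zN' \cong pN'$ once, and then observe that because the partial isometries $u_\alpha$ lie in $N \subseteq M$, the inverse map $y \mapsto \sum_\alpha u_\alpha y u_\alpha^*$ sends $pMp$ back into $M$. This single observation handles both the reverse inclusion in (1) and injectivity in (2). The paper's argument is more self-contained (it does not appeal to the compression isomorphism as a black box), but yours isolates the conceptual point more cleanly: the compression isomorphism on $N'$ restricts to one on $N' \cap M$ precisely because it is implemented by elements of $N$. One minor point worth making explicit in your write-up is that the (possibly infinite) sum $\sum_\alpha u_\alpha y u_\alpha^*$ converges strongly and hence lies in the von Neumann algebra $M$; this is implicit in your claim that ``each summand lies in $M$'' implies $x \in M$.
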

\begin{proof}
\mbox{}
\item[\underline{Proof of (1):}]
The proof of (1) is similar to the proof of the standard fact that $(pNp)' = N'p$.

Clearly $(N'\cap M)p \subseteq (N'p) \cap pMp$.
Suppose $u$ is a unitary in $(N'p) \cap pMp$.
Let $K$ be the closure of $NpH$.
Let $q\in B(H)$ be the projection onto $K$, which is clearly in $N' \cap N = Z(N)$.
Define $u_0$ in $B(K)$ by $u_0 (np\xi) := npu\xi$.
One now verifies that $u_0$ is an isometry and thus is well-defined.
Look at the operator $u_0q \in N' \cap B(H)$, and note that $u = u_0qp \in N'p$.
We claim that $u_0q \in M$, so that $u = u_0qp \in (N' \cap M)p$.
First, for any $m \in M'$, $n \in N$, and $\xi\in H$,
we have
$mu_0np\xi  = mnup\xi = nupm\xi = u_0npm\xi = u_0mnp\xi$.
Thus $u_0 \in qMq$.
Since $q \in M$, for all $m \in M'$, we have $u_0qm\xi = u_0mq\xi = mu_0q\xi$.
Hence $u_0q$ commutes with $M'$ on $H$, and $u_0q \in M$. 

\item[\underline{Proof of (2):}]
For $x\in N'\cap M$, we have $p(zx) = px$.
Hence the map is surjective.
We now show the map is injective.
Suppose $x \in z(N'\cap M)$ such that $px = 0$.
By (1), $z(N'\cap M) = zN' \cap zMz$.
Then for all unitary $u\in U(N)$, $upz = z(up) \in zN$, so 
$0 = upxu^* = (upz)xu^* = x(upzu^*) = xupu^*$.
Taking sup over $u \in U(N)$ yields $0 = xz = x$.
\end{proof}

Similar to the discussion in \S\ref{sec:OperationsOnMarkovTowers}, 
given an inclusion of tracial von Neumann algebras $A_0\subset (A_1, \tr_1)$, we obtain another inclusion of tracial von Neumann algebras by compression by a non-zero projection $p\in P(A_0)$.
We define a faithful trace $\tr_1^p$ on $pA_1 p$ by
\begin{equation}
\label{eq:CompressedTrace1}
\tr^p_1(x) := \tr_1(p)^{-1}\tr_1(pxp).
\end{equation}
It is straightforward to verify that the unique trace-preserving conditional expectation is given by 
\begin{equation}
\label{eq:CompressedConditionalExpectation1}
E^p_1 : pA_1p \to pA_{0}p
\qquad
\qquad
E^p_1(pxp) := E_1(pxp) = pE_1(x)p
\end{equation}
Notice that since $[e_1,p] = 0$, we have for all $pxp \in pA_1 p$, we have 
\begin{equation}
\label{eq:CompressionImplementsConditionalExpectation1}
e_1p (pxp) e_1p = p e_1xe_1p = pE_1(x)e_1p = E_1^p(pxp)e_1p,
\end{equation}
so the conditional expectation is implemented by $e_1p$.

Suppose now that $A_0 \subset (A_1, \tr_1)$ is strongly Markov.
We would like to show that $pA_2p$ with trace $\tr^p_2(x):=\tr_2(p)^{-1}\tr_2(pxp)$ and Jones projection $pe_1$
is isomorphic to the basic construction of $pA_0p \subset (pA_1p, \tr_1^p)$, but we will need an extra assumption on $p$.
(This extra assumption will be automatic when $A_0 \subset A_1$ is a ${\rm II}_1$ subfactor; see also \cite[Lem.~2.4]{MR1262294}.)
Toward this goal, we recall the following recognition lemma based on \cite[Prop.~1.2]{MR965748}, \cite[Lem.~5.8]{MR1073519}, and \cite[Lem.~5.3.1]{MR1473221}.

\begin{lem}[{\cite[Lem.~2.15]{MR2812459}}]
\label{lem:BasicConstructionRecognition}
Suppose $A_0 \subset (A_1, \tr_1)$ is a strongly Markov inclusion of tracial von Neumann algebras, and $(B, \tr_B, p)$ is a tracial von Neumann algebra containing $A_1$ together with a projection $p\in P(B)$ such that
\begin{enumerate}[label={\rm(R\arabic*)}]
\item
\label{eq:RecognitionImplement}
$pxp = E_{A_0}^{A_1}(x)p$ for all $x\in A_1$,
\item
\label{eq:RecognitionMarkov}
$E^{B}_{A_1}(p) = [A_1:A_0]^{-1} 1_{A_1}$, and
\item
\label{eq:RecognitionSpan}
$B$ is algebraically spanned by $A_1$ and $p$, i.e., $B = A_1pA_1 := \spann\set{apb}{a,b\in A_1}$.
\end{enumerate}
Then the map $A_2\to B$ given by $ae_1b \mapsto apb$ is a (normal) unital $*$-isomorphism of von Neumann algebras.
\end{lem}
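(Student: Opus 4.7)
The strategy is to realize both $A_2$ and $B$ as quotients of the algebraic $A_0$-balanced tensor product $A_1\otimes_{A_0}A_1$, and then show the two quotient maps share the same kernel by comparing GNS inner products coming from the traces. The key inputs are R1 (which forces $p$ to commute with $A_0$), R2 (which forces the traces to match), and R3 (which gives surjectivity onto $B$).

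\emph{Setup.} First I would observe that $p$ commutes with $A_0$. Applying $*$ to R1 and using that $E_{A_0}^{A_1}$ is $*$-preserving gives $pxp = pE_{A_0}^{A_1}(x^*)^* = pE_{A_0}^{A_1}(x)$, so combined with R1 we get $E_{A_0}^{A_1}(x)p = pE_{A_0}^{A_1}(x)$ for every $x\in A_1$, whence $p$ commutes with $A_0$. Likewise $e_1$ commutes with $A_0$. Consequently there are well-defined $A_0$-balanced bilinear maps
\[
\tilde\psi:A_1\otimes_{A_0}A_1\to A_2, \quad a\otimes b\mapsto ae_1b \quad\text{and}\quad \tilde\pi:A_1\otimes_{A_0}A_1\to B, \quad a\otimes b\mapsto apb.
\]
By strong Markov-ness $A_2=A_1e_1A_1$, so $\tilde\psi$ is surjective, and by R3 so is $\tilde\pi$.

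\emph{Algebra structure and $*$-structure.} Equip $A_1\otimes_{A_0}A_1$ with the multiplication $(a\otimes b)(c\otimes d):=aE_{A_0}^{A_1}(bc)\otimes d$ and the involution $(a\otimes b)^*:=b^*\otimes a^*$. Using R1, $(apb)(cpd)=a(pbcp)d=aE_{A_0}^{A_1}(bc)pd$, matching the tensor product. The analogous identity $e_1xe_1=E_{A_0}^{A_1}(x)e_1$ shows the same for $\tilde\psi$. Hence both $\tilde\psi$ and $\tilde\pi$ are $*$-algebra homomorphisms.

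\emph{Trace matching and injectivity.} The crux is to compute the pullback inner products. For elementary tensors $x=a\otimes b$ and $y=a'\otimes b'$, using R2 and the bimodule property of $E^B_{A_1}$,
\[
\tr_B(\tilde\pi(y)^*\tilde\pi(x))=\tr_{A_1}\bigl(E^B_{A_1}(b'^*pa'^*apb)\bigr)=\tr_{A_1}\bigl(b'^*E_{A_0}^{A_1}(a'^*a)E^B_{A_1}(p)b\bigr)=d^{-2}\tr_{A_1}\bigl(E_{A_0}^{A_1}(a'^*a)bb'^*\bigr),
\]
where I used R1 to collapse $pa'^*ap$ and R2 to compute $E^B_{A_1}(p)=d^{-2}$. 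The identical calculation with $e_1$ in place of $p$, using $e_1xe_1=E_{A_0}^{A_1}(x)e_1$ and the Markov property $E^{A_2}_{A_1}(e_1)=d^{-2}$ of the Jones tower, gives the same value for $\tr_{A_2}(\tilde\psi(y)^*\tilde\psi(x))$. Since these GNS pre-inner products on $A_1\otimes_{A_0}A_1$ coincide, $\tilde\pi$ and $\tilde\psi$ have the same kernel, so $\pi:=\tilde\pi\circ\tilde\psi^{-1}:A_2\to B$ is a well-defined $*$-algebra isomorphism which by construction sends $ae_1b\mapsto apb$.

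\emph{Normality.} Because $\pi$ is a unital $*$-isomorphism and the trace calculation above shows $\tr_B\circ\pi=\tr_2$, it is trace-preserving between finite tracial von Neumann algebras with faithful normal traces. Hence $\pi$ induces an isometry of the GNS Hilbert spaces intertwining the left regular representations, which shows $\pi$ is normal (equivalently, $\sigma$-weakly continuous).

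The main obstacle is clean verification of the trace-matching identity on both sides; this is where R1 and R2 must work together in harmony (R1 to collapse the sandwich $pa'^*ap$, R2 to evaluate $E^B_{A_1}(p)$). A secondary (but standard) point is the identification of $A_2$ with $A_1\otimes_{A_0}A_1$ via $\tilde\psi$, which is the usual recognition of the Jones basic construction; indeed the entire argument is naturally symmetric in the pair $(A_2,e_1)$ and $(B,p)$, which is what makes the lemma clean.
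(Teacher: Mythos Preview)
The paper does not actually prove this lemma; it is stated with a citation to \cite[Lem.~2.15]{MR2812459} and used as a black box. So there is no ``paper's own proof'' to compare against.

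Your argument is correct and is essentially the standard one. A few small remarks. First, the existence of the trace-preserving conditional expectation $E^B_{A_1}$ is being implicitly assumed in hypothesis R2, which is tantamount to assuming $\tr_B|_{A_1}=\tr_1$; you use this compatibility again in the normality step, so it is worth making explicit. Second, when you assert that equality of the GNS sesquilinear forms forces equality of kernels, you are using faithfulness of both $\tr_2$ and $\tr_B$; these are part of the standing hypotheses on tracial von Neumann algebras but again worth flagging. Third, unitality of $\pi$ follows from surjectivity of $\tilde\pi$ (R3) together with the existence of a Pimsner--Popa basis giving $\sum_b be_1b^*=1_{A_2}$, so $\pi(1_{A_2})$ is a two-sided unit in $B$ and hence equals $1_B$.

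For contrast, the paper does contain a closely related argument in the proof of \ref{EP:BasicContruction}, where injectivity of $ae_nb\mapsto ap_nb$ is shown directly by sandwiching with $p_na(\,\cdot\,)bp_n$ and using faithfulness of $x\mapsto xp_n$. Your GNS/trace-matching approach is a clean alternative that packages the same information symmetrically in $(A_2,e_1)$ and $(B,p)$, and has the bonus of immediately yielding that $\pi$ is trace-preserving, from which normality drops out.
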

In this case, where $(B,\tr_B,p)$ is isomorphic to the basic construction $A_2$ of $A_0\subseteq (A_1,\tr_1)$, we call the inclusion $A_0\subseteq (A_1,\tr_1)\subseteq (B,\tr_B,p)$ \textit{standard}, after \cite{MR2812459}. 

We now prove that compression by well-behaved projections of $A_0$ preserves the strong Markov structure.
Here, `well-behaved' is the condition 
$A_0 = A_0 p A_0 := \spann\set{apb}{a,b\in A_0}$,
which implies that the central support of $p$ in $A_0$ is $1$.

\begin{prop}
\label{prop:CompressingStronglyMarkovInclusion}
Suppose $A_0 \subset (A_1, \tr_1)$ is a strongly Markov inclusion of tracial von Neumann algebras, and $p\in P(A_0)$ is a projection such that $A_0pA_0 = A_0$.
\begin{enumerate}[label={\rm(\arabic*)}]
\item
Let $\{a\}\subset A_0$ be a finite set such that $\sum_{a} apa^* = 1_{A_0}$.\footnote{
Such a finite set necessarily exists by the same trick used in \cite[Prop.~3(b)]{MR561983}.
}
Then for any Pimsner-Popa basis $\{b\}$ for $A_1$ over $A_0$, $\{pbap\}$ is a Pimsner-Popa basis for $pA_1p$ over $pA_0p$.
\item
The inclusion $pA_0p \subset (pA_1p, \tr_1^p)$ is strongly Markov with index $[pA_1p:pA_0p] = [A_1:A_0]$.
\item
The inclusion $pA_0p \subset (pA_1p, \tr_1^p) \subset (pA_2p , \tr_2^p, pe_1)$\footnote{
The definition of $\tr^p_2$ is analogous to \eqref{eq:CompressedTrace1}.
} is standard.
\end{enumerate}
\end{prop}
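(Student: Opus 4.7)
The plan is to identify $(pA_2p, \tr_2^p, pe_1)$ as the basic construction of $pA_0p \subset (pA_1p, \tr_1^p)$, from which all three parts follow. The structural point is that $p \in A_0$ commutes with $e_1$, so $pe_1 \in pA_2p$ is the natural candidate for the Jones projection of the compressed inclusion.

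For (3), I would verify the three conditions of the Recognition Lemma \ref{lem:BasicConstructionRecognition} applied to $(pA_2p, \tr_2^p, pe_1)$. Conditions \ref{eq:RecognitionImplement} and \ref{eq:RecognitionMarkov} reduce, after a short manipulation using $[p,e_1]=0$, $p^2=p$, and $(1-p)\cdot pe_1=0$, to the corresponding identities for $A_0\subset A_1\subset A_2$: for $pxp\in pA_1p$,
\[
(pe_1)(pxp)(pe_1)=pe_1 xe_1 p = pE^{A_1}_{A_0}(x)p\cdot pe_1=E^{pA_1p}_{pA_0p}(pxp)\cdot pe_1,
\]
while $E^{pA_2p}_{pA_1p}(pe_1)=pE^{A_2}_{A_1}(e_1)p=[A_1:A_0]^{-1}p$.

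The main obstacle is the span condition \ref{eq:RecognitionSpan}, $pA_2p=pA_1p\cdot pe_1\cdot pA_1p$. I would combine two inputs: strong Markovness of $A_0\subset A_1$ (which gives $A_2=A_1e_1A_1$), and the hypothesis $A_0 pA_0=A_0$, which via the argument of \cite[Prop.~3(b)]{MR561983} produces a finite $\{a\}\subset A_0$ with $\sum_a apa^*=1_{A_0}$. For $c,d\in A_1$, inserting this partition of unity between $e_1$ and $d$ and commuting each $apa^*\in A_0$ past $e_1$ yields
\[
p\cdot ce_1 d\cdot p=\sum_a pc\cdot e_1(apa^*)\cdot dp=\sum_a (pcap)(pe_1)(pa^* dp),
\]
after regrouping using $pe_1=e_1 p$ and $p^2=p$. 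This establishes (3).

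Parts (1) and (2) are then consequences of the same circle of identities. For (1), the commutation trick above combined with $\sum_a apa^*=1_{A_0}$ and the original Pimsner-Popa identity $\sum_b be_1 b^*=1_{A_2}$ telescopes to $\sum_{a,b}(pbap)(pe_1)(pbap)^*=\sum_b pbe_1 b^* p = p = 1_{pA_2p}$, so $\{pbap\}$ is a Pimsner-Popa basis for $pA_1p$ over $pA_0p$. Part (2) is immediate: this finite basis exhibits the compressed inclusion as strongly Markov, and the analogous telescoping $\sum_{a,b}(pbap)(pbap)^*=\sum_b pbb^*p=[A_1:A_0]\cdot p$ computes the Watatani index as $[A_1:A_0]$.
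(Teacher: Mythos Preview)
Your computations are all correct and essentially identical to the paper's --- the same insertion of $\sum_a apa^*=1_{A_0}$ and the same commutation $[p,e_1]=0$ drive everything. The one issue is the logical ordering. You propose to establish (3) first via the Recognition Lemma \ref{lem:BasicConstructionRecognition} and then read off (1) and (2), but the Recognition Lemma has as a standing hypothesis that the base inclusion is strongly Markov, and its condition \ref{eq:RecognitionMarkov} is phrased in terms of the index of that inclusion. So applying it to $pA_0p\subset (pA_1p,\tr_1^p)$ already presupposes part (2), and your check of \ref{eq:RecognitionMarkov} yields $[A_1:A_0]^{-1}p$ rather than the required $[pA_1p:pA_0p]^{-1}p$ until (2) is known. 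Likewise, interpreting $\sum_{a,b}(pbap)(pe_1)(pbap)^*=p$ as the Pimsner--Popa basis condition presupposes that $pe_1$ is the Jones projection for the compressed inclusion, i.e.\ (3).

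The paper avoids this circularity by running the argument in the order $(1)\Rightarrow(2)\Rightarrow(3)$: it verifies (1) via the equivalent reconstruction formula $pxp=\sum_{a,b}(pbap)E_1^p\bigl((pbap)^*(pxp)\bigr)$, which makes no reference to the basic construction; then (2) follows from your Watatani index computation; and only then does it invoke the Recognition Lemma, with \ref{eq:RecognitionSpan} supplied by (1). Your span argument for \ref{eq:RecognitionSpan} is a perfectly good alternative to citing (1), but you still need (2) in hand before the lemma applies. The fix is simply to reorder your write-up.
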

\begin{proof}
\mbox{}
\item[\underline{Proof of (1):}]
For all $pxp \in pA_1p$, we have
$$
\sum_{pbap} 
pbap
E^p_1(pa^*b^*p \cdot pxp)
=
\sum_b
\sum_a
pbapa^* E_1(b^*px)p
=
\sum_b pbE_1(b^*px)p
=
pxp.
$$

\item[\underline{Proof of (2):}]
Given that there exists a Pimsner-Popa basis for $pA_1p$ over $pA_0p$ by part (1), the inclusion is Markov if and only if the Watatani index \cite{MR996807} is a scalar by \cite[1.1.4(c)]{MR1278111}.
We now calculate
$$
\sum_{pbap} pbap(pbap)^*
=
\sum_b\sum_a pbapa^*b^*p
=
\sum_b pbb^*p
=
[A_1:A_0] p
=
[A_1:A_0] \id_{pA_1p}.
$$

\item[\underline{Proof of (3):}]
We show the hypotheses of Lemma \ref{lem:BasicConstructionRecognition} hold.
We already saw \ref{eq:RecognitionImplement} holds in \eqref{eq:CompressionImplementsConditionalExpectation1}.
To see \ref{eq:RecognitionMarkov} holds, note that $E^p_2(pxp) = pE_2(x)p$ for all $x\in A_2$ as in  \eqref{eq:CompressedConditionalExpectation1}.
Thus by part (2),
$$
E^{p}_{2}(pe_1) 
= 
pE_2(e_1) 
= 
[A_1:A_0]^{-1}p 
= 
[pA_1p:pA_0p]^{-1} 1_{pA_1p}.
$$
Finally, \ref{eq:RecognitionSpan} follows immediately from the existence of a Pimsner-Popa basis for $pA_1p$ over $pA_0p$ by part (1).
\end{proof}

By iterating Lemma \ref{lem:BasicConstructionRecognition} and Proposition \ref{prop:CompressingStronglyMarkovInclusion}, we immediately obtain the following.

\begin{cor}
\label{cor:CompressJonesTower}
Assume the hypotheses of Proposition \ref{prop:CompressingStronglyMarkovInclusion}, and let $A_\bullet=(A_n, \tr_n, e_{n+1})_{n\geq 0}$ be the Jones tower for $A_0 \subset (A_1, \tr_1)$.
Then $pAp_\bullet:=(pA_np, \tr_n^p, pe_{n+1})_{n\geq 0}$ is isomorphic to the Jones tower of $pA_0p \subset (pA_1p, \tr_1^p)$.
\end{cor}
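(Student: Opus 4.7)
The plan is to proceed by induction on $n$, applying Proposition \ref{prop:CompressingStronglyMarkovInclusion} at each level of the tower. Part (3) of that proposition gives the base case directly, namely that $pA_0p \subset (pA_1p,\tr_1^p) \subset (pA_2p, \tr_2^p, pe_1)$ is standard, so via Lemma \ref{lem:BasicConstructionRecognition} we canonically identify $(pA_2p, \tr_2^p, pe_1)$ with the Jones basic construction of $pA_0p \subset (pA_1p, \tr_1^p)$. For the inductive step, assuming the tower of $pA_0p \subset (pA_1p, \tr_1^p)$ has been identified with $(pA_kp, \tr_k^p, pe_k)$ through level $n$, I want to apply Proposition \ref{prop:CompressingStronglyMarkovInclusion} to the strongly Markov inclusion $A_{n-1} \subset (A_n, \tr_n)$ (which is strongly Markov by \cite[Cor.~2.18]{MR2812459}) with the projection $p\in P(A_{n-1})$, concluding that $pA_{n-1}p \subset (pA_np, \tr_n^p) \subset (pA_{n+1}p, \tr_{n+1}^p, pe_n)$ is standard.

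The one hypothesis that is not obviously satisfied at each level is the spanning condition $A_{n-1} p A_{n-1} = A_{n-1}$, so I would handle it by a short auxiliary induction: the assumption $A_0 p A_0 = A_0$ supplies a finite set $\{a_i\} \subset A_0$ with $\sum_i a_i p a_i^\ast = 1$, but since $A_0 \subset A_k$ for all $k\geq 0$, the very same identity shows $1 \in A_k p A_k$, hence $A_k = A_k \cdot 1 \cdot A_k \subset A_k p A_k$. So the hypothesis holds at every level essentially for free, and the inductive step goes through verbatim.

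The main step to be careful with is bookkeeping: checking that the isomorphism furnished by Lemma \ref{lem:BasicConstructionRecognition} at level $n+1$ extends the identifications already made at lower levels (it does, because it extends the identity on $pA_np$ and sends the abstract Jones projection to $pe_n$), and that the canonical trace on the basic construction of $pA_{n-1}p \subset (pA_np, \tr_n^p)$ agrees with $\tr_{n+1}^p$ under this isomorphism (which again follows from the standardness conclusion of Proposition \ref{prop:CompressingStronglyMarkovInclusion}(3) applied at each level). Assembling these level-by-level isomorphisms yields the required isomorphism of Jones towers $pAp_\bullet \cong \text{Jones tower}(pA_0p \subset (pA_1p, \tr_1^p))$.
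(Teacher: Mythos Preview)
Your proposal is correct and follows exactly the approach the paper intends: the paper states only that the corollary follows ``by iterating Lemma \ref{lem:BasicConstructionRecognition} and Proposition \ref{prop:CompressingStronglyMarkovInclusion},'' and you have simply spelled out that iteration, including the easy but necessary observation that $A_0pA_0=A_0$ implies $A_kpA_k=A_k$ for all $k$.
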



Suppose $A_0\subset (A_1,\tr_1)$ is a strongly Markov inclusion of tracial von Neumann algebras.
Denote by $\cP_\bullet$ the canonical planar $\dag$-algebra whose box spaces are given by
$$
\cP_{n,+}
:=
A_0'\cap A_n
\qquad\qquad
\cP_{n,-}
:=
A_1'\cap A_{n+1}.
$$
Suppose $p\in P(A_0)$ is a projection such that $A_0pA_0 = A_0$.
By Corollary \ref{cor:CompressJonesTower}, the Jones tower for $pA_0p \subset (pA_1p, \tr^p_1)$ is given by $(pA_np, \tr^p_n, pe_{n+1})_{n\geq 0}$, and thus we get another canonical planar $\dag$-algebra $\cQ_\bullet$ whose box spaces are given by
$$
\cQ_{n,+}
:=
pA_0'\cap pA_np
\qquad\qquad
\cQ_{n,-}
:=
pA_1'\cap pA_{n+1}p.
$$
By Lemma \ref{lem:CompressRelativeCommutant}, the map $\Phi_{n,\pm}:x\mapsto xp$ gives an isomorphism of von Neumann algebras $\Phi_{n,\pm}:\cP_{n,\pm}\to \cQ_{n,\pm}$ for each $n\geq 0$.

\begin{thm}
\label{thm:CompresionIsomorphsim}
The maps $\Phi_{n,\pm} : \cP_{n,\pm} \to \cQ_{n,\pm}$ constitute a planar $\dag$-algebra isomorphism.
\end{thm}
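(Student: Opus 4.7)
My plan is to apply Lemma \ref{lem:SufficientConditionsForPlanarMap}, which reduces the problem to four verifications: that each $\Phi_{n,\pm}$ is a unital $\dag$-algebra map, that Jones projections are preserved, and that $\Phi$ commutes with right inclusion, left inclusion, right capping, and left capping. Everything will flow from the fact that $p \in A_0$ commutes with every $x \in A_0'\cap A_n$ (and likewise with $A_1'\cap A_{n+1}$ in the negative case), combined with Corollary \ref{cor:CompressJonesTower} which says that compressing the Jones tower by $p$ gives the Jones tower of the compressed inclusion.

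First, I would check that $\Phi_{n,\pm}$ lands in the correct compressed relative commutant and is a $\dag$-algebra isomorphism. Landing is immediate from $xp = px \in pA_np$ and $p\in A_0$. To see $\Phi_{n,+}$ is bijective, I invoke Lemma \ref{lem:CompressRelativeCommutant}(2) together with the hypothesis $A_0 = A_0pA_0$, which forces the central support of $p$ in $A_0$ to equal $1$; then Lemma \ref{lem:CompressRelativeCommutant}(1) identifies $p(A_0'\cap A_n)$ with $pA_0'\cap pA_np = \cQ_{n,+}$. The same argument works for $\cP_{n,-}$ since $p\in A_0 \subset A_1$. Preservation of Jones projections is immediate because $[p,e_n]=0$ and the Jones projection of the compressed tower at level $n$ is exactly $pe_n$.

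For the four tangle actions, right inclusion is trivial, and left inclusion is equally trivial once one observes that both sides equal $xp$ viewed in the larger algebra. Right capping requires that $d E^{pA_np}_{pA_{n-1}p}(xp) = dE^{A_n}_{A_{n-1}}(x)p$, which is immediate from formula \eqref{eq:CompressedConditionalExpectation1} (the trace-preserving conditional expectation in the compressed tower is just the restriction of the original one).

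The main obstacle, and the only step requiring real work, will be left capping. The left cap in $\cP_\bullet$ acts as $x \mapsto d^{-1}\sum_b bxb^*$ using a Pimsner-Popa basis $\{b\}$ for $A_1$ over $A_0$, while in $\cQ_\bullet$ it acts as $y\mapsto d^{-1}\sum_{b'} b'y b'^*$ for a Pimsner-Popa basis $\{b'\}$ for $pA_1p$ over $pA_0p$. The key is to use the explicit basis $\{pbap\}$ for $pA_1p$ over $pA_0p$ produced in Proposition \ref{prop:CompressingStronglyMarkovInclusion}(1), where $\{a\}\subset A_0$ satisfies $\sum_a apa^* = 1_{A_0}$. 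Then I would compute
\[
\sum_{b,a} (pbap)(xp)(pa^*b^*p)
= \sum_b pb\Bigl(\sum_a apa^*\Bigr)xb^*p
= \sum_b pbxb^*p,
\]
where the middle step uses that $x$ commutes with both $a$ and $p$ (since $x\in A_0'$). Finally, since $d^{-1}\sum_b bxb^* \in A_1'$ commutes with $p$, the outer $p$'s may be absorbed into a single $p$ on the right, yielding $\bigl(d^{-1}\sum_b bxb^*\bigr)p = \Phi$ applied to the left cap of $x$, as required.
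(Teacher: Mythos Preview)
Your proof is correct and follows the same overall strategy as the paper: both invoke Lemma~\ref{lem:SufficientConditionsForPlanarMap}, dispatch Jones projections and the three easy tangles quickly, and isolate left capping as the only nontrivial verification.

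The two proofs diverge at the left-capping step. You fix an arbitrary Pimsner--Popa basis $\{b\}$ for $A_1$ over $A_0$, take the explicit basis $\{pbap\}$ for $pA_1p$ over $pA_0p$ furnished by Proposition~\ref{prop:CompressingStronglyMarkovInclusion}(1), and compute the compressed left cap directly, collapsing $\sum_a apa^*$ to $1$. The paper goes the other direction: it fixes an arbitrary basis $\{a\}$ for the compressed inclusion, then manufactures a special basis $\{c\}=\{a\}\cup\{(1-p)b\}$ for $A_1$ over $A_0$ and exploits basis-independence of the left-capping formula (Proposition~\ref{prop:LeftCapping}) so that multiplying by $p$ kills the $(1-p)b$ terms outright. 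Your route is arguably more economical, since it recycles an already-established basis rather than introducing a new construction; the paper's route has the conceptual appeal of exhibiting how a basis for the compression extends to one for the original inclusion. Both rely on the same underlying facts and yield the result with comparable effort.
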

\begin{proof}
We prove the unital $*$-algebra isomorphisms $\Phi_{n,\pm}$ satisfy the conditions of Lemma \ref{lem:SufficientConditionsForPlanarMap}.

First, note that $\Phi_{n,\pm}(e_n) = pe_n$, so Jones projections in $\cP_\bullet$ map to Jones projections in $\cQ_\bullet$ by Corollary \ref{cor:CompressJonesTower}.
Hence Condition (1) of Lemma \ref{lem:SufficientConditionsForPlanarMap} is satisfied.

The only interesting part in checking Condition (2) of Lemma \ref{lem:SufficientConditionsForPlanarMap} holds is verifying that left capping commutes with $\Phi_{n,\pm}$.
First, by Proposition \ref{prop:LeftCapping}, if $\{b\}$ is a Pimsner-Popa basis for $A_1$ over $A_0$, then for all $x\in \cP_{n,+}= A_0'\cap A_n$,
$$
\begin{tikzpicture}[xscale=-1, baseline = -.1cm]
 \fill[shaded] (.6,-.5) -- (.6,.5) -- (0,.5) -- (0,-.5);
 \filldraw[unshaded] (.15,-.25) arc (-180:0:.15cm) -- (.45,.25) arc (0:180:.15cm);
 \draw (0,-.5) -- (0,.5);
 \node at (-.4,.45) {\scriptsize{$n-1$}};
 \node at (-.4,-.45) {\scriptsize{$n-1$}};
 \roundNbox{unshaded}{(0,0)}{.25}{0}{0}{$x$}
\end{tikzpicture}
=
d^{-1} \sum_{\beta} b xb^*.
$$
This means that picking Pimsner-Popa bases $\{b\}$ for $A_1$ over $A_0$ and $\{a\}$ for $pA_1p$ over $pA_0p$, we must show that
\begin{equation}
\label{eq:LeftCappingForCompression}
\Phi_{n-1,-}
\left(
\begin{tikzpicture}[xscale=-1, baseline = -.1cm]
 \fill[shaded] (.6,-.5) -- (.6,.5) -- (0,.5) -- (0,-.5);
 \filldraw[unshaded] (.15,-.25) arc (-180:0:.15cm) -- (.45,.25) arc (0:180:.15cm);
 \draw (0,-.5) -- (0,.5);
 \node at (-.4,.45) {\scriptsize{$n-1$}};
 \node at (-.4,-.45) {\scriptsize{$n-1$}};
 \roundNbox{unshaded}{(0,0)}{.25}{0}{0}{$x$}
\end{tikzpicture}
\right)
=
d^{-1}p\sum_{b} b xb^* 
\overset{\text{?}}{=} 
d^{-1}\sum_{a} apxa^*
=
\begin{tikzpicture}[xscale=-1, baseline = -.1cm]
 \fill[shaded] (.85,-.5) -- (.85,.5) -- (0,.5) -- (0,-.5);
 \filldraw[unshaded] (.4,-.25) arc (-180:0:.15cm) -- (.7,.25) arc (0:180:.15cm);
 \draw (0,-.5) -- (0,.5);
 \node at (-.4,.45) {\scriptsize{$n-1$}};
 \node at (-.4,-.45) {\scriptsize{$n-1$}};
 \roundNbox{unshaded}{(0,0)}{.25}{.35}{.35}{\scriptsize{$\Phi_{n,+}(x)$}}
\end{tikzpicture}
.
\end{equation}
The trick is to carefully choose our Pimsner-Popa basis for $A_1$ over $A_0$.
We take the Pimsner-Popa basis $\{a\}$ for $pA_1p$ over $pA_0p$ and we take the disjoint union with $\{(1-p)b\}$, where $\{b\}$ was our Pimsner-Popa basis for $A_1$ over $A_0$.
We now claim $\{c\} = \{a\}\cup \{(1-p)b\}$ is a Pimsner-Popa basis for $A_1$ over $A_0$.
Indeed, since $a = pap \in pA_1p$ for all $a\in \{a\}$, we have
$$
\sum_{c} c e_1 c^*
=
\sum_{a} ape_1 a^* + \sum_{b} (1-p)be_1 b^*(1-p) 
=
p+(1-p)
= 
1_{A_2}.
$$
Thus for this special choice of Pimsner-Popa basis for $A_1$ over $A_0$, we immediately obtain
$$
p\sum_{c} c xc^* 
= 
p\left(\sum_{a} a(px)a^* + \sum_{b} (1-p)bx b^*(1-p) \right)
=
\sum_{a} apxa^*.
$$
Hence \eqref{eq:LeftCappingForCompression} holds, and the result follows.
\end{proof}

\section{The module embedding theorem via towers of algebras}

We have finally developed the tools necessary to prove Theorem \ref{thm:ModulePAEmbedding}, which turns a finite depth cyclic right pivotal module $(\cM,m)$ over the category of projections $\cC$ of a subfactor planar algebra $\cQ_\bullet$ (or equivalently, a finite depth connected right planar module over $\cQ_\bullet$) into an embedding of $\cQ_\bullet$ into the graph planar algebra of the fusion graph of $\cM$ with respect to the unshaded-shaded strand $X\in \cQ_{1,+}$. 
As a special case, we recover the embedding of a subfactor planar algebra into the graph planar algebra of its own principal graph, described in \cite{MR2812459}, along with an embedding into the graph planar algebra of the dual principal graph. 
We then verify that, up to an automorphism of the graph planar algebra, the resulting embedding does not depend on the choice of generating object for the module category.

\subsection{The Embedding Theorem}

Suppose $\cM_\bullet$ is a a connected right planar module over $\cQ_\bullet$. 
Since $\cQ_\bullet$ has finite depth, so does $\cM_\bullet$, by Lemma \ref{lem:FiniteDepthAlgebraHasFiniteDepthModule}. 
The tangle 
 \[\begin{tikzpicture}[baseline]
  \halfcircle{}{(0,0)}{.5}{.5}{}
  \draw (.5,-.5) -- (.5,.5);
  \node at (.4,.32) {\tiny{$n$}}; 
  \ncircle{unshaded}{(.5,0)}{.2}{180}{}
 \end{tikzpicture}\]
gives a map $\cQ_\bullet\to \cM_\bullet$, which is injective since $\cM_\bullet$ is non-zero and connected. 
Let $M_\bullet$ be the Markov tower obtained from $\cM_\bullet$ from Example \ref{example:MarkovTowerFromPlanarModule}.
Choose $r\geq 0$ such that the inclusion $M_{2r}\subseteq (M_{2r+1},\tr_{2r+1})\subseteq (M_{2r+2}, \tr_{2r+2}, e_{2r+1})$ is standard.
Then setting $(A_n, \tr_n):=(M_{2r+n},\tr_{2r+n})$ for $n\geq 0$ as described in \S\ref{ssec:shift}, $A_0\subseteq (A_1, \tr_1)$ is a strongly Markov inclusion, and the tower $(A_n,\tr_{2r+n})$ is a strongly Markov tower. 
Let $\cP_\bullet$ be the canonical relative commutant planar $\dag$-algebra of the inclusion $A_0\subseteq (A_1, \tr_1)$ described in \S\ref{sec:StronglyMarkovPA}.
Therefore, the tangle 
 \[\begin{tikzpicture}[baseline]
  \halfcircle{}{(0,0)}{.5}{1}{}
  \draw (.2,-.5) -- (.2,.5);
  \node at (.35,.32) {\tiny{$2r$}};
  \draw (.9,-.5) -- (.9,.5);
  \node at (.8,.32) {\tiny{$n$}}; 
  \ncircle{unshaded}{(.9,0)}{.2}{180}{}
 \end{tikzpicture}\]
gives an embedding $\Phi:\cQ_n\to\cP_n$ on the level of Markov towers.
In terms of the string calculus of pivotal modules over tensor categories, $\Phi$ places $2r$ strings to the left of elements in $\cQ_{n,+}$  and $2r+1$ strings to the left of elements in $\cQ_{n,-}$:
\[ \begin{tikzpicture}[baseline]
 \draw (0,-.7) -- (0,.7);
 \roundNbox{unshaded}{(0,0)}{.3}{0}{0}{$x$}
 \node at (0,-.9) {\tiny{$n$}};
\end{tikzpicture}
\quad
\begin{tikzpicture}[baseline]
 \clip (0.5,0.9) -- (-0.5,0.9) -- (-0.5,-0.9) -- (0.5,-0.9);
 \draw [|->,thick] (-0.3,0)--(0.3,0);
 \node at (0,0.25) {\scriptsize{$\Phi$}};
\end{tikzpicture}
\quad
\begin{tikzpicture}[baseline]
 \fill[ctwoshading] (-.3,-.7) -- (-0.3,.7) -- (-.7,0.7) -- (-.7,-0.7) -- (-.3,-0.7);
 \draw[thick,red] (-0.3,-.7) -- (-0.3,0.7);
 \draw (0.45,-.7) -- (0.45,.7);
 \draw (0,-.7) -- (0,.7);
 \roundNbox{unshaded}{(0.45,0)}{.3}{0}{0}{$x$}
 \node at (0.45,-.9) {\tiny{$n$}};
 \node at (0,-0.9){\tiny{$2r$}};
\end{tikzpicture} \]
Here, the unshaded-shaded strand is a generator of the category of projections $\cC$ of $\cQ_\bullet$, while the shaded-unshaded red strand is a simple generator of a cyclic pivotal right module category over $\cC$.

\begin{thm}
 The map $\Phi$ is a $\dag$-planar algebra embedding. 
\end{thm}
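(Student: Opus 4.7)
The approach is to invoke Lemma \ref{lem:SufficientConditionsForPlanarMap}: it suffices to check that each $\Phi_{n,\pm}$ is a unital $\dag$-algebra homomorphism into the appropriate relative commutant, sends Jones projections to Jones projections, and commutes with the right inclusion, left inclusion, right capping, and left capping tangles; injectivity will then follow from a separate trace-preservation argument. I will first verify well-definedness: for $x \in \cQ_{n,+}$, the element $\Phi(x) = \id_m \vartriangleleft \id_{(X \otimes \overline{X})^{\otimes r}} \vartriangleleft x$ is an endomorphism of $m \vartriangleleft X^{\text{alt}\otimes (2r+n)}$, hence lies in $M_{2r+n} = A_n$. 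By bifunctoriality of $\vartriangleleft$, it commutes with every $\phi \vartriangleleft \id_{X^{\text{alt}\otimes n}}$ for $\phi \in M_{2r} = A_0$, placing it in $A_0' \cap A_n = \cP_{n,+}$; the argument for $\cQ_{n,-}$ is parallel. That each $\Phi_{n,\pm}$ is a unital $\dag$-algebra homomorphism is immediate from the planar module tangle calculus and the $\dag$-structure on $\cM_\bullet$.

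Condition (1) of Lemma \ref{lem:SufficientConditionsForPlanarMap} is straightforward: the Jones projections in $\cQ_\bullet$ are defined by cup/cap tangles on the two rightmost strands, and $\Phi$ places $2r$ (or $2r+1$) extra buffer strands to the left, producing exactly the Jones projection formulas for the Markov tower $M_\bullet$ given in Example \ref{example:MarkovTowerFromPlanarModule}, which coincide with the Jones projections of $\cP_\bullet$ under the correspondence. For condition (2), the right and left inclusion tangles commute with $\Phi$ tautologically, since the strand added to the right of $\Phi(x)$ gives literally the same diagram as $\Phi$ applied to the corresponding inclusion of $x$. Right capping corresponds to the trace-preserving conditional expectation $E \colon M_{2r+n} \to M_{2r+n-1}$, which in the module picture is the partial trace on the rightmost $\cC$-strand; bifunctoriality then gives $E(\Phi(x)) = \Phi(E^\cQ(x))$, where $E^\cQ$ is the right capping in $\cQ_\bullet$.

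The main obstacle is left capping. By Proposition \ref{prop:LeftCapping}, left capping in $\cP_\bullet$ applied to $\Phi(x)$ equals $d^{-1}\sum_b b \Phi(x) b^*$ for any Pimsner-Popa basis $\{b\}$ of $A_1 = M_{2r+1}$ over $A_0 = M_{2r}$. Following the pictorial Pimsner-Popa trick of \cite[Thm.~4.1]{MR2812459} (as used in the proofs of Theorems \ref{ShiftIso} and \ref{thm:CompresionIsomorphsim}), I would rewrite this sum as a diagram in $M_{2r+n}$ with an arc wrapping around the leftmost buffer strand of $\Phi(x)$. Both this element and $\Phi_{n-1,-}(\text{LC}^\cQ(x))$ lie in $A_1' \cap A_n = \cP_{n-1,-}$: the former by definition of the conditional expectation, and the latter because the diagrammatic cap in $\text{LC}^\cQ(x)$ affects only the $x$-portion of the diagram, which commutes with $A_1$ by bifunctoriality. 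By uniqueness of the conditional expectation $E^{A_0'}_{A_1'}$, it suffices to check the two elements have the same trace pairing against every $z \in A_1$; this follows from the pivotal trace compatibility axiom \ref{PivotalModule:compatible} together with the diagrammatic calculus in $\cM$, since both pairings reduce via the balanced zig-zag relations to the same categorical trace, namely the one obtained by taking the trace over the leftmost strand of $x$ inside $\End_\cM(m \vartriangleleft (X \otimes \overline{X})^{\otimes r} \otimes X)$. The case $x \in \cQ_{n,-}$ is parallel, using a Pimsner-Popa basis of $A_2$ over $A_1$.

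Finally, for injectivity: iterated use of the pivotal trace compatibility axiom \ref{PivotalModule:compatible} yields $\tr_{2r+n} \circ \Phi_{n,\pm} = c \cdot \tr^\cQ_{n,\pm}$ on $\cQ_{n,\pm}$ for a positive constant $c = \Tr^\cM_{m \vartriangleleft (X \otimes \overline{X})^{\otimes r}}(\id)$. Since $\tr^\cQ_{n,\pm}$ is faithful on the finite dimensional $\Cstar$ algebra $\cQ_{n,\pm}$ by the positivity axiom of a subfactor planar algebra, $\Phi_{n,\pm}(a) = 0$ implies $\Phi_{n,\pm}(a^* a) = 0$, hence $\tr^\cQ(a^* a) = 0$, hence $a = 0$. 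Thus each $\Phi_{n,\pm}$ is injective, completing the proof that $\Phi$ is a planar $\dag$-algebra embedding.
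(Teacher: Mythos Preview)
Your overall strategy---invoking Lemma~\ref{lem:SufficientConditionsForPlanarMap} and checking Jones projections together with the four generating tangles---is exactly the paper's approach, and your verifications for Jones projections, right inclusion, left inclusion, and right capping match the paper's. (The paper disposes of injectivity in the paragraph preceding the theorem, noting that the module tangle $\cQ_n\to\cM_n$ is injective because $\cM_\bullet$ is connected; your trace-preservation argument is a reasonable alternative.)

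The genuine gap is in your left-capping argument. The claim ``it suffices to check the two elements have the same trace pairing against every $z\in A_1$'' is wrong: the trace-preserving conditional expectation $E^{A_0'}_{A_1'}$ is characterised by pairing against $A_1'\cap A_n$, not against $A_1$. (For a counterexample to sufficiency, take $A_0=A_1=\bbC$ inside $A_n=M_2(\bbC)$: pairing against $A_1$ only detects the trace.) Even after correcting this, your appeal to ``pivotal trace compatibility together with the diagrammatic calculus'' and ``balanced zig-zag relations'' is a description of what one hopes happens, not a proof; carrying it out is no shorter than the direct computation. The paper avoids the trace-pairing detour entirely: after isotoping $x$ to the right of the region containing $b$ and $b^*$ (the same trick you cite from Theorems~\ref{ShiftIso} and~\ref{thm:CompresionIsomorphsim}), the Pimsner--Popa relation $\sum_b b\,e_{2r+1}\,b^*=1$ collapses the sum directly, giving $d^{-1}\sum_b b\,\Phi(x)\,b^*=\Phi_{n-1,-}\bigl(E^{\cQ_{n,+}}_{\cQ_{n-1,-}}(x)\bigr)$ in one diagrammatic step, with no appeal to uniqueness of conditional expectations.
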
 
\begin{proof}
 For clarity, let us denote the conditional expectations and inclusions in $\cQ_\bullet$ by $\cE$ and $\iota$, reserving the plain symbols for their counterparts in $\cP_\bullet$. 
 Similarly, let us denote the $n$-th Jones projection in $\cQ_\bullet$ by $\varepsilon_n$, reserving $e_n$ for the Jones projection in $\cP_\bullet$.

 We need only check the hypotheses of Lemma \ref{lem:SufficientConditionsForPlanarMap}: that $\Phi$ commutes with the action of several tangles.

\begin{itemize}
 \item (Jones Projections)
 $\displaystyle\Phi(\varepsilon_n)=\Phi\left(
\begin{tikzpicture}[baseline=-.1cm]
 \draw (-.1,-.7) -- (-.1,.7);
 \draw (.5,-.7) -- (.5,-.4) arc (180:0:.15cm) -- (.8,-.7);
 \draw (.5,.7) -- (.5,.4) arc (-180:0:.15cm) -- (.8,.7);
 \node at (-.1,-.9) {\tiny{$n-1$}};
\end{tikzpicture}
  \right)\,
=
\,
\begin{tikzpicture}[baseline=-.1cm]
 \fill[ctwoshading] (-.85,-.7) -- (-1.15,-.7) -- (-1.15,.7) -- (-.85,.7) -- (-.85,-.7);
 \draw[thick,red] (-.85,-.7) -- (-.85,.7);
 \draw (-.55,-.7) -- (-.55,.7);
 \draw (-.1,-.7) -- (-.1,.7);
 \draw (.5,-.7) -- (.5,-.4) arc (180:0:.15cm) -- (.8,-.7);
 \draw (.5,.7) -- (.5,.4) arc (-180:0:.15cm) -- (.8,.7);
 \node at (-.1,-.9) {\tiny{$n-1$}};
 \node at (-.55,.9) {\tiny{$2r$}};
\end{tikzpicture}
\,
=
\,
\begin{tikzpicture}[baseline=-.1cm]
 \fill[ctwoshading] (-.4,-.7) -- (-.7,-.7) -- (-.7,.7) -- (-.4,.7) -- (-.4,-.7);
 \draw[thick,red] (-.4,-.7) -- (-.4,.7);
 \draw (-.1,-.7) -- (-.1,.7);
 \draw (.5,-.7) -- (.5,-.4) arc (180:0:.15cm) -- (.8,-.7);
 \draw (.5,.7) -- (.5,.4) arc (-180:0:.15cm) -- (.8,.7);
 \node at (.55,0) {\tiny{$2r+n-1$}};
\end{tikzpicture}
=e_n
$
\item (Conditional Expectation) \label{condex} 
 $\displaystyle\Phi(\cE_n(x))=\Phi\left(
\begin{tikzpicture}[baseline=-.1cm]
 \draw (0,-.7) -- (0,.7);
 \draw (.15,.3) arc (180:0:.15cm) -- (.45,-.3) arc (0:-180:.15cm);
 \roundNbox{unshaded}{(0,0)}{.3}{0}{0}{$x$}
 \node at (0,-0.9) {\tiny{$n-1$}};
\end{tikzpicture}
  \right)\,
  =
\,
\begin{tikzpicture}[baseline=-.1cm]
 \fill[ctwoshading] (-.9,-.7) -- (-1.2,-.7) -- (-1.2,.7) -- (-.9,.7) -- (-.9,-.7);
 \draw[thick,red] (-.9,-.7) -- (-.9,.7);
 \draw (-.6,-.7) -- (-.6,.7);
 \draw (0,-.7) -- (0,.7);
 \draw (.15,.3) arc (180:0:.15cm) -- (.45,-.3) arc (0:-180:.15cm);
 \roundNbox{unshaded}{(0,0)}{.3}{0}{0}{$x$}
 \node at (0,-0.9) {\tiny{$n-1$}};
\end{tikzpicture}
\,
=
\,
\begin{tikzpicture}[baseline=-.1cm]
 \fill[ctwoshading] (-.9,-.7) -- (-1.2,-.7) -- (-1.2,.7) -- (-.9,.7) -- (-.9,-.7);
 \draw[thick,red] (-.9,-.7) -- (-.9,.7);
 \draw (-.6,-.7) -- (-.6,.7);
 \draw (0,-.7) -- (0,.7);
 \draw (.15,.3) -- (.15,.7) arc (180:0:.3cm) -- (.75,-.7) arc (0:-180:.3) -- (.15,-.3);
 \roundNbox{unshaded}{(0,0)}{.3}{0}{0}{$x$}
 \draw[dashed] (-1.4,-.65) -- (-1.4,.65) -- (.45,.65) -- (.45,-.65) -- (-1.4,-.65);
 \node at (-.2,-.9) {\tiny{$n-1$}};
 \node at (-.6,.9) {\tiny{$2r$}};
\end{tikzpicture}
=E_n(\Phi(x))
$
\item (Right Inclusion)
 $\displaystyle \iota_n(x)=i_n\left(
\begin{tikzpicture}[baseline=-.1cm]
 \draw (0,-.7) -- (0,0.7);
 \draw (0.45,-.7) -- (0.45,.7);
 \roundNbox{unshaded}{(0,0)}{.3}{0}{0}{$x$}
 \node at (0,-.9) {\tiny{$n$}};
 \node at (0.45,-.9) {\tiny{$1$}};
\end{tikzpicture}
\right)\,
=
\,
\begin{tikzpicture}[baseline=-.1cm]
 \fill[ctwoshading] (-.9,-.7) -- (-1.2,-.7) -- (-1.2,.7) -- (-.9,.7) -- (-.9,-.7);
 \draw[thick,red] (-.9,-.7) -- (-.9,.7);
 \draw (-.6,-.7) -- (-.6,.7);
 \draw (0,-.7) -- (0,0.7);
 \draw (0.45,-.7) -- (0.45,.7);
 \roundNbox{unshaded}{(0,0)}{.3}{0}{0}{$x$}
 \node at (0,-.9) {\tiny{$n$}};
 \node at (-.6,.9) {\tiny{$2r$}};
 \node at (0.45,-.9) {\tiny{$1$}};
\end{tikzpicture}
\,
=
\,
\begin{tikzpicture}[baseline=-.1cm]
 \fill[ctwoshading] (-.75,-.7) -- (-1.05,-.7) -- (-1.05,.7) -- (-.75,.7) -- (-.75,-.7);
 \draw[thick,red] (-.75,-.7) -- (-.75,.7);
 \draw (-.45,-.7) -- (-.45,.7);
 \draw (0,-.7) -- (0,0.7);
 \draw (0.6,-.7) -- (0.6,.7);
 \roundNbox{unshaded}{(0,0)}{.3}{0}{0}{$x$}
 \draw[dashed] (-1.25,-.65) -- (-1.25,.65) -- (.45,.65) -- (.45,-.65) -- (-1.25,-.65);
 \node at (0,-.9) {\tiny{$n$}};
 \node at (-.45,.9) {\tiny{$2r$}};
 \node at (0.6,-.9) {\tiny{$1$}};
\end{tikzpicture}
=i_n(\Phi(x))
$
\item (Left Capping) In \S\ref{sec:StronglyMarkovPA},
we discussed that the left-capping tangle in the canonical planar $\ast$-algebra is given for $n\geq 1$ by 
\[
E^{A_0'\cap A_{n}}_{A_1'\cap A_{n}}(x) = \frac{1}{d^2}\sum_{b}bxb^\ast
\]
where $\{b\}$ is a Pimnser Popa basis of $A_1$ over $A_0$. 
This means that, for any $x \in\cQ_{n,+}$, we have 
$$
    d^2E^{A_0'\cap A_{n}}_{A_1'\cap A_{n}}(\Phi(x))
    =
\sum_{b}
\begin{tikzpicture}[baseline=-.1cm, scale=.8]
 \fill[ctwoshading] (-1,-1.7) -- (-1,1.7) -- (-.6,1.7) -- (-.6,-1.7) -- (-1,-1.7);
 \fill[shaded] (0,1.7) -- (0,0.5) -- (0.3,0.5) -- (0.3,-0.5) -- (0,-0.5) -- (0,-1.7) -- (0.6,-1.7) -- (0.6,1.7) -- (0,1.7);
 \draw[thick,red] (-0.6,-1.7) -- (-0.6,1.7);
 \draw (0.6,-1.7) -- (0.6,1.7);
 \draw (0.3,-.5) -- (0.3,.5);
 \draw (0,-1.7) -- (0,1.7);
 \roundNbox{unshaded}{(0.45,0)}{.3}{0}{0}{$x$}
 \roundNbox{unshaded}{(0.15,0.8)}{.3}{0.6}{0}{$b$}
 \roundNbox{unshaded}{(0.15,-0.8)}{.3}{0.6}{0}{$b^\ast$}
 \node at (0.6,-2) {\tiny{$n-1$}};
 \node at (0,2) {\tiny{$2r+1$}};
 \node at (-.15,0) {\tiny{$2r$}};
\end{tikzpicture}
=
\sum_{b}
\begin{tikzpicture}[baseline=-.1cm, scale=.8]
 \fill[ctwoshading] (-1,-1.7) -- (-1,1.7) -- (-.6,1.7) -- (-.6,-1.7) -- (-1,-1.7);
 \fill[shaded] (0,1.7) -- (0,0.5) -- (0.3,0.5) -- (0.3,0.25)arc (-180:0:.15cm)-- (0.6,1.3)arc(180:0:.3cm) -- (1.2,-1.3) arc(0:-180:0.3cm) -- (0.6,-0.25) arc(0:180:0.15cm) -- (0.3,-0.5) -- (0,-.5) -- (0,-1.7) -- (1.5,-1.7) -- (1.5,1.7);
 \draw[thick,red] (-0.6,-1.7) -- (-0.6,1.7);
 \draw (0.3,.5)--(0.3,0.25) arc (-180:0:.15cm)-- (0.6,1.3) arc(180:0:.3cm) -- (1.2,.15);
 \draw (0.3,-.7)--(0.3,-0.25) arc (180:0:.15cm)-- (0.6,-1.3) arc(-180:0:.3cm) -- (1.2,-.15);
 \draw (0,-1.7) -- (0,1.7);
 \draw (1.5,-1.7) -- (1.5,1.7);
 \draw[dashed] (-1,1.3) --  (-1,-1.3) --  (0.8,-1.3) --  (0.8,1.3) -- (-1,1.3);
 \roundNbox{unshaded}{(1.35,0)}{.3}{0}{0}{$x$}
 \roundNbox{unshaded}{(0.15,0.8)}{.3}{0.6}{0}{$b$}
 \roundNbox{unshaded}{(0.15,-0.8)}{.3}{0.6}{0}{$b^\ast$}
 \node at (1.5,-2) {\tiny{$n-1$}};
 \node at (0,2) {\tiny{$2r+1$}};
 \node at (-.15,0) {\tiny{$2r$}};
\end{tikzpicture} 
=d\,
\begin{tikzpicture}[baseline=-.1cm, scale=.8]
 \fill[ctwoshading] (-1,-1.7) -- (-1,1.7) -- (-.6,1.7) -- (-.6,-1.7) -- (-1,-1.7);
 \fill[shaded] (-0.3,1.7) -- (1.2,1.7) -- (1.2,-1.7) -- (-0.3,-1.7) -- (-0.3,1.7);
 \filldraw[unshaded] (0.3,1.3) arc(180:0:.3cm) -- (0.9,-1.3)arc(0:-180:0.3cm) -- (0.3,1.3); 
 \draw[thick,red] (-0.6,-1.7) -- (-0.6,1.7);
 \draw (-0.3,-1.7) -- (-0.3,1.7);
 \draw (1.2,-1.7) -- (1.2,1.7);
 \draw[dashed] (-1,1.3) --  (-1,-1.3) --  (0.5,-1.3) --  (0.5,1.3) -- (-1,1.3);
 \roundNbox{unshaded}{(1.05,0)}{.3}{0}{0}{$x$}
 \node at (1.2,-2) {\tiny{$n-1$}};
 \node at (-0.3,2) {\tiny{$2r+1$}};
 \node at (0.2,0) {\tiny{$1$}};
\end{tikzpicture} 
=d^2\Phi\left(E^{\cQ_{n,+}}_{\cQ_{n-1,-}}(x)\right).
$$

\item (Left Inclusion) 
The left inclusion $l_{n} : \cP_{n,-} \to\cP_{n+1,+}$ is just the inclusion $A_1'\cap A_{n+1}\to A_0'\cap A_{n+1}$.
Graphically, $\ell_n:\cQ_{n,-} \to\cQ_{n+1,+}$ is equivalent to adding a string on the left. 
Thus, for $x\in \cQ_{n,-}$, we have that:
\begin{equation*}
l_n(\Phi(x))=
\begin{tikzpicture}[baseline=-.1cm]
 \fill[ctwoshading] (-.9,-.7) -- (-0.9,.7) -- (-1.3,0.7) -- (-1.3,-0.7) -- (-0.9,-0.7);
 \fill[shaded] (-0.45,-0.7) -- (-0.45,0.7) -- (0.15,0.7) -- (0.15,-0.7) -- (-0.45,-0.7);
 \draw[red, thick] (-0.9,-0.7) -- (-0.9,0.7);
 \draw (-0.45,-0.7) -- (-0.45,0.7);
 \draw (0.15,-0.7) -- (0.15,0.7);
 \roundNbox{unshaded}{(0.15,0)}{.3}{0}{0}{$x$};
 \node at (0.15,0.8){\tiny{$n$}};
 \node at (-0.45, -0.9){\tiny{$2r+1$}};
\end{tikzpicture}
\,
=
\,
\begin{tikzpicture}[baseline=-.1cm]
 \fill[ctwoshading] (-.9,-.7) -- (-0.9,.7) -- (-1.3,0.7) -- (-1.3,-0.7) -- (-0.9,-0.7);
 \fill[shaded] (0.15,-0.7) -- (0.15,0.7) -- (0.6,0.7) -- (0.6,-0.7) -- (0.15,-0.7);
 \draw[red, thick] (-0.9,-0.7) -- (-0.9,0.7);
 \draw (-0.45,-0.7) -- (-0.45,0.7);
 \draw (0.6,-0.7) -- (0.6,0.7);
 \draw (0.15,-0.7) -- (0.15,0.7);
 \roundNbox{unshaded}{(.6,0)}{0.3}{0}{0}{$x$};
 \draw[dashed] (-0.15,-1.1) -- (-0.15,1.1) -- (1.05,1.1) --  (1.05,-1.1) -- (-0.15,-1.1);
 \node at (0.6,0.8){\tiny{$n$}};
 \node at (-0.45, -0.9){\tiny{$2r$}};
 \node at (0.15,-0.9){\tiny{1}};
\end{tikzpicture}
=\Phi(\overline{\ell_{n}}(x)).
\qedhere
\end{equation*}
\end{itemize}
\end{proof}

We have checked that $\Phi:\cQ_{\bullet}\to\cP_{\bullet}$ is a planar $\dag$-algebra inclusion. 
Let $\cG_\bullet$ be the bipartite graph planar algebra of the fusion graph of $X$ acting on $\cM$. 
Then by  \cite[Th,.~3.33]{MR2812459}, 
we know that $\cP_\bullet$ is a planar $\dag$-algebra isomorphic to $\cG_\bullet$. 
Thus, we have an embedding of $\cQ_\bullet$ into $\cG_\bullet$.

\begin{cor}[The Embedding Theorem]
 \label{cor:EmbeddingTheorem}
 A finite depth subfactor planar algebra $\cQ_\bullet$ can be embedded into the bipartite graph planar algebra of the fusion graph of a connected right planar $\cQ_\bullet$-module.
\end{cor}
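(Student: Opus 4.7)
The plan is to assemble the corollary from three ingredients already established in the paper. Given a connected right planar $\cQ_\bullet$-module $\cM_\bullet$, the associated Markov tower $M_\bullet$ constructed in Example \ref{example:MarkovTowerFromPlanarModule} inherits finite depth from $\cQ_\bullet$ by Lemma \ref{lem:FiniteDepthAlgebraHasFiniteDepthModule}. Invoking property \ref{EP:FiniteDepth} of Markov towers, I would fix $r \geq 0$ with $Y_{2r+1} = (0)$, so that the inclusion $M_{2r} \subset (M_{2r+1},\tr_{2r+1}) \subset (M_{2r+2}, \tr_{2r+2}, e_{2r+1})$ is standard in the sense of Definition \ref{def:StronglyMarkovInclusion}. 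Relabeling $A_n := M_{2r+n}$ then gives a strongly Markov inclusion $A_0 \subset (A_1,\tr_1)$ whose Jones tower reproduces the shifted Markov tower $M_{\bullet + 2r}$.

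Next, I would invoke Theorem \ref{thm:ModulePAEmbedding}, which provides the planar $\dag$-algebra embedding $\Phi : \cQ_\bullet \hookrightarrow \cP_\bullet$ into the canonical relative commutant planar $\dag$-algebra of $A_0 \subset (A_1,\tr_1)$ from \S\ref{sec:StronglyMarkovPA}, given graphically by placing $2r$ strings on the left of each element of $\cQ_{n,+}$ (and $2r+1$ on the left of elements of $\cQ_{n,-}$). Finally, Theorem \ref{planaralgebraisomorphism} supplies a (non-canonical, Pimsner--Popa basis dependent) planar $\dag$-algebra isomorphism $\cP_\bullet \cong \cG\cP\cA(\Gamma')_\bullet$, where $\Gamma'$ is the Bratteli diagram for $A_0 \subset A_1$. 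Composing these maps yields the desired planar $\dag$-algebra embedding $\cQ_\bullet \hookrightarrow \cG\cP\cA(\Gamma')_\bullet$.

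The only remaining point is the bookkeeping identification of $\Gamma'$ with the fusion graph $\Gamma$ of $\cM_\bullet$ with respect to $X$. By Fact \ref{Fact:BratteliPrincipal}, the choice $Y_{2r+1} = (0)$ guarantees that the Bratteli diagram for $M_{2r} \subset M_{2r+1}$ is canonically isomorphic to the principal graph of $M_\bullet$, and by Example \ref{ex:MarkovTowerFromRightModule} together with Remark \ref{rem:HowMCategoryChangesUnderMarkovTowerOperations}, the latter is precisely the fusion graph of the associated cyclic pivotal right module category with respect to the generator $X = 1_0 \otimes X \otimes 1_1$. Because the shift is by an even integer $2r$, the distinguished basepoint and bipartite orientation survive the identification, so $\Gamma' = \Gamma$ with compatible pointings.

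The main work has already been done in Theorem \ref{thm:ModulePAEmbedding}, whose proof hinged on the left-capping verification via the Pimsner--Popa basis sum formula of Proposition \ref{prop:LeftCapping}; for the corollary itself, the only conceptual obstacle is confirming that the particular $r$ chosen to make $M_{2r} \subset M_{2r+1}$ standard is large enough to stabilize the Bratteli diagram to the principal graph. This is exactly what the finite depth hypothesis, filtered through \ref{EP:FiniteDepth} and \ref{EP:NewStuff}, provides, so no further analytical work is required beyond the packaging of the three preceding theorems.
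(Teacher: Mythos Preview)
Your proposal is correct and follows essentially the same route as the paper: compose the embedding $\Phi:\cQ_\bullet\hookrightarrow\cP_\bullet$ of Theorem~\ref{thm:ModulePAEmbedding} with the isomorphism $\cP_\bullet\cong\cG\cP\cA(\Gamma)_\bullet$ of Theorem~\ref{planaralgebraisomorphism}. You are slightly more explicit than the paper in justifying why the Bratteli diagram of $M_{2r}\subset M_{2r+1}$ coincides with the fusion graph (via Fact~\ref{Fact:BratteliPrincipal} and finite depth), whereas the paper simply asserts this identification.
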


In particular, by considering $(\cQ_\bullet,1_{0,+})$ as a connected right planar $\cQ_\bullet$-module, we recover the main result of \cite{MR2812459}. 
By instead considering $(\cQ_\bullet,Y)$ for $Y$ a simple summand of the shaded-unshaded strand $\overline{X}\in \cQ_{1,-}$, we obtain an embedding of $\cQ_\bullet$ into the graph planar algebra for the dual principal graph.

\subsection{Invariance of the embedding}
\label{sec:InvarianceOfEmbedding}

As the observant reader may have noted, we made \emph{three} choices in defining the embedding map from $\cQ_\bullet \hookrightarrow\cG\cP\cA(\Gamma)_\bullet$.
First, we chose a simple object $m\in \cM$ to get our Markov tower $M_n := \End_\cM(m \vartriangleleft X^{\text{alt}\otimes n})$, and second, we chose $r\geq 0$ such that the inclusion $M_{2r}\subseteq (M_{2r+1}, \tr_{2r+1})\subseteq (M_{2r+2}, \tr_{2r+2}, e_{2r+1})$ is standard.
Third, we chose a basis for the strongly Markov inclusion $M_{2r}\subseteq (M_{2r+1}, \tr_{2r+1})$ to obtain a planar $\dag$-algebra isomorphism from the canonical relative commutant planar algebra $\cP_\bullet$ of the inclusion to the graph planar algebra $\cG\cP\cA(\Gamma)_\bullet$ of the fusion graph $\Gamma$.
In this section, we show that the embedding does not depend on these choices up to a $\dag$-automorphism of $\cG\cP\cA(\Gamma)_\bullet$.

\begin{defn}
Suppose $\cQ_\bullet$ is a subfactor planar algebra and $\cP_\bullet, \cP_\bullet'$ are two unitary shaded planar algebras together with planar algebra embeddings $\Phi: \cQ_\bullet \hookrightarrow \cP_\bullet$ and $\Phi': \cQ_\bullet \hookrightarrow \cP_\bullet'$.
We say the embeddings $\Phi$ and $\Phi'$ are \emph{equivalent} if there is a planar $\dag$-algebra isomorphism $\Psi: \cP_\bullet \to \cP_\bullet'$ such that the following diagram commutes:
$$
  \begin{tikzcd}
    Q_{\bullet} \arrow[hook]{r}{\Phi} \arrow[swap, hook]{dr}{\Phi'} & P_{\bullet} \arrow{d}{\Psi} 
    \\
     & P_{\bullet}'
  \end{tikzcd}
$$  
\end{defn}

We now treat our three choices for our embedding in reverse order.
First, note that choosing a different basis for the inclusion just alters the isomorphism $\cP_\bullet \cong \cG\cP\cA(\Gamma)_\bullet$ by a $\dag$-automorphism of $\cG\cP\cA(\Gamma)_\bullet$, resulting in equivalent embeddings.

Second, suppose we chose a different $r'\geq 0$ such that the inclusion $M_{2r'}\subseteq (M_{2r'+1}, \tr_{2r'+1})\subseteq (M_{2r'+2}, \tr_{2r'+2}, e_{2r'+1})$ is standard.
Without loss of generality, we may assume $r' = r+k$ for $k\in \bbN$.
Denoting the canonical relative commutant planar algebra for the strongly Markov inclusion $M_{2r+2k}\subseteq (M_{2r+2k+1}, \tr_{2r+2k+1})$ by $\cP_\bullet'$, we see that $\cP_\bullet'\cong \cP_\bullet$ by iteratively applying the shift-by-2 planar algebra isomorphism from Theorem \ref{ShiftIso}. 
Hence, replacing $r$ with $r'$ results in an equivalent embedding.

Third, suppose we chose the simple object $n\in \cM_0 \subset \cM$ instead of $m$, where $\cM_0 = \cM\vartriangleleft 1_0$.
For $i\geq 0$, define $N_i := \End_\cM(n \vartriangleleft X^{\text{alt}\otimes i})$.
Since $\cM$ is indecomposable as a right $\cC$-module category, there is a $j>0$ such that $n$ is a subobject of $m \vartriangleleft X^{\text{alt}\otimes 2j}$.
Fix an orthogonal projection $p \in M_{2j} = \End_\cM(m \vartriangleleft X^{\text{alt}\otimes 2j})$ with image isomorphic to $n$.
Notice that the compressed shifted Markov tower $(pM_{2j+k}p, \tr_{2j+k}^p, pe_{2j+k+1})_{k\geq 0}$ is $*$-isomorphic to the Markov tower $(N_{k}, \tr_k, f_{k+1})_{k\geq 0}$, where we denote by $e_i$ the Jones projections in $M_\bullet $ and by $f_i$ the Jones projections in $N_\bullet$.
Again, since $\cM$ is indecomposable, we may fix $k>0$ sufficiently large such that the following three conditions hold:
\begin{enumerate}[label={\rm(\arabic*)}]
\item
 The projection $p\vartriangleleft \id_{X^{\text{alt}\otimes 2k}}$ has central support 1 in the finite dimensional von Neumann algebra $M_{2(j+k)} = \End_\cM(m \vartriangleleft X^{\text{alt}\otimes 2(j+k)})$, which is equivalent to $M_{2(j+k)} = M_{2(j+k)}p M_{2(j+k)}$ by finite dimensionality.
\item
Setting $r:= 2(j+k)$, the inclusion $M_{2r} \subseteq (M_{2r+1}, \tr_{2r+1}) \subseteq (M_{2r+2}, \tr_{2r+2}, e_{2r+1})$ is standard.
\item
The inclusion $N_{2k} \subseteq (N_{2k+1}, \tr_{2k+1}) \subseteq (N_{2k+2}, \tr_{2k+2}, e_{2k+1})$ is standard.
\end{enumerate}
Now, by Theorem \ref{thm:CompresionIsomorphsim}, compressing $M_\bullet$ by $p\vartriangleleft \id_{X^{\text{alt}\otimes 2k}} \in M_{2r}$ gives a planar $\dag$-algebra isomorphism from the canonical relative commutant planar algebra $\cP_\bullet$ of the strongly Markov inclusion $M_{2r} \subseteq (M_{2r+1}, \tr_{2r+1})$ to the
canonical relative commutant planar algebra $\cP^p_\bullet$ of the strongly Markov inclusion $pM_{2r}p \subseteq (pM_{2r+1}p, \tr^p_{2r+1})$, where $\tr^p_{2r+1}$ is defined analogously to \eqref{eq:CompressedTrace1}, which, in turn, is $\dag$-isomorphic to the canonical relative commutant planar algebra $\cR_\bullet$ of the strongly Markov inclusion $N_{2k} \subseteq (N_{2k+1}, \tr_{2k+1})$.
Hence, replacing $m$ with $n$ results in an equivalent embedding.

We have just proved the following.

\begin{prop}
The embedding $\cQ_\bullet \hookrightarrow \cG\cP\cA(\Gamma)_\bullet$ is invariant under our choices, up to equivalence.
\end{prop}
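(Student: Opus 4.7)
The plan is to address the three choices in reverse order of difficulty, using the tools developed in Section~4 to exhibit, in each case, a planar $\dag$-algebra isomorphism intertwining the two candidate embeddings.

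First I would dispatch the choice of planar $\dag$-algebra isomorphism $\cP_\bullet \cong \cG\cP\cA(\Gamma)_\bullet$. By Theorem~\ref{planaralgebraisomorphism}, such an isomorphism exists and depends only on the choice of a loop algebra representation for the inclusion $M_{2r}\subseteq(M_{2r+1},\tr_{2r+1})$; any two such representations differ by a $\dag$-automorphism of $\cG\cP\cA(\Gamma)_\bullet$ (a diagonal gauge on vertex spaces). Composing the embedding $\cQ_\bullet\hookrightarrow\cP_\bullet$ with this automorphism yields an equivalent embedding in the sense of the definition above.

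Next I would handle the choice of $r$. Given two valid choices $r$ and $r'$, we may assume $r'=r+k$ for some $k\in\bbN$, and the strongly Markov inclusion $M_{2r'}\subseteq(M_{2r'+1},\tr_{2r'+1})$ is obtained from $M_{2r}\subseteq(M_{2r+1},\tr_{2r+1})$ by applying the iterated basic construction twice $k$ times. Applying Theorem~\ref{ShiftIso} ($k$ times in a row) produces a canonical planar $\dag$-algebra isomorphism $\Psi$ between the two canonical relative commutant planar algebras $\cP_\bullet$ and $\cP'_\bullet$, given on box spaces by adding $2k$ strings on the left. Unpacking the definition of $\Phi$ and $\Phi'$, one sees $\Psi\circ\Phi=\Phi'$, so the embeddings are equivalent.

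Finally, the main obstacle is the change of simple basepoint from $m$ to $n$, where $n\in\cM_0$ is another simple object. The paragraph preceding the proposition already lays out the roadmap: since $\cM$ is indecomposable, $n$ appears as a summand of $m\vartriangleleft X^{\alttens[2j]}$ for some $j>0$, realized by an orthogonal projection $p\in M_{2j}$; and we can choose $k$ so large that (i)~the shifted projection $p\vartriangleleft \id_{X^{\alttens[2k]}}$ has central support $1$ in $M_{2(j+k)}$, (ii)~the shifted inclusion for $r:=j+k$ is standard, and (iii)~the corresponding shifted inclusion on the $N$-side with parameter $k$ is also standard. Then I would assemble a commutative diagram of planar $\dag$-algebra isomorphisms: shift both Markov towers via Theorem~\ref{ShiftIso} so that both inclusions become standard after a common number of steps, and then use Theorem~\ref{thm:CompresionIsomorphsim} to identify the canonical relative commutant planar algebra of $M_{2r}\subseteq(M_{2r+1},\tr_{2r+1})$ compressed by $p\vartriangleleft\id_{X^{\alttens[2k]}}$ with the canonical relative commutant planar algebra of $N_{2k}\subseteq(N_{2k+1},\tr_{2k+1})$. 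The hypothesis $A_0pA_0=A_0$ required by Proposition~\ref{prop:CompressingStronglyMarkovInclusion} is exactly condition~(i), and this is precisely what forced us to take $k$ large. Chasing the definition of $\Phi$ through this chain of isomorphisms matches $\Phi$ (for $m$) with the analogous $\Phi$ (for $n$), completing the proof.

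The main technical obstacle is the bookkeeping in the last paragraph: one must verify that the compression isomorphism $\Phi_{n,\pm}:x\mapsto xp$ from Theorem~\ref{thm:CompresionIsomorphsim}, pre- and post-composed with the relevant shift isomorphisms, genuinely intertwines the two embedding maps rather than merely the ambient planar algebra structures. This is a direct diagrammatic check using the pivotal module string calculus once the diagram is set up, but it requires care because the basepoint change alters which strand is ``red'' in the module diagrams.
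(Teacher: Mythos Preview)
Your proposal is correct and follows essentially the same approach as the paper: both treat the three choices in the same order (basis, then $r$, then basepoint), invoke Theorem~\ref{ShiftIso} iteratively for the shift and Theorem~\ref{thm:CompresionIsomorphsim} for the compression, and rely on indecomposability of $\cM$ to find $j,k$ satisfying the three listed conditions. Your additional remark about verifying that the chain of isomorphisms actually intertwines the embedding maps (not just the ambient planar algebras) is a point the paper leaves implicit, so your account is if anything slightly more careful.
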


\bibliographystyle{amsalpha}
{\footnotesize{
\bibliography{embedding_theorem}
}}
\end{document}